\documentclass{article}
\usepackage[utf8]{inputenc}

\usepackage{amsmath}
\usepackage{quiver}

\usepackage{comment}
\usepackage{leftidx}
\usepackage{multicol}
\usepackage{amssymb}
\usepackage{stmaryrd}
\usepackage{accents}

\usepackage{xfrac}
\usepackage{amsthm}
\usepackage{xcolor}
\usepackage{sectsty}
\usepackage{relsize}

\usepackage{indentfirst}
\usepackage{tikz}
\tikzset{shorten <>/.style={shorten >=#1,shorten <=#1}}

\usetikzlibrary{matrix,arrows,decorations.pathmorphing}
\usepackage{tikz-cd}
\newcounter{nodemaker}
\setcounter{nodemaker}{0}

\tikzset{%
    symbol/.style={%
        draw=none,
        every to/.append style={%
            edge node={node [sloped, allow upside down, auto=false]{$#1$}}}
    }
}

\makeatletter
\newcommand{\bigdoublevee}{\big@doubleop{\bigvee}}
\newcommand{\bigdoublewedge}{\big@doubleop{\bigwedge}}
\newcommand{\big@doubleop}[1]{%
  \DOTSB\mathop{\mathpalette\big@doubleop@aux{#1}}\slimits@
}

\newcommand\big@doubleop@aux[2]{%
  \sbox\z@{$\m@th#1#2$}%
  \makebox[1.35\wd\z@][s]{$\m@th#1#2\hss#2$}%
}
\makeatother

\makeatletter
\newcommand*{\doublerightarrow}[2]{\mathrel{
  \settowidth{\@tempdima}{$\scriptstyle#1$}
  \settowidth{\@tempdimb}{$\scriptstyle#2$}
  \ifdim\@tempdimb>\@tempdima \@tempdima=\@tempdimb\fi
  \mathop{\vcenter{
    \offinterlineskip\ialign{\hbox to\dimexpr\@tempdima+1em{##}\cr
    \rightarrowfill\cr\noalign{\kern.5ex}
    \rightarrowfill\cr}}}\limits^{\!#1}_{\!#2}}}
\newcommand*{\triplerightarrow}[1]{\mathrel{
  \settowidth{\@tempdima}{$\scriptstyle#1$}
  \mathop{\vcenter{
    \offinterlineskip\ialign{\hbox to\dimexpr\@tempdima+1em{##}\cr
    \rightarrowfill\cr\noalign{\kern.5ex}
    \rightarrowfill\cr\noalign{\kern.5ex}
    \rightarrowfill\cr}}}\limits^{\!#1}}}
\makeatother

\newcommand{\hirayo}{\text{\usefont{U}{min}{m}{n}\symbol{'207}}}
\DeclareFontFamily{U}{min}{}
\DeclareFontShape{U}{min}{m}{n}{<-> udmj30}{}

\DeclareUnicodeCharacter{3088}{\yo}

\usepackage{geometry}
\geometry
 {a4paper,
 total={150mm,257mm},
 left=30mm,
 top=20mm}

\usepackage{hyperref}
\hypersetup{
    colorlinks=true,
    linkcolor=blue,
    filecolor=magenta,      
    urlcolor=cyan,
}

\usepackage{cleveref}

\newtheorem{theorem}{Theorem}[section]
\theoremstyle{proposition}
\newtheorem{proposition}[theorem]{Proposition}
\newtheorem{corollary}[theorem]{Corollary}
\newtheorem{corollary'}[theorem]{Corollary}
\newtheorem{lemma}[theorem]{Lemma}
\theoremstyle{definition}
\newtheorem{definition}[theorem]{Definition}

\theoremstyle{remark}
\newtheorem*{remark}{Remark}

\newcommand{\cod}
 {{\rm cod}}
 
 \newcommand{\Ind}
 {{\rm Ind}}

\newcommand{\comp}
 {\circ}

\newcommand{\Cont}
 {{\bf Cont}}

\newcommand{\dom}
 {{\rm dom}}

\newcommand{\li}
{{\textup{lim } }}

\newcommand{\lan}
{{\textup{lan } }}

\newcommand{\ran}
{{\textup{ran } }}

\newcommand{\colim}
{{\textup{colim}}}

\newcommand{\comma}[2]
{\mbox{$(#1\!\downarrow\!#2)$}}

\newcommand{\empstg}
 {[\,]}

\newcommand{\epi}
 {\twoheadrightarrow}

\newcommand{\coeq}
{{\textup{coeq}}}

\newcommand{\eq}
{{\textup{eq}}}

\newcommand{\hy}
 {\mbox{-}}

\newcommand{\im}
 {{\rm im}}

\newcommand{\imp}
 {\!\Rightarrow\!}

\newcommand{\mono}
 {\rightarrowtail}

\newcommand{\ob}
 {{\rm ob}}
 
 \newcommand{\Hom}
 {{\rm Hom}}

\newcommand{\op}
 {^{\rm op}}

\newcommand{\Set}
 {{\bf Set }}

\newcommand{\Sh}
 {{\bf Sh}}

\newcommand{\sh}
 {{\bf sh}}

\newcommand{\Sub}
 {{\rm Sub}}

\sectionfont{\large}
\subsectionfont{\normalsize}

\usepackage[backend=biber,
sorting=nty
]{biblatex}
\addbibresource{DiersI.bib}

\nocite{*}

\title{On Diers's theory of Spectrum I \\
Stable functors and right multi-adjoints}
\author{Axel Osmond}
\date{October 2020}

\begin{document}

\maketitle

\begin{abstract}
    Diers developed a general theory of right multi-adjoint functors leading to a purely categorical, point-set construction of spectra. Situations of “multiversal” properties return sets of canonical solutions rather than a unique one. In the case of a right multi-adjoint, each object deploys a canonical cone of local units jointly assuming the role of the unit of an adjunction. This first part revolves around the theory of multi-adjoint and recalls or precises results that will be used later on for geometric purpose. We also study the weaker notion of local adjoint, proving Beck-Chevalley conditions relating local adjunctions and the equivalence with the notion of stable functor. We also recall the link with the free-product completion, and describe factorization aspects involved in a situation of multi-adjunction. The relation between accessible right multi-adjoints and locally finitely multipresentable categories is also revisited.  
\end{abstract}

\section*{Introduction}

This paper, together with \cite{partII}, is the first part of a twofold work on Diers construction of spectra through the notion of right multi-adjoint, and more generally, to a series of papers devoted to synthesise current approaches about the notion of spectrum and how they are related. Spectra have played a prominent role in several regions of mathematics: for instance, algebraic geometry could resume in some sense as the study of the different flavors of spectra of commutative rings, while Stone duality is about spectra of distributive lattices and ordered structure; more loosely, categorical model theory is in some sense the study of ``spectra of theories", for a still-to-define notion of 2-dimensional spectra. We could sum up the central philosophy behind this notion though the following claim: \emph{spectra arise when free construction fails}. A very broad overview of the situation is the following: one starts with a category of algebraic ``ambient" objects, and a class of objects and maps between them one wan to see as ``local data", but fails to associate canonically one local object under an ambient object: one ends up rather with a family of canonical local objects under an ambient object, which is universal in some sense. The spectrum of an ambient object is a space whose points index this canonical family under it, equipped with a structural sheaf whose purpose is to gather those local objects, and it defines a left adjoint to a comparison functor between categories of ``structured spaces".\\

Until now, several and rather independent proposals to construct spectra in a general way have been done. Contrasting to the topos-theoretic approaches of \cite{Cole}, \cite{Coste}, \cite{Anel}, or \cite{lurie2009derived}, of which we will also provide a synthesis in \cite{survey}, Diers approach is more purely categorical in its premises, and strictly point-set in the way it processes to the construction. However, both notion of spectra follows a similar ``scenario": a first step identifying algebraic situations with a hidden geometric content, and a second step where this geometric content is used to construct a corresponding notion of spectrum. In the topos theoretic approach, the first step could be synthesized as revolving around what was variously called \emph{admissibility} in \cite{Cole}, \emph{geometry} in \cite{lurie2009derived}, \emph{triples} in \cite{Coste}, which was stated in terms suited for topos constructions; in Diers approach, the starting point for constructing spectra was the notion of \emph{right multi-adjoint}, which is at first sight far more abstract and algebraic than admissibility, and whose geometric meaning is more subtle. But modulo a slight additional assumption - we will refer as \emph{Diers condition} in the second part - which is easily encountered in practice, this situation leads in a very natural and concrete way to a notion of spectrum.  The topos theoretic approach is more abstract but also more ``universal" as it is based on syntactic data, and start from a complicated situation (admissibility) to process to a natural construction; Diers way is more concrete and based on semantical data, and while the construction of the spectrum, being point-set, could seem a bit ``handmade", the algebraic situation it starts with is far more natural. Moreover this method subsumes not only most of the usual examples of algebraic geometry or the structured versions of Stone dualities, but also a vast list of exotic examples which were unsuspected before Diers investigation, and are not necessarily suited for the general topos theoretic approach. Indeed, the later requires the categories it use as ``local data" to construct the spectrum to be axiomatisable by geometric theories, as well as the factorization system need to be left generated - in some sense, also axiomatisable. In particular local objects must form a (non full) subcategory of ambient objects, and are models of a \emph{geometric extension} of the theory behind the ambient objects. In Diers this condition is largely relaxed as local objects are related to ambient objects through a functor that is not required to be faithful nor injective on objects.  \\

Those two papers will hence deals with Diers approach. While Diers work on multiversal constructions in category theory has been rather well known amongst category theory community, his presentation of the spectral construction seems to have been poorly acknowledged, perhaps due to a restricted circulation of the main paper \cite{Diers}: this contrasts with the quality of the paper itself and the highly practical and comprehensive construction he proposed. We hope those two new papers will help to make more people aware of Diers work on spectra. \\  

This first paper will revolves essentially about the first step in Diers approach. More precisely, we will focus on its notion of interest and its variations, providing different presentations of this situation and a purely categorical analysis of it, postponing the actual construction of the spectrum and the geometric analysis in the second paper. Most of the content of this paper is expository, and aimed at synthesizing as much as possible of the ``algebraic" aspects of the construction, gathering and relating different notions dispatched in several papers as \cite{diers1977categories}, \cite{Diers-multipres}, \cite{Taylor}, \cite{adámek2019nice}... However we try to present them as explicitly and originally as possible, providing alternative presentation and proofs of some already known results, and also providing new observations at some points. The last section will also provide a totally new method, whose relevance will however appear later with a 2-categorical version. \\

Right multi-adjoint were introduced by Diers and extensively studied in \cite{diers1977categories}, \cite{Diers} or \cite{Diers-multipres}, amongst ``multi" versions of universal properties and usual categorical constructions. Multiversal properties are analogous of universal properties where, rather than having a unique solution representing a construction, one has a canonical small set of solution jointly assuming the universal property. The prototypical situation is the notion of multirepresentable functor into $\Set$, that is, a functor that decompose as a coproduct of representable functors: the other situations are constructed from this as well as universal construction are done by representing a functor into $\Set$. For instance, as well as a (co)limit over a diagram is an object representing the functor assigning to any object the set of (co)cones over the diagram with this object as tip, a \emph{multi (co)limit} exists when this functor is multi-representable, and the ``local representing objects" form a small family of (co)cone such that any other (co)cone factorizes uniquely through exactly one of them. The other main example of multi-construction is the notion of \emph{right multi-adjoint}, to which revolves the present paper. In an ordinary adjunction, any object in the category where the right adjoint lands admits \emph{one} unit uniquely factorizing any map from this object toward an object in the range of the right adjoint: and the left adjoint is used to provide the codomain of this unit. In a multi-adjonction, there is no global left adjoint, hence no uniquely defined unit under a fixed object: one rather has a small cone of \emph{local units} under a fixed object, which jointly play the role of the unit in the sense that any arrow from this object toward the righ adjoint uniquely factorizes through exactly one of those units, followed by a morphism in the range of the right adjoint. This is a special situation of the more general notion of \emph{local right adjoint}, were again one lacks a global left adjoint, but is able to construct local left adjoints to restrictions at slices: however in this case, while any object still posses a cone of local units, one cannot in general enforce the \emph{smallness} of this cone, and right multi-adjointess amounts in fact exactly to ``local adjointness plus small solution set condition". \\

The second part of this work concerns a characterization of right multi-adjoint through free coproduct completion, and we reprove as explicitly as possible a result, already known in \cite{diers1977categories} but proved in a different way, stating that a functor is right multi-adjoint if and only if its free coproduct extension is right adjoint. We will see in the second part that this result is the ``discrete version" of the spectral adjunction.\\

The third section of this work is about the orthogonality aspects of local adjunctions, and gives some characterizations and properties of ``diagonally universal morphism" as defined in \cite{Diers}, which will play a central role in the second part when defining the topology of the spectrum. \\

The fourth section contains some new results and improvements; we consider the case of accessible right multi-adjoint in the context of \emph{locally finitely multipresentable categories} -which were also introduced by Diers - and revisit some results expressing how to construct locally finitely multi-presentable categories from accessible right multi-adjoint satisfying some relative full faithfulness property; we also provide a "right accessible multi-adjoint theorem" in \cref{Accessible multi-adjoint functor theorem}, and finally improve a theorem of \cite{diers1977categories} on a method for constructing locally finitely multipresentable categories from some orthogonality conditions.\\

The last part contains new results. We examine a situation, in the context of a factorization system in the presence of a terminal object, producing a case of right multi-adjunction. However in this last section, to emphasize the geometric interpretation, we will work with a convention that produces actually left multi-adjonction. The interest of this construction will be revealed in a future work applying its bicategorical analog to the bicategory of Grothendieck toposes in order to construct notions of ``2-geometries" and spectra for toposes.


\section{Local right adjoints and stable functors }

In this first section, we recall our three notions of interest, namely local right adjoints, right multi-adjoints and stable functors. We first give some technical points about the behavior of the local units of the local adjunctions. We also prove that for a local right adjoint, the local adjunctions enjoy automatically a Beck-Chevalley condition, which was seemingly unnoticed until now. Then we turn to different characterization of local adjointness in term of nerves and initial family, and introduce the stronger notion of right multi-adjoint and recall a variant of Freyd adjoint functor theorem for multi-adjoint. Finally we turn to the notion of stable functor, as studied by Taylor in \cite{Taylor}, and also in \cite{weber2004generic}, and we prove equivalence with the notion of local right adjoint. 

\begin{definition}
A functor $ U : \mathcal{A} \rightarrow \mathcal{B}$ is said to be a \emph{local right adjoint} if for each object $ A $ of $ \mathcal{A}$ the restriction of $ U$ to the slice $ \mathcal{A}/A$ has a left adjoint
\[
\begin{tikzcd}
\mathcal{A}/A \arrow[rr, "U/A"', bend right] & \perp & \mathcal{B}/U(A) \arrow[ll, "L_A"', bend right]
\end{tikzcd} \] 
where we denote $ A_f$ the domain of the arrow $ L_A(f) $ in $ \mathcal{A}/A$. In the following we will also denote $ U/A$ as $ U_A$ for concision. The maps $ \eta_f^A$ for $ f: B \rightarrow U(A)$ are called \emph{local units} under $B$. 
\end{definition}

The definition of a local right adjoint means that for any arrow $ f : B \rightarrow U(A)$ in $ \mathcal{B}\downarrow U $ and $ u : A' \rightarrow A$ in $ \mathcal{A}/A$ we have triangles in $\mathcal{B}$ and $\mathcal{A}$ respectively
\[ 
\begin{tikzcd}
B \arrow[rr, "f"] \arrow[rd, "\eta^A_f"'] &                                 & U(A) \\
                                          & U(A_f) \arrow[ru, "U_AL_A(f)"'] &     
\end{tikzcd} \quad 
\begin{tikzcd}
                    & A_{U(u)} \arrow[ld, "\epsilon^A_u"'] \arrow[rd, "L_A(U(u))"] &   \\
A' \arrow[rr, "u"'] &                                                              & A
\end{tikzcd} \]
satisfying the triangle identities 
\[ 
\begin{tikzcd}
U_A \arrow[r, "\eta^A_{U_A}", Rightarrow] \arrow[rd, equal] & U_AL_AU_A \arrow[d, "U_A(\epsilon^A)", Rightarrow] \\
                                                                 & U_A                                               
\end{tikzcd} \quad 
\begin{tikzcd}
L_A \arrow[r, "L_A(\eta^A)", Rightarrow] \arrow[rd, equal] & L_A U_A L_A \arrow[d, "\epsilon^A_{L_A}", Rightarrow] \\
                                                    & L_A                                                  
\end{tikzcd} \]
In other words we have the following retractions 
\[ 
\begin{tikzcd}[row sep=small, column sep=large]
U(A') \arrow[rd, "\eta^A_{U(u)}" description] \arrow[dd, equal] \arrow[rrrd, "U_A(u)", bend left=20] &                                                                                   &&      \\
                                                                                          & U(A_{U(u)}) \arrow[ld, "U_A(\epsilon^A_u)" description] \arrow[rr, "U_AL_A(U(u))" description] && U(A) \\
U(A') \arrow[rrru, "U_A(u)"', bend right=20]                                                  &                                                                                   &&     
\end{tikzcd} \quad 
\begin{tikzcd}[row sep=small, column sep=large]
A_f \arrow[rd, "L_A(\eta^A_f)" description] \arrow[dd, equal] \arrow[rrrd, "L_A(f)", bend left=20] &                                                                                        &  &   \\
                                                                             & A_{U_AL_A(f)} \arrow[rr, "L_AU_AL_A(f)" description] \arrow[ld, "\epsilon^A_{L_A(f)}" description] &  & A \\
A_f \arrow[rrru, "L_A(f)"', bend right=20]                                      &                                                                                        &  &  
\end{tikzcd} \]
defining an isomorphism 
\[ \mathcal{A}/A[L_A(f), u] \simeq \mathcal{B}/U(A)[f, U(u)] \]
sending an arrow $ v : L_A(f) \rightarrow u$, resp. an arrow $ g : f \rightarrow U(u)$, to the composite triangle on the left, resp. on the right
\[  
\begin{tikzcd}[sep= large]
B \arrow[rd, "f"'] \arrow[r, "\eta^A_f"] & U(A_f) \arrow[d, "U_AL_A(f)" description] \arrow[r, "U_A(v)"] & U(A') \arrow[ld, "U_A(u)"] \\
                                         & U(A)                                                          &                           
\end{tikzcd} \quad 
\begin{tikzcd}[sep= large]
A_f \arrow[rd, "L_A(f)"'] \arrow[r, "L_A(g)"] & A_{U(u)} \arrow[r, "\epsilon^A_u"] \arrow[d, "L_A(U(u))" description] & A' \arrow[ld, "u"] \\
                                              & A                                                                     &                   
\end{tikzcd} \]

\begin{remark}
Beware that in general we cannot enforce the counits to be pointwise iso, that is, to require each $U_A$ to be full and faithful. Hence the factorization of a morphism in the range of $U$ may not be trivial. Morally, the factorization through the unit only takes in account the object of $ \mathcal{A}$ whose strict image is the codomain, while, even when the domain is in the image of $U$, the factorization may not remember from which precise object in $ \mathcal{A}$ it comes from. 
\end{remark}

\begin{remark}
For any $ u : A_1 \rightarrow A_2$ in $\mathcal{A}$, functoriality of $U$ makes the following square commute up to equality
\[ 
\begin{tikzcd}
\mathcal{A}/A_1 \arrow[r, "U_{A_1}"] \arrow[d, "A/u"'] \arrow[rd, "=", phantom] & \mathcal{B}/U(A_1) \arrow[d, "\mathcal{B}/U(u)"] \\
\mathcal{A}/A_2 \arrow[r, "U_{A_2}"']                                           & \mathcal{B}/U(A_2)                              
\end{tikzcd} \]
its corresponding mate 
\[ 
\begin{tikzcd}
\mathcal{A}/A_1 \arrow[d, "A/u"', ""{name=U, inner sep=0.1pt}] & \mathcal{B}/U(A_1) \arrow[d, "\mathcal{B}/U(u)"] \arrow[l, "L_{A_1}"'] \\
\mathcal{A}/A_2                   & \mathcal{B}/U(A_2) \arrow[l, "L_{A_2}", ""{name=D, below, inner sep=0.3pt}] 
\arrow[from=D, to=U, Rightarrow, bend right=30, "\sigma^u"']
\end{tikzcd} \]
defined as the composite
\[ \begin{tikzcd}[sep=large]
L_{A_2} \mathcal{B}/U(u) \arrow[r, " L_{A_2}\mathcal{B}/U(u)(\eta^{A_1}) "] & L_{A_2} \mathcal{B}/U(u) U_{A_1} L_{A_1} = L_{A_2}U_{A_2}\mathcal{A}/u L_{A_1} \arrow[r, "\epsilon^{A_2}_{\mathcal{A}/u L_{A_1}}"] & \mathcal{A}/uL_{A_1}
\end{tikzcd}   \]
This mates relates in a canonical way the unit of any $ f : B \rightarrow U(A_1)$ and the unit of the composite $ U(u)f : B \rightarrow U(A_2)$ as seen in the following diagram
\[ 
\begin{tikzcd}[row sep=large, column sep=huge]
A_f \arrow[rr, "L_{A_1}(f)"]                                                                                                                             &  & A_1 \arrow[d, "u"]                                                                                       \\
A_{U_{A_2}\mathcal{A}/uL_{A_1}(f)} \arrow[rr, "L_{A_2}U_{A_2}\mathcal{A}/uL_{A_1}(f)" description] \arrow[u, "\epsilon^{A_2}_{\mathcal{A}/uL_{A_1}(f)}"] &  & A_2                                                                                                      \\
A_{\mathcal{B}/U(u)U_{A_1}L_{A_1}(f)} \arrow[rru, "L_{A_2}\mathcal{B}/U(u)U_{A_1}L_{A_1}(f)" description] \arrow[u, equal]                          &  & A_{U(u)f} \arrow[u, "L_{A_2}(\mathcal{B}/U(u)(f))"'] \arrow[ll, "L_{A_2}\mathcal{B}/U(u)(\eta^{A_1}_f)"]
\end{tikzcd} \]
But surprisingly, this mates is automatically an isomorphism because of the universal property of the units, as stated in the following proposition: 
\end{remark}

\begin{theorem}\label{BC}
Let be $ U : \mathcal{A} \rightarrow \mathcal{B}$ a local right adjoint. Then for any $ u : A_1 \rightarrow A_2$ in $\mathcal{A}$, we have the Beck-Chevalley condition at $u$, that is, the canonical transformation $ \sigma^u$ is a point-wise isomorphism. 
\end{theorem}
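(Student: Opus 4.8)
\section*{Proof proposal}

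The plan is to fix an object $f : B \to U(A_1)$ and to prove that the single component $\sigma^u_f : L_{A_2}(U(u)f) \to u\circ L_{A_1}(f)$ is an isomorphism in $\mathcal{A}/A_2$; write $s : A_{U(u)f}\to A_f$ for its underlying arrow in $\mathcal{A}$. Chasing the mate through the transposition for $L_{A_2}\dashv U_{A_2}$ (the composite $\epsilon^{A_2}_{uL_{A_1}(f)}\circ L_{A_2}(-)$ being the adjoint transpose) shows that $s$ is the unique arrow with $u\circ L_{A_1}(f)\circ s = L_{A_2}(U(u)f)$ and $U(s)\circ\eta^{A_2}_{U(u)f} = \eta^{A_1}_f$. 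I would then read the universal property of a local right adjoint in its equivalent ``initiality'' form: $(A_f, L_A(f), \eta^A_f)$ is the initial object of the category $\mathcal{F}(f)$ of factorizations $B\to U(A')\to U(A)$ of $f$ through $U$. In this language, $(A_f, u L_{A_1}(f), \eta^{A_1}_f)$ is an object of $\mathcal{F}(U(u)f)$ — using the unit triangle $U(L_{A_1}(f))\circ\eta^{A_1}_f = f$ — its initial object is $(A_{U(u)f}, L_{A_2}(U(u)f), \eta^{A_2}_{U(u)f})$, and $\sigma^u_f$ is exactly the canonical comparison arrow between the two; so it suffices to exhibit an inverse $\mu : A_f\to A_{U(u)f}$.

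To build $\mu$ I would apply the universal properties twice, in the order opposite to the obvious one. The naive attempt — feeding data that lives over $A_2$ directly into the universal property of $L_{A_1}$ — does not typecheck, since that property only sees factorizations over $A_1$ and, with no pullback available, a factorization over $A_2$ cannot be moved to one over $A_1$. Instead: (i) $(A_1, u, f)$ is an object of $\mathcal{F}(U(u)f)$, so initiality of $(A_{U(u)f},\dots)$ yields a canonical $\lambda : A_{U(u)f}\to A_1$ with $u\circ\lambda = L_{A_2}(U(u)f)$ and $U(\lambda)\circ\eta^{A_2}_{U(u)f} = f$; (ii) this last equation says that $(A_{U(u)f}, \lambda, \eta^{A_2}_{U(u)f})$ is an object of $\mathcal{F}(f)$, so initiality of $(A_f, L_{A_1}(f), \eta^{A_1}_f)$ yields $\mu : A_f\to A_{U(u)f}$ with $\lambda\circ\mu = L_{A_1}(f)$ and $U(\mu)\circ\eta^{A_1}_f = \eta^{A_2}_{U(u)f}$. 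From $u\circ\lambda\circ\mu = u\circ L_{A_1}(f)$ one checks at once that $\mu$ underlies a morphism $u L_{A_1}(f)\to L_{A_2}(U(u)f)$ of $\mathcal{A}/A_2$.

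The remaining point — that $s$ and $\mu$ are mutually inverse — is where I expect the only real subtlety. Manipulating the defining equations of $s$, $\mu$ and $\lambda$, one sees that $\mu\circ s$ is an endomorphism of the initial object of $\mathcal{F}(U(u)f)$ (it respects both the structure map to $A_2$ and the unit $\eta^{A_2}_{U(u)f}$), hence $\mu\circ s = \mathrm{id}$. For the other composite the structure-map condition only gives $u\circ L_{A_1}(f)\circ s\circ\mu = u\circ L_{A_1}(f)$, and $u$ cannot be cancelled; the fix is the auxiliary identity $L_{A_1}(f)\circ s = \lambda$, which follows from $\lambda\circ\mu = L_{A_1}(f)$ together with $\mu\circ s = \mathrm{id}$, and which refines the computation to $L_{A_1}(f)\circ s\circ\mu = \lambda\circ\mu = L_{A_1}(f)$. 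Combined with $U(s\circ\mu)\circ\eta^{A_1}_f = \eta^{A_1}_f$, this shows $s\circ\mu$ is an endomorphism of the initial object of $\mathcal{F}(f)$, so $s\circ\mu = \mathrm{id}$. Therefore $\sigma^u_f$ is an isomorphism; since $f$ was arbitrary, $\sigma^u$ is a pointwise isomorphism, which is the Beck-Chevalley condition at $u$.
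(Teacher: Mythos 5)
Your proof is correct and follows essentially the same route as the paper's: both characterize $\sigma^u_f$ by the universal property of $\eta^{A_2}_{U(u)f}$ applied to $\eta^{A_1}_f$, construct the candidate inverse from the universal property of $\eta^{A_1}_f$ applied to $\eta^{A_2}_{U(u)f}$, and obtain both retraction identities from uniqueness of the induced maps (your initial-object phrasing of $\mathcal{F}(f)$ and the intermediate $\lambda$ are only a mild repackaging, since $\lambda = L_{A_1}(f)\circ s$).
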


\begin{proof}
Remark that for each $ u :A_1 \rightarrow A_2 $ and $ f : B \rightarrow U(A_1)$, the morphism $ \sigma^u_f : L_{A_2}(\mathcal{B}/U(u)(f)) \rightarrow L_{A_1}(f)$ is in $\mathcal{A}$, and we have a factorization 
\[ 
\begin{tikzcd}
B \arrow[rd, "\eta^{A_1}_f" description] \arrow[rr, "f"] \arrow[dd, "\eta^{A_2}_{U(u)f}"']                         &                                                    & U(A_1) \arrow[rd, "U(u)"] &        \\
                                                                                                                   & U(A_f) \arrow[ru, "U_{A_1}L_{A_1}(f)" description] &                           & U(A_2) \\
U(A_{U(u)f}) \arrow[rrru, "U_{A_2}L_{A_2}(U(u)f)" description, bend right=15, near end] \arrow[ru, "U(\sigma^u_f)" description] &                                                    &                           &       
\end{tikzcd} \]
Observe that $ \sigma^u_f$ is the unique arrow in $ \mathcal{A}$ provided by the universal property of the unit $ \eta^{A_2}_{U(u)f}$ at $ \eta^{A_1}_f$ seen as an arrow $ U(u)f \rightarrow U(u)U_{A_1}L_{A_1}(f)$ in $ \mathcal{B}/U(A_2)$. But on the other hand, by the universal property of $ \eta^{A_1}_f$ at $ \eta^{A_2}_{U(u)f}$ seen as an arrow $ f \rightarrow U_{A_1}L_{A_1}(f)U(\sigma^u_f)$ in $\mathcal{B}/U(A_1)$, there exists a unique arrow $ w : A_f \rightarrow A_{U(u)f}$ in $ \mathcal{A}$ such that
\[ 
\begin{tikzcd}[sep=large]
                                                                                 & B \arrow[rr, "f"] \arrow[rd, "\eta^{A_2}_{U(u)f}"'{inner sep=0.1pt}, near start] \arrow[ld, "\eta_f^{A_1}"'] &                                          & U(A_1)                                 \\
U(A_f) \arrow[rr, "U(w)"', dashed] \arrow[rrru, crossing over, "U_{A_1}L_{A_1}(f)" description, near end] &                                                                                            & U(A_{U(u)f}) \arrow[r, "U(\sigma^u_f)"'] & U(A_f) \arrow[u, "U_{A_1}L_{A_1}(f)"']
\end{tikzcd} \]
Now we prove that $ w$ and $ \sigma^u_f$ are mutual inverses in $\mathcal{A}$. First, as \[ U(\sigma^u_f) \eta^{A_2}_{U(u)f} = \eta^{A_1}_f\] and $ 1_{L_{A_1}(f)}$ is the unique map induced by $ \eta_f^{A_1} $ seen as an arrow $ f \rightarrow U_{A_1}L_{A_1}(f)$, then necessarily we have a retraction in $ \mathcal{A}$ 
\[ 
\begin{tikzcd}
A_f \arrow[rr, equal] \arrow[rd, "w"'] &                                    & A_f \\
                                & A_{U(u)f} \arrow[ru, "\sigma^u_f"'] &    
\end{tikzcd} \]
but again, as now $ \eta_{U(u)f}^{A_2} = U(w) \eta^{A_1}_f$ and $ 1_{L_{A_2}(U(u)f)}$ is the unique map induced by $ \eta^{A_2}_{U(u)f}$ as an arrow $ U(u)f \rightarrow U_{A_2}L_{A_2}(U(u)f)$, we have a retraction in $\mathcal{A}$
\[ 
\begin{tikzcd}
                                              & A_f \arrow[rd, "w"] &           \\
A_{U(u)f} \arrow[ru, "\sigma^u_f"] \arrow[rr, equal] &                     & A_{U(u)f}
\end{tikzcd} \]
and $\sigma^u_f$ defines both an iso $ A_{U(u)f} \simeq A_f$ in $\mathcal{A}$ and $ L_{A_2}\mathcal{A}/u(f) \simeq \mathcal{B}/U(u) L_{A_1}(f)$ which can be shown to be natural.
\end{proof}

\begin{remark}
Beck-Chevalley condition says that factorization through local unit are not modified by postcomposing with an arrow in the range of $U$: that is, for $f : B \rightarrow U(A_1)$ and $ u : A_1 \rightarrow A_2$, then the Beck chevalley transformation provides an isomorphism $ A_f \simeq A_{U(u)f}$.
\end{remark}

\begin{corollary}\label{units are candidate}
Let be $f : B \rightarrow U(A)$: then we have $ A_f \simeq A_{\eta^A_f} $ in $\mathcal{A}$ and $ \eta^{A_f}_{\eta^A_f} \simeq \eta^A_f$ in $B\downarrow U$.
\end{corollary}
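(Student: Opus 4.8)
The plan is to obtain both isomorphisms as instances of the Beck--Chevalley condition of \cref{BC}, taken at the morphism $u := L_A(f) \colon A_f \to A$ of $\mathcal{A}$.

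The only nontrivial input is the factorization triangle defining the local unit: it says $f = U_A L_A(f) \circ \eta^A_f = U(u) \circ \eta^A_f$, i.e. $\mathcal{B}/U(u)(\eta^A_f) = f$ in $\mathcal{B}/U(A)$. Thus, viewing $\eta^A_f$ as an object of $\mathcal{B}/U(A_f)$, the pushforward $\mathcal{B}/U(u)$ carries it to $f$. Now I would apply \cref{BC} at $u$ and look at the component $\sigma^u_{\eta^A_f}$: by the previous line its source is $L_A(\mathcal{B}/U(u)(\eta^A_f)) = L_A(f)$ and its target is $\mathcal{A}/u\big(L_{A_f}(\eta^A_f)\big)$, so read off as an arrow of $\mathcal{A}$ between domains it is a map $\sigma^u_{\eta^A_f} \colon A_f \to A_{\eta^A_f}$, and \cref{BC} says it is an isomorphism; this is the asserted $A_f \simeq A_{\eta^A_f}$. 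For the second isomorphism I would reuse the identity $U(\sigma^u_{g}) \circ \eta^{A_2}_{U(u)g} = \eta^{A_1}_{g}$ established inside the proof of \cref{BC} (equivalently, the compatibility of the units displayed in the remark just before it): instantiating $A_1 = A_f$, $A_2 = A$, $g = \eta^A_f$ and using $U(u)\eta^A_f = f$ it becomes $U(\sigma^u_{\eta^A_f}) \circ \eta^A_f = \eta^{A_f}_{\eta^A_f}$, which exhibits $\sigma^u_{\eta^A_f}$ as a morphism $\eta^A_f \to \eta^{A_f}_{\eta^A_f}$ in $B \downarrow U$; being an isomorphism of $\mathcal{A}$, it is an isomorphism in $B \downarrow U$.

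The part requiring care --- the only real obstacle --- is purely notational: one must remember that $A_{\eta^A_f}$ denotes the domain of $L_{A_f}(\eta^A_f)$, the local left adjoint being taken in the slice over $A_f$ (this is forced, since $\eta^A_f$ has codomain $U(A_f)$), and one must keep track that the mate $\sigma^u$ runs from the $L_{A_2}$-side to the $\mathcal{A}/u\,L_{A_1}$-side, so that the isomorphism genuinely points $A_f \to A_{\eta^A_f}$. Once these conventions are pinned down, the corollary is a direct substitution into \cref{BC} and its proof.
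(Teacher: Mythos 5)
Your proof is correct and follows essentially the same route as the paper: both apply the Beck--Chevalley isomorphism of \cref{BC} at $u = L_A(f)$ to the arrow $\eta^A_f$, use $U(u)\eta^A_f = f$ to identify the source of $\sigma^u_{\eta^A_f}$ with $A_f$ and its target with $A_{\eta^A_f}$, and read off the second isomorphism from the compatibility $U(\sigma^u_{\eta^A_f})\eta^A_f = \eta^{A_f}_{\eta^A_f}$. Your explicit attention to the direction of the mate and to which slice each local unit lives in is exactly the bookkeeping the paper's diagrammatic proof performs.
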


\begin{proof}
Consider the following diagram
\[ 
\begin{tikzcd}
B \arrow[rr, "\eta^A_f"] \arrow[rd, "\eta^{A_f}_{\eta^A_f}"'] &                                                         & U(A_f) \arrow[r, "U_AL_A(f)"] & U(A) \\
                                                              & U(A_{\eta^A_f}) \arrow[ru, "U_{A_f}L_{A_f}(\eta_f^A)"'] &                               &     
\end{tikzcd} \]
Then by what precedes we have $ \sigma^{U_AL_A(f)}_{\eta^A_f}$ is an iso as seen in the following diagram 
\[ 
\begin{tikzcd}[sep =large]
B \arrow[rr, "\eta^A_f"] \arrow[rd, "\eta^{A_f}_{\eta^A_f}" description] \arrow[dd, "\eta^A_{U_AL_A(f)\eta_f^A}"']                       &                                                                    & U(A_f) \arrow[r, "U_AL_A(f)"] & U(A) \\
                                                                                                                                         & U(A_{\eta^A_f}) \arrow[ru, "U_{A_f}L_{A_f}(\eta_f^A)" description] &                               &      \\
U(A_{U_AL_A(f)\eta^A_f}) \arrow[ru, "\sigma^{U_AL_A(f)}_{\eta_f^A} \atop \simeq" description] \arrow[rrruu, "U_AL_A(U_AL_A(f)\eta^A_f)" description, bend right=25] &                                                                    &                               &     
\end{tikzcd} \]
But $ U_AL_A(f)\eta^A_f = f $, exhibiting an isomorphism 
\[ 
\begin{tikzcd}[column sep=large]
B \arrow[rd, "\eta^{A_f}_{\eta^A_f}" description] \arrow[d, "\eta_f^A"'] &                 \\
U(A_f) \arrow[r, "\sigma^{U_AL_A(f)}_{\eta_f^A}"'{inner sep=4pt}, "\simeq" description]                       & U(A_{\eta^A_f})
\end{tikzcd} \]
\end{proof}

A consequence is that local units that are related by an arrow in the range of $U$ must actually be isomorphic as objects under their domain:

\begin{corollary}\label{units form a set}
Let be $f_1 : B \rightarrow U(A_1)$ and $ f_2 : B \rightarrow U(A_2)$, such that there exists a morphism $u$ in $ \mathcal{A}$ and a triangle 
\[ 
\begin{tikzcd}
                               & B \arrow[ld, "\eta^{A_1}_{f_1}"'] \arrow[rd, "\eta^{A_2}_{f_2}"] &            \\
U(A_{f_1}) \arrow[rr, "U(u)"'] &                                                                  & U(A_{f_2})
\end{tikzcd} \]
then $u$ is an isomorphism. 
\end{corollary}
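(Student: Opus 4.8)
The plan is to argue straight from the universal property of the local units, recast as an initiality statement, using twice the trivial remark that an initial object has only its identity as an endomorphism. To fix the language: for an arrow $g \colon B \to U(A)$ of $\mathcal{B}$, let $\mathcal{F}_g$ be the comma category $(g \downarrow U_A)$, whose objects are triples $(C, c \colon C \to A, g' \colon B \to U(C))$ with $U(c) g' = g$ and whose morphisms $(C, c, g') \to (C', c', g'')$ are arrows $\theta \colon C \to C'$ of $\mathcal{A}$ with $c' \theta = c$ and $U(\theta) g' = g''$. The adjunction $L_A \dashv U_A$ defining the local right adjoint says exactly that $(A_g, L_A(g), \eta^A_g)$ is the initial object of $\mathcal{F}_g$; in particular, any $\mathcal{F}_g$-endomorphism of this triple has underlying arrow $1_{A_g}$. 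Only this remark, for $g = f_1$ and $g = f_2$, will be used.

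First I would feed the hypothesis into $\mathcal{F}_{f_2}$. Since $U(L_{A_2}(f_2))\, \eta^{A_2}_{f_2} = f_2$ and, by assumption, $U(u)\, \eta^{A_1}_{f_1} = \eta^{A_2}_{f_2}$, the triple $(A_{f_1}, L_{A_2}(f_2) \circ u, \eta^{A_1}_{f_1})$ is an object of $\mathcal{F}_{f_2}$. The unique morphism to it from the initial object $(A_{f_2}, L_{A_2}(f_2), \eta^{A_2}_{f_2})$ then provides an arrow $\phi \colon A_{f_2} \to A_{f_1}$ of $\mathcal{A}$ with $L_{A_2}(f_2) \circ u \circ \phi = L_{A_2}(f_2)$ and $U(\phi)\, \eta^{A_2}_{f_2} = \eta^{A_1}_{f_1}$. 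The first identity exhibits $u \circ \phi$ as an endomorphism of the triple $(A_{f_2}, L_{A_2}(f_2), \eta^{A_2}_{f_2})$ in $\mathcal{F}_{f_2}$ --- the other required relation $U(u \circ \phi)\, \eta^{A_2}_{f_2} = \eta^{A_2}_{f_2}$ following at once from the two unit identities just obtained --- so $u \circ \phi = 1_{A_{f_2}}$.

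Then I would run the mirror-image argument in $\mathcal{F}_{f_1}$, starting from the new relation $U(\phi)\, \eta^{A_2}_{f_2} = \eta^{A_1}_{f_1}$: the triple $(A_{f_2}, L_{A_1}(f_1) \circ \phi, \eta^{A_2}_{f_2})$ lies in $\mathcal{F}_{f_1}$, and the unique arrow out of its initial object $(A_{f_1}, L_{A_1}(f_1), \eta^{A_1}_{f_1})$ is some $\psi \colon A_{f_1} \to A_{f_2}$ with $L_{A_1}(f_1) \circ \phi \circ \psi = L_{A_1}(f_1)$ and $U(\psi)\, \eta^{A_1}_{f_1} = \eta^{A_2}_{f_2}$, whence exactly as before $\phi \circ \psi = 1_{A_{f_1}}$. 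Combining the two identities, $u = u \circ (\phi \circ \psi) = (u \circ \phi) \circ \psi = \psi$, so $\phi \circ u = \phi \circ \psi = 1_{A_{f_1}}$ while $u \circ \phi = 1_{A_{f_2}}$; hence $u$ is an isomorphism with inverse $\phi$.

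The one point that deserves care is the translation underlying the first paragraph --- that the triangle identities of the local adjunction at $A$ amount precisely to $(A_g, L_A(g), \eta^A_g)$ being initial, hence rigid, in $\mathcal{F}_g$ --- after which the rest is a formal diagram chase. I would remark in passing that this proof uses neither \cref{BC} nor \cref{units are candidate}; an alternative closer to the spirit of what precedes would instead combine \cref{units are candidate} --- each $\eta^{A_i}_{f_i}$ being, up to isomorphism under $B$, its own local unit at $A_{f_i}$ --- with the Beck-Chevalley isomorphism $\sigma^u$ of \cref{BC}, but that route additionally requires the refinement that the isomorphism furnished by \cref{units are candidate} is inverse to the structure map $L_{A_{f_i}}(\eta^{A_i}_{f_i})$.
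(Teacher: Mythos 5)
Your proof is correct, and it takes a genuinely different route from the paper's. You argue directly from the universal property of the local units, recast as initiality of $(A_g, L_A(g), \eta^A_g)$ in the comma $(g\downarrow U_A)$, and you extract a two-sided inverse of $u$ by two applications of the rigidity of initial objects; every step checks out (in particular the verification that $u\circ\phi$ and $\phi\circ\psi$ are endomorphisms of the respective initial objects is exactly right, and the final bookkeeping $u=\psi$ closes the loop). The paper instead derives the statement from the machinery it has just built: it combines \cref{units are candidate} (each $\eta^{A_i}_{f_i}$ is, up to iso under $B$, its own local unit at $A_{f_i}$) with the Beck--Chevalley isomorphism of \cref{BC} applied at $u$, obtaining $\eta^{A_{f_1}}_{\eta^{A_1}_{f_1}}\simeq \eta^{A_{f_2}}_{U(u)\eta^{A_1}_{f_1}}=\eta^{A_{f_2}}_{\eta^{A_2}_{f_2}}$. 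What each approach buys: yours is self-contained, uses nothing beyond the definition of a local right adjoint, and produces the inverse of $u$ explicitly; the paper's is shorter given the preceding results, but as written it only exhibits an isomorphism between the two units and leaves implicit the last step identifying that isomorphism with (an inverse of) $u$ itself --- precisely the refinement you flag in your closing remark. Either argument is acceptable; yours is arguably the more complete of the two.
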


\begin{proof}
By \cref{BC}, we have that 
\[
  \eta^{A_{f_1}}_{\eta_{f_1}^{A_1}} \simeq \eta_{f_1}^{A_1} \hskip 2cm
  \eta^{A_{f_2}}_{\eta_{f_2}^{A_2}} \simeq \eta_{f_2}^{A_2}
 \]
 But by Beck-Chevalley condition at $u$, we also have 
\[
\eta^{A_{f_1}}_{\eta_{f_1}^{A_1}} 
 \simeq \eta^{A_{f_2}}_{U(u)\eta^{A_{1}}_{f_1}}
 = \eta^{A_{f_2}}_{\eta^{A_2}_{f_2}}     
\] 
\end{proof}

From their very universal property, locals units under a given object have to live isolated from each other, each one in its connected component:

\begin{corollary}\label{one unit per connected component}
If $ U : \mathcal{A} \rightarrow \mathcal{B}$ is local right adjoint, then for any $B$, any two $f_1 : B \rightarrow U(A_1)$, $ f_2 : B \rightarrow U(A_2)$ in the same connected component of $B \downarrow U$ factorize through the same unit, that is $ \eta^{A_1}_{f_1} \simeq \eta^{A_2}_{f_2} $, $A_{f_1} \simeq A_{f_2} $.
\end{corollary}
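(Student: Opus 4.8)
The plan is to reduce the connected‑component statement to the case of a single edge in $B\downarrow U$, and then to dispatch that case using \cref{units form a set}. First I would recall what the morphisms of $B\downarrow U$ are: a morphism from $f_1 : B\to U(A_1)$ to $f_2 : B\to U(A_2)$ is a morphism $u : A_1\to A_2$ in $\mathcal{A}$ with $U(u)f_1 = f_2$. So an arbitrary pair $f_1,f_2$ in the same connected component is linked by a zig‑zag of such morphisms, and it suffices to treat a single step, i.e.\ to assume there is $u : A_1\to A_2$ in $\mathcal{A}$ with $U(u)f_1 = f_2$, and to show $\eta^{A_1}_{f_1}\simeq \eta^{A_2}_{f_2}$ and $A_{f_1}\simeq A_{f_2}$, the isomorphisms being compatible with the maps from $B$.

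The single‑step case is essentially the content of the Beck–Chevalley corollary. Given $u : A_1\to A_2$ with $U(u)f_1 = f_2$, apply \cref{BC} at $u$: the component $\sigma^u_{f_1}$ is an isomorphism $A_{U(u)f_1}\xrightarrow{\ \sim\ } A_{f_1}$ in $\mathcal{A}$, and since $U(u)f_1 = f_2$ this is an isomorphism $A_{f_2}\simeq A_{f_1}$. The remark following \cref{BC} already records exactly this consequence ("factorization through local units are not modified by postcomposing with an arrow in the range of $U$"). To see that the isomorphism is compatible with the units, one reads off from the factorization diagram in the proof of \cref{BC}: there $U(\sigma^u_{f_1})\,\eta^{A_2}_{U(u)f_1} = \eta^{A_1}_{f_1}$, i.e.\ $\sigma^u_{f_1}$ is a morphism $\eta^{A_2}_{f_2}\to \eta^{A_1}_{f_1}$ in $B\downarrow U$ whose underlying $\mathcal{A}$‑arrow is invertible; hence $\eta^{A_1}_{f_1}\simeq \eta^{A_2}_{f_2}$ as objects of $B\downarrow U$. (Alternatively, and perhaps more cleanly, one can invoke \cref{units form a set} directly: the triangle $U(\sigma^u_{f_1})\,\eta^{A_2}_{f_2} = \eta^{A_1}_{f_1}$ with $\sigma^u_{f_1}$ an arrow of $\mathcal{A}$ exhibits precisely the hypothesis of that corollary, so $\sigma^u_{f_1}$ is an isomorphism and the units agree.)

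Finally I would assemble the zig‑zag: composing the single‑step isomorphisms along the chain connecting $f_1$ to $f_2$ yields $\eta^{A_1}_{f_1}\simeq\eta^{A_2}_{f_2}$ and $A_{f_1}\simeq A_{f_2}$ in general. The only point requiring a little care is that the single‑step argument as stated uses a morphism pointing one way ($U(u)f_1 = f_2$), whereas a zig‑zag also contains steps pointing the other way ($U(u)f_2 = f_1$); but an isomorphism of units is symmetric, so a backward edge is handled by the same argument applied with the roles of $f_1$ and $f_2$ exchanged, and then inverting. I do not expect a genuine obstacle here: all the work has already been done in \cref{BC} and \cref{units form a set}, and the present corollary is just the observation that "being connected in $B\downarrow U$" is generated by single edges, each of which leaves the local unit unchanged up to isomorphism.
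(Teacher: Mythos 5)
Your proof is correct and follows exactly the route the paper intends: the corollary is stated without proof precisely because, as you observe, a single edge $U(u)f_1=f_2$ of the zig-zag is handled by \cref{BC} (the iso $\sigma^u_{f_1}:A_{f_2}\to A_{f_1}$ with $U(\sigma^u_{f_1})\eta^{A_2}_{f_2}=\eta^{A_1}_{f_1}$), and connectedness then follows by composing and inverting these isomorphisms along the zig-zag. Your alternative appeal to \cref{units form a set} for the single-step case is equally valid and changes nothing of substance.
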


\begin{corollary}
For any $ f : B \rightarrow U(A)$, $ \eta^A_f$ is initial in $(B\downarrow U)\downarrow f$.
\end{corollary}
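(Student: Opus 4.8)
The plan is to recognize this corollary as nothing more than a repackaging of the defining universal property of the local unit $\eta^A_f$, once the comma category $(B\downarrow U)\downarrow f$ has been unwound explicitly. First I would spell out that category: an object is a triple $(A',g,v)$ consisting of an object $A'$ of $\mathcal{A}$, an arrow $g:B\to U(A')$, and an arrow $v:A'\to A$ with $U(v)\circ g=f$; a morphism $(A',g,v)\to(A'',g',v')$ is an arrow $k:A'\to A''$ of $\mathcal{A}$ with $U(k)\circ g=g'$ and $v'\circ k=v$. The candidate initial object is the triple $(A_f,\eta^A_f,L_A(f))$, which does lie in this category since the factorization triangle of the local unit gives $U_AL_A(f)\circ\eta^A_f=f$, so that $L_A(f):A_f\to A$ is indeed an arrow $\eta^A_f\to f$ in $B\downarrow U$.

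Next I would observe that, reading the condition $U(v)\circ g=f$ as $U_A(v)\circ g=f$, the data of an object $(A',g,v)$ is exactly the data of an object $v$ of $\mathcal{A}/A$ together with a morphism $g:f\to U_A(v)$ in $\mathcal{B}/U(A)$, and that the two morphism conditions ($v'\circ k=v$ and $U(k)\circ g=g'$) are precisely the slice-morphism conditions in $\mathcal{A}/A$ and in $\mathcal{B}/U(A)$ respectively. Hence $(B\downarrow U)\downarrow f$ is isomorphic to the comma category $f\downarrow U_A$ of the slice adjunction $L_A\dashv U_A:\mathcal{A}/A\to\mathcal{B}/U(A)$ at the object $f$, and under this identification our candidate corresponds to $(L_A(f),\eta^A_f)$; initiality is then the general fact that the unit of an adjunction is initial in the associated comma category. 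If one prefers a direct argument: a morphism from $(A_f,\eta^A_f,L_A(f))$ to $(A',g,v)$ is an arrow $k:A_f\to A'$ with $v\circ k=L_A(f)$ — that is, an arrow $k:L_A(f)\to v$ in $\mathcal{A}/A$ — subject to $U_A(k)\circ\eta^A_f=g$; since the natural isomorphism $\mathcal{A}/A[L_A(f),v]\simeq\mathcal{B}/U(A)[f,U_A(v)]$ recalled above sends $k$ to $U_A(k)\circ\eta^A_f$, there is exactly one such $k$, namely $\epsilon^A_v\circ L_A(g)$, which yields both existence and uniqueness.

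The only delicate point is the bookkeeping in the identification of the two comma categories: one must check carefully that $v'\circ k=v$ is exactly the slice-morphism condition $k:v\to v'$ in $\mathcal{A}/A$ and that $U(k)\circ g=g'$ is exactly $U_A(k)\circ g=g'$ in $\mathcal{B}/U(A)$, so that a morphism out of the candidate is genuinely the same thing as an element of $\mathcal{A}/A[L_A(f),v]$ mapping to $g$. Beyond this there is no real obstacle; as with \cref{BC} and its corollaries, everything reduces to the universal property of the local units, here merely phrased in terms of initial objects.
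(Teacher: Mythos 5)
Your proof is correct: the paper states this corollary without proof precisely because it is the tautological unwinding you give, namely the identification of $(B\downarrow U)\downarrow f$ with $f\downarrow U_A$ and the standard fact that the unit $\eta^A_f$ of the slice adjunction $L_A\dashv U_A$ is initial there. Your bookkeeping of the two comma categories and the explicit inverse $\epsilon^A_v\circ L_A(g)$ are both accurate, so nothing is missing.
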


\begin{definition}
Let be $\mathcal{C}$ a category; \emph{multi-initial family} in $\mathcal{C}$ is a family of objects $ (X_i)_{i \in I} $ such that for any $C$ in $\mathcal{C}$ there is a unique $i \in I$ and a unique arrow $ X_i \rightarrow C$. A $X_i$ for $i \in I$ is a local initial object.  
\end{definition}

\begin{remark}
Observe that in this definition, if one has an arrow $ f : C_1 \rightarrow C_2$, then $C_1$ and $C_2$ lie under the same local initial object. More generally, if two objects $ C_1, \, C_2$ are in the same connected component and $ X_i \rightarrow C_1$, $X_j \rightarrow C_2$ are the initial maps, then they lie under the same initial object: for there is a zigzag 
\[ 
\begin{tikzcd}[sep =small]
    & B_1 \arrow[ld] \arrow[rd] &     & ... \arrow[ld] \arrow[rd] &         & B_n \arrow[ld] \arrow[rd] &     \\
C_1 &                           & B_2 &                           & B_{n-1} &                           & C_2
\end{tikzcd}\]
and by uniqueness of the local initial object over a given object, necessarily the local object over $ B_1$ is the same as the local initial object over $B_3$ because they both lies over $ B_2$ and so on. Conversely any two objects under a same local initial object are in the same connected component. Hence there is exactly one local initial object by connected component. 
\end{remark}

\begin{proposition}
Let $ U : \mathcal{A} \rightarrow \mathcal{B}$ be a local right adjoint and $ B$ in $\mathcal{B}$: then the coma category $ B \downarrow U$ has a multi-initial family. 
\end{proposition}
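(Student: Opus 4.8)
The plan is to assemble the multi-initial family of $B \downarrow U$ out of the local units $\eta^A_f$ and use the corollaries just proved to control the indexing. First I would fix $B$ and consider the class of all local units $\eta^A_f$ for $f : B \to U(A)$ ranging over all objects $A$ and all arrows $f$; each such $\eta^A_f$ is an object of $B \downarrow U$, namely $\eta^A_f : B \to U(A_f)$. By \cref{one unit per connected component}, any two arrows $f_1, f_2$ lying in the same connected component of $B \downarrow U$ give isomorphic units $\eta^{A_1}_{f_1} \simeq \eta^{A_2}_{f_2}$, so choosing one representative per connected component of $B \downarrow U$ yields a family $(\eta_i)_{i \in I}$ indexed by $\pi_0(B \downarrow U)$; I would take this as the candidate multi-initial family.

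Next I would verify existence: given any object $f : B \to U(A)$ of $B \downarrow U$, it lies in some connected component $i$, whose chosen representative is (up to iso) $\eta^A_f$, and the defining triangle of the local adjunction gives the arrow $U_A L_A(f) : \eta^A_f \to f$ in $B \downarrow U$. So every object receives a map from some member of the family. For uniqueness of the index, suppose $\eta_i \to f$ and $\eta_j \to f$; then $\eta_i$ and $\eta_j$ both sit in the connected component of $f$, hence $i = j$ by construction. For uniqueness of the arrow from $\eta_i$ to $f$: this is exactly the content already recorded, that $\eta^A_f$ is initial in $(B \downarrow U) \downarrow f$ — the universal property of the local unit says any map $f \to U(u)$ factors uniquely, which specializes (taking $u$ appropriately, i.e. working in the slice over $f$) to uniqueness of $\eta^A_f \to f$. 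So I would cite the preceding corollary rather than re-derive it.

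The one genuine subtlety — and the step I expect to be the main obstacle — is \emph{smallness}: the definition of multi-initial family demands a set $(X_i)_{i\in I}$, but a priori $\pi_0(B \downarrow U)$ need not be small, since $B \downarrow U$ is a large category. Here I would point out that this is precisely the gap discussed in the introduction: a bare local right adjoint does \emph{not} in general produce a \emph{small} multi-initial family, and right multi-adjointness is by definition "local adjointness plus a solution-set condition" that forces smallness. So either the proposition should be read as asserting a (possibly large) multi-initial family, or — more likely given the phrasing — "multi-initial family" in the cited \textbf{Definition} tacitly allows a proper class, or the ambient convention is that $\mathcal{B}\downarrow U$ is essentially small. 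I would state explicitly which reading is intended; modulo that, the proof is the two paragraphs above, and the remaining verifications (that the chosen isomorphisms can be made coherent, that the factorization triangle is the asserted one) are routine consequences of \cref{one unit per connected component} and the initiality corollary.
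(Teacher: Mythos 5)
Your proposal is correct and follows essentially the same route as the paper: the multi-initial family is the class of local units under $B$, existence is given by $U_AL_A(f) : \eta^A_f \to f$, and uniqueness of the local initial object follows from the fact that any unit mapping to $f$ is forced to be isomorphic to $\eta^A_f$ (the paper invokes \cref{units form a set} at this point). Your worry about smallness is the intended reading: the paper explicitly calls it the ``(large) class of local units'' and immediately remarks that the family need not be small, reserving smallness for the definition of right multi-adjoint.
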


\begin{proof}
We claim that the (large) class of local units under $B$ is a multi-initial family in $ B\downarrow U$. First, for any $ f: B \rightarrow U(A)$, we have by local adjunction in $A$ an arrow $L_A(f)$ in $ B\downarrow U$; but now suppose there is an other $ g : B \rightarrow U(A')$ such that $ \eta^{A'}_g$ has a map $ g \rightarrow f$ in $ B \downarrow U$, that is there is a map $u$ in $\mathcal{A}$ such that $f = U(u)\eta^{A_1}_g$; but then by the universal property of the unit there is a unique factorization
\[ 
\begin{tikzcd}
B \arrow[rr, "f"] \arrow[rd, "\eta^A_f" description] \arrow[rdd, "\eta^{A'}_g"', bend right] &                                                      & U(A) \\
                                                                                             & U(A_f) \arrow[ru, "U_AL_A(f)" description] \arrow[d, dashed, "U(w)" description]           &      \\
                                                                                             & U(A_g) \arrow[ruu, "U_A(u)" description, bend right] &     
\end{tikzcd} \]
But by \cref{units form a set}, this forces $\eta^A_f \simeq \eta^{A'}_g $. \end{proof}

\begin{remark}
Observe that without further assumption, the multi-initial family of local units at a given $B$ may not be small. This is the point of the following notion.
\end{remark}

\begin{definition}
A functor $ U : \mathcal{A} \rightarrow \mathcal{B}$ is said to be a \emph{right multi-adjoint} if for any $B$ in $\mathcal{B}$ there is a \emph{small} multi-initial family in the comma $ B\downarrow U$. 
\end{definition}

\begin{remark}
Observe that this definition is indexed by the domain, that is, by the object $B$ in $ \mathcal{B}$, while the definition of local right adjoint was indexed by the objects $ A$ of $ \mathcal{A}$. However it is easy to see that any right multi-adjoint is in particular a local right adjoint: for any $ f : B \rightarrow U(A)$ define $ \eta^A_f$ to be the unique local initial object over $f$, and $ L_A(f)$ to be the unique map $ \eta^A_f \rightarrow f$. Then $ \eta^A_f$ has the universal property of the unit as any triangle $ f \rightarrow U(u)$ in $ \mathcal{B}/U(A)$
\[ 
\begin{tikzcd}
B \arrow[rr, "f"] \arrow[rd, "g"'] &                           & U(A) \\
                                   & U(A') \arrow[ru, "U(u)"'] &     
\end{tikzcd} \]
can be seen as a triangle $ g \rightarrow f $ in $B \downarrow U$ forcing $g$ and $f$ to be in the same connected component. Therefore $ \eta^A_f$ is also the initial object over $g$, inducing an unique arrow $ \eta^A_f \rightarrow g$ in $B \downarrow U$, that is a unique arrow $ L_A(f) \rightarrow u$ in $ \mathcal{A}/A$ as desired.
\end{remark}

Existence of a multi-initial object in the comma is reminiscent of the so called solution set condition in Freyd Adjoint Functor Theorem. Let us precise this fact, in order to retrieve an analogous multi-adjoint theorem, in the vein of \cite{Taylor}. First we need a weakening of the notion of initial family 

\begin{definition}
Let be $\mathcal{C}$ a category; a \emph{weakly initial family} is a family $ (X_i)_{i \in I}$ such that for any object $ C$, there some $i \in I$ and some arrow $ X_i \rightarrow C$.
\end{definition}

\begin{remark}
Observe that in this definition, there is no requirement of uniqueness, nor for the index, nor for the arrow, so there may be several weakly initial objects over an arbitrary one. In the following we are interested in \emph{small} weakly initial families:
\end{remark}

\begin{definition}
A functor $U : \mathcal{A} \rightarrow \mathcal{B}$ is said to satisfy the \emph{Solution Set Condition} if each of the coma $B \downarrow U$ admits a small weakly initial family: that is a family $ (n_i : B \rightarrow U(A_i))_{i \in I_B}$ such that any map from $ B$ to $U$ factorizes through some (non necessarily unique) $n_i$. 
\end{definition}

\begin{proposition}\label{Mradj = Lradj + SSC}
A functor $U : \mathcal{A} \rightarrow \mathcal{B}$ is a right multi-adjoint if and only if it is local right adjoint and satisfies the Solution Set Condition. 
\end{proposition}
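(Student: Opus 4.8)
The statement is a biconditional, and the forward implication is essentially a matter of unwinding definitions. If $U$ is a right multi-adjoint, then it is a local right adjoint by the Remark immediately following the definition of right multi-adjoint; and for each $B$ the small multi-initial family in $B \downarrow U$ is in particular a small weakly initial family, since possessing a unique index together with a unique arrow entails in particular possessing some index and some arrow. So for this direction I would simply invoke those two observations.

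For the converse, assume $U$ is a local right adjoint satisfying the Solution Set Condition and fix $B$ in $\mathcal{B}$. The Proposition above already tells us that the (possibly large) class of local units under $B$ forms a multi-initial family of $B \downarrow U$; the task is to extract from it a \emph{small} one, and this is exactly where the Solution Set Condition enters. The plan is: take a small weakly initial family $(n_i : B \rightarrow U(A_i))_{i \in I_B}$, and, using local right adjointness of $U$, form the local unit $\eta^{A_i}_{n_i}$ for each $i$. First I would observe that every connected component of $B \downarrow U$ contains one of the $n_i$: given any $f : B \rightarrow U(A)$, weak initiality provides an $i$ and a morphism $u : A_i \rightarrow A$ in $\mathcal{A}$ with $U(u)\, n_i = f$, hence a morphism $n_i \rightarrow f$ in $B \downarrow U$, so $f$ and $n_i$ lie in the same component. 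Since $I_B$ is small, $B \downarrow U$ has only set-many connected components; picking one index $j$ from each gives a small subfamily $(\eta^{A_j}_{n_j})_{j \in J}$.

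It then remains to verify that $(\eta^{A_j}_{n_j})_{j \in J}$ is multi-initial in $B \downarrow U$, which I would deduce from the corollaries already in hand. Given $f : B \rightarrow U(A)$, let $j \in J$ be the index attached to the connected component of $f$; since $n_j$ lies in that same component, \cref{one unit per connected component} furnishes an isomorphism $\eta^A_f \simeq \eta^{A_j}_{n_j}$ in $B \downarrow U$, along which the morphism $L_A(f) : \eta^A_f \rightarrow f$ transports to a morphism $\eta^{A_j}_{n_j} \rightarrow f$; and uniqueness of the index $j$ — and of this morphism — is inherited from the Proposition above, which already established that the local units form a multi-initial family, together with the one-representative-per-component choice (distinct $j, j' \in J$ lie in distinct components, since any morphism $\eta^{A_j}_{n_j} \rightarrow f$ forces $n_j$, hence $j$'s component, to lie in the connected component of $f$). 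The only point requiring genuine care is the smallness accounting of the middle paragraph — that a small weakly initial family already exhausts all connected components — which is routine but is the crux here, since all the substantive information about the universal behaviour of local units is already packaged in \cref{BC} and its corollaries.
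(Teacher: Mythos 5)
Your proof is correct, and in fact it supplies an argument that the paper itself does not: the paper's own proof of the converse direction is garbled (it merely restates the forward implication and then discusses preservation of wide pullbacks, which belongs to the following proposition). Your route is the right one: the Solution Set Condition bounds the number of connected components of $B \downarrow U$ (every component contains some member of the small weakly initial family), local right adjointness puts an initial object in each component via \cref{one unit per connected component}, and choosing one local unit per component yields the small multi-initial family, with uniqueness of the index coming from the one-representative-per-component choice and uniqueness of the factorizing arrow transported along the isomorphism $\eta^A_f \simeq \eta^{A_j}_{n_j}$ from the universal property of the local unit. The only stylistic remark is that you could have phrased the final verification more directly — the family of local units under $B$ is already known to be multi-initial, so it suffices to observe it is small up to isomorphism, being indexed by the set of connected components — but your more explicit check is equally valid.
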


\begin{proof}
It is obvious that a right multi-adjoint satisfies the Solution Set Condition as for any object $B$ the small multi-initial family of $B\downarrow U$ is in particular a small weakly initial family, and any right multi-adjoint is trivially local right adjoint. \\

For the converse, observe that the small family of local units of a right multi-adjoint produces a small (weakly) initial family, so that it always satisfies the solution set condition. Preservation of wide pullback just come from the fact that they are ordinary products in the slices, hence preserved by the restriction as it is right adjoint. 
\end{proof}

\begin{definition}
A \emph{connected limit} is a limit indexed by a connected category; a \emph{wide pullback} is a limit of a diagram over a set of arrows with common codomain. In particular any wide-pullback is a connected limit. 
\end{definition}

\begin{proposition}
A category has wide pullbacks if and only if each slice $\mathcal{A}/A$ has products. A functor preserves connected limits if and only if it preserves wide pullbacks. 
\end{proposition}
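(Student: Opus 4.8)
The plan is to treat the two assertions separately: the first is a routine unravelling of definitions, and the hard direction of the second reduces to a standard generation result for connected limits.

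For the first assertion I would note that, for a fixed object $A$, a small family of objects of the slice $\mathcal{A}/A$ is literally the same datum as a small family $(f_i : X_i \to A)_{i \in I}$ of arrows into $A$, i.e. the datum of a wide-pullback diagram over $A$; then I would check that the two universal properties coincide verbatim. A cone over the diagram $(f_i)_{i\in I}$ with apex $Q$ is an arrow $q : Q \to A$ together with liftings $q_i : Q \to X_i$ satisfying $f_i q_i = q$, which is exactly an object $(Q \xrightarrow{q} A)$ of $\mathcal{A}/A$ equipped with a morphism to each $(X_i,f_i)$ in $\mathcal{A}/A$; hence a limiting cone for $(f_i)_{i\in I}$ in $\mathcal{A}$ is the same as a product of the $(X_i,f_i)$ in $\mathcal{A}/A$. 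Quantifying over all $A$ and all small families gives the equivalence. The only thing to flag is the empty family: the empty product of $\mathcal{A}/A$ is $\mathrm{id}_A$, which matches the limit of the one-object diagram $\{A\}$, so one reads ``wide pullback over $A$'' as always retaining the vertex $A$.

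For the functor statement, the implication ``preserves connected limits $\Rightarrow$ preserves wide pullbacks'' is immediate, since every wide-pullback diagram is connected (each source vertex is joined to the common-codomain vertex), as already observed after the definition. For the converse I would invoke the standard fact that the class of connected limit shapes is generated by wide-pullback shapes. The two ingredients are: (a) an equalizer is itself a wide pullback in the present sense, being the limit of the diagram on the two-element set of parallel arrows $\{f,g : X \to Y\}$; and (b) a general connected limit $\lim_{\mathcal{J}} D$ is assembled as an iterated wide pullback: well-order the objects and morphisms of $\mathcal{J}$, start from $D(j_0)$, at each stage adjoin a new $D(j)$ together with all morphisms already relating it (a cospan-type wide pullback, possibly followed by equalizers enforcing those morphisms), and at limit stages take the limit of the chain built so far, which is again produced from wide pullbacks (for a tower $\cdots \to A_1 \to A_0$ this is: a wide pullback of the composite maps into $A_0$, then equalizers out of it, then a wide intersection). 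Since $F$ preserves every wide pullback, an induction shows it preserves each stage of this construction, hence the object produced at the final stage, hence $\lim_{\mathcal{J}} D$ together with its cone.

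The main obstacle is exactly the bookkeeping in ingredient (b) for an infinite shape $\mathcal{J}$: organising the transfinite induction so that every object and every morphism of $\mathcal{J}$ is accounted for by a wide pullback of data already built, dealing with the limit stages, and verifying that the result is $\lim_{\mathcal{J}} D$ with the correct projections. This also makes explicit that the converse is really the statement ``if $\mathcal{A}$ has wide pullbacks and $F$ preserves them, then $F$ preserves connected limits'', since the construction requires the intermediate wide pullbacks to exist in $\mathcal{A}$ — which, by the first assertion, is precisely the requirement that every slice of $\mathcal{A}$ have products. For finite connected $\mathcal{J}$ the induction is a clean finite one (a spanning tree yields the iterated pullbacks and the remaining finitely many arrows yield equalizers); only the infinite case genuinely needs wide, rather than finite, pullbacks.
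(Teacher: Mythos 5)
The paper states this proposition without proof, so I can only judge your argument on its own terms. Your treatment of the first assertion is correct: a wide-pullback diagram over $A$ is the same datum as a family of objects of $\mathcal{A}/A$, cones match up verbatim, and your handling of the empty family is right. The easy direction of the second assertion (wide-pullback shapes are connected) is also fine.

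The hard direction, however, breaks at your ingredient (a). The wide pullback of the two-element family $\{f,g\colon X\to Y\}$ is \emph{not} the equalizer of $f$ and $g$. The shape of a wide pullback over $Y$ — the one forced by the first assertion, since it must compute a binary product in $\mathcal{A}/Y$ — has a separate source vertex for each member of the family, so the limit of $\{f,g\}$ is $X\times_Y X$ taken along $f$ on one leg and $g$ on the other; in $\Set$ this is $\{(x,x') : f(x)=g(x')\}$, which differs from $\mathrm{Eq}(f,g)=\{x : f(x)=g(x)\}$. The equalizer shape $\cdot\rightrightarrows\cdot$ is simply not a wide-cospan shape. Indeed, equalizers cannot be wide pullbacks in any sense compatible with the first sentence: a nontrivial group $G$ viewed as a one-object category has every slice equivalent to the terminal category, hence has all wide pullbacks, yet has no equalizer of a pair of distinct arrows (there are no cones at all over such a pair). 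So ``has wide pullbacks'' does not imply ``has equalizers,'' and your identification in (a) is false.

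This is not a repairable detail but the crux of the statement. Your ingredient (b), even once the transfinite bookkeeping is carried out, only establishes that connected limits are generated by wide pullbacks \emph{together with} equalizers (which is the classical Par\'e decomposition), hence only that a functor preserving wide pullbacks \emph{and} equalizers preserves connected limits. The entire content of the proposition's second sentence is that preservation of equalizers (of those parallel pairs whose equalizer exists in $\mathcal{A}$) already follows from preservation of wide pullbacks alone, and that is precisely the step your argument skips by misidentifying equalizers with wide pullbacks. A correct proof must argue this point separately — for instance by exploiting that, when the equalizer $e\colon E\to X$ of $f,g$ and the pullback $X\times_Y X$ both exist, the cone $(e,e)$ and the limit cone over the connected diagram built from them interact in a way that wide-pullback-preservation controls — rather than by absorbing equalizers into the class of wide pullbacks.
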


In Freyd adjoint functor theorem, assuming completeness and local smallness of the domain category $\mathcal{A}$, one can prove that a functor preserving limits is right adjoint if it satisfies the solution set condition : this is achieved by constructing an initial object in any $ B \downarrow U$ as the limit $ \underset{i \in I}{\lim} \, U(A_i) \simeq U(\underset{i \in I}{\lim} \,{A_i}) $. Similarly we can give a corresponding right multi-adjoint theorem, an accessible version of will be given in \cref{Accessible multi-adjoint functor theorem} :

\begin{proposition}\label{Mradj preserves connected limits}
Let $ \mathcal{A}$ be a category with wide pullbacks; then a functor $ U : \mathcal{A} \rightarrow \mathcal{B}$ is a right multi-adjoint if and only if it satisfies the solution set condition and preserves wide pullbacks. 
\end{proposition}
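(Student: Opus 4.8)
The plan is to deduce this from \cref{Mradj = Lradj + SSC}, so the work reduces to showing that, for a category $\mathcal{A}$ with wide pullbacks, a functor $U : \mathcal{A} \to \mathcal{B}$ satisfying the solution set condition is a local right adjoint if and only if it preserves wide pullbacks. One direction is easy: if $U$ is a right multi-adjoint, then it is local right adjoint, and each $L_A$ is left adjoint to $U_A = U/A$, so $U_A$ preserves limits; since products in $\mathcal{A}/A$ are exactly the wide pullbacks over $A$ in $\mathcal{A}$ (by the preceding \cref{...} on wide pullbacks and slices), and similarly in $\mathcal{B}/U(A)$, and since the forgetful functors $\mathcal{A}/A \to \mathcal{A}$, $\mathcal{B}/U(B) \to \mathcal{B}$ create these, $U$ preserves wide pullbacks. (This is essentially the remark already made in the proof of \cref{Mradj = Lradj + SSC}.)

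For the substantive direction, I would fix $A$ in $\mathcal{A}$ and construct a left adjoint $L_A$ to $U_A : \mathcal{A}/A \to \mathcal{B}/U(A)$ by a Freyd-style argument carried out inside the slices. Given $f : B \to U(A)$, I want an initial object in the comma category $f \downarrow U_A$. The solution set condition for $U$ at $B$ gives a small weakly initial family $(n_i : B \to U(A_i))_{i \in I_B}$; I would restrict attention to those $n_i$ admitting a factorization $f = U(v_i) \circ n_i$ for some $v_i : A_i \to A$, i.e. that lift to objects of $f \downarrow U_A$ — this is a small family, and it is weakly initial in $f \downarrow U_A$ because any object $(A' \to A, g : f \to U(A'))$ of that comma has $g$ factoring through some $n_i$, and then that $n_i$ lifts appropriately. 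So $f \downarrow U_A$ has a small weakly initial family. Now I would form, in the slice $\mathcal{A}/A$, the wide pullback of the chosen maps $v_i : A_i \to A$ (which exists since $\mathcal{A}$ has wide pullbacks, hence $\mathcal{A}/A$ has products), obtaining an object $P \to A$ with a map $b : B \to U(P)$ over $U(A)$ built from the $n_i$ using that $U$ preserves this wide pullback. Then $(P \to A, b)$ is weakly initial in $f \downarrow U_A$, and the standard Freyd trick — pass to the equalizer of all endomorphisms of $(P\to A, b)$ in $f\downarrow U_A$, which requires $\mathcal{A}/A$ to have equalizers — produces an initial object. But $\mathcal{A}/A$ need not have equalizers, only products; so instead I would use the "weakly initial plus connected-limit-closed" version of the initial object argument, or note that the full subcategory of $f \downarrow U_A$ on the chosen weakly initial objects and the maps between them, being small, admits an initial object after closing under an appropriate connected limit that $U_A$ preserves.

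A cleaner route for the problematic step: once $f \downarrow U_A$ has a small weakly initial family, take $(P, b)$ as above to be a single weakly initial object, and consider the (possibly large) monoid of endomorphisms; the subobject of $(P,b)$ picked out as the "joint equalizer" is realized as a wide pullback — indeed as a connected limit — in $\mathcal{A}/A$ of the cospan formed by the graph of all these endomorphisms against the diagonal. Since $\mathcal{A}$ has wide pullbacks it has all connected limits, and since $U$ preserves wide pullbacks it preserves connected limits (by the other half of the displayed \cref{...}), so $U_A$ preserves this connected limit; this yields the initial object of $f\downarrow U_A$, hence the unit $\eta^A_f$ and the left adjoint $L_A$. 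Thus $U$ is local right adjoint, and combined with the solution set condition hypothesis, \cref{Mradj = Lradj + SSC} gives that $U$ is a right multi-adjoint.

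The main obstacle is precisely the equalizer step of Freyd's construction: slices of a category with wide pullbacks have products but not in general equalizers, so the usual "take the equalizer of all endomorphisms of the weakly initial object" move is unavailable verbatim. The fix is to observe that the relevant equalizer-type limit can be presented as a connected limit (a wide pullback of graphs along the diagonal, or a suitable inverse limit), that connected limits exist in $\mathcal{A}$ and hence in $\mathcal{A}/A$, and that preservation of wide pullbacks upgrades to preservation of all connected limits by the proposition just stated — so $U_A$ preserves enough to transport the construction. Care is also needed that the weakly initial family in $f\downarrow U_A$ is genuinely small (it is, being indexed by a subset of the solution set $I_B$) and that the comparison map $b : B \to U(P)$ into the wide pullback is well-defined, which is exactly where preservation of the wide pullback by $U$ is used.
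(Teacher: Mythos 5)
Your route coincides with the paper's for the substantive direction up to the construction of the wide pullback $P=\times_A(A_i)$ of the solution set restricted to factorizations of $f$, together with the comparison map $b:B\to U(P)$ obtained from preservation; but the paper then simply declares $(P\to A,\,b)$ to be initial in $f\downarrow U_A$, whereas the justification it gives (every factorization of $f$ factors through some $n_i$, hence through $P$) only yields \emph{weak} initiality. You are right that an equalizer step is genuinely needed --- the product of a weakly initial family is in general only weakly initial --- and your completion via Freyd's trick is essentially correct. What actually makes it work is not quite the reason you give: the sentence ``since $\mathcal{A}$ has wide pullbacks it has all connected limits'' is false in general (equalizers of arbitrary parallel pairs cannot be built from wide pullbacks; what wide pullbacks give is completeness of every \emph{slice}). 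But every limit your argument needs lives in $\mathcal{A}/A$: the equalizer of a parallel pair $s,t:P\to X$ lying over a common base $A$ is the pullback of $\langle s,t\rangle:P\to X\times_A X$ against the diagonal $\langle 1,1\rangle:X\to X\times_A X$, where $X\times_A X$ exists as a binary wide pullback, and the joint equalizer of all endomorphisms of $(P\to A,\,b)$ in $f\downarrow U_A$ is the wide pullback over $P\times_A P$ of the diagonal together with the graphs $\langle 1,t\rangle$. These are all built from wide pullbacks, hence exist in $\mathcal{A}$ and are preserved by $U$, so they lift to $f\downarrow U_A$ and both halves of Freyd's argument (the joint equalizer, and the equalizers of pairs out of it in the uniqueness step) go through. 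So your fix is the right one; it just needs to be justified via slice-completeness rather than via existence of all connected limits in $\mathcal{A}$.

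Two smaller points. First, the weakly initial family in $f\downarrow U_A$ must be indexed by \emph{all} pairs $(u,i)$ with $f=U(u)n_i$, as in the paper, and not by a single chosen $v_i$ for each $i$: if $g=U(w)n_i$ and $f=U(v')g$, the lift of $n_i$ admitting a map to $(v',g)$ is $(v'w,\,n_i)$, which need not be your chosen $(v_i,n_i)$. The family of all such pairs is still small provided $\mathcal{A}$ is locally small. Second, your treatment of the easy direction (slice products are wide pullbacks, preserved since each $U_A$ is a right adjoint and the forgetful functors create them) is exactly the paper's remark and is fine.
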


\begin{proof}
 Indeed, assuming that $\mathcal{A}$ has connected limit and that $U$ preserves them, the solution set condition will enable us to prove that its restriction on any slice $ U_A : \mathcal{A}/A \rightarrow \mathcal{B}/{U(A)}$ is right adjoint by constructing an initial object in any of the coma $ f \downarrow U_A$ whit $ f : B \rightarrow U(A) \in \mathcal{B}/{U(A)} $ (this is nothing but the category of factorizations of $f$ through $U$) as the following. If $ S_B = (n_i : B \rightarrow U(A_i))_{i \in I_B} $ is the small weakly initial family in $B\downarrow U$ given by the solution set condition, define $ S_f = S_B\downarrow f $ consisting of all the pairs $ (u,i)$ with $i \in I_B$ and $f = F(u)n_i$ a factorization of $f$ through $ n_i$: then we can do the wide pullback of the $A_i$ over $ A$ and it is preserved by $U$, so that we have 

\[ \begin{tikzcd}
 & U(\times_A (A_i)_{i \in S_f}) \simeq \times_{U(A)}(U(A_i))_{i \in S_f} \arrow[]{ld}[swap]{U(p_i)} \arrow[]{rd}{U(p_j)} & \\
 U(A_i) \arrow[]{rd}{} & B \arrow[dashed]{u}{\exists !} \arrow[]{d}{f} \arrow[]{r}{n_i} \arrow[]{l}{n_j} & U(A_j) \arrow[]{ld}{}\\ & U(A) & 
\end{tikzcd} \]
Now we claim that this unique arrow $B \rightarrow U(\times_A (A_i)_{i \in S_f})$ is the desired initial object in $ f \downarrow U_A$, as any factorization of $f$ factorizes itself through some of the $n_i$ in $S_f$ hence through the wide pullback.
For the converse we will make use of stability in order to recover the Solution Set Condition, while the preservation of wide pullback just come from the fact that they are ordinary products in the slices, hence preserved by the restriction as it is right adjoint. 
\end{proof}

Now we turn to another facet of the local adjunction.

\begin{definition}
Let $ U : \mathcal{A} \rightarrow \mathcal{B}$ be a functor; then the \emph{co-nerve} of $U$ is the functor \[ 
\begin{tikzcd}
\mathcal{B} \arrow[r, "\mathcal{N}^U"] & {[\mathcal{A}, \mathcal{S}et]}
\end{tikzcd} \]
sending each $B$ in $\mathcal{B}$ to the functor $ \mathcal{N}^U_B=  \mathcal{B}[B,U(-)] : \mathcal{A} \rightarrow \mathcal{S}et $.  
\end{definition}

For any $ B$ in $ \mathcal{B}$, we have a discrete opfibration 
\[ \pi_B : \int \mathcal{N}^U_B \rightarrow \mathcal{A} \]
whose objects are pairs $ (A,f)$ with $ A $ in $\mathcal{A}$ and $ f : B \rightarrow U(A)$, and morphisms $ (A_1, f_1) \rightarrow (A_2, f_2)$ are $ u : A_1 \rightarrow A_2$ with $ U(u) f_1 = f_2$. There is a general result saying that a functor is a right adjoint if the projection from the category of elements of its co-nerve at each object has a limit; in fact this says that the co-nerve functor is representable, that is, there is an initial object in the category of elements, which is the unit. We give here the corresponding statement for a right multi-adjoint.

\begin{definition}
Let $ U : \mathcal{A} \rightarrow \mathcal{B}$ be a functor; then the \emph{co-nerve} of $U$ is the functor \[ 
\begin{tikzcd}
\mathcal{B} \arrow[r, "\mathcal{N}^U"] & {[\mathcal{A}, \mathcal{S}et]}
\end{tikzcd} \]
sending each $B$ in $\mathcal{B}$ to the functor $ \mathcal{N}^U_B=  \mathcal{B}[B,U(-)] : \mathcal{A} \rightarrow \mathcal{S}et $.  
\end{definition}

For any $ B$ in $ \mathcal{B}$, we have a discrete opfibration 
\[ \pi_B : \int \mathcal{N}^U_B \rightarrow \mathcal{A} \]
whose objects are pairs $ (A,f)$ with $ A $ in $\mathcal{A}$ and $ f : B \rightarrow U(A)$, and morphisms $ (A_1, f_1) \rightarrow (A_2, f_2)$ are $ u : A_1 \rightarrow A_2$ with $ U(u) f_1 = f_2$. There is a general result saying that a functor is a right adjoint if the projection from the category of elements of its co-nerve at each object has a limit; in fact this says that the co-nerve functor is representable, that is, there is an initial object in the category of elements, which is the unit. We give here the corresponding statement for a right multi-adjoint.

\begin{definition}\label{multilimits}
Let $ F : I \rightarrow \mathcal{A}$ be a functor: then a \emph{multilimit} of $F$ is a small family of cones \[ (p_ i^j : L_j \rightarrow F(i))_{i \in I \atop j \in J} \]
such that for any cone $ (f_i : X \rightarrow F(i))_{i \in I}$ there is a unique $j \in J$ and a unique factorization of the cone $(f_i)_{i \in I}$ through the cone $ (p^j_i)_{i \in I} $.\\

A functor $U : \mathcal{A} \rightarrow \mathcal{B}$ preserves multilimits (or also, is \emph{multicontinuous}) if for any multilimit $ (p_i^j : L_j \rightarrow F(i))_{i \in I,\, j \in J}$ in $\mathcal{A}$, there is a multilimit $ ( q^k_i : M_k \rightarrow UF(i))_{i \in I, k\in K}$ in $\mathcal{B}$ and for each $ k \in K$ we have 
\[ M_k \simeq \coprod_{j \in J_k} U(L_j) \]
where $J_k$ is the set of $j \in J$ such that the cone $ (U(p^j_i) : U(L_j) \rightarrow UF(i))_{i \in I}$ factorizes through $ M_k $.
\end{definition}

Before going further, we think relevant to introduce the dual notion of multilimits, for we are going to use them also at the end later in this paper and also in a closure theorem for multireflective subcategories in a moment:

\begin{definition}\label{multicolimits}
Let $ F : I \rightarrow \mathcal{A}$ be a functor. Then a \emph{multicolimit} of $F$ is a small family of cocones \[ ((q^j_i : F(i) \rightarrow X_j)_{i \in I})_{ j \in J} \] that is multi-initial the the category of cocones over $F$: that is, any other cocone $(f_i : F(i) \rightarrow A)_{i \in I}$ factorizes uniquely through one of the $(q^j_i : F(i) \rightarrow X_j)_{i \in I}$ for a unique $j \in J$.\\ 

A category is (finitely) \emph{multicocomplete} if any (finite) diagram admits a multi-colimit.
Dually, a \emph{multicomplete} category is a category where any (finite) diagram has a multi-limits. \\

A functor $U :  \mathcal{A} \rightarrow \mathcal{B}$ is \emph{multicocontinuous} if for any multicolimit $ ((q^j_i : F(i)_i \rightarrow X_j)_{i \in I})_{ j \in J}$ in $\mathcal{A}$, then the composite $ UF : I \rightarrow \mathcal{B}$ has a multicolimit $ ((s^k_i : UF(i) \rightarrow Y_k)_{i \in I})_{k \in K}$ such that for any $ k \in K$ we have a coproduct decomposition \[ Y_k \simeq \prod_{J_k} U(X_j) \]
where $ J_k= \{ j \in J \mid \, (U(q^j_i) : UF(i) \rightarrow U(X_j))_ {i \in I} \textrm{ factorizes through } (s^k_i)_{i \in I} \} $. 
\end{definition}

\begin{remark}\label{universal property of mcolim}
The universal property of the multicolimits can be encapsulated, that for any other $ X$ in $\mathcal{B}$, in the following isomorphism
\[  \underset{j \in J}{\coprod} \, \mathcal{B}[F(i), X] \simeq \underset{i \in I}{\lim} \, \mathcal{B}[X_j, X] \]
\end{remark}

\begin{remark}
Remark that any (co)limit is in particular a multi-(co)limit with a single cone. In particular (co)completeness implies multi-(co)completeness, and (co)continuity implies multi-(co)continuity. Then in particular any corepresentable functors are multicontinuous.
\end{remark}

Let us get back to right multi-adjoints. The following observation just gives the obvious analog of the characterization of right adjoint in terms of the existence of the limit of the projection of the category of elements of the nerve:

\begin{proposition}
A functor $ U : \mathcal{A} \rightarrow \mathcal{B}$ is a local right adjoint if and only if for any $B$ in $\mathcal{B}$, each connected component of $ \int \mathcal{N}^U_B$ has an initial object. Moreover, $ U$ is a right multi-adjoint if $ \int \mathcal{N}^U_B$ has a set of connected components. Equivalently, $ U$ is a right multi-adjoint if and only if the functor $ \pi_B : \int \mathcal{ N }^U_B \rightarrow \mathcal{A} $ has a multi-limit in $\mathcal{A}$ and $U$ preserves it. 
\end{proposition}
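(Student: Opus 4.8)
The plan is to first identify $\int\mathcal{N}^U_B$ with the comma category $B\downarrow U$: by construction they have the same objects $(A,f)$ with $f:B\rightarrow U(A)$ and the same morphisms $u$ with $U(u)f_1=f_2$. So every clause of the statement is really about connected components of $B\downarrow U$, and I will use throughout the observation recorded after the definition of multi-initial family: a category $\mathcal{C}$ admits a multi-initial family exactly when each of its connected components has an initial object (pick one per component), the family being small exactly when $\mathcal{C}$ has a set of connected components. With that dictionary, ``local right adjoint $\iff$ each component of $\int\mathcal{N}^U_B$ has an initial object'' becomes ``local right adjoint $\iff$ $B\downarrow U$ has a multi-initial family'', and ``right multi-adjoint'' is the small version of the right-hand side.

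The ``$\Rightarrow$'' half of the first equivalence, and of the ``moreover'', is then nothing new: it is precisely the Proposition established just before the definition of right multi-adjoint, namely that for a local right adjoint the local units under $B$ form a multi-initial family of $B\downarrow U$, the initial object of the component of $(A,f)$ being $(A_f,\eta^A_f)$ equipped with $L_A(f)$. Imposing in addition that $\int\mathcal{N}^U_B$ has a set of connected components says exactly that this family is small, which is the definition of right multi-adjoint; conversely a small multi-initial family meets each component in exactly one object, so forces a set of components. Hence for a local right adjoint, ``right multi-adjoint'' and ``$\int\mathcal{N}^U_B$ has a set of components for all $B$'' coincide, which also gives the last ``equivalently''.

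For the ``$\Leftarrow$'' half I reconstruct the local left adjoints. Fix $A$ and $f:B\rightarrow U(A)$; the comma $f\downarrow U_A$ is the category of factorizations of $f$ through $U$, hence canonically isomorphic to $(B\downarrow U)\downarrow f$. Let $(A_0,n_0)$ be an initial object of the connected component of $(A,f)$ in $B\downarrow U$ and $\iota:(A_0,n_0)\rightarrow(A,f)$ its unique structural morphism. I claim $(A_0,n_0,\iota)$ is initial in $(B\downarrow U)\downarrow f$: given any factorization $(A',g,u)$, the object $(A',g)$ lies in the same component as $(A,f)$, so receives a unique morphism $v:(A_0,n_0)\rightarrow(A',g)$; then $u\circ v$ and $\iota$ are both morphisms $(A_0,n_0)\rightarrow(A,f)$, hence equal by initiality, so $v$ is a morphism over $f$, and it is the only one since any such underlies a morphism $(A_0,n_0)\rightarrow(A',g)$. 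Thus $U_A$ has a left adjoint at every $f$, hence a left adjoint, so $U$ is a local right adjoint; and if moreover each $\int\mathcal{N}^U_B$ has a set of components, collecting the initial objects yields small multi-initial families, so $U$ is a right multi-adjoint.

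The final reformulation via $\pi_B$ is the multi-analog of the classical fact that $U$ is a right adjoint iff every $\pi_B$ has a limit preserved by $U$ (equivalently every $\int\mathcal{N}^U_B$ has an initial object, which is the unit). Reading a multilimit of $\pi_B$ in the sense of \cref{multilimits} as a small family of cones multi-terminal among all cones over $\pi_B$ --- equivalently a terminal object in each connected component of the category of cones over $\pi_B$ --- one transports it along $U$ via the canonical cone $(f:B\rightarrow U(A))_{(A,f)}$ over $U\circ\pi_B$: preservation by $U$ together with the coproduct-decomposition clause of \cref{multilimits} lets one read off, from each member of the transported family, an object of $\int\mathcal{N}^U_B$, and the usual componentwise argument identifies these with the initial objects of the components, the smallness matching the set of components; combined with the previous two paragraphs this yields the equivalence. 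I expect the genuine obstacle to be localized in two places: in the ``$\Rightarrow$'' direction, that an arrow \emph{out of} a local unit \emph{within its component} is unique --- not merely that $\eta^A_f$ is initial in the slice over $f$ --- which is exactly what the multi-initial-family Proposition supplies via \cref{BC}, \cref{one unit per connected component} and \cref{units form a set}, so it is legitimately assumed; and in the $\pi_B$-reformulation, keeping Diers's bookkeeping for preservation of multilimits (the index sets $J_k$ and the coproduct decomposition) in register with the component structure of $\int\mathcal{N}^U_B$ and checking that $U$-preservation returns genuine initial, not merely weakly initial, objects. The rest is diagram-chasing with the adjunction triangle identities.
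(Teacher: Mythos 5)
Your proof is correct and follows exactly the route the paper intends: the paper dismisses the first half as "tautological" (which your identification of $\int\mathcal{N}^U_B$ with $B\downarrow U$ and the reconstruction of initial objects in $(B\downarrow U)\downarrow f \cong f\downarrow U_A$ makes precise) and defers the multilimit reformulation to an adaptation of Borceux's Proposition 3.3.2, which your final paragraph sketches in the same spirit. Your write-up is in fact more detailed than the paper's, and the one place you flag as the "genuine obstacle" (uniqueness of arrows out of a local unit within its component) is indeed supplied by the earlier multi-initial-family proposition, so no gap remains.
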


The first half of this fact is tautological; for the second part, one can adapt \cite{borceux1994handbook}[proposition 3.3.2]. \\

Now from what was said, it appears that a right multi-adjoint is a functor such that the associated conerve in any object is ``locally representable". Indeed, any arrow from an object $B$ toward $U$ is determined first by the connected component it lies in, which corresponds to the local unit it factorizes through, and secondly by a choice of map in $\mathcal{A}$. This amounts to the following:

\begin{proposition}
Let $ \mathcal{U}$ be a multi-right adjoint: then for each $\mathcal{B}$ one has 
\[ \mathcal{N}^U_B \simeq \coprod_{i \in I_B} \mathcal{A}[A_i, -] \]
with $ I_B$ the set of connected components of $B$ and $ n_i: B \rightarrow U(A_i)$ the initial object of the connected component $i$. 
\end{proposition}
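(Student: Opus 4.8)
The plan is to read off the isomorphism fibrewise from the discrete opfibration $\pi_B : \int \mathcal{N}^U_B \rightarrow \mathcal{A}$ and then verify naturality. First I would recall that since $U$ is a right multi-adjoint it is in particular a local right adjoint (by the remark following the definition of right multi-adjoint), so by the characterization of local right adjoints in terms of the co-nerve, each connected component of $\int \mathcal{N}^U_B$ has an initial object; moreover right multi-adjointness guarantees that the set $I_B$ of connected components of $\int \mathcal{N}^U_B$ is small. For each $i \in I_B$ fix the initial object $(A_i, n_i)$ of the $i$-th component, where $n_i : B \rightarrow U(A_i)$.

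Next, for a fixed $A$ in $\mathcal{A}$, I would observe that the fibre $\pi_B^{-1}(A)$ has object set exactly $\mathcal{B}[B, U(A)]$, and that every $(A,f)$ lies in a unique component $i \in I_B$; thus $\mathcal{B}[B,U(A)]$ is the disjoint union over $i \in I_B$ of the sets $\{\, f : B \rightarrow U(A) \mid (A,f) \text{ lies in component } i \,\}$. For each such $i$, initiality of $(A_i, n_i)$ in its component says precisely that for every $(A,f)$ in that component there is a unique $u : A_i \rightarrow A$ with $U(u) n_i = f$, i.e. the assignment $u \mapsto U(u) n_i$ is a bijection $\mathcal{A}[A_i, A] \xrightarrow{\ \sim\ } \{\, f \mid (A,f) \text{ in component } i \,\}$ — well-definedness being exactly the fact that $u$ provides a morphism $(A_i,n_i) \rightarrow (A, U(u) n_i)$ in $\int \mathcal{N}^U_B$, which keeps us in component $i$. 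Summing over $i \in I_B$ yields a bijection $\mathcal{B}[B, U(A)] \simeq \coprod_{i \in I_B} \mathcal{A}[A_i, A]$.

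Then I would check that this bijection is natural in $A$. Given $a : A \rightarrow A'$, the action of $\mathcal{N}^U_B$ on it is post-composition $f \mapsto U(a) f$, while the action on $\coprod_{i \in I_B} \mathcal{A}[A_i, -]$ is $(i, u) \mapsto (i, a \comp u)$; since $U(a \comp u)\, n_i = U(a)\bigl(U(u) n_i\bigr)$ and $a$ furnishes a morphism $(A, U(u)n_i) \rightarrow (A', U(a)U(u)n_i)$ in $\int\mathcal{N}^U_B$ (so the component index $i$ is unchanged), the relevant naturality square commutes. Hence the fibrewise bijections assemble into an isomorphism of functors $\mathcal{N}^U_B \simeq \coprod_{i \in I_B} \mathcal{A}[A_i, -]$, as claimed.

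There is no serious obstacle here: everything reduces to unwinding the initiality of the local units $(A_i, n_i)$ in the components of $\int \mathcal{N}^U_B$. The only points that need a little care are (i) invoking the preceding propositions to know that the components form a small set and that each carries an initial object, and (ii) the bookkeeping that morphisms of $\mathcal{A}$ act on $\int \mathcal{N}^U_B$ preserving connected components — which is exactly what upgrades the object-by-object bijection to a natural isomorphism.
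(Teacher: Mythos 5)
Your proposal is correct and follows exactly the reasoning the paper intends: the paper gives no formal proof, only the informal remark preceding the statement that an arrow $B \rightarrow U(A)$ is determined by the connected component of $\int \mathcal{N}^U_B$ it lies in (equivalently, the local unit it factors through) together with a morphism in $\mathcal{A}$, and your fibrewise bijection from initiality plus the naturality check is precisely a careful unwinding of that. No gaps.
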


We finally end this subsection on a result on multireflective subcategories. Recall that full reflectives subcategories inherits limits and colimits from their ambient category in the following way. Limits can be computed directly in the subcategory and are created by the inclusion, which is proven by proving that the reflection of a limit computed in the ambient category is a limit in the subcategory, and observing the later must be preserved by inclusion, so that this reflection was actually an iso. For colimits, one compute the colimit in the ambient category, then take its reflection. In the context of multireflection, the correct analog of those statement are in term of limits and multicolimits.

\begin{theorem}\label{closures properties of mreflective subcategories}
Let be $ U : \mathcal{A} \hookrightarrow \mathcal{B}$ a full multireflective subcategory; then: \begin{itemize}
    \item if $ \mathcal{B}$ has colimits, then $\mathcal{A}$ has multicolimits
    \item if $\mathcal{B}$ has limits, then $ \mathcal{A}$ has connected limits and they are created by $U$.
\end{itemize} 
\end{theorem}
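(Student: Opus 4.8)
The plan is to mimic the classical argument for full reflective subcategories, replacing "reflection" by the multireflection cone wherever a single reflection was used, and being careful that the multicolimit only sees one connected component at a time.

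For the first bullet, let $F : I \to \mathcal{A}$ be a diagram and form its colimit $\colim UF$ in $\mathcal{B}$ (using that $U$ is a full embedding, I will suppress $U$ notationally on objects of $\mathcal{A}$). Apply the multireflection to the object $\colim UF$: this yields a small family of local units $(\eta_k : \colim UF \to U(X_k))_{k \in K}$ which is multi-initial in $(\colim UF)\downarrow U$. For each $k$, precomposing the colimit coprojections with $\eta_k$ gives a cocone $(F(i) \to U(X_k))_{i\in I}$ in $\mathcal{A}$ — here I use that each $F(i)$ already lies in $\mathcal{A}$ and $U$ is full, so the composite $F(i)\to \colim UF \to U(X_k)$ comes from a unique arrow of $\mathcal{A}$. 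I claim the family $((F(i)\to U(X_k))_i)_{k\in K}$ is a multicolimit of $F$. Given any cocone $(f_i : F(i)\to A)_{i\in I}$ in $\mathcal{A}$, view it in $\mathcal{B}$: it factors uniquely through $\colim UF$, giving $g : \colim UF \to U(A)$; by the multi-initial property of the units there is a unique $k$ and a unique factorization $g = U(w)\circ \eta_k$. Composing with the coprojections recovers the factorization of the original cocone through the $k$-th cone, and uniqueness of $k$ and of $w$ (plus uniqueness of the colimit factorization in $\mathcal{B}$) gives the uniqueness required by \cref{multicolimits}. The main thing to check carefully is that this is genuinely the universal property of a multicolimit and not merely a weakly initial family — i.e. that distinct $k$'s cannot both admit factorizations of the same cocone; but this is exactly the uniqueness of $k$ in the multi-initial family, transported along the bijection between cocones under $F$ valued in $\mathcal{A}$ and arrows $\colim UF \to U(A)$.

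For the second bullet, let $F : I \to \mathcal{A}$ with $I$ connected and form $\lim UF$ in $\mathcal{B}$. Apply the multireflection: since $\int \mathcal{N}^U_{\lim UF}$ has a multi-initial family, but the limit cone $(\lim UF \to UF(i))_i$ determines a single connected component of $\int\mathcal{N}^U_{\lim UF}$ (any two legs are connected through $I$, which is connected, using the morphisms $U(F(\alpha))$), all the legs factor through a single local unit $\eta : \lim UF \to U(L)$. The legs thus descend to a cone $(L \to F(i))_i$ in $\mathcal{A}$ over $F$. I then argue this cone is a limit in $\mathcal{A}$: given a competing cone $(a_i : A \to F(i))_i$ in $\mathcal{A}$, push it into $\mathcal{B}$, get a unique $A \to \lim UF$, and use the universal property of $\eta$ restricted to this connected component (i.e. \cref{one unit per connected component}: the arrow $A \to \lim UF$ lies in the component of $\eta$) to get a unique factorization $A \to U(L)$, which by fullness is an arrow of $\mathcal{A}$ compatible with the cone. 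Finally, to see $U$ creates these limits: apply $U$ to the cone $(L\to F(i))_i$; it factors the limit cone $(\lim UF \to UF(i))_i$ through $U(\eta)$ post-composed, and by the triangle identities (or directly: a map between cones over the same diagram that the forgetful-to-$\mathcal{B}$ functor sends to an iso, namely $\lim UF \cong U(L)$, since $U$ reflects isos being a full embedding) one concludes $U(L) \cong \lim UF$, so the limit computed in $\mathcal{A}$ is preserved by and created by $U$.

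The step I expect to be the main obstacle is the connectedness argument in the second bullet: one must verify that the category of elements $\int \mathcal{N}^U_{\lim UF}$ restricted to the legs of a limit cone over a \emph{connected} diagram really is connected, so that multi-initiality collapses to ordinary initiality there — and then that the resulting factorization is unique as a cone morphism, not just as an arrow of $\mathcal{B}\downarrow U$. Once that is secured, the rest is a transcription of the classical reflective-subcategory argument with \cref{one unit per connected component} and \cref{units form a set} doing the bookkeeping that "reflection is an iso on the subcategory" did classically. A minor additional point worth spelling out is that the passage "cocone in $\mathcal{A}$ $\leftrightarrow$ arrow out of the colimit in $\mathcal{B}$" uses fullness of $U$ in an essential way, which is why the hypothesis is a full multireflective subcategory.
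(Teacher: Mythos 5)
Your proposal is correct and follows essentially the same route as the paper's proof: compute the (co)limit in $\mathcal{B}$, apply the multireflection (resp.\ use \cref{one unit per connected component} to collapse the connected limit cone onto a single local unit), and descend along fullness of $U$. The delicate point you flag --- uniqueness of the factorization as a cone morphism, which is most cleanly secured by first showing $n_F$ is invertible via initiality of the local unit in its component --- is in fact glossed over in the paper's own argument as well, so your plan is, if anything, slightly more careful on that score.
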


\begin{proof}
For multicolimits, take a functor $ F : I \rightarrow \mathcal{A}$; then one can compute the colimit $ (q_i : UF(i) \rightarrow \colim_{i \in I} UF(i))_{i \in I}$ in $\mathcal{B}$. Then consider its small cone of local units $ n_x : (\colim_{i \in I} UF(i) \rightarrow U(A_x))_{x \in I_{\colim_{i \in I} UF(i)}}$. Then by fullness of $U $ each composite $ n_xq_i :  UF(i) \rightarrow U(A_x) $ comes from a unique map $ q^x_i : F(i) \rightarrow A_x$ and $(q^x_i : F(i) \rightarrow A_x)_{i \in I, x \in I_{\colim_{i \in I} UF(i) }}$ is a multicolimit of $F$. \\

Now for connected limits, if we suppose $ F : I \rightarrow \mathcal{A}$ with $ I$ connected, then the category $ \lim_{i \in I} \, UF(i) \downarrow UF$ is connected; then by \cref{one unit per connected component}, all the limit projections $ p_i$ factorize through a same local unit $ n_F$
\[\begin{tikzcd}
	{\underset{i \in I}{\lim} \, UF(i)} & {UF(i)} \\
	{U(A_F)}
	\arrow["{p_i}", from=1-1, to=1-2]
	\arrow["{n_F}"', from=1-1, to=2-1]
	\arrow["{U(u^F_i)}"', from=2-1, to=1-2]
\end{tikzcd}\]
Then $( U(u^F_i) :A_F \rightarrow A_i)_{i \in I} $ is a limiting cone in $\mathcal{A}$: indeed, any cone $ (u_i : A \rightarrow F(i))_{i \in I}$ in $\mathcal{A}$ is transported by $U$ to a cone in $\mathcal{B}$, were it induces a unique arrow $ (U(u_i))_{i \in I} U(A) \rightarrow \lim_{i \in I} UF(i)$ and the composite $ n_F (U(u_i))_{i \in I} : U(A) \rightarrow U(A_F)$ comes uniquely from an arrow $ A \rightarrow A_F$ as desired. Hence $ A_F$ is a limit of the connected diagram $F$, but as $U$ preserves connected limits, $ n_F $ was actually an isomorphism $ \lim_{i \in I} \, UF(i) \simeq U(A_F)$. \end{proof}

\begin{remark}
In fact, concerning connected limit, the condition that $ U$ is full can be slightly relaxed into the condition of being \emph{relative full and faithful}, which will be defined later in \cref{relff}
\end{remark}

Finally we come to an alternative notion encapsulating the property of being a local right adjoint, but in a way that is more related to factorization systems. This was studied in \cite{Taylor} and \cite{weber2004generic}, and we prove there that this notion coincides with local right adjointness. It relies on an atlernative presentation of local unit in a more ``orthogonality structure" spirit. 

\begin{definition}
A candidate (diagonally universal toward $ U$ in the terminology of Diers), is a morphism $ n : B \rightarrow U(A)$ such that for any square of the following form there exist an unique morphism $ w : A \rightarrow A_1$  such that $ U(w)$ diagonalizes uniquely the square and the left triangle already commutes in $ \mathcal{A}$
\[  
\begin{tikzcd}[column sep=large]
B \arrow[]{r}{f} \arrow[]{d}[swap]{n} & U(A_1) \arrow[]{d}{U(v)} & &  & A_1 \arrow[]{d}{v}  \\ U(A) \arrow[]{r}[swap]{U(u)} \arrow[]{ru}{ U(w)} & U(A_2)  & & A \arrow[dashed]{ru}{\exists ! w} \arrow[]{r}{u} & A_2
\end{tikzcd}\]
\end{definition}
\begin{definition}
A functor $ U : \mathcal{A} \rightarrow \mathcal{B}$ is \emph{stable} when any morphism $ f : B \rightarrow U(A)$ factorizes uniquely through the range of $ U $ as 
\[ 
\begin{tikzcd}
B \arrow[rr, "f"] \arrow[rd, "n_f"'] &                              & U(A) \\
                                     & U(A_f) \arrow[ru, "U(u_f)"'] &     
\end{tikzcd} \]
where $n_f : B \rightarrow U(A_f) $ is a candidate. We refer to this factorization as \emph{the stable factorization of} $f$ and to $ n_f$ as \emph{the candidate of} $f$. 
\end{definition}
\begin{remark}
Then the candidate for $f$ is the initial object in the category of factorizations of $f$ through the range of $U$. Indeed, for any other factorization through $U$
\[ 
\begin{tikzcd}
B \arrow[rr, "f"] \arrow[rd, "g"'] &                           & U(A) \\
                                   & U(A') \arrow[ru, "U(u)"'] &     
\end{tikzcd}\]
one gets a square as below, where $n_f$ produces a unique $w$ such that $ U(w)$ is a filler
\[ 
\begin{tikzcd}
B \arrow[r, "g"] \arrow[d, "n_f"']                                 & U(A') \arrow[d, "U(u)"] \\
U(A_f) \arrow[r, "U(u_f)"'] \arrow[ru, "U(v)" description, dashed] & U(A)                   
\end{tikzcd} \]
\end{remark}

\begin{proposition}
For a functor $ U : \mathcal{A} \rightarrow \mathcal{B}$ and $ B$ in $\mathcal{B}$ we have the following 
\begin{itemize}
    \item If a candidate $ n_1 : B \rightarrow U(A_1)$ admits an arrow $ n_2 \rightarrow n_1$ from another candidate $n_2 : B \rightarrow U(A_2)$ in $B \downarrow U$, then we have $ n_1 \simeq n_2$ in $ B \downarrow U$ and $ A_1 \simeq A_2$ in $\mathcal{A}$.
    \item In particular, any two candidates in a same connected component of $ B \downarrow U$ are isomorphic.
    \item If $f : B \rightarrow U(A)$ admits a stable factorization, then it is unique up to unique isomorphism.
    \item In particular, when $U$ is stable, the stable factorization of any arrow is unique up to unique isomorphism.
\end{itemize}  
\end{proposition}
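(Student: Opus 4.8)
The plan is to prove the four items in order, the first one carrying essentially all the weight and the rest following formally from it (or from the Remark just above).

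For the first item, write $t\colon A_2\to A_1$ for the morphism of $\mathcal{A}$ underlying the given arrow $n_2\to n_1$ in $B\downarrow U$, so $U(t)\,n_2=n_1$. I would first feed into the candidate property of $n_1$ the commuting square whose top edge is $n_2\colon B\to U(A_2)$, whose left edge is $n_1$, whose right edge is $U(t)$ and whose bottom edge is $\mathrm{id}_{U(A_1)}$ — it commutes precisely because $U(t)\,n_2=n_1$ — obtaining $w\colon A_1\to A_2$ with $t\circ w=\mathrm{id}_{A_1}$ and $U(w)\,n_1=n_2$; in particular $w$ is an arrow $n_1\to n_2$ in $B\downarrow U$. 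Then I would feed into the candidate property of $n_2$ the commuting square with top edge $n_1$, left edge $n_2$, right edge $U(w)$ and bottom edge $\mathrm{id}_{U(A_2)}$, obtaining $w''\colon A_2\to A_1$ with $w\circ w''=\mathrm{id}_{A_2}$. From $t\circ w=\mathrm{id}_{A_1}$ and $w\circ w''=\mathrm{id}_{A_2}$ one gets $t=t\circ(w\circ w'')=(t\circ w)\circ w''=w''$, hence also $w\circ t=\mathrm{id}_{A_2}$, so $t$ and $w$ are mutually inverse; thus $A_1\simeq A_2$ in $\mathcal{A}$ and, since $U(t)\,n_2=n_1$ with $t$ invertible, $n_1\simeq n_2$ in $B\downarrow U$. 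Note this argument uses only the \emph{existence} part of the candidate property; the one genuinely new move is arranging both squares to have an identity bottom edge, so that the property returns sections rather than arbitrary fillers.

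For the second item the key lemma I would isolate is that a candidate $n$ is weakly initial in its connected component of $B\downarrow U$: given any $g$ in that component, induct on the length of a zigzag from $n$ to $g$, carrying along an arrow from $n$ to the current vertex. At each step, if the next edge points away from the current vertex one composes; if it points towards it one applies the candidate property of $n$ to the square formed by the two already available arrows, used as right and bottom edges, exactly as in the first item. Granting weak initiality, two candidates $n_1,n_2$ in the same component admit an arrow $n_1\to n_2$, and the first item (with the two candidates interchanged) yields $n_1\simeq n_2$ in $B\downarrow U$ and $A_1\simeq A_2$ in $\mathcal{A}$.

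For the third item I would recall, as in the Remark preceding the statement, that whenever $n_f\colon B\to U(A_f)$ is a candidate with $U(u_f)\,n_f=f$, the pair $(n_f,u_f)$ is initial in the category of factorizations of $f$ through the range of $U$: any factorization $f=U(v)\,g$ yields a commuting square, and the candidate property of $n_f$ supplies \emph{the} morphism of factorizations $(n_f,u_f)\to(g,v)$. Two stable factorizations of $f$ are therefore two initial objects of one and the same category, hence linked by a unique isomorphism of factorizations — this is the third item — and the fourth is the special case where $U$ is stable, so that every arrow does admit a stable factorization. I expect the only real obstacle to be the bookkeeping: setting up the squares in the first item with identity bottoms and keeping the roles of $A_1,A_2$ straight when instantiating the candidate property, and handling both edge orientations in the zigzag induction of the second item.
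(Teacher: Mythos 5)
Your proof of the first item is exactly the paper's argument: two applications of the candidate property to squares with an identity bottom edge, yielding $t\circ w=\mathrm{id}_{A_1}$ and $w\circ w''=\mathrm{id}_{A_2}$, hence mutual inverses. The paper dispatches the remaining items with "the first item is easily shown to imply the other ones"; your zigzag induction for weak initiality and the initial-object argument in the category of factorizations are correct fillings of that gap, so the proposal is correct and essentially the same as the paper's.
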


\begin{proof}
The first item is easily shown to implies the other ones. Suppose we have $ n_1$, $n_2$ and a triangle 
\[ 
\begin{tikzcd}
B \arrow[r, "n_1"] \arrow[d, "n_2"']  & U(A_1) \\
U(A_2) \arrow[ru, "U(u)" description] &       
\end{tikzcd}\]
Then we have a unique filler
\[ 
\begin{tikzcd}
B \arrow[r, "n_2"] \arrow[d, "n_1"']                                               & U(A_2) \arrow[d, "U(u)"] \\
U(A_1) \arrow[r, "U(1_{A_1})"', equal] \arrow[ru, "U(v)" description, dashed] & U(A_1)                  
\end{tikzcd} \]
But now there is a unique filler of the square 
\[ 
\begin{tikzcd}
B \arrow[r, "n_1"] \arrow[d, "n_2"']                                               & U(A_1) \arrow[d, "U(v)"] \\
U(A_2) \arrow[r, "U(1_{A_2})"', equal] \arrow[ru, "U(w)" description, dashed] & U(A_2)                 
\end{tikzcd} \]
so that $ u : A_2 \rightarrow A_2$ is both a retraction and a section in $A\mathcal{A}$, hence an isomorphism, so that $n_1 \simeq n_2$ in $B \downarrow U$. 
\end{proof}

\begin{proposition}
For any square as below
\[
\begin{tikzcd}
B_1 \arrow[r, "f_1"] \arrow[d, "f"'] & U(A_1) \arrow[d, "U(u)"] \\
B_2 \arrow[r, "f_2"']                & U(A_2)                  
\end{tikzcd}\]
the stable factorizations of $ f_1$ and $f_2$ are related by a unique morphism in $\mathcal{A}$ such that 
\[ 
\begin{tikzcd}[sep=large]
B_1 \arrow[r, "n_{f_1}"] \arrow[d, "f"'] & U(A_{f_1}) \arrow[d, "{U(w_{g,u})}" description] \arrow[r, "U(u_{f_1})"] & U(A_1) \arrow[d, "U(u)"] \\
B_2 \arrow[r, "n_{f_2}"']                & U(A_{f_2}) \arrow[r, "U(u_{f_2})"']                           & U(A_2)                  
\end{tikzcd} \]
\end{proposition}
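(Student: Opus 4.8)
The plan is to read off $w$ directly from the universal property of the candidate $n_{f_1}$. Recall that the stable factorizations give $f_1 = U(u_{f_1})\,n_{f_1}$ with $n_{f_1}\colon B_1\to U(A_{f_1})$ a candidate, and $f_2 = U(u_{f_2})\,n_{f_2}$ with $n_{f_2}\colon B_2\to U(A_{f_2})$ a candidate. So I would first manufacture, out of the hypotheses, a commutative square having $n_{f_1}$ as its left edge and landing on the right in $U(A_{f_2})$, so that the candidate property hands back an arrow $A_{f_1}\to A_{f_2}$ in $\mathcal{A}$.

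The square I would use is
\[
\begin{tikzcd}[sep=large]
B_1 \arrow[r, "n_{f_2}\, f"] \arrow[d, "n_{f_1}"'] & U(A_{f_2}) \arrow[d, "U(u_{f_2})"] \\
U(A_{f_1}) \arrow[r, "U(u\,u_{f_1})"'] & U(A_2)
\end{tikzcd}
\]
and the single computation of the proof is that it commutes: the composite $B_1 \to U(A_{f_2}) \to U(A_2)$ is $U(u_{f_2})\,n_{f_2}\,f = f_2\,f$, the composite $B_1\to U(A_{f_1})\to U(A_2)$ is $U(u)\,U(u_{f_1})\,n_{f_1} = U(u)\,f_1$, and these agree precisely because the given square $U(u)f_1 = f_2 f$ commutes. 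This bookkeeping is the step I would be most careful with: one must take the top edge to be exactly $n_{f_2}\,f$ (not merely some map into $U(A_{f_2})$) and the bottom edge to be $U(u\comp u_{f_1})$, so that the diagonal produced by the candidate composes correctly against $U(u_{f_2})$ and $U(u_{f_1})$.

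Applying the candidate property of $n_{f_1}$ to this square then yields a unique $w\colon A_{f_1}\to A_{f_2}$ in $\mathcal{A}$ with $U(w)\,n_{f_1} = n_{f_2}\,f$ and, already in $\mathcal{A}$, $u_{f_2}\comp w = u\comp u_{f_1}$. The first equation is the commutativity of the left-hand square of the asserted diagram; applying $U$ to the second gives $U(u_{f_2})\,U(w) = U(u)\,U(u_{f_1})$, the commutativity of the right-hand square. Uniqueness is immediate from the uniqueness clause in the definition of a candidate: any $w'$ fitting the asserted diagram satisfies $U(w')\,n_{f_1} = n_{f_2}\,f$ together with $u_{f_2}\comp w' = u\comp u_{f_1}$ — the latter being exactly the ``left triangle commutes in $\mathcal{A}$'' requirement that the candidate's diagonal must meet — whence $w' = w$. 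Finally, since $n_{f_i}$ and $u_{f_i}$ are extracted functorially from $f_i$, the arrow $w$ depends functorially on the square, which is the naturality remarked on earlier. There is no genuine obstacle here beyond setting up the correct square; the whole content is packaged in the universal property of candidates.
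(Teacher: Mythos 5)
Your proof is correct and is essentially the paper's own argument: the paper likewise obtains $w$ as the unique filler, via the candidate property of $n_{f_1}$, of exactly the square you construct (drawn there in transposed orientation, with $n_{f_2}f$ as one edge and $U(u\,u_{f_1})$ as the other). Your added remarks on uniqueness and functoriality go slightly beyond the paper's one-line proof but introduce nothing different in substance.
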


\begin{proof}
 The desired $ w_{g,u}$ is the filler of the square 
 \[ 
\begin{tikzcd}
B_1 \arrow[r, "n_{f_1}"] \arrow[d, "n_{f_2}f"'] & U(A_{f_1}) \arrow[d, "U(uu_{f_1})"] \\
U(A_{f_2}) \arrow[r, "U(u_{f_2})"']             & U(A_2)                             
\end{tikzcd}\]
\end{proof}

\begin{theorem}\label{stable is MRadj}
Stable functors and local right adjoints coincide.
\end{theorem}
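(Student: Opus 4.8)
The plan is to prove the two implications separately; the definitions of ``stable'' and ``local right adjoint'' are rigged so that the candidate of a stable factorization should play exactly the role of the local unit, and conversely.

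For \emph{stable $\Rightarrow$ local right adjoint}: fix $A$ in $\mathcal{A}$ and use the stable factorization to define $L_A : \mathcal{B}/U(A) \to \mathcal{A}/A$ on objects by $f \mapsto (u_f : A_f \to A)$. On morphisms I would invoke the Proposition just proved on the comparison of stable factorizations along a commuting square, specialized to squares lying over the fixed object $U(A)$ (so with the vertical map $1_A$): this yields, for $b : f_1 \to f_2$ in $\mathcal{B}/U(A)$, a morphism $w_b : A_{f_1} \to A_{f_2}$ with $u_{f_2} w_b = u_{f_1}$ in $\mathcal{A}$ and $U(w_b) n_{f_1} = n_{f_2} b$; uniqueness of the diagonal makes $L_A$ a functor. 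It then remains to produce a bijection $\mathcal{A}/A[L_A(f), u'] \cong \mathcal{B}/U(A)[f, U(u')]$, natural in $u' : A' \to A$, with unit $n_f$. Left to right is $v \mapsto U(v)\,n_f$; the inverse is exactly the statement, already recorded after the definition of stability, that $n_f$ is the initial object of the category of factorizations of $f$ through the range of $U$: a factorization $f = U(u')h$ is an object of that category, hence receives a unique morphism from $n_f$, i.e.\ there is a unique $v : A_f \to A'$ with $u' v = u_f$ and $U(v)n_f = h$. Checking that these two assignments are mutually inverse and natural is routine.

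For \emph{local right adjoint $\Rightarrow$ stable}: I claim that for every $f : B \to U(A)$ the local factorization $f = U(L_A(f))\,\eta^A_f$ is a stable factorization, i.e.\ that $\eta^A_f$ is a candidate; once this is known, uniqueness of the factorization is the Proposition on candidates already proved, and $U$ is stable. So take a square as in the definition of candidate: $g : B \to U(A_1)$, $v : A_1 \to A_2$, $u : A_f \to A_2$ with $U(v)g = U(u)\eta^A_f =: h$. Working in the slice $\mathcal{A}/A_2$, the local adjunction $L_{A_2} \dashv U_{A_2}$ evaluated at $h$ converts the two factorizations $h = U(u)\eta^A_f$ and $h = U(v)g$ into morphisms $\bar\varphi : L_{A_2}(h) \to u$ and $\bar\psi : L_{A_2}(h) \to v$ of $\mathcal{A}/A_2$ with $U(\bar\varphi)\eta^{A_2}_h = \eta^A_f$ and $u\bar\varphi = L_{A_2}(h)$, resp.\ $U(\bar\psi)\eta^{A_2}_h = g$ and $v\bar\psi = L_{A_2}(h)$. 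The key observation is that $\bar\varphi$ is an isomorphism: the equation $U(\bar\varphi)\eta^{A_2}_h = \eta^A_f$ is precisely a triangle between the two local units $\eta^{A_2}_h$ and $\eta^A_f$, so \cref{units form a set} forces $\bar\varphi$ to be invertible. Then $w := \bar\psi\,\bar\varphi^{-1} : A_f \to A_1$ satisfies $vw = u$ in $\mathcal{A}$ and $U(w)\,\eta^A_f = U(\bar\psi)\,\eta^{A_2}_h = g$, so $U(w)$ fills the square; and if $w'$ also does, then $w, w'$ are both morphisms $u \to v$ in $\mathcal{A}/A_2$, so precomposing with the iso $\bar\varphi$ and applying the adjunction bijection $\mathcal{A}/A_2[L_{A_2}(h), v] \cong \mathcal{B}/U(A_2)[h, U(v)]$ sends both $w\bar\varphi$ and $w'\bar\varphi$ to $g$, whence $w = w'$.

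The main obstacle is this last implication, and more precisely the requirement that the diagonal $w$ can be chosen so that the left-hand triangle commutes \emph{strictly in} $\mathcal{A}$, together with its uniqueness: one cannot simply cancel $\eta^A_f$ (the counits of the local adjunctions need not be pointwise invertible, as the Remark warns), so the argument has to pass through the comparison morphism $\bar\varphi$ between the relevant local objects and use the rigidity of local units supplied by \cref{units form a set} --- ultimately a manifestation of the Beck--Chevalley phenomenon of \cref{BC} --- to promote it to an isomorphism. By contrast the implication stable $\Rightarrow$ local right adjoint is essentially bookkeeping once one has the description of candidates as initial $U$-factorizations.
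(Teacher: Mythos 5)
Your proof is correct. For the direction \emph{stable $\Rightarrow$ local right adjoint} you construct the slice adjunction $L_A \dashv U_A$ explicitly from the initiality of candidates among $U$-factorizations; the paper sketches exactly this but then shortcuts by observing that the candidates under $B$ form a multi-initial family in $B\downarrow U$, which yields local right adjointness by the earlier remark on multi-initial families --- the two are interchangeable, and your version has the merit of actually exhibiting the unit and the hom-bijection. For the harder direction \emph{local right adjoint $\Rightarrow$ stable} your route genuinely differs from the paper's: the paper first characterizes candidates as those $n : B \to U(A)$ for which $L_A(n)$ is invertible, proves that such maps are candidates via a rather involved diagram that inserts the inverses of $L_A(n)$ and of the Beck--Chevalley comparison $\sigma^u_n$, and only then specializes to local units via \cref{units are candidate}. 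You instead attack the local unit $\eta^A_f$ directly: transporting both legs of the square into the slice adjunction at $A_2$, you get the two comparison maps $\bar\varphi, \bar\psi$ out of $L_{A_2}(h)$, promote $\bar\varphi$ to an isomorphism by \cref{units form a set}, and read off both existence and uniqueness of the strict diagonal from the injectivity of the adjunction bijection. This is tighter than the paper's argument (which leaves the uniqueness of the filler largely implicit) and isolates more cleanly where the rigidity of local units --- ultimately \cref{BC} --- is used; what it does not give you is the paper's by-product characterization of arbitrary candidates as the maps with invertible $U_AL_A(n)$, which the paper reuses in the factorization section.
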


\begin{proof}
 Let $ U : \mathcal{A} \rightarrow \mathcal{B} $ be a stable functor. For each $A$ defines the functor 
\[ \begin{tikzcd}
\mathcal{A}/A \arrow[bend right=15]{rr}[swap]{U/A} & \perp & \mathcal{B}/U(A) \arrow[bend right=15]{ll}[swap]{L_A} 
\end{tikzcd} \]
where $L_a$ returns the left part of the initial factorization of an arrow by its associated candidate:
\[ \begin{array}{cccc}
   L_A : & \mathcal{B}/{U(A)} & \rightarrow & \mathcal{A}/A  \\
     & \begin{tikzcd}
     B \arrow[]{rr}{f} \arrow[]{rd}[swap]{n_f}  & & U(A) \\ & U(A_f) \arrow[]{ru}[swap]{U(u_f)} & 
     \end{tikzcd} & \mapsto &A_f \stackrel{u_f}{\rightarrow } A
\end{array} \]

We can easily prove this functor is left adjoint to $ U/A$, but it is more direct to observe that the family of candidates under $B$ is a multi-initial family in $B \downarrow U$. Hence $U$ is a local right adjoint. \\

For the converse, suppose $ U$ is a local right adjoint. We claim that candidates are arrows $ n : B \rightarrow U(A)$ such that $ L_A(n)$ provides an iso $ A_f \simeq A$ in $\mathcal{A}$, hence $ \eta^A_n \simeq n$ in $B \downarrow U$. Let be a square 
\[ 
\begin{tikzcd}
B \arrow[d, "n"'] \arrow[r, "g"] & U(A_1) \arrow[d, "U(u)"] \\
U(A) \arrow[r, "U(v)"']          & U(A_2)                  
\end{tikzcd}\]
Recall by \cref{BC} we also have that composing with $U(v)$ does not modify the unit, as we have an isomorphism $ \sigma^u_n : \eta^{A_2}_{U(v)n} \simeq \eta^{A}_f$. But the triangle 
\[ 
\begin{tikzcd}
B \arrow[r, "g"] \arrow[rd, "U(u)n"'] & U(A_1) \arrow[d, "U(u)"] \\
                                      & U(A_2)                  
\end{tikzcd} \]
provides us with a unique arrow $ w : A_{U(v)n} \rightarrow A_1$ such that 
\[ 
\begin{tikzcd}[column sep=huge]
                                                   & A_1 \arrow[d, "U(u)"] \\
A_{U(v)n} \arrow[r, "L_A(U(u)n)"'] \arrow[ru, "w"] & A_2                  
\end{tikzcd} \quad 
\begin{tikzcd}[column sep=huge]
B \arrow[d, "\eta^{A_2}_{U(v)n}"'] \arrow[r, "g"]                   & U(A_1) \arrow[d, "U(u)"] \\
U(A_{U(v)n}) \arrow[r, "U_{A_2}L_{A_2}(U(u)n)"'] \arrow[ru, "U(w)" description] & U(A_2)                  
\end{tikzcd} \]
Then by inserting the local inverses of $ \eta^A_n$ and $ \sigma^u_n$ in the square above and using the universal property of $ \eta^{A_n}_{\eta^A_n}$ at the triangle

\[ 
\begin{tikzcd}[row sep=large, column sep=huge]
B \arrow[d, "n"'] \arrow[rd, "\eta^{A}_n" description] \arrow[rrr, "g"] \arrow[rrd, "\eta^{A_n}_{\eta_f^A}" description] &                                                                                                            &                                                                                                                                            & U(A_1) \arrow[dddd, "U(u)"] \\
U(A) \arrow[d, equal] \arrow[rd, "U(L_A(n)^{-1})" description]                                                      & U(A_n) \arrow[l, "U(L_A(n))" description] \arrow[d, equal] \arrow[r, "U((\sigma^u_n)^{-1})" description] & U(A_{U(v)n}) \arrow[ld, "U(\sigma^u_n)" description] \arrow[d, equal] \arrow[ru, "U(w)" description, dashed]                             &                             \\
U(A)  \arrow[dd, equal] \arrow[rd, "U(L_A(n)^{-1})" description]                  &\arrow[l, "U(L_A(n))" description] U(A_n) \arrow[r, "U((\sigma^u_n)^{-1})" description] \arrow[d, equal]                                    & U(A_{U(v)n}) \arrow[ruu, "U(w)" description, dashed] \arrow[ld, "\sigma^u_n" description] \arrow[rdd, "U_{A_2}L_{A_2}(U(v)n)" description] &                             \\
                                                                                                                         & U(A_n) \arrow[ld, "U(L_A(n))" description]                                                            &                                                                                                                                            &                             \\
U(A) \arrow[rrr]                                                                                                         &                                                                                                            &                                                                                                                                            & U(A_2)                     
\end{tikzcd} \]
provides us with a triangle as below and a diagonalization in $\mathcal{B}$
\[ 
\begin{tikzcd}
                                                               & A_1 \arrow[d, "u"] \\
A \arrow[r, "v"'] \arrow[ru, "w (\sigma^u_f)^{-1}L_A(n)^{-1}"] & A_2               
\end{tikzcd} \]
In particular, local units are candidates by \cref{units are candidate}. Hence for any arrow $ f : B \rightarrow U(A)$, the factorization through the unit as $ f = U_AL_A(f) \eta^A_f$ provides a stable factorization through a candidate. 
\end{proof}

This achieve to prove that stable functors and local right adjoint can be used indifferently and are two ways of encapsulating the same property. \\

However in the following, and especially in the second paper, we will give more interest to right multi-adjoint for the smallness condition allows us to manipulate local units without size issue.

\section{Right multi-adjoints through free product completion}

In this section, we give the characterization of right multi-adjoints though the free product completion, following loosely \cite{diers1977categories} and \cite{tholen_1984}. In the second part of this work, we are going to show how the notion of spectrum is a way to turn a local right adjoint into a right adjoint, the spectrum functor being the desired left adjoint. But this construction, motivated by geometric and duality theoretic conceptions, process by extracting as much as possible topological and geometric information from a situation of local adjunction: in some way, it exploits the defect of universality on the algebraic side in order to produce richer structure on the geometric side. In this section, we recall another way to turn a situation of multi-adjunction into a honest adjunction, which is purely algebraic and purer in some sense, but also devoid of any geometric content for this very reason. The relation between those two approach will be studied in more detail in the second part of this work, where this approach through free product completion will appear as the ``discrete version" of the spectral adjunction. \\

The main intuition of this part is that, for a right multi-adjoint $ U : \mathcal{A} \rightarrow \mathcal{B}$, the cone of local units under a given object $B$ defines a family of objects in $\mathcal{A}$ given by the codomains of those local units. Hence, at the level of families of objects, the multiversality of the construction can be fixed and $U$ will induce an honest adjunction between categories of families of objects of $\mathcal{A}$ and $\mathcal{B}$. The good notion of ``category of families" here is the one provided by the free product completion, the beginning of this part is devoted to.

\begin{definition}
For a category $\mathcal{A}$, the \emph{free product completion} of $\mathcal{A}$ is the category $ \Pi\mathcal{A}$ whose \begin{itemize}
    \item objects are functors $ A_{(-)} : I \rightarrow \mathcal{A}$ (also denoted $ (A_i)_{i \in I})$ with $ I$ a set,
    \item and arrows $(A_i)_{i \in I} \rightarrow (B_j)_{j \in J} $ consist of the data of an application $ \alpha : J \rightarrow I$ and a natural transformation  
    \[ 
\begin{tikzcd}[row sep=small, column sep=large]
I \arrow[rd, "A_{(-)}", ""{name=U, below, inner sep=0.1pt}]                       &             \\
                                              & \mathcal{A} \\
J \arrow[ru, "B_{(-)}"', ""{name=D, inner sep=0.1pt}] \arrow[uu, "\alpha"] &       \arrow[from=U, to=D, Rightarrow, "f"', bend right=20]     
\end{tikzcd} \]
that is, a $J$-indexed family $ (f_j : A_{\alpha(j)} \rightarrow B_j)_{j \in J}$. 
\end{itemize}
\end{definition}

\begin{proposition}
We have the following properties of the free product completion, for a given category $ \mathcal{A}$:\begin{itemize}
    \item $\Pi \mathcal{A} $ has small products
    \item There is a codense full embedding $ \iota_\mathcal{A} : \mathcal{A} \hookrightarrow \Pi\mathcal{A}$ whose essential image is the subcategory $ (\Pi \mathcal{A})_{coco}$ of co-connected objects.
    \item Moreover, the embedding $ \mathcal{A} \rightarrow \Pi \mathcal{A}$ has a right adjoint if and only if $\mathcal{A}$ already had products 
    \item We have a full embedding $ \Pi \mathcal{A} \hookrightarrow [\mathcal{A}, \mathcal{S}et]^{op}$ whose essential image consists of all small products of representable
    \item For any category $ \mathcal{B}$ with small products, we have an equivalence of categories 
    \[ [\mathcal{A}, \mathcal{B}] \simeq  [\Pi \mathcal{A}, \mathcal{B}]_{\Pi} \]
    (where $  [\Pi \mathcal{A}, \mathcal{B}]_{\Pi}$ is the category of functors preserving small products) sending any $ F: \mathcal{A} \rightarrow \mathcal{B}$ on its right Kan extension $ \ran_{\iota_\mathcal{A}} F  $ and any $ G : \Pi \mathcal{A} \rightarrow \mathcal{B}$ on its restriction $ G \iota_\mathcal{A}$.
\end{itemize}
\end{proposition}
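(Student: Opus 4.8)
The plan is to derive all five items from the explicit hom-set description $\Pi\mathcal{A}[(A_i)_{i\in I},(B_j)_{j\in J}]\cong\prod_{j\in J}\bigl(\coprod_{i\in I}\mathcal{A}[A_i,B_j]\bigr)$ together with one comma-category computation. First I would establish small products directly: given a set-indexed family $\bigl((A^k_i)_{i\in I_k}\bigr)_{k\in K}$ in $\Pi\mathcal{A}$, its product is $(A^k_i)_{(k,i)\in\coprod_k I_k}$, the $k$-th projection being the morphism whose reindexing is the coproduct insertion $I_k\hookrightarrow\coprod_k I_k$ and whose component maps are identities; unfolding a cone over this family from a test object $(B_j)_{j\in J}$ produces, for each $k$, a function $I_k\to J$ together with a compatible family of arrows, and these glue to a single function $\coprod_k I_k\to J$ with a compatible family — precisely the universal property. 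In particular $\prod_{i\in I}\iota_\mathcal{A}(A_i)=(A_i)_{i\in I}$, a fact used throughout.

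For the embedding, $\iota_\mathcal{A}$ sends $A$ to the one-element family $(A)_{\{\ast\}}$, and the hom-set formula immediately gives $\Pi\mathcal{A}[\iota_\mathcal{A}A,\iota_\mathcal{A}B]\cong\mathcal{A}[A,B]$, so $\iota_\mathcal{A}$ is a full embedding. To identify its essential image with the co-connected objects I would dualize the standard description of the connected objects of a free coproduct completion: since $(A_i)_{i\in I}\cong\prod_{i\in I}\iota_\mathcal{A}A_i$, a family with $|I|\neq 1$ is either terminal (when $I=\varnothing$) or a non-trivial product, hence not co-connected, whereas a one-element family admits no such decomposition. Codensity of $\iota_\mathcal{A}$ is the statement $\ran_{\iota_\mathcal{A}}\iota_\mathcal{A}\cong\mathrm{Id}_{\Pi\mathcal{A}}$, which will drop out of the computation of right Kan extensions along $\iota_\mathcal{A}$ in the last paragraph since $(\ran_{\iota_\mathcal{A}}\iota_\mathcal{A})(A_i)_{i\in I}=\prod_{i\in I}\iota_\mathcal{A}A_i=(A_i)_{i\in I}$; equivalently, it is the assertion that $(A_i)_{i\in I}\mapsto\Pi\mathcal{A}[(A_i)_{i\in I},\iota_\mathcal{A}(-)]=\coprod_{i\in I}\mathcal{A}[A_i,-]$ is fully faithful, which is item 4. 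For item 3, if $\iota_\mathcal{A}$ has a right adjoint $R$ then $R(A_i)_{i\in I}$ represents $\prod_{i\in I}\mathcal{A}[-,A_i]$, so all small products exist in $\mathcal{A}$; conversely, if $\mathcal{A}$ has small products then $R(A_i)_{i\in I}:=\prod_{i\in I}A_i$ works, since $\mathcal{A}[C,\prod_iA_i]\cong\prod_i\mathcal{A}[C,A_i]\cong\Pi\mathcal{A}[\iota_\mathcal{A}C,(A_i)_{i\in I}]$ naturally.

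For item 4, the opposite of the Yoneda embedding, $\mathcal{A}\hookrightarrow[\mathcal{A},\mathcal{S}et]^{op}$, $A\mapsto\mathcal{A}[A,-]$, is fully faithful, and $[\mathcal{A},\mathcal{S}et]^{op}$ has all small products (computed as pointwise coproducts in $[\mathcal{A},\mathcal{S}et]$). Applying the universal property of item 5 with $\mathcal{B}=[\mathcal{A},\mathcal{S}et]^{op}$ extends it to an essentially unique product-preserving functor $\Pi\mathcal{A}\to[\mathcal{A},\mathcal{S}et]^{op}$ carrying $(A_i)_{i\in I}$ to $\coprod_{i\in I}\mathcal{A}[A_i,-]$, so its essential image is exactly the class of small products of representables. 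Full faithfulness is the computation that both $\Pi\mathcal{A}[(A_i)_{i\in I},(B_j)_{j\in J}]$ and $[\mathcal{A},\mathcal{S}et]^{op}\bigl[\coprod_i\mathcal{A}[A_i,-],\coprod_j\mathcal{A}[B_j,-]\bigr]$ compute to $\prod_{j\in J}\coprod_{i\in I}\mathcal{A}[A_i,B_j]$.

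The heart is item 5. I would define $\Phi\colon[\mathcal{A},\mathcal{B}]\to[\Pi\mathcal{A},\mathcal{B}]_\Pi$ by $\Phi(F)(A_i)_{i\in I}=\prod_{i\in I}F(A_i)$, sending a morphism $(\alpha,(f_j)_{j\in J})$ to the map with $j$-th component $F(f_j)\circ\mathrm{pr}_{\alpha(j)}$; a short verification shows $\Phi(F)$ is a functor and preserves products. Its quasi-inverse is $\Psi(G)=G\iota_\mathcal{A}$: then $\Psi\Phi(F)=F$ on the nose, while $\Phi\Psi(G)(A_i)_{i\in I}=\prod_iG\iota_\mathcal{A}A_i\cong G\bigl(\prod_i\iota_\mathcal{A}A_i\bigr)=G(A_i)_{i\in I}$ using product-preservation of $G$, the comparison being natural in the object and in $G$. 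Finally I would identify $\Phi(F)$ with $\ran_{\iota_\mathcal{A}}F$ by computing the defining limit: the comma category $(A_i)_{i\in I}\downarrow\iota_\mathcal{A}$ decomposes as $\coprod_{i\in I}(A_i\downarrow\mathcal{A})$ — a morphism $(A_i)_{i\in I}\to\iota_\mathcal{A}C$ amounts to a choice of $i_0\in I$ and a map $A_{i_0}\to C$, and comma morphisms preserve $i_0$ — and each coslice $A_i\downarrow\mathcal{A}$ has the initial object $(A_i,\mathrm{id})$, so $(\ran_{\iota_\mathcal{A}}F)(A_i)_{i\in I}$ exists whenever $\mathcal{B}$ has small products and equals $\prod_{i\in I}F(A_i)$. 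The main obstacle is not any single deep idea but coherence bookkeeping: keeping the contravariant reindexing straight everywhere, checking that the comparison isomorphisms witnessing $\Phi\Psi\cong\mathrm{Id}$ are natural simultaneously in $\Pi\mathcal{A}$ and in the functor variable, and making the identification "essential image $=$ co-connected objects" precise against the ambient definition of co-connectedness.
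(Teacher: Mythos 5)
Your proposal is correct and follows essentially the same route as the paper: products via disjoint unions of indexing sets, the singleton embedding with coconnectedness characterizing its essential image, the coproduct-of-corepresentables embedding, and the right Kan extension computed pointwise using the initial discrete $I$-indexed subcategory of the comma category $(A_i)_{i\in I}\downarrow\iota_{\mathcal{A}}$. The only differences are presentational (you define the product-preserving extension explicitly and then identify it with $\ran_{\iota_{\mathcal{A}}}F$, and you route item 4 through item 5), not substantive.
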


\begin{proof}
 For the first item: the product in $ \Pi \mathcal{A}$  of a family of families $ ((A^j_i)_{i \in I_j})_{j \in J}$ has as indexing set the disjoint union $ \coprod_{j \in J}I_j$ and whose member of index $ (j,i)$ is the object $ A_{(j,i)} = A^i_j$; the projections are given for each $ j \in J$ as the transformation 
 \[ 
\begin{tikzcd}[column sep=huge]
I_j \arrow[rrd, "(A^j_i)_{ i \in I_j}", ""{name= U, inner sep=0.1pt, below}] \arrow[d, "q_j"']                                                &&             \\
\underset{j \in J}{\coprod} I_j \arrow[rr, "{(A_{(j,i)})_{(j,i) \in \underset{j \in J}{\coprod} I_j}}"', ""{name=D, near start, inner sep=0.1pt}] && \mathcal{A} \arrow[from=D, to=U, Rightarrow, "p_j", bend left=15] 
\end{tikzcd} \]
where $ p_i $ is the pointwise equality $ A_{(j,i)} = A^j_i $. \\

For the second item, the embedding sends an object $ A$ of $\mathcal{A}$ to the one element family $ A : 1 \rightarrow \mathcal{A}$ and a morphism $ f : A_1 \rightarrow A_2 $ to the natural transformation 
\[ 
\begin{tikzcd}
1 \arrow[rr, "A_2"', bend right, ""{name=D, inner sep=0.1pt}] \arrow[rr, "A_1", bend left, ""{name=U, below, inner sep=0.1pt}] &  & \mathcal{A} \arrow[Rightarrow, from =U, to=D, "f"']
\end{tikzcd} \]
Now we prove objects in the image of this embedding are coconnected, which says that for a family of families $ ((A^j_i)_{i \in I_j})_{j \in J}$, we have 
\[ \Pi\mathcal{A}[ \prod_{j \in J} (A^j_i)_{i \in I_j}, \iota_\mathcal{A}(A)] \simeq \coprod_{j \in J} \Pi \mathcal{A}[(A^j_i)_{i \in I}, \iota_\mathcal{A}(A)] \]
Indeed, any arrow $\Pi_{j \in J} (A^j_i)_{i \in I_j} \rightarrow  \iota_\mathcal{A}(A) $ defines an arrow $ 1 \rightarrow \coprod_{j \in J}I_j $ pointing at some pair $ (j,i)$ with $j \in J$ and $i \in I_j$, and a natural transformation 
\[ 
\begin{tikzcd}
                                                                                                       & 1 \arrow[rdd, "A", bend left] \arrow[ldd, "{(j,i)}"', bend right] \arrow[d, dashed] &             \\
                                                                                                       & I_j \arrow[rd, "(A_i)_{ i \in I_j}" description] \arrow[ld, "q_j" description]      &             \\
\underset{j \in J}{\coprod} I_j \arrow[rr, "{(A_{(j,i)})_{(j,i) \in \underset{j \in J}{\coprod} I_j}}"'] &                                                                                     & \mathcal{A}
\end{tikzcd} \]
which is nothing but an arrow $ f : A_{(j,i)} \rightarrow A$. But in such a case, as $ 1$ is a connected object in $\mathcal{S}et$, this arrow $ (j,i) : 1 \rightarrow \coprod_{j \in J}I_j$ factorizes through $ I_j$ for some $j \in J$, pointing the corresponding $i \in I_j$, and the natural transformation $ f $ factorizes through the componentwise identity $ p^j : A_{(i,j)} = A^j_i$ so we have an arrow $ (A^j_i)_{i \in I_j} \rightarrow \iota_\mathcal{A}(A) $ as desired. Conversely, any coconnected object is of the form $ \iota_\mathcal{A}(A)$: indeed a family $ (A_i)_{i \in I} : I \rightarrow \mathcal{A}$ is nothing but the product in $\Pi\mathcal{A}$ of the family $(\iota_\mathcal{A}(A_i))_{i \in I}$ as the set $ I $ decomposes as the coproduct $ \coprod_I 1$ in $\mathcal{S}et$; and any family should be indexed by a connected set to be a coconnected object in $ \Pi\mathcal{A}$, but $1$ is the only connected set. This also suffice to prove that any object is a product of objects in the range of $ \iota_A$.  \\

Now suppose that $\mathcal{A}$ has products. Then for any family $ (A_i)_{i \in I}$ in $ \Pi\mathcal{A}$ one can compute the product in $\mathcal{A}$, $ \prod_{i \in I}A_i$. Now for an object $ A $ in $ \mathcal{A}$, we have 
\[ \Pi\mathcal{A}[\iota_\mathcal{A}(A), (A_i)_{i \in I} ] \simeq \mathcal{A}[A, \prod_{i \in I} A_i] \]
sending a family of arrows $ (A \rightarrow A_i)_{i \in I}$ to the universal map $ A \rightarrow \prod_{i \in I}$. The unit of this adjunction is iso as $ \iota_\mathcal{A}$ is full and faithful, while the counit is the transformation
\[ 
\begin{tikzcd}[column sep=large]
* \arrow[rrd, "\underset{i \in I}\prod A_i",""{name=U, inner sep=0.1pt, below}]    &  &             \\
I \arrow[rr, "(A_i)_{i \in I}"', ""{name=D, inner sep=0.1pt}] \arrow[u, "!"] &  & \mathcal{A} \arrow[Rightarrow, from=U, to=D, "\epsilon_{(A_i)_{i \in I}}"']
\end{tikzcd} \]
where $ \epsilon_{(A_i)_{i \in I}}$ has the projection $p_i : \prod_{i \in I}A_i \rightarrow A_i$ has component in $i$. For the converse, it is easy to see that any right adjoint of the embedding $ \iota_\mathcal{A}$ sends a family on an object in $\mathcal{A}$ with the universal property of the product.  \\

The embedding $ \Pi\mathcal{A} \hookrightarrow [\mathcal{A}, \mathcal{S}et]^{op}$ justs sends a family $(A_i)_{i \in I} $ to the coproduct $ \coprod_{i \in I} \hirayo^*_{A_i}$ and an arrow $ (\alpha, (f_j)_{j \in J}) : (A_i)_{i \in I} \rightarrow (B_j)_{j \in J}$ to the opposite of the induced map between the corresponding coproducts in $[\mathcal{A}, \mathcal{S}et]$ as depicted below
\[ 
\begin{tikzcd}
\hirayo_{A_{\alpha(j)}}^* \arrow[r, "q_{\alpha(j)}"]              & \underset{i \in I}{\coprod} \hirayo^*_{A_i}                                                                              \\
\hirayo_{B_j}^* \arrow[u, "\hirayo^*_{f_j}"] \arrow[r, "q'_{j}"'] & \underset{j \in J}{\coprod}\hirayo^*_{B_j} \arrow[u, "\langle q_{\alpha(j)} \hirayo^*_{f_j} \rangle_{j \in J}"', dashed]
\end{tikzcd}\]

Finally, for a functor $U : \mathcal{A} \rightarrow  \mathcal{B}$ with $ \mathcal{B}$ having products; we claim that the right Kan extensions of $U$ is pointwise and can be computed as \[ \ran_{\iota_{\mathcal{A}}}U (A_i)_{i \in I} = \prod_{i \in I}U(A_i) \]
Indeed for any $(A_i)_{i \in I}$ the comma category $(A_i)_{i \in I}\downarrow \iota_\mathcal{A} $ has a small initial $I$-indexed subcategory consisting of the objects $(i, 1_{A_i})$ for $ i \in I$, and this subcategory is discrete. Hence calculating the poinwise right Kan extension resumes to calculating the product above. Moreover, as $\iota_\mathcal{A} $ is full and faithful, restricting back this Kan extension along $ \iota_\mathcal{A}$ gives back $U$, in fact up to equality in this context. 
\end{proof}

\begin{proposition}
The embedding $ \mathcal{A} \hookrightarrow \Pi\mathcal{A}$ creates connected limits in $ \mathcal{A}$. Moreover, $ \Pi \mathcal{A}$ is complete if and only if $ \mathcal{A}$ has connected limits.
\end{proposition}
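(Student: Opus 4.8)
The plan is to lean on the hom-set description of $\Pi\mathcal{A}$ from the previous proposition --- every object is a small product $\prod_j\iota_\mathcal{A}(A_j)$ of coconnected objects, each $\iota_\mathcal{A}(A_j)$ is coconnected, and $\iota_\mathcal{A}$ is fully faithful with essential image the coconnected objects --- together with one elementary fact: \emph{connected limits commute with small coproducts in $\mathcal{S}et$}. Indeed, for a connected $K$ and a set-indexed family $F_j\colon K\to\mathcal{S}et$, the ``component index'' of a cone over $\coprod_j F_j$ is locally constant, hence constant since $K$ is connected, so the cone lies in a single $F_j$; whence $\lim_k\coprod_j F_j(k)\simeq\coprod_j\lim_k F_j(k)$.

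\textbf{Creation of connected limits.} First I would fix a connected $D\colon K\to\mathcal{A}$ and prove preservation. For any $Y\simeq\prod_j\iota_\mathcal{A}(A_j)$ in $\Pi\mathcal{A}$, coconnectedness and full faithfulness give a natural iso $\Pi\mathcal{A}[Y,\iota_\mathcal{A}(-)]\simeq\coprod_j\mathcal{A}[A_j,-]$; hence, if $\lim D$ exists in $\mathcal{A}$, using that each $\mathcal{A}[A_j,-]$ preserves it and that connected limits commute with the outer coproduct, $\Pi\mathcal{A}[Y,\iota_\mathcal{A}(\lim D)]\simeq\coprod_j\lim_k\mathcal{A}[A_j,D(k)]\simeq\lim_k\coprod_j\Pi\mathcal{A}[Y,\iota_\mathcal{A}D(k)]\simeq\lim_k\Pi\mathcal{A}[Y,\iota_\mathcal{A}D(k)]$, naturally in $Y$, so $\iota_\mathcal{A}(\lim D)=\lim(\iota_\mathcal{A}D)$. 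For the reflection/lifting half I would instead start from the assumption that $L:=\lim(\iota_\mathcal{A}D)$ exists in $\Pi\mathcal{A}$: the same computation applied to $\Pi\mathcal{A}[\prod_j Y_j,L]\simeq\lim_k\Pi\mathcal{A}[\prod_j Y_j,\iota_\mathcal{A}D(k)]$ shows $L$ sends products to coproducts, i.e. is coconnected, hence $L\simeq\iota_\mathcal{A}(A)$ for some $A\in\mathcal{A}$; then $\mathcal{A}[-,A]\simeq\Pi\mathcal{A}[\iota_\mathcal{A}(-),L]\simeq\lim_k\mathcal{A}[-,D(k)]$ naturally, so $A\simeq\lim D$ in $\mathcal{A}$, and since $\iota_\mathcal{A}$ is fully faithful the limit cone of $\iota_\mathcal{A}D$ is the $\iota_\mathcal{A}$-image of a (necessarily limiting) cone over $D$. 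This gives creation up to the usual isomorphism, as expected for an embedding that is not injective on objects. Note this already delivers the ``only if'' direction of the final sentence: if $\Pi\mathcal{A}$ is complete, then every $\iota_\mathcal{A}D$ with $D$ connected has a limit, hence so does $D$.

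\textbf{Building connected limits in $\Pi\mathcal{A}$.} For the ``if'' direction I would use that $\Pi\mathcal{A}$ already has small products, so it is enough to construct connected limits. Given a small connected $D\colon K\to\Pi\mathcal{A}$ with $D(k)=(A^k_i)_{i\in I_k}$ and $D(\kappa)=(\alpha_\kappa\colon I_{k'}\to I_k,\ (f^\kappa_{i'}\colon A^k_{\alpha_\kappa(i')}\to A^{k'}_{i'})_{i'})$ for $\kappa\colon k\to k'$, I would first observe that $k\mapsto I_k$, $\kappa\mapsto\alpha_\kappa$ is a functor $I_{(-)}\colon K\op\to\mathcal{S}et$. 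Form its (small) category of elements $\mathcal{E}$ --- objects $(k,i)$ with $i\in I_k$, and a morphism $(k,i)\to(k',i')$ a $\kappa\colon k\to k'$ with $\alpha_\kappa(i')=i$ --- and let $Q=\pi_0(\mathcal{E})$, with $\mathcal{E}_q$ the connected component of $q\in Q$. Functoriality of $D$ turns each $\mathcal{E}_q$ into the shape of a diagram $\widetilde D_q\colon\mathcal{E}_q\to\mathcal{A}$, $(k,i)\mapsto A^k_i$, $\kappa\mapsto f^\kappa_{i'}$; as $\mathcal{E}_q$ is small and connected, $Z_q:=\lim\widetilde D_q$ exists by hypothesis. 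Then I claim $\lim_K D=(Z_q)_{q\in Q}$, verified by producing, naturally in a test object $(C_l)_{l\in L_0}$, a bijection between cones over $D$ with apex $(C_l)_{l\in L_0}$ and elements of $\prod_{q\in Q}\coprod_{l\in L_0}\mathcal{A}[C_l,Z_q]=\Pi\mathcal{A}[(C_l)_{l\in L_0},(Z_q)_{q\in Q}]$: the index parts of the cone legs assemble into a single function $\beta\colon Q\to L_0$ (the cone condition forces them constant on each $\mathcal{E}_q$), and the morphism parts restrict on each $\mathcal{E}_q$ to exactly a cone over $\widetilde D_q$ with apex $C_{\beta(q)}$. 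Hence $\Pi\mathcal{A}$ has all small connected limits, and with products all small limits.

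\textbf{Main obstacle.} The functoriality and naturality checks are routine bookkeeping; the genuinely delicate step is the combinatorial identification in the last paragraph --- recognising that the cone condition over $D$ splits, after passing to $\pi_0(\mathcal{E})$, into a purely $\mathcal{S}et$-theoretic ``index'' part and connected ``fibre'' parts governed by the assumed connected limits of $\mathcal{A}$, all while keeping the reindexing functions $\alpha_\kappa$ straight. An alternative, morally equivalent route uses the full embedding $\Pi\mathcal{A}\hookrightarrow[\mathcal{A},\mathcal{S}et]\op$ and checks that small products of representables are closed under limits there, which unwinds to the same argument.
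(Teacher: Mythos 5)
Your proof is correct, and its core construction is the same as the paper's: for a connected diagram $D$ in $\Pi\mathcal{A}$ you pass to the category of elements of the index functor $K\op\to\mathcal{S}et$, split it into its (small) connected components, take the connected limit of each component's fibre diagram in $\mathcal{A}$, and assemble the resulting family as the limit in $\Pi\mathcal{A}$; the verification against a test family $(C_l)_{l\in L_0}$ is exactly the paper's. Where you differ is in the creation step: the paper argues directly that for a cone $(A_i)_{i\in I}\to\iota_\mathcal{A}F$ with $F$ connected, the index maps $\alpha_d\colon 1\to I$ must all coincide because the indexing category is connected and $I$ is discrete, whereas you package the same observation as the commutation of connected limits with small coproducts in $\mathcal{S}et$ and run a Yoneda-style computation on $\Pi\mathcal{A}[Y,\iota_\mathcal{A}(-)]\simeq\coprod_j\mathcal{A}[A_j,-]$. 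Your version buys one thing the paper leaves implicit: the explicit check that a limit in $\Pi\mathcal{A}$ of a connected diagram of coconnected objects is again coconnected, which is what makes the ``only if'' direction of the completeness statement (and genuine \emph{lifting}, not just preservation plus reflection) airtight. The paper's version is more elementary and self-contained, avoiding the appeal to the commutation fact. Both are valid; no gaps.
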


\begin{proof}
 Let be $ D$ a connected category and $ F : D \rightarrow \mathcal{A}$; we prove that the singleton $ \iota_\mathcal{A}(\lim \, F)$ is the limit of $\iota_\mathcal{A} F$ in $\Pi \mathcal{A}$. Let be a cone $ (\alpha_d, (f_d)_{d \in D} : (A_i)_{i \in I} \iota_\mathcal{A}F$ in $\Pi\mathcal{A}$ consisting for each $d \in D$ of an arrow $ f_d : A_{\alpha_d} \rightarrow F(d)$ where $ \alpha_d : 1 \rightarrow I$ points to some index; but as $ D$ is connected and $ I$ is a set, necessarily the $ \alpha_d$ are all equal to the same index $\alpha$, so that we actually have a cone $( f_d : A_\alpha \rightarrow F(d))_{d \in D}$ in $\mathcal{A}$, inducing a unique arrow $ f : A_\alpha \rightarrow \lim \, F$ in $\mathcal{A}$. This defines a unique arrow $ (\alpha, f) : (A_i)_{i \in I} \rightarrow \iota_ \mathcal{A}(\lim\, F)$. By what precedes, it is also clear that any connected cone that $ \iota_\mathcal{A}$ sends to a limiting cone was already limiting.\\ 
 
 Now, recall that a category is complete if and only if it has connected limits and products. But $\Pi \mathcal{A}$ always has products, so we just have to show that $ \Pi\mathcal{A}$ has connected limits if and only if $ \mathcal{A}$ does. Let be $ {D}$ a connected category, and $ F : {D} \rightarrow \Pi\mathcal{A}$ a functor, with $ F_d : I_d \rightarrow \mathcal{A}$ its component in $d$ and with transition morphism
 \[ 
\begin{tikzcd}[row sep=small]
I_{d_1} \arrow[rrd, "F_{d_1}", ""{name=U, inner sep=0.1pt, below}]                    &  &             \\
                                                  &  & \mathcal{A} \\
I_{d_2} \arrow[uu, "I_s"] \arrow[rru, "F_{d_2}"', ""{name=D, inner sep=0.1pt}] &  & \arrow[Rightarrow, from=U, to=D, "F_s"']           
\end{tikzcd} \]
 for each $ s : d_1 \rightarrow d_2$ in $D$. Then $F$ defines an oplax cocone $ (F_d : I_d \rightarrow \mathcal{A})_{d \in D}$ in $ \mathcal{C}at$, defining uniquely a functor 
 \[ \int I \stackrel{\langle F_d \rangle_{d \in D}}{\longrightarrow } \mathcal{A} \] 
 where $ \int I$ is the category of elements of the functor $ I : D^{op} \rightarrow \mathcal{S}et$ returning the indexing set $I_d : I_{d_2} \rightarrow I_{d_1}$ and the transition map $ I_s$ for $s : d_1 \rightarrow d_2$: it is indeed well known that the category of elements is the oplax colimit in $ \mathcal{C}at$, and we see here the $ I_d$ as discrete categories. Now, as $D$ was small and each $ I_d$ was a set, the category $ \int I$ has a small set $ \pi_0 (\int I) $ of connected components. In this context, one can describe the connected components as follows. In set, the colimit of the diagram $I$ is the quotient 
 \[ \underset{d \in D}\colim \, I_d \simeq \coprod_{d \in D} I_d /\sim_D  \]
 where $ (d,i) \sim_D (d',i')$ if there is a zigzag in $D$ relating $i $ and $i'$: this exactly amounts to say that $ (d,i) $ and $ (d',i')$ are in the same connected component of $\int I$, so we also have that the connected components of $\int I$ are exactly equivalence classes $ [(d,i)]_{\sim_D}$ and \[\coprod_{d \in D} I_d /\sim_D \simeq \pi_0(\int I )  \] 
 Now, if we restrict the induced functor $ \langle F _d\rangle_{d \in D}$ along the inclusion of a connected component 
 \[ 
\begin{tikzcd}[row sep=small]
\int I \arrow[rrd, "\langle F _d\rangle_{d \in D}"]                                                  &  &             \\
                                                                                                     &  & \mathcal{A} \\
{[(d,i)]_{\sim_D}} \arrow[uu, "{i_{[(d,i)]_{\sim_D}}}", hook] \arrow[rru, "{F_{[(d,i)]_{\sim_D}}}"'] &  &            
\end{tikzcd} \]
we can compute the limit $ \lim F_{[(d,i)]_{\sim_D}}$ in $\mathcal{A}$, and this limit is preserved by the inclusion functor $ \iota_\mathcal{A}$. So the desired limit of $F$ in $\Pi\mathcal{A}$ is the family 
\[ \pi_0(\int I) \rightarrow \mathcal{A}  \]
sending the connected component $ [(d,i)]_{\sim_D}$ to the connected limit $ \lim F_{[(d,i)]_{\sim_D}} $, and this actually coincides with the product in $ \Pi \mathcal{A}$ of the family $ (\lim F_{[(d,i)]_{\sim_D}} : 1 \rightarrow \mathcal{A})_{[(d,i)]_{\sim_D} \in \pi_0(\int I)} $. \\
\end{proof}

\begin{proposition}
Any functor $ U : \mathcal{A} \rightarrow \mathcal{B}$ extends uniquely into a functor $ \Pi U $, called its \emph{free product extension}, making the square below to commute up to equality
\[ 
\begin{tikzcd}
\mathcal{A} \arrow[d, "\iota_\mathcal{A}"', hook] \arrow[r, "U"] & \mathcal{B} \arrow[d, "\iota_\mathcal{B}", hook] \\
\Pi\mathcal{A} \arrow[r, "\Pi U"']                               & \Pi\mathcal{B}                                  
\end{tikzcd}\]
\end{proposition}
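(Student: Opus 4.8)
The plan is to build $\Pi U$ by "applying $U$ indexwise" and then to extract functoriality, strict commutativity of the square, and the uniqueness clause from the explicit descriptions of $\Pi\mathcal{A}$ and $\Pi\mathcal{B}$ already recorded above. Concretely, I would first define $\Pi U$ on objects by $\Pi U\big((A_i)_{i \in I}\big) = (U(A_i))_{i \in I}$, keeping the indexing set $I$ unchanged, and on an arrow $(\alpha, (f_j)_{j \in J}) : (A_i)_{i \in I} \to (B_j)_{j \in J}$ — an application $\alpha : J \to I$ together with a family $f_j : A_{\alpha(j)} \to B_j$ — by $\Pi U(\alpha, (f_j)_{j \in J}) = (\alpha, (U(f_j))_{j \in J})$. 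This is a legitimate arrow $(U(A_i))_{i \in I} \to (U(B_j))_{j \in J}$ since $U(f_j) : U(A_{\alpha(j)}) \to U(B_j)$, so the assignment is at least well defined.

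Functoriality is then immediate bookkeeping: the identity on $(A_i)_{i \in I}$ is $(\mathrm{id}_I, (\mathrm{id}_{A_i})_i)$, sent to $(\mathrm{id}_I, (\mathrm{id}_{U(A_i)})_i)$; and since the index part of a composite in $\Pi\mathcal{A}$ is the composite of the applications while the component part is the pointwise composite $g_k \circ f_{\beta(k)}$, the equalities $U(g_k \circ f_{\beta(k)}) = U(g_k) \circ U(f_{\beta(k)})$ force $\Pi U$ to send a composite to the composite of the $\Pi U$'s. Strict commutativity is equally direct: $\iota_\mathcal{A}(A)$ is the one-element family $A : 1 \to \mathcal{A}$ and $\iota_\mathcal{A}(f)$ the one-component transformation, so $\Pi U \iota_\mathcal{A}(A)$ is the one-element family $U(A) : 1 \to \mathcal{B}$, that is $\iota_\mathcal{B}(U(A))$, and similarly on arrows, giving $\Pi U \circ \iota_\mathcal{A} = \iota_\mathcal{B} \circ U$ on the nose. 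I would also note here that $\Pi U$ preserves small products: the product in $\Pi\mathcal{A}$ of $((A^j_i)_{i \in I_j})_{j \in J}$ has indexing set $\coprod_{j \in J} I_j$ and member $A^j_i$ at $(j,i)$, and $\Pi U$ sends it to the family with the same indexing set and members $U(A^j_i)$, which is precisely the chosen product in $\Pi\mathcal{B}$ of $(\Pi U(A^j_i)_{i \in I_j})_{j \in J}$, the projections matching componentwise because $U(\mathrm{id}) = \mathrm{id}$.

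For uniqueness I would invoke the earlier proposition on the free product completion. Since $\iota_\mathcal{B} \circ U : \mathcal{A} \to \Pi\mathcal{B}$ lands in a category with products, the equivalence $[\mathcal{A}, \Pi\mathcal{B}] \simeq [\Pi\mathcal{A}, \Pi\mathcal{B}]_{\Pi}$ tells us that any product-preserving functor $\Pi\mathcal{A} \to \Pi\mathcal{B}$ restricting along $\iota_\mathcal{A}$ to $\iota_\mathcal{B} \circ U$ must be (the chosen pointwise model of) $\ran_{\iota_\mathcal{A}}(\iota_\mathcal{B} \circ U)$, which the pointwise formula evaluates as $\prod_{i \in I} \iota_\mathcal{B}(U(A_i)) = (U(A_i))_{i \in I} = \Pi U$; because $\iota_\mathcal{A}$ is full and faithful and the products in both completions were pinned down by an explicit construction, the reconstruction is on the nose rather than merely up to canonical isomorphism. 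The step I expect to require the most care is exactly this last point — pinning down in what sense the extension is \emph{unique}: taken literally, a functor $\Pi\mathcal{A} \to \Pi\mathcal{B}$ commuting with the embeddings is unconstrained on objects outside the image of $\iota_\mathcal{A}$, so the honest statement is that $\Pi U$ is the unique \emph{product-preserving} such functor, and the whole of its content is carried by the cited equivalence together with the strictness of the chosen products. Everything else is routine computation inside the two free product completions.
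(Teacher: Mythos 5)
Your proof is correct and follows essentially the same route as the paper, which simply defines $\Pi U$ as the right Kan extension $\ran_{\iota_\mathcal{A}} \iota_\mathcal{B} U$ given pointwise by $(A_i)_{i \in I} \mapsto (U(A_i))_{i \in I}$; you merely spell out the functoriality and commutativity checks the paper leaves tacit. Your closing remark that ``uniquely'' must be read as \emph{unique product-preserving extension} is an accurate reading of what the statement leaves implicit and is exactly the right caveat.
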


\begin{proof}
 The functor $\Pi U$ just is the right Kan extension $ \ran_{\iota_\mathcal{A}} \iota_\mathcal{B} U$, and is defined by sending a family $ (A_i)_{i \in I}$ to $ (U(A_i))_{i \in I}$. 
\end{proof}

The following proposition is tautological:

\begin{proposition}
$\mathcal{A}$ has a multi-initial family if and only if $ \Pi \mathcal{A}$ has an initial object. 
\end{proposition}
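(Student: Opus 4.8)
The plan is to simply unwind the definition of morphisms in $\Pi\mathcal{A}$: both sides of the equivalence will turn out to literally say the same thing once one tests against singleton families. Throughout I read ``multi-initial family'' as small, so that such a family is an object of $\Pi\mathcal{A}$; if one works instead with a large version of $\Pi\mathcal{A}$ the argument is unchanged.

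For the direction from $\Pi\mathcal{A}$ to $\mathcal{A}$, I would suppose $(X_i)_{i\in I}$ is an initial object of $\Pi\mathcal{A}$ and show it is a multi-initial family in $\mathcal{A}$ by testing against $\iota_\mathcal{A}(C)$ for each object $C$ of $\mathcal{A}$. By the definition of $\Pi\mathcal{A}$, a morphism $(X_i)_{i\in I}\to\iota_\mathcal{A}(C)$ consists of a function $\alpha\colon 1\to I$ — that is, a choice of index $i_0\in I$ — together with a single arrow $f\colon X_{i_0}\to C$ in $\mathcal{A}$. Initiality of $(X_i)_{i\in I}$ says this datum exists and is unique, which is precisely the assertion that there is a unique $i\in I$ and a unique arrow $X_i\to C$; hence $(X_i)_{i\in I}$ is a multi-initial family.

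For the converse, given a small multi-initial family $(X_i)_{i\in I}$ in $\mathcal{A}$, regarded as an object of $\Pi\mathcal{A}$, I would verify initiality directly. A morphism $(X_i)_{i\in I}\to (B_j)_{j\in J}$ in $\Pi\mathcal{A}$ is a function $\alpha\colon J\to I$ together with a family $(f_j\colon X_{\alpha(j)}\to B_j)_{j\in J}$, so producing such a morphism amounts to choosing, separately for each $j\in J$, an index in $I$ and an arrow from the corresponding $X$ into $B_j$. Applying the multi-initial property of $(X_i)_{i\in I}$ to each $B_j$ yields exactly one such choice — the unique index $\alpha(j)$ and the unique arrow $f_j$ — and any competing morphism $(X_i)_{i\in I}\to(B_j)_{j\in J}$ would violate this pointwise uniqueness. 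Thus there is a unique morphism $(X_i)_{i\in I}\to(B_j)_{j\in J}$, so $(X_i)_{i\in I}$ is initial in $\Pi\mathcal{A}$. (Alternatively, one could invoke the earlier observation that every object of $\Pi\mathcal{A}$ is a product of objects in the image of $\iota_\mathcal{A}$, reducing initiality to the singleton case of the first paragraph.)

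There is no genuine obstacle here — the whole content is bookkeeping with the hom-sets of $\Pi\mathcal{A}$, which is why the proposition is labelled tautological. The only point that deserves a word of care is the size convention: ``multi-initial family'' must be understood as small in order for it to name an object of $\Pi\mathcal{A}$, and the crucial bridge is that a morphism out of $(X_i)_{i\in I}$ in $\Pi\mathcal{A}$ decomposes componentwise over the target's index set, each component being exactly a ``unique index plus unique arrow'' instance of the definition of a multi-initial family.
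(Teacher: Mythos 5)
Your proof is correct and is exactly the definitional unwinding the paper has in mind — the paper simply declares the proposition tautological and gives no proof, and your argument supplies the intended bookkeeping (testing initiality against singleton families in one direction, and decomposing a morphism out of $(X_i)_{i\in I}$ componentwise over the target's index set in the other). Your remark on the smallness convention is also consistent with the paper's definition of $\Pi\mathcal{A}$, whose objects are set-indexed families.
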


The following proposition is the core idea of \cite{diers1977categories}[part 4], though we present here a quite different proof. 

\begin{proposition}
For a functor $ U : \mathcal{A} \rightarrow \mathcal{B}$, the following are equivalent:\begin{enumerate}
    \item $U$ is a right multi-adjoint
    \item $U $ has a relative left adjoint along $ \iota_\mathcal{A}$
    \item its free product extension $\Pi U : \Pi\mathcal{A} \rightarrow  \Pi\mathcal{B}$ is a right adjoint
\end{enumerate}  
\end{proposition}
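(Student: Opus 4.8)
The plan is to prove the cycle of implications $(1)\Rightarrow(2)\Rightarrow(3)\Rightarrow(1)$, using the free product completion machinery already established in this section. Throughout, I would use the tautological fact, stated just above, that $\mathcal{A}$ has a multi-initial family iff $\Pi\mathcal{A}$ has an initial object, and apply it slice-wise.

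\textbf{Step $(1)\Rightarrow(2)$.} Suppose $U$ is a right multi-adjoint. I want to produce a functor $L : \mathcal{B} \rightarrow \Pi\mathcal{A}$ left adjoint to $\Pi\mathcal{A} \xrightarrow{\Pi U} \Pi\mathcal{B} \to \Pi\mathcal{B}$... more precisely a relative left adjoint to $U$ along $\iota_\mathcal{A}$, meaning a functor $L : \mathcal{B} \rightarrow \Pi\mathcal{A}$ together with an isomorphism $\Pi\mathcal{A}[L(B), \iota_\mathcal{A}(A)] \simeq \mathcal{B}[B, U(A)]$ natural in $B$ and $A$. The definition: send $B$ to the family $L(B) = (A_i)_{i \in I_B}$ where $(n_i : B \rightarrow U(A_i))_{i \in I_B}$ is the small multi-initial family of $B \downarrow U$, i.e. the cone of local units under $B$. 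For the hom-set bijection, unwind $\Pi\mathcal{A}[(A_i)_{i\in I_B}, \iota_\mathcal{A}(A)]$: by the codensity/coconnectedness computation it equals $\coprod_{i \in I_B}\mathcal{A}[A_i, A]$, and the multi-initiality of the $n_i$ says exactly that any $f : B \to U(A)$ factors through a \emph{unique} $n_i$ by a \emph{unique} $u : A_i \to A$ — which is precisely an element of that coproduct. Naturality in $A$ is immediate from functoriality; naturality in $B$ uses the observation (from the stable-functor discussion and \cref{BC}) that factorization through local units is compatible with postcomposition, plus functoriality of $L$ on morphisms of $\mathcal{B}$, which itself has to be defined via the universal property — this bookkeeping is the one genuinely fiddly point.

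\textbf{Step $(2)\Rightarrow(3)$.} Assume $U$ has a relative left adjoint $L$ along $\iota_\mathcal{A}$, so $\Pi\mathcal{A}[L(B),\iota_\mathcal{A}(A)] \simeq \mathcal{B}[B,U(A)]$. I want to build a left adjoint to $\Pi U : \Pi\mathcal{A} \to \Pi\mathcal{B}$. Recall $\Pi\mathcal{B}$ has all small products and every object of $\Pi\mathcal{B}$ is a small product of objects $\iota_\mathcal{B}(B)$, while $\Pi U = \ran_{\iota_\mathcal{A}}\iota_\mathcal{B}U$ preserves products (it is a right Kan extension landing in a category with products, computed pointwise as $(U(A_i))_i$, which is manifestly product-preserving since products in $\Pi$ are disjoint unions of index sets). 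So the candidate left adjoint $\widehat{L} : \Pi\mathcal{B} \to \Pi\mathcal{A}$ should send $(B_j)_{j\in J}$ to the product $\prod_{j\in J}L(B_j)$ computed in $\Pi\mathcal{A}$ (concretely the disjoint-union family $\coprod_{j}(A^j_i)_{i\in I_{B_j}}$). Then compute: $\Pi\mathcal{A}[\widehat{L}((B_j)_j), (A_i')_{i'}]$; expand the right entry as a product of coconnected objects $\iota_\mathcal{A}(A'_{i'})$ — wait, it is already a family, so use that $\Pi\mathcal{A}[\prod_j L(B_j), \prod_{i'}\iota_\mathcal{A}(A'_{i'})] \simeq \prod_{i'}\coprod_j \Pi\mathcal{A}[L(B_j),\iota_\mathcal{A}(A'_{i'})] \simeq \prod_{i'}\coprod_j \mathcal{B}[B_j, U(A'_{i'})] \simeq \Pi\mathcal{B}[(B_j)_j, (U(A'_{i'}))_{i'}] = \Pi\mathcal{B}[(B_j)_j, \Pi U((A'_{i'})_{i'})]$, where the last bijection is again the defining hom-formula for $\Pi\mathcal{B}$ together with $\Pi U$ acting componentwise. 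Chasing naturality gives the adjunction.

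\textbf{Step $(3)\Rightarrow(1)$.} Assume $\Pi U$ has a left adjoint $F$. Fix $B$ in $\mathcal{B}$ and apply the adjunction at the one-element family $\iota_\mathcal{B}(B)$: we get $F(\iota_\mathcal{B}(B)) = (A_i)_{i\in I}$ in $\Pi\mathcal{A}$ with $\Pi\mathcal{A}[(A_i)_i, (A'_{i'})_{i'}] \simeq \Pi\mathcal{B}[\iota_\mathcal{B}(B), (U(A'_{i'}))_{i'}]$. Restricting the target to $\iota_\mathcal{A}(A)$ and unwinding both sides: the left side is $\coprod_i \mathcal{A}[A_i, A]$ and the right side is $\mathcal{B}[B, U(A)]$ (since $\iota_\mathcal{B}(B)$ is a one-element family, a map $\iota_\mathcal{B}(B)\to(U(A'_{i'}))_{i'}$ picks one index and one morphism $B\to U(A'_{i'})$, and with a single $A$ this is just $\mathcal{B}[B,U(A)]$). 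So every $f : B\to U(A)$ factors through exactly one $n_i : B\to U(A_i)$ — the image of $1_{A_i}$ under the unit — by exactly one map of $\mathcal{A}$, which says $(n_i)_{i\in I}$ is a small multi-initial family in $B\downarrow U$. Since $I$ is a set, this is precisely the right multi-adjoint condition.

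\textbf{Main obstacle.} The genuine work is in $(1)\Rightarrow(2)$: defining $L$ on morphisms of $\mathcal{B}$ and checking the hom-isomorphism is natural \emph{in} $B$. A morphism $B'\to B$ must be sent to a $\Pi\mathcal{A}$-morphism $L(B')\to L(B)$, which requires, for each local unit under $B'$, choosing the (unique, by multi-initiality) local unit under $B'$ through which its composite with $B'\to B$... rather, one needs to track how local units reorganize under precomposition, and the well-definedness of the reindexing function on connected components. This is exactly the content of \cref{one unit per connected component} and the Beck–Chevalley corollaries, so the ingredients are all in place; it is just a matter of assembling them carefully. The other steps are essentially formal manipulations of the hom-formulas for $\Pi$ and the product-preservation of $\Pi U$.
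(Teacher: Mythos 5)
Your proposal is correct and follows essentially the same route as the paper: define $L(B)$ as the family of codomains of the local units, extend to $\Pi\mathcal{B}$ by the disjoint-union (product) family, and recover the multi-initial family from the unit at a singleton $\iota_\mathcal{B}(B)$. The only cosmetic difference is that your $(2)\Rightarrow(3)$ is carried out purely formally from the relative hom-formula and coconnectedness, whereas the paper re-invokes the local-unit structure to define the extension on morphisms; the substance, including the fiddly reindexing of units under precomposition that you correctly flag as the real work in $(1)\Rightarrow(2)$, is identical.
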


\begin{proof}
 Suppose that $ U$ is a right multi-adjoint, with $I_B$ the set of local units $ \eta_x : B \rightarrow U(A_x)$ and $ \pi_B : I_B \rightarrow \mathcal{A}$ its projection sending $ x $ to $ A_x$. Define a functor $ L : \mathcal{B} \rightarrow \Pi \mathcal{A}$ sending an object $B$ to the family $\pi_B : I_B \rightarrow \mathcal{A} $, and an arrow $ f : B_1 \rightarrow B_2  $ to the transformation 
 \[ 
\begin{tikzcd}[row sep=small]
I_{B_1} \arrow[rrd, "\pi_{B_1}", ""{name=U, below, inner sep=0.1pt}]                    &  &             \\
                                                    &  & \mathcal{A} \\
I_{B_2} \arrow[uu, "I_f"] \arrow[rru, "\pi_{b_2}"', ""{name=D, inner sep=0.1pt}] &  &  \arrow[Rightarrow, from=U, to=D, "L_f"]          
\end{tikzcd} \]
where $I_f$ sends $x$ to the index of the unit $ \eta^{A_x}_{n_x f} : B \rightarrow U(A_{n_x f}) = n_{I_f(x)}$ and $ L_f $ has component $ L_{A_x}(n_x f) :A_{n_x f} \rightarrow A_x$ as provided in each $ x \in I_{B_2}$ by the factorization  
\[ 
\begin{tikzcd}[column sep=huge]
B_1 \arrow[d, "\eta^{A_x}_{n_xf}"'] \arrow[r, "f"] & B_2 \arrow[d, "n_x"] \\
U(A_{n_xf}) \arrow[r, "U_{A_x}L_{A_x}(n_xf)"']     & U(A_x)              
\end{tikzcd} \]
Observe that the local units of $B$ define in particular a morphism of families 
\[ 
\begin{tikzcd}
1 \arrow[r, "B", ""{name=U, inner sep=0.1pt, below, near start}]                       & \mathcal{B}                 \\
I_B \arrow[r, "\pi_B"'] \arrow[u, "!"] & \mathcal{A} \arrow[u, "U"', ""{name=D, inner sep=0.1pt, below, near start}] \arrow[Rightarrow, from= U, to=D, "n"', bend right=35]
\end{tikzcd} \]
corresponding to the family $ (n_x : B \rightarrow U(A_x))_{x \in I_B}$.
Now it is easy to see that this functor is a relative left adjoint to $U$ along $ \iota_\mathcal{A}$, that is, that for any $B$ in $\mathcal{B}$ and $ A$ in $\mathcal{A}$ we have 
\[ \Pi\mathcal{A}[L(B), \iota_\mathcal{A} (A)] \simeq \mathcal{B}[B, U(A)] \]
Indeed, any arrow $f : B \rightarrow U(A)$ factorizes through a unique $ n_x : B \rightarrow U(A_x)$, where $x$ is the index of the unit $ \eta^{A}_f$, while $ L_A(f) : A_x \rightarrow A$  provides a morphism in $\Pi\mathcal{A}$  
\[ 
\begin{tikzcd}[ row sep=small ]
I_B \arrow[rrd, "\pi_B", ""{name=U, below, inner sep=0.1pt}]            &  &             \\
                                    &  & \mathcal{A} \\
1 \arrow[rru, "A"', ""{name=D, inner sep=0.1pt}] \arrow[uu, "x"] &  & \arrow[Rightarrow, from=U, to=D, "L_A(f)"']           
\end{tikzcd} \]\\
while any arrow $ (x, u) : L(B) \rightarrow \iota_\mathcal{A}(A)$ can be pasted with the family of units 
\[ 
\begin{tikzcd}[column sep=large]
1 \arrow[rr, "B",""{name=U, inner sep=0.1pt, below}]                      && \mathcal{B}                 \\
I_B \arrow[rr, "\pi_B", ""{name=A, below, inner sep=0.1pt}] \arrow[u, "!"] && \mathcal{A} \arrow[u, "U"', ""{name=D, inner sep=0.1pt, below, near start}] \\
1 \arrow[u, "x"] \arrow[rru, "A"', ""{name=B, inner sep=0.1pt}]     &&    \arrow[Rightarrow, from= U, to=D, "n"', bend right=20]                 \arrow[Rightarrow, from=A, to=B, "L_A(f)"'{near end}]       
\end{tikzcd} \]
to return an arrow $ B \rightarrow U(A)$, and one just has to use the universal properties of the local units to see that those processes are mutual inverses.\\

This functor $ L$ extends to $\Pi\mathcal{B}$ as follows: for a family $ (B_i)_{i \in I}$, define $ L(B_i)_{i \in I}$ as the family 
\[  \coprod_{i \in I} I_{B_i} \stackrel{ \langle \pi_{B_i} \rangle_{i \in I} }{\longrightarrow} \mathcal{A}  \]
sending $ (i,x)$ with $ i \in I$ and $ x \in I_{B_i}$ to the associated $ A_x$; for an arrow $ (\alpha, f=(f_i)_{i \in I}) : (B_i)_{i \in I_1} \rightarrow (B'_i)_{i \in I_2} $, that is a family $ (f_i : B_{\alpha(i)} \rightarrow B'_i)_{i \in I_2}$, each pair $ (i,x) \in \coprod_{i \in I_2}I_{B'_i} $ defines uniquely some $ I_f(x) \in I_{B_{\alpha(i)}}$ indexing the unit through which factorizes the composite $ n_x f_i : B_{\alpha(i)} \rightarrow U(A_x)$, that is such that 
\[ 
\begin{tikzcd}[column sep=large]
B_{\alpha(i)} \arrow[d, "{n_{I_{(\alpha, f)}(x)}}"'] \arrow[r, "f_i"] & B'_i \arrow[d, "n_x"] \\
U(A_{I_f(x)}) \arrow[r, "U(L_{A_x}(n_xf))"']                                      & U(A_x)               
\end{tikzcd} \]
and define $ I_{(\alpha,f)} : \coprod_{i \in I_2}I_{B'_i} \rightarrow \coprod_{i \in I}I_{B_i}$ as sending $ (i,x)$ to this $ I_{(\alpha,f)}(x)$, and define the desired morphism $L(\alpha, f)$ as 
\[ 
\begin{tikzcd}[row sep=small]
\underset{i \in I_1}\coprod I_{B_i} \arrow[rrd, "\langle \pi_{B_i} \rangle_{i \in I_1} ", ""{name=U, below, inner sep=0.1pt}]                                    &  &             \\
                                                                                                                             &  & \mathcal{A} \\
\underset{i \in I_2}\coprod I_{B'_i} \arrow[rru, "\langle \pi_{B'_i} \rangle_{i \in I_2} "', ""{name=D, inner sep=0.1pt}] \arrow[uu, "{I_{(\alpha, f)}}"] &  &  \arrow[Rightarrow, from=U, to=D, " L_f"']          
\end{tikzcd} \]
where $ L_f$ denotes the family $ (L_{A_x}(n_xf) : A_{I_{(\alpha, f)}} \rightarrow A_x)_{(i,x) \in \coprod_{i \in I_2}I_{B'_i}}$. In particular we have a morphism 
\[ \begin{tikzcd}
I \arrow[r, "(B_i)_{i \in I}", ""{name=U, inner sep=0.1pt, below, near start}]                       & \mathcal{B}                 \\
\underset{i \in I}{\coprod}I_{B_i} \arrow[r, "\pi_{(B_i)_{i \in I}}"'] \arrow[u, "\pi_I"] & \mathcal{A} \arrow[u, "U"', ""{name=D, inner sep=0.1pt, below, near start}] \arrow[Rightarrow, from= U, to=D, "n"', bend right=35]
\end{tikzcd} \]
where $ \pi_I :\coprod_{i \in I}I_{B_i} \rightarrow I $ is the projection sending $ (i,x)$ on $i \in I$, $\pi_{(B_i)_{i \in I}} : \coprod_{i \in I}I_{B_i} \rightarrow \mathcal{A}$ sends $ (i,x)$ on $ A_x$, and $ n$ has as components \[( n_{(i,x)}= n_x : B_i \rightarrow U(A_x))_{(i,x) \in \underset{i \in I}\coprod I_{B_i}}\]
Then for any $ (A_j)_{j \in J} $ in $\Pi\mathcal{A}$ and $ (B_i)_{i \in I}$ in $ \Pi\mathcal{B}$ we have an isomorphism 
\[ \Pi \mathcal{A}[ L(B_i)_{i \in I}, (A_j)_{j \in J}] \simeq \Pi \mathcal{B}[(B_i)_{i \in I}, (U(A_j))_{j \in J}] \]
Indeed a morphism of family $ (\alpha, f) :(B_i)_{i \in I} \rightarrow (U(A_j))_{j \in J} $, that is a family $ (f_j : B_{\alpha(j)} \rightarrow U(A_j))_{j \in J}$, defines an application $ \xi : J \rightarrow \coprod_{i \in I} I_{B_i} $ sending $ i$ to the index of the local unit $ n_{\xi(i)} = \eta^{A_i}_{f_i} : B_{\alpha(i)} \rightarrow A_{\xi(i)}$, and a morphism of families 
\[ 
\begin{tikzcd}[row sep=small, column sep=huge]
\underset{i \in I}\coprod I_{B_i} \arrow[rrd, "\pi_{(B_i)_{i \in I}}", ""{name=U, near start, below, inner sep=0.1pt}] &  &             \\
                                                                       &  & \mathcal{A} \\
J \arrow[uu, "\xi"] \arrow[rru, "(A_j)_{j \in J}"', ""{name=D, near start, inner sep=0.1pt}]                    &  &        \arrow[Rightarrow, from=U, to=D, "L_{(A_j)_{j \in J}}(f_j)_{j \in J}" description]    
\end{tikzcd} \]
where $L_{(A_j)_{j \in J}}(f_j)_{j \in J}$ consists of all right part in $\mathcal{A}$ of their factorization \[ (L_{A_j}(f_j) : A_{\xi(j)} \rightarrow A_j)_{j \in J} \]
For the converse, use the same argument as for the proof of the first implication by pasting. \\

Now we prove that if $ U$ is such that $ \Pi U$ is right adjoint to a functor $L$, then $ U$ is multi-adjoint. Observe that with this hypothesis, we have in particular for any $B$ in $\mathcal{B}$ a unit $ \eta_{(*,B)} : (*,B) \rightarrow \Pi U L(*,B)$. So if $I_B$ denote the indexing set of $\Pi U L(*,B)$ and $ A_i$ is the object in $\mathcal{A}$ corresponding to the $i$th index of $I_B$ in this family, then we have a cone $ (\eta_i : B \rightarrow U(A_i))_{i \in I_B}$. Now we prove this is a cone of local units. For any $ A$ in $\mathcal{A}$, the unit property of $ \Pi UL(*,B)$ provides a factorization
\[\begin{tikzcd}[column sep=large]
	{*} && {\mathcal{B}} \\
	& {I_B} & {} \\
	{*} && {\mathcal{A}}
	\arrow["{U}"', from=3-3, to=1-3]
	\arrow["{B}"{name=0}, from=1-1, to=1-3]
	\arrow["{A}"{name=1, swap}, from=3-1, to=3-3]
	\arrow[Rightarrow, from=1-1, to=3-1, no head]
	\arrow[from=1-1, to=2-2]
	\arrow["{i_f}" description, from=3-1, to=2-2, dashed]
	\arrow["{(A_i)_{i \in I_B}}"{name=2, description}, from=2-2, to=3-3]
	\arrow[Rightarrow, "{n}"', from=0, to=2-3, shift left=2, curve={height=12pt}, shorten <=8pt, shorten >=10pt]
	\arrow[Rightarrow, "{f}"', from=0, to=1, shift right=10, shorten <=6pt, shorten >=6pt, crossing over]
	\arrow[Rightarrow, "{L_A(f)}"', from=2, to=1, shorten <=4pt, shorten >=4pt, dashed]
\end{tikzcd}\] 
for $i_f : * \rightarrow I_B$ pointing at the index of the local unit $ \eta^A_f$ and $ L_A(f)$ returning the image of $f$ along the local left adjoint at $A$. 
\end{proof}

\section{Factorization aspects}

As suggested by the definition of candidates in the notion of stability, orthogonality structures are hidden in the notion of local adjunction. In the same vein, one could ask whether the stable factorization of arrows toward $U$ can be generalized to any arrow, that is, if the orthogonality structure provided by the candidates on the left and the morphisms in the range of $U$ on the right can be completed into a factorization system. In the context studied in \cite{Diers}, this is possible through a small object argument in the context of locally finitely presentable category. This step is essential in general in the construction of spectra, and also takes place in the topos-theoretic approach of \cite{Coste}, though it is mostly left implicit. The reference for this is \cite{Anel}, we mostly follow there modulo some adaptations, and in combination with elements from \cite{Diers}. \\

\begin{definition}
Let $ U : \mathcal{A} \rightarrow \mathcal{B}$ a local right adjoint. A morphism $n : B \rightarrow C$ is said to be \emph{ diagonally universal } if it is left orthogonal to morphisms in the range of $U$, that is, if for any morphism $ u : A_1 \rightarrow A_2$ in $\mathcal{A}$ and any square as below, there exists a unique filler $d : C \rightarrow U(A_1)$ making both the upper and lower triangles to commute
\[ 
\begin{tikzcd}
B \arrow[d, "n"'] \arrow[r, "f"]              & U(A_1) \arrow[d, "U(u)"] \\
C \arrow[r, "g"'] \arrow[ru, "d" description] & U(A_2)                  
\end{tikzcd} \]
\end{definition}

As a left class in an orthogonality structure, diagonally universal morphisms enjoy the following properties, which are standards and then do not need proofs here.

\begin{proposition}
We have the following:\begin{itemize}
    \item diagonally universal morphisms are stable under composition and contain isomorphisms
    \item if $ n : B \rightarrow C$ is diagonally universal and $ m : C \rightarrow D$ is such that $ mn $ is diagonally universal, then $m$ is also diagonally universal. In particular, diagonally universal morphisms are stable under retracts.
    \item diagonally universal morphisms are stable under pushout along arbitrary morphisms
    \item diagonally universal morphisms are stable under colimits and retracts in the arrow category $ \mathcal{B}^2$ 
\end{itemize}
\end{proposition}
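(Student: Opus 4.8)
The plan is to recognise the diagonally universal morphisms as exactly the left class ${}^{\perp}\mathcal{R}$ of the unique-lifting orthogonality structure cut out by the class $\mathcal{R}=\{\,U(u)\mid u\in\mathrm{Mor}(\mathcal{A})\,\}$ of morphisms in the strict image of $U$; each bullet is then an instance of a standard closure property of such a left class, and I would prove them by direct diagram chases in $\mathcal{B}$ that lean on the \emph{uniqueness} of fillers, treating the points in the order stated.

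For the first point, if $n$ is invertible then any square with left leg $n$, top leg $f$ and right leg $U(u)$ has $fn^{-1}$ as a filler, visibly the only candidate. For composition, given diagonally universal $n\colon B\to C$ and $n'\colon C\to D$ and a square with left leg $n'n$, I would first lift against $n$ (taking the bottom leg pre-composed by $n'$), getting a unique $d_1\colon C\to U(A_1)$, then lift the resulting square against $n'$, getting $d_2\colon D\to U(A_1)$; one then checks $d_2(n'n)=f$ and $U(u)d_2=g$, and deduces uniqueness of the composite filler by applying uniqueness first for $n'$ and then for $n$ to any competitor.

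For left cancellation, suppose $n\colon B\to C$ and $mn$ are diagonally universal with $m\colon C\to D$; given a square with left leg $m$, top leg $f\colon C\to U(A_1)$ and bottom leg $g$, I would pre-compose the top leg by $n$ to obtain a square with left leg $mn$, hence a unique $e\colon D\to U(A_1)$ with $e(mn)=fn$ and $U(u)e=g$. The crucial step is to show $em=f$: both $em$ and $f$ are fillers of the square with left leg $n$, top leg $fn$ and right leg $U(u)$ (since $(em)n=e(mn)=fn$ and $U(u)(em)=gm=U(u)f$), so they coincide by uniqueness for $n$; thus $e$ is the required filler for $m$, its uniqueness inherited from that for $mn$. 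Stability under retracts is then the routine transport argument: a lifting problem for a retract is carried along the retract data to a lifting problem for the retracting morphism, solved there, and carried back. For pushouts, given diagonally universal $n\colon B\to C$ and a pushout of $n$ along some $h\colon B\to B'$ with legs $k$ and $n'$, a lifting problem $(f\colon B'\to U(A_1),\,g\colon C'\to U(A_2))$ against $U(u)$ restricts along the pushout square to a lifting problem for $n$; its unique filler $d\colon C\to U(A_1)$ together with $f$ satisfies the cocone condition $dn=fh$, so the universal property of the pushout yields $d'\colon C'\to U(A_1)$ with $d'k=d$ and $d'n'=f$, and $U(u)d'=g$ as well as uniqueness of $d'$ follow by pushing the relevant identities back along $k$ and $n'$ and using joint epimorphy of the pushout cocone together with uniqueness for $n$. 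Finally, closure under colimits in $\mathcal{B}^2$ would be obtained by restricting a lifting problem for $\colim_i n_i$ along the colimit cocone to lifting problems for the $n_i$, solving each uniquely, observing that uniqueness forces these fillers to form a cocone and hence to assemble into a single filler $d$, and checking the two triangle identities and the uniqueness of $d$ via joint epimorphy of colimit cocones; retracts are handled exactly as above.

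I expect the only delicate bookkeeping — and presumably the reason the statement is waved through as standard — to be the repeated appeals to uniqueness: to identify $em=f$ in the cancellation argument, and to see that the family of fillers produced in the colimit case (and the filler produced after a pushout) is genuinely compatible with the defining cocone. Beyond that there is no real obstacle, and none of the arguments touch the local right adjoint structure of $U$ except through the fixed class $\mathcal{R}$ it determines.
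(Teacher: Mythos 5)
Your proposal is correct and matches the paper's approach exactly: the paper itself justifies the proposition only by remarking that diagonally universal morphisms form the left class $^\perp U(\mathcal{A}^2)$ of an orthogonality structure, so that all four items are standard closure properties of such a left class, and it omits the proofs entirely. Your diagram chases (composition via two successive unique lifts, cancellation via identifying $em=f$ by uniqueness against $n$, pushouts and colimits via joint epimorphy of the cocones) are the standard verifications and are all sound.
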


\begin{remark}
Beware that without further assumption, a diagonally universal morphism with codomain in the range of $U$, that is of the form $ n : B \rightarrow U(A)$, needs not to be a candidate, as the filler needs not to arise from a morphism in $\mathcal{A}$. 
\end{remark}

\begin{definition}\label{relff}
A functor $ U : \mathcal{A} \rightarrow \mathcal{B}$ is said to be \emph{relatively full and faithful} if for any triangle as below 
\[\begin{tikzcd}
	{U(A_1)} && {U(A_2)} \\
	& {U(A)}
	\arrow["{U(u_1)}"', from=1-1, to=2-2]
	\arrow["{U(u_2)}", from=1-3, to=2-2]
	\arrow["f", from=1-1, to=1-3]
\end{tikzcd}\]
then $ f$ comes uniquely from some $ u : A_1 \rightarrow A_2$ such that $ U(u)=f$.
\end{definition}

\begin{proposition}
For any $ u : A_1 \rightarrow A_2$, $ U(u)$ is diagonally universal if and only if $U(u)$ is an isomorphism. If moreover $ U$ is relatively full and faithful, then $U(u)$ is diagonally universal if and only if $u$ is an isomorphism.
\end{proposition}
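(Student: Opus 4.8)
The plan is to prove the two equivalences separately: the first one without any hypothesis on $U$, and the second one invoking relative full and faithfulness only at the very end.

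For the first equivalence, the implication that $U(u)$ being an isomorphism makes it diagonally universal is immediate, since isomorphisms of $\mathcal{B}$ lie in the left class of any orthogonality structure (this is recorded in the preceding proposition). For the converse, I would suppose $U(u)$ diagonally universal and test its lifting property against $U(u)$ itself, using the commuting square
\[
\begin{tikzcd}
U(A_1) \arrow[d, "U(u)"'] \arrow[r, "1_{U(A_1)}"] & U(A_1) \arrow[d, "U(u)"] \\
U(A_2) \arrow[r, "1_{U(A_2)}"'] \arrow[ru, "d" description] & U(A_2)
\end{tikzcd}
\]
whose right edge is $U$ of a morphism of $\mathcal{A}$, as required. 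The unique filler $d : U(A_2) \to U(A_1)$ satisfies $d\,U(u) = 1_{U(A_1)}$ (upper triangle) and $U(u)\,d = 1_{U(A_2)}$ (lower triangle), hence is a two-sided inverse of $U(u)$.

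For the second equivalence, assume $U$ relatively full and faithful. If $u$ is an isomorphism, then $U(u)$ is an isomorphism by functoriality, hence diagonally universal by the first part. Conversely, if $U(u)$ is diagonally universal, the first part supplies an inverse $d : U(A_2) \to U(A_1)$ of $U(u)$. Since $U(u)\,d = 1_{U(A_2)} = U(1_{A_2})$, the morphism $d$ fits into a commuting triangle over $U(A_2)$ with legs $U(1_{A_2})$ and $U(u)$, so relative full and faithfulness produces a unique $w : A_2 \to A_1$ with $U(w) = d$. It then remains to check that $w$ inverts $u$: from $U(wu) = d\,U(u) = 1_{U(A_1)} = U(1_{A_1})$ and $U(uw) = U(u)\,d = 1_{U(A_2)} = U(1_{A_2})$, together with the fact that relative full and faithfulness forces $U$ to be faithful (two parallel morphisms of $\mathcal{A}$ with the same image under $U$ always sit in a triangle over their common codomain whose other leg is an identity, so the uniqueness clause identifies them), one obtains $wu = 1_{A_1}$ and $uw = 1_{A_2}$.

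I expect no serious obstacle; the one delicate point is the final step, where the conclusion that $U(u)$ is an isomorphism must be upgraded to the conclusion that $u$ is an isomorphism. This is exactly where relative full and faithfulness does its work: first to lift the inverse $d$ back into $\mathcal{A}$, and then, via the faithfulness it entails, to guarantee that the lifted morphism is a genuine two-sided inverse of $u$.
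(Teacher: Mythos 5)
Your proof is correct and follows essentially the same route as the paper: the identity square whose unique filler is a two-sided inverse of $U(u)$, followed by lifting that inverse through relative full and faithfulness. Your explicit remark that relative full and faithfulness entails faithfulness (via the triangle with an identity leg) is a welcome clarification of a step the paper leaves implicit.
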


\begin{proof}
The unique filler of the square 
 \[ 
\begin{tikzcd}
U(A_1) \arrow[d, "U(u)"'] \arrow[r, equal]                  & U(A_1) \arrow[d, "U(u)"] \\
U(A_2) \arrow[r, equal] \arrow[ru, "d" description, dashed] & U(A_2)                  
\end{tikzcd}\]
is both a right and left inverse to $ U(u)$; and if moreover $U$ is relatively full and faithful, it comes from a unique morphism $ d = U(v)$ which is both a section of $ u$ from the lower triangle, but it is also a retraction because there is a unique morphism in $\mathcal{A}$ lifting $U(u)$ in the upper triangle, and this must be $u$. 
\end{proof}

\begin{proposition}
A morphism $ n : B \rightarrow U(A)$ is diagonally universal if and only if $ U_AL_A(n)$ is an isomorphism. 
\end{proposition}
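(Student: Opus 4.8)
The plan is to exploit the canonical factorization $n = U_AL_A(n)\circ\eta^A_n$ of any $n : B \to U(A)$ through its local unit, and to play the two classes of maps appearing in it against one another: $\eta^A_n$ is, by \cref{units are candidate}, a candidate, hence in particular a diagonally universal morphism, while $U_AL_A(n)$ is a morphism in the range of $U$, which by the preceding proposition is diagonally universal precisely when it is an isomorphism. Since diagonally universal morphisms are stable under composition and contain isomorphisms (recalled among the standard properties above), one implication is then immediate: if $U_AL_A(n)$ is an isomorphism it is diagonally universal, and $n = U_AL_A(n)\circ\eta^A_n$ is a composite of diagonally universal maps, hence diagonally universal.

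For the converse, assume $n$ is diagonally universal. The shortest route uses the cancellation property of the left class: since $\eta^A_n$ is diagonally universal and $n = U_AL_A(n)\circ\eta^A_n$ is diagonally universal, $U_AL_A(n)$ is diagonally universal, hence an isomorphism by the preceding proposition. I would also give a self-contained version that does not invoke diagonal universality of the candidate $\eta^A_n$, only that it \emph{is} a candidate. Consider the square with $\eta^A_n$ along the top, $n$ and $U_AL_A(n)$ as the two vertical edges, and $\mathrm{id}_{U(A)}$ along the bottom; it commutes because $U_AL_A(n)\circ\eta^A_n = n$. Diagonal universality of $n$ yields a filler $d : U(A)\to U(A_n)$, so $d$ is a section of $U_AL_A(n)$ and $d\circ n = \eta^A_n$. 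Then $\delta := d\circ U_AL_A(n) : U(A_n)\to U(A_n)$ satisfies $\delta\circ\eta^A_n = \eta^A_n$ and $U_AL_A(n)\circ\delta = U_AL_A(n)$, i.e. $\delta$ is a filler of the (commuting) square having $\eta^A_n$ on top and $U_AL_A(n)$ on both the bottom and the right. Because $\eta^A_n$ is a candidate, this square has a \emph{unique} filler in $\mathcal{B}$, and since $\mathrm{id}_{A_n}$ represents one such filler, that filler is $\mathrm{id}_{U(A_n)}$; hence $d\circ U_AL_A(n) = \mathrm{id}_{U(A_n)}$, and together with $U_AL_A(n)\circ d = \mathrm{id}_{U(A)}$ this shows $U_AL_A(n)$ is an isomorphism.

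The one substantial point — and the main obstacle — is the assertion used above that a candidate, hence every local unit $\eta^A_n$, is diagonally universal. This is not purely formal: the defining property of a candidate only provides a (unique, $\mathcal{A}$-represented) diagonal against lifting squares whose \emph{bottom} edge $U(A)\to U(A_2)$ already lies in the range of $U$, whereas diagonal universality demands a diagonal against an arbitrary bottom edge $g : U(A)\to U(A_2)$. The intended fix is to take the stable factorization $g = U(u_g)\circ n_g$ of $g$ (with $n_g$ a candidate), solve the now-admissible square obtained by replacing the bottom edge with $n_g$, transport the resulting diagonal back along $n_g$, and recover uniqueness from the candidate property of $n_g$ together with that of the candidate sitting on the top edge (equivalently, from the initiality of stable factorizations). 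I would either carry this reduction out explicitly or cite it from the stable-functor/candidate theory recalled in the previous section.

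Granting that lemma, the two implications above complete the proof. Note in passing that the statement also makes transparent why diagonal universality of $n : B\to U(A)$ is strictly weaker than $n$ being a candidate: the latter demands that $L_A(n)$ itself be an isomorphism in $\mathcal{A}$, while the former only demands that its image $U_AL_A(n)$ be invertible in $\mathcal{B}$ — consistent with the earlier remark that a diagonally universal morphism with codomain in the range of $U$ need not be a candidate.
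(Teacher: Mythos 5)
Your ``self-contained'' argument for the implication (diagonally universal $\Rightarrow$ $U_AL_A(n)$ iso) is exactly the proof in the paper: the filler $d$ of the square with $\eta^A_n$ on top, $n$ on the left, $U_AL_A(n)$ on the right and $1_{U(A)}$ on the bottom is a section of $U_AL_A(n)$ satisfying $d\circ n=\eta^A_n$, and uniqueness of fillers against the candidate $\eta^A_n$ in the second square forces $d\circ U_AL_A(n)=1_{U(A_n)}$. That half is correct and needs nothing more. The problem is the other implication (and your ``shortest route'' for the first one), both of which rest on the lemma that candidates --- in particular local units --- are diagonally universal. You rightly single this out as the one substantial point, but your proposed repair does not close the gap: factoring the bottom edge as $g=U(u_g)\circ n_g$ and passing to the square whose bottom edge is $U(u_g)$ puts the composite $n_g\circ\eta^A_n$ of two candidates on the left, and you then need that composite to again be a candidate. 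Candidates are not closed under composition.

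In fact the lemma is false, and with it the ``if'' direction of the statement under the literal reading of ``diagonally universal'' (arbitrary bottom edge $g$). Take $U\colon\mathbf{Set}_*\to\mathbf{Set}$ the forgetful functor from pointed sets --- a right adjoint, hence a local right adjoint. For $n=\eta_\emptyset\colon\emptyset\to U(\{*\})$ the map $U_AL_A(n)$ is an identity, yet the commuting square whose top edge is the empty map, whose right edge is the pointed inclusion $\{*\}\hookrightarrow\{*,x\}$ and whose bottom edge sends $*$ to $x$ admits no filler, since any $d\colon\{*\}\to\{*\}$ composed with the inclusion lands on the basepoint. So $n$ is not left orthogonal to $U(\mathcal{A}^2)$, even though $U_AL_A(n)$ is invertible, and the composite of the candidates $\eta_\emptyset$ and $n_g$ in this example is visibly not a candidate. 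The paper's own one-sentence treatment of this direction (``use the candidate property at $\eta^A_n$'') has the same defect: the candidate property only produces diagonals for squares whose bottom edge also lies in the range of $U$, which is the only case the paper ever actually uses (both squares in its proof of the other direction have bottom edges $1_{U(A)}$ and $U_AL_A(n)$). Under that restricted reading the converse is immediate --- conjugate the square by the isomorphism $U_AL_A(n)$ and apply the candidate property of $\eta^A_n$ --- and no appeal to ``candidates are diagonally universal'' is needed. You should therefore either prove the statement with the bottom edge restricted to $U(\mathcal{A}^2)$, or record the counterexample; as written, the second half of your proof, like the paper's, does not go through.
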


\begin{proof}
The unique lifter $d$ in the following square 
\[ 
\begin{tikzcd}
B \arrow[d, "n"'] \arrow[r, "\eta^A_n"]                        & U(A_n) \arrow[d, "U_AL_A(n)"] \\
U(A) \arrow[r, equal] \arrow[ru, "d" description, dashed] & U(A)                         
\end{tikzcd} \]
 is a section of $ U_AL_A(n)$. This provide also a filler of the square
 \[ 
\begin{tikzcd}
B \arrow[d, "\eta^A_n"'] \arrow[r, "\eta^A_n"]        & U(A_n) \arrow[d, "U_AL_A(n)"] \\
U(A_n) \arrow[r, "U_AL_A(n)"'] \arrow[ru, "d U_AL_A(n)" description] & U(A)                         
\end{tikzcd} \] 
But $ 1_{U(A)} = U(1_A)$ is the only filler of the square because $ \eta^A_n$ is a candidate. So $ d$ is also a retraction of $U_AL_A$, which is hence an isomorphism. Conversely, if $ U_AL_A(n)$ has an inverse, then one can use the candidate property at $ \eta^A_f$ to get a filler in any square with a morphism in the range of $U$ on the right. 
\end{proof}

\begin{remark}
Beware that $U$ needs not be conservative, so that the inverse of $ U_AL_A(n)$ needs not comes from a morphism in $\mathcal{A}$ making $ L_A(f)$ an isomorphism itself, so that the remark above does not says that $ n \simeq \eta^A_n$ in $ B \downarrow U$; in particular $n : B \rightarrow U(A)$ may be diagonally universal without being a candidate. However, in case where $ U$ is relatively full and faithful, the filler we had above must come from a unique morphism $ d = U(v)$ which must satisfies the same commutations, hence provides an inverse of $L_A(n)$: hence the following corollary.
\end{remark}

\begin{corollary}\label{relffdiaguniv}
Suppose that $ U$ is relatively full and faithful; then for a morphism $ n : B \rightarrow U(A)$, the following are equivalent:\begin{itemize}
    \item $n$ is diagonally universal 
    \item $n$ is a candidate 
    \item $ L_A(n)$ is an isomorphism
\end{itemize}
\end{corollary}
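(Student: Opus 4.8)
The plan is to establish the three-cycle of implications $(L_A(n)\text{ is iso})\Rightarrow(n\text{ is a candidate})\Rightarrow(n\text{ is diagonally universal})\Rightarrow(L_A(n)\text{ is iso})$, leaning on the preceding proposition characterizing diagonal universality of $n$ by invertibility of $U_AL_A(n)$, on \cref{units are candidate}, and crucially on relative full faithfulness. For the first implication: if $L_A(n)$ is invertible in $\mathcal{A}$, then $U_AL_A(n)=U(L_A(n))$ is invertible in $\mathcal{B}$, and since $n=U_AL_A(n)\circ\eta^A_n$ this isomorphism exhibits $n\cong\eta^A_n$ in $B\downarrow U$ through a witness lying in the $U$-image of an isomorphism of $\mathcal{A}$. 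By \cref{units are candidate} (together with the identification of candidates with local units in the proof of \cref{stable is MRadj}) $\eta^A_n$ is a candidate, and transporting its lifting property along this isomorphism --- a routine chase, pre- and post-composing a given test square with $U(L_A(n))^{\pm 1}$ --- shows $n$ is a candidate as well.

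For $n$ a candidate $\Rightarrow$ $n$ diagonally universal: given a commuting square with a morphism $U(v)$ in the range of $U$ on the right, the candidate property produces a unique $w$ in $\mathcal{A}$ with $U(w)$ a filler; it remains to check $U(w)$ is the unique filler in $\mathcal{B}$. Any other filler $d$ satisfies the triangle $U(v)\,d=U(u)$ over $U(A_2)$ between morphisms in the range of $U$, so relative full faithfulness forces $d=U(w')$ for a unique $w'$ with $v w'=u$; since also $U(w')n=f$, this $w'$ meets the defining conditions of the candidate, whence $w'=w$ and $d=U(w)$.

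The last and genuinely substantial implication, $n$ diagonally universal $\Rightarrow$ $L_A(n)$ iso, is essentially the content of the remark preceding the corollary. By the previous proposition, diagonal universality of $n$ gives that $U_AL_A(n)$ is invertible; let $d$ be its inverse. Reading $d$ as a triangle $U(A)\xrightarrow{d}U(A_n)\xrightarrow{U_AL_A(n)}U(A)$ over $U(A)$ between morphisms in the range of $U$, relative full faithfulness yields a unique $v$ in $\mathcal{A}$ with $U(v)=d$ and $L_A(n)\circ v=1_A$. Since $d$ is a two-sided inverse of $U_AL_A(n)$, we also get $U(v\circ L_A(n))=\mathrm{id}=U(1_{A_n})$, and applying the uniqueness clause of relative full faithfulness to the triangle $U(A_n)\xrightarrow{U(v\,L_A(n))}U(A_n)\xrightarrow{U_AL_A(n)}U(A)$ against the competing datum $1_{A_n}$ forces $v\circ L_A(n)=1_{A_n}$. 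Thus $v$ is a two-sided inverse of $L_A(n)$ in $\mathcal{A}$, closing the cycle.

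The main obstacle is precisely this last step: converting the $\mathcal{B}$-level isomorphism $U_AL_A(n)$ into an honest isomorphism $L_A(n)$ in $\mathcal{A}$. It is here that relative full faithfulness is indispensable --- the preceding proposition already shows that without it one only obtains $U_AL_A(n)$ invertible, while $L_A(n)$ itself may genuinely fail to be --- and one must use both its existence and its uniqueness parts, the latter to pin down the second of the two inverse identities. The remaining implications are bookkeeping around results already in place.
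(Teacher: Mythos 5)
Your three-cycle strategy is reasonable, and two of its legs are essentially right. The leg ``diagonally universal $\Rightarrow L_A(n)$ iso'' is precisely the paper's own argument (the remark preceding the corollary): the proposition just above gives a two-sided inverse $d$ of $U_AL_A(n)$ in $\mathcal{B}$, and relative full faithfulness --- existence to realize $d$ as $U(v)$, uniqueness to force both identities $L_A(n)v=1_A$ and $vL_A(n)=1_{A_n}$ down in $\mathcal{A}$ --- turns it into an inverse of $L_A(n)$; your use of the uniqueness clause against the competing lift $1_{A_n}$ is exactly the right move, and is the step the paper leaves implicit. The leg ``$L_A(n)$ iso $\Rightarrow$ candidate,'' by transporting the candidate property of the local unit $\eta^A_n$ along the $\mathcal{A}$-isomorphism $L_A(n)$, is also fine (and, as you implicitly note, needs no relative full faithfulness).

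The gap is in ``candidate $\Rightarrow$ diagonally universal.'' Diagonal universality is left orthogonality in $\mathcal{B}$ to all of $U(\mathcal{A}^2)$: the test squares have an \emph{arbitrary} morphism $g\colon U(A)\to U(A_2)$ of $\mathcal{B}$ as their bottom edge. The candidate property only quantifies over squares whose bottom edge is itself of the form $U(u)$, and your argument assumes this from the outset --- you write the triangle $U(v)\,d=U(u)$ and apply relative full faithfulness to it, which requires both legs to lie in the image of $U$. So as written you only obtain unique lifting against the candidate's restricted test class, which is strictly weaker than the orthogonality condition defining $\mathcal{D}iag$. The statement is still true, but this leg should be closed differently: a candidate $n$ has $L_A(n)$ an isomorphism (apply the candidate property to the square with top edge $\eta^A_n$, right edge $U_AL_A(n)$ and bottom edge the identity, as in the proof of \cref{stable is MRadj}, and use the universal property of the unit to get the second inverse identity), hence $U_AL_A(n)$ is invertible, and the proposition preceding the corollary then delivers diagonal universality --- that proposition is exactly the device that reduces an arbitrary bottom edge to the unit case. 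Equivalently, reorder your cycle so that the implication you prove directly out of ``candidate'' is ``$L_A(n)$ is an isomorphism.''
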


We defined $ \mathcal{D}iag$ as the left orthogonal $ ^\perp\!U({\mathcal{A}}^2)$. Hence we end with an orthogonality strucutre $ (\mathcal{D}iag, \mathcal{D}iag^\perp)$ where $\mathcal{D}iag^\perp$ is the double-orthogonal $ (^\perp\!U({\mathcal{A}}^2))^{\perp}$. Arrows in $\mathcal{D}iag^\perp$ lies now out of the essential image of $U$ and may have arbitrary domain and codomain. However, we have the following fullness property of the essential image of $U$ relatively to $ \mathcal{D}iag^\perp$:

\begin{proposition}
Let be $ u : U(A_1) \rightarrow U(A_2)$ be an arrow in $ \mathcal{D}iag^\perp$. Then $u \simeq U(L_{A_2}(u))$ in $\mathcal{B}/U(A_2)$ and $ \eta^{A_2}_u$ is an isomorphism. In particular $u$ is an arrow in the essential image of $U$.
\end{proposition}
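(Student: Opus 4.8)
The plan is to play the stable factorization of $u$ off against the two orthogonality properties carried by its candidate. Since $U$ is a local right adjoint it is stable (\cref{stable is MRadj}), so $u : U(A_1) \to U(A_2)$, regarded as an object of $\mathcal{B}/U(A_2)$, factors as $u = U(v)\circ n$, where $n := \eta^{A_2}_u$ is the candidate of $u$ and $v := L_{A_2}(u)$ is an arrow of $\mathcal{A}$. Here $U(v) = U_{A_2}L_{A_2}(u)$ lies in the range of $U$, while $n$, being a local unit, is a candidate and hence (comparing definitions: a candidate is left orthogonal to $U(\mathcal{A}^2)$ with codomain in the range of $U$) belongs to $\mathcal{D}iag$.

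First I would exploit that $u \in \mathcal{D}iag^\perp$ is right orthogonal to $n \in \mathcal{D}iag$. The square with $n$ on the left, $u$ on the right, $1_{U(A_1)}$ on top and $U(v)$ on the bottom commutes precisely by the stable factorization, so there is a unique diagonal $d$ with $d\,n = 1_{U(A_1)}$ and $u\,d = U(v)$; in particular $d$ is a retraction of $n$. Then I would promote $d$ to a two-sided inverse using that $n$ is itself a candidate: consider the square with $n$ on top \emph{and} on the left, and $U(v)$ on the right \emph{and} on the bottom, which commutes because both composites equal $u$. The identity is a diagonal of this square, and a one-line check (using $d\,n = 1$ and $u\,d = U(v)$) shows $n\,d$ is a diagonal as well; uniqueness of the diagonal for the candidate $n$ then forces $n\,d = 1$, so $n = \eta^{A_2}_u$ is an isomorphism.

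This is already the second assertion, and the factorization $u = U(v)\circ n$ then exhibits $n$ as an isomorphism $u \xrightarrow{\sim} U(v) = U(L_{A_2}(u))$ in $\mathcal{B}/U(A_2)$, which is the first assertion; the resulting commuting square of isomorphisms between $u$ and $U(L_{A_2}(u))$ (with the identity of $U(A_2)$ on the codomain side) shows $u$ lies in the essential image of $U$. The one delicate point is the promotion step: it relies on the candidate property — equivalently, on diagonal universality — delivering uniqueness of the filler among \emph{all} morphisms of $\mathcal{B}$, not merely among those of the form $U(w)$, and it is exactly this that turns the mere retraction $d$ into an inverse. Everything else is bookkeeping with the triangle identities of the stable factorization.
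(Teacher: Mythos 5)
Your proof is correct, and its first half is exactly the paper's: both arguments extract the retraction $d$ (the paper's $w$) of $\eta^{A_2}_u$ from the orthogonality of $u \in \mathcal{D}iag^{\perp}$ against the local unit of its stable factorization. You diverge only in how the retraction is upgraded to an inverse: the paper shows $w$ lies in both $\mathcal{D}iag$ (cancellation of the left class applied to $w\,\eta^{A_2}_u = 1$) and $\mathcal{D}iag^{\perp}$ (left-cancellativity of the right class applied to $u\,w = U(L_{A_2}(u))$), hence is an isomorphism, whence so is its section $\eta^{A_2}_u$; you instead apply the uniqueness clause of the orthogonality $\eta^{A_2}_u \perp U(L_{A_2}(u))$ to the degenerate square to force $\eta^{A_2}_u \circ d = 1$. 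Both are the standard retract argument and yield the same conclusion. One point to tighten: your justification that $\eta^{A_2}_u$ lies in $\mathcal{D}iag$ (``a candidate is left orthogonal to $U(\mathcal{A}^2)$, comparing definitions'') is not actually a comparison of definitions --- a candidate only supplies fillers coming from $\mathcal{A}$ for squares whose \emph{bottom} arrow is in the range of $U$, whereas membership in $\mathcal{D}iag = {}^{\perp}U(\mathcal{A}^2)$ demands a unique filler in $\mathcal{B}$ for squares with arbitrary bottom arrow. The correct reference is the proposition characterizing diagonally universal morphisms $n : B \rightarrow U(A)$ as those with $U_AL_A(n)$ invertible, which local units satisfy by \cref{units are candidate}; with that citation your argument is complete.
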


\begin{proof}
Indeed, $u$ is right orthogonal to the local unit in its stable factorization, so there exists a unique $w$ as below
\[ 
\begin{tikzcd}[column sep=large]
U(A_1) \arrow[r, equal] \arrow[d, "\eta_u^{A_2}"']      & U(A_1) \arrow[d, "u"] \\
U(A_u) \arrow[r, "U(L_{A_2}(u))"'] \arrow[ru, "w" description] & U(A_2)               
\end{tikzcd} \]
which is both diagonally universal by left cancellation, and in $\mathcal{D}iag^\perp$ by right cancellation, and is hence an isomorphism. In particular $ \eta^{A_2}_u$ is an iso, being section of an iso.
\end{proof}

Now, we explain how, in a suitable context, the stable factorization of morphisms towards $U$ extends to a factorization system in $\mathcal{B}$, where the diagonally universal morphisms form the left class. To do so, we are going to adapt \cite{Anel} version of the small object argument in the context of locally presentable categories. \\

We recall first some general properties of the left and right classes of a factorization system:

\begin{proposition}
For a factorization system $(\mathcal{L}, \mathcal{R})$: \begin{multicols}{2}
 \begin{itemize}
    \item $\mathcal{L}$ contains all isomorphisms and is closed under composition,
    \item $\mathcal{L}$ is right-cancellative: if one has 
\[\begin{tikzcd}
	{C_1} & {C_2} \\
	& {C_3}
	\arrow["{l}", from=1-1, to=1-2]
	\arrow["{f}", from=1-2, to=2-2]
	\arrow["{l'}"', from=1-1, to=2-2]
\end{tikzcd}\]with $l,\;l'$ in $\mathcal{L}$ then $ f $ is also in $\mathcal{L}$
    \item  $\mathcal{L}$ is closed under colimits in $\overrightarrow{\mathcal{C}}$
 \end{itemize}

 \columnbreak
 \begin{itemize}
    \item $\mathcal{R}$ contains all isomorphisms and is closed under composition,
    \item $\mathcal{R}$ is left-cancellative: if one has 
\[\begin{tikzcd}
	{C_1} & {C_2} \\
	& {C_3}
	\arrow["{f}", from=1-1, to=1-2]
	\arrow["{r}", from=1-2, to=2-2]
	\arrow["{r'}"', from=1-1, to=2-2]
\end{tikzcd}\]with $r,\;r'$ in $\mathcal{R}$ then $ f $ is also in $\mathcal{R}$
    \item  $\mathcal{R}$ is closed under limits in $\overrightarrow{\mathcal{C}}$
 \end{itemize}  
  \end{multicols}
\end{proposition}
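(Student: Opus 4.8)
The plan is to reduce everything to the orthogonality description of the two classes. In an orthogonal factorization system one has $\mathcal{L} = {}^{\perp}\mathcal{R}$ and $\mathcal{R} = \mathcal{L}^{\perp}$, with the diagonal fillers unique; so the two columns of the statement are formal duals of one another and it suffices to establish the three properties of $\mathcal{L}$, reading off those of $\mathcal{R}$ by reversing all arrows. Throughout I would lean on two standard ingredients: (i) a lifting square whose left leg is in $\mathcal{L}$ and whose right leg is in $\mathcal{R}$ has a \emph{unique} diagonal; and (ii) the essential uniqueness of the $(\mathcal{L},\mathcal{R})$-factorization, in the sharp form that a morphism already lying in $\mathcal{L}$ has a factorization consisting of itself followed by an isomorphism. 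Ingredient (ii) is itself a short chase: writing $l\in\mathcal{L}$ as $l=\rho\lambda$ with $\lambda\in\mathcal{L}$, $\rho\in\mathcal{R}$, the square with left leg $l$, bottom $\mathrm{id}$, top $\lambda$, right leg $\rho$ has a unique diagonal $d$ with $dl=\lambda$ and $\rho d=\mathrm{id}$; comparing $d\rho$ with $\mathrm{id}$ as fillers of the square with top and left leg $\lambda$ and bottom and right leg $\rho$ forces $d\rho=\mathrm{id}$, so $\rho$ is invertible.

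Granting this, the isomorphism and composition clauses are immediate: an isomorphism solves every lifting square by conjugation, and for composable $l,l'\in\mathcal{L}$ one fills a square against $r\in\mathcal{R}$ in two successive stages, the uniqueness of each stage giving uniqueness of the composite diagonal. For right-cancellation, given $l'=f\circ l$ with $l,l'\in\mathcal{L}$, I would factor $f=\rho\circ m$ with $m\in\mathcal{L}$, $\rho\in\mathcal{R}$; then $l'=\rho\circ(ml)$ with $ml\in\mathcal{L}$ by the composition clause, so ingredient (ii) applied to $l'$ makes $\rho$ an isomorphism, whence $f=\rho\circ m\in\mathcal{L}$ again by composition. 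The stability under retracts mentioned in the earlier proposition then also follows, since a retract of $\mathrm{id}$ through which a map factors is an isomorphism.

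For closure of $\mathcal{L}$ under colimits in the arrow category $\overrightarrow{\mathcal{C}}$, take a diagram $D\colon\mathcal{J}\to\overrightarrow{\mathcal{C}}$ whose values $l_j$ all lie in $\mathcal{L}$, with colimit arrow $l=\operatorname{colim}_j l_j$ (computed by taking colimits of domains and of codomains separately), and test $l$ against an arbitrary $r\in\mathcal{R}$. A lifting square for $(l,r)$ restricts along the colimit cocone to a lifting square for each $(l_j,r)$, producing a unique diagonal $d_j$ for every $j$; the uniqueness clause makes the $d_j$ compatible with the transition maps, so they assemble into a cocone on the codomains and hence into a diagonal $d$ for $(l,r)$, and one checks on cocone components that $d$ solves the square and that any other solution restricts to the $d_j$, hence equals $d$. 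Dualizing every arrow gives closure of $\mathcal{R}$ under limits in $\overrightarrow{\mathcal{C}}$, and left-cancellation for $\mathcal{R}$ dualizes right-cancellation for $\mathcal{L}$.

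The part I expect to demand the most care is not any individual clause but the bookkeeping around \emph{uniqueness} of fillers: it is what makes the two-stage fill in the composition clause well defined, what glues the $d_j$ into a cocone in the colimit clause, and what powers ingredient (ii). Once one is disciplined about always invoking the full orthogonality — existence \emph{and} uniqueness of diagonals — each clause is a routine diagram chase; the colimit clause is simply the most laborious, since one must additionally verify that two candidate diagonals out of the colimit agree, but it presents no genuine conceptual obstacle.
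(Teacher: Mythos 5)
Your proof is correct. The paper in fact states this proposition without proof, as a recollection of standard facts about orthogonal factorization systems, and your argument is precisely the standard one: reduce everything to the unique-diagonal characterization $\mathcal{L}={}^{\perp}\mathcal{R}$, $\mathcal{R}=\mathcal{L}^{\perp}$, derive the ``$\mathcal{L}$-map factors as itself followed by an iso'' lemma, and handle the colimit clause by gluing the uniquely determined componentwise diagonals. Nothing to correct.
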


We also have the following constrains on the mutual factorizations of left and right maps:

\begin{lemma}\label{constrain on factorization}
If $ (\mathcal{L},\mathcal{R})$ is an orthogonality structure in $\mathcal{C}$ and we have a factorization as below with $ r$ in $\mathcal{R}$ and $l$ in $\mathcal{L}$
\[\begin{tikzcd}
	A & B \\
	C
	\arrow["r", from=1-1, to=1-2]
	\arrow["l"', from=1-1, to=2-1]
	\arrow["f"', from=2-1, to=1-2]
\end{tikzcd}\]
Then $l$ is a split monomorphism, and $f$ factorizes through $r$. Dually, for any factorization as below 
\[\begin{tikzcd}
	A & B \\
	& C
	\arrow["l"', from=1-1, to=2-2]
	\arrow["r", from=1-2, to=2-2]
	\arrow["f", from=1-1, to=1-2]
\end{tikzcd}\]
then $ r$ is a split epimorphism and $ f$ factorizes through $l$.
\end{lemma}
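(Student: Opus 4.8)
The plan is to read off both the split map and the factorization directly from the lifting property of the orthogonality structure $(\mathcal{L},\mathcal{R})$, choosing in each case a commuting square one of whose legs is an identity. Recall that $l\in\mathcal{L}$ and $r\in\mathcal{R}$ means precisely that every commuting square with $l$ on the left and $r$ on the right admits a (unique) diagonal filler. So for the first statement, where $r = f\comp l$ with $l\colon A\to C$ in $\mathcal{L}$ and $r\colon A\to B$ in $\mathcal{R}$, I would form the square
\[
\begin{tikzcd}
A \arrow[r, "1_A"] \arrow[d, "l"'] & A \arrow[d, "r"] \\
C \arrow[r, "f"'] \arrow[ru, "d" description, dashed] & B
\end{tikzcd}
\]
which commutes since $r\comp 1_A = r = f\comp l$. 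The induced filler $d\colon C\to A$ satisfies $d\comp l = 1_A$, so $l$ is a split monomorphism, and $r\comp d = f$, so $f$ factorizes through $r$.

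For the dual statement, where $l = r\comp f$ with $l\colon A\to C$ in $\mathcal{L}$ and $r\colon B\to C$ in $\mathcal{R}$, I would symmetrically form the square
\[
\begin{tikzcd}
A \arrow[r, "f"] \arrow[d, "l"'] & B \arrow[d, "r"] \\
C \arrow[r, "1_C"'] \arrow[ru, "d" description, dashed] & C
\end{tikzcd}
\]
which commutes since $1_C\comp l = l = r\comp f$. The induced filler $d\colon C\to B$ satisfies $r\comp d = 1_C$, so $r$ is a split epimorphism, and $d\comp l = f$, so $f$ factorizes through $l$.

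There is essentially no obstacle here; the statement is a one-line consequence of the definition. The only point requiring a moment of care is the bookkeeping of which side of the square the identity should sit on: to split the $\mathcal{L}$-map $l$ one must place $1_A$ on the top (so that the diagonal becomes a retraction of $l$), whereas to split the $\mathcal{R}$-map $r$ one must place $1_C$ on the bottom (so that the diagonal becomes a section of $r$); in both cases the remaining leg is then forced to be $f$ by the triangle identity. Note that only the existence part of orthogonality is used; uniqueness of the filler plays no role in the statement as phrased.
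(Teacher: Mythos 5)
Your proof is correct and is exactly the argument in the paper: the same two squares (identity on top to split the $\mathcal{L}$-map, identity on the bottom to split the $\mathcal{R}$-map) with the diagonal filler read off in the same way. Nothing to add.
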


\begin{proof}
Those statement just comes from the diagonalization of the squares
\[\begin{tikzcd}
	A & A \\
	C & B
	\arrow["l"', from=1-1, to=2-1]
	\arrow["f"', from=2-1, to=2-2]
	\arrow["r", from=1-2, to=2-2]
	\arrow[Rightarrow, no head, from=1-1, to=1-2]
	\arrow["d"{description}, from=2-1, to=1-2]
\end{tikzcd} \hskip1cm  \begin{tikzcd}
	A & B \\
	C & C
	\arrow["r", from=1-2, to=2-2]
	\arrow["f", from=1-1, to=1-2]
	\arrow["l"', from=1-1, to=2-1]
	\arrow[Rightarrow, no head, from=2-1, to=2-2]
	\arrow["d"{description}, from=2-1, to=1-2]
\end{tikzcd} \]
\end{proof}

We also have the following usefull property:

\begin{lemma}\label{Diers observation on coequalizers}
Let be $ (\mathcal{L}, \mathcal{R})$ an orthogonality structure in a category $ \mathcal{C}$ with coequalizers: then if a parallel pair $ a,a' : C \rightrightarrows D$ in $\mathcal{C}$ is equalized by a morphism $ l : B \rightarrow C $ in $\mathcal{L}$, then its coequalizer $ q_{a,a'} : D \rightarrow \coeq(a,a')$ is in $\mathcal{L}$. \end{lemma}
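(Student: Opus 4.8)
The plan is to prove $q := q_{a,a'} \in \mathcal{L}$ by showing that it is left orthogonal to every morphism of $\mathcal{R}$; since in an orthogonality structure $\mathcal{L}$ is exactly the class ${}^{\perp}\mathcal{R}$ of morphisms left orthogonal to $\mathcal{R}$, this suffices. So I would fix an arbitrary $r : X \to Y$ in $\mathcal{R}$ together with a commuting square whose left edge is $q$, whose right edge is $r$, with top edge $g : D \to X$ and bottom edge $h : \coeq(a,a') \to Y$, and then manufacture a unique diagonal $\coeq(a,a') \to X$.

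The key step is to show that $g$ already coequalizes $a$ and $a'$, so that it descends along $q$. I would do this by comparing $ga$ and $ga'$ as morphisms $C \to X$: they agree after precomposition with $l$, because $gal = ga'l$ is simply $g$ applied to the hypothesis $al = a'l$; and they agree after postcomposition with $r$, because $r g a = h q a = h q a' = r g a'$, using $rg = hq$ together with the coequalizer identity $qa = qa'$. The auxiliary square with left edge $l \in \mathcal{L}$, right edge $r \in \mathcal{R}$, top edge $gal$ and bottom edge $rga$ then commutes and admits a \emph{unique} filler $C \to X$; since both $ga$ and $ga'$ are fillers of it, $ga = ga'$. The universal property of $\coeq(a,a')$ now yields a unique $g' : \coeq(a,a') \to X$ with $g'q = g$. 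To finish, I would invoke that $q$, being a coequalizer, is an epimorphism: from $rg'q = rg = hq$ it follows that $rg' = h$, so $g'$ is a diagonal of the original square, and any competing diagonal $g''$ satisfies $g''q = g = g'q$, hence $g'' = g'$; this gives $q \in {}^{\perp}\mathcal{R} = \mathcal{L}$.

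The only part that is not pure bookkeeping is the observation underlying the second paragraph: that being equalized on the left by an $\mathcal{L}$-map and equalized on the right by an $\mathcal{R}$-map forces two parallel arrows to coincide. This is precisely a unique-lifting argument against the given orthogonality structure, and it is essentially the whole content of the lemma; everything else is the universal property of the coequalizer together with the fact that coequalizers are epimorphisms. It is worth noting at the outset that only the existence of $\coeq(a,a')$ is used, and that the hypotheses $l \in \mathcal{L}$ and $r \in \mathcal{R}$ enter solely in order to make the filler of the auxiliary square exist and be unique.
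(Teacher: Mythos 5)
Your proof is correct and follows essentially the same route as the paper's: both arguments show that the top map of the square already coequalizes $a,a'$ by exhibiting its two composites with $a$ and $a'$ as fillers of an auxiliary lifting square with $l\in\mathcal{L}$ on the left and $r\in\mathcal{R}$ on the right, then descend through the coequalizer and use that $q_{(a,a')}$ is an epimorphism to verify the lower triangle and uniqueness. No gaps.
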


\begin{proof}
Let be a square as below with $ r \in \mathcal{R}$
\[\begin{tikzcd}
	D & A \\
	{\coeq(a,a')} & {A'}
	\arrow["{q_{(a,a')}}"', from=1-1, to=2-1]
	\arrow["f", from=1-1, to=1-2]
	\arrow["r", from=1-2, to=2-2]
	\arrow["g"', from=2-1, to=2-2]
\end{tikzcd}\]
Then we have $ fan = fa'n$ and $ g q_{(a,a')} a = g q_{(a,a')} a'$, so that we have a commutative square
\[\begin{tikzcd}
	B &&& A \\
	C & D & {\coeq(a,a')} & {A'}
	\arrow["g"', from=2-3, to=2-4]
	\arrow["r", from=1-4, to=2-4]
	\arrow["{fan=fa'n}", from=1-1, to=1-4]
	\arrow["n"', from=1-1, to=2-1]
	\arrow["{q_{(a,a')}}"', from=2-2, to=2-3]
	\arrow["{a'}"', shift right=1, from=2-1, to=2-2]
	\arrow["a", shift left=1, from=2-1, to=2-2]
\end{tikzcd}\]
and both $ fa$, $fa'$ provides diagonalization of this square: but such a diagonalization must be unique, so that $ fa = fa'$. Hence there exists a unique $d$ factorizing $f$ through the coequalizer as below
\[\begin{tikzcd}
	D & A \\
	{\coeq(a,a')}
	\arrow["{q_{(a,a')}}"', from=1-1, to=2-1]
	\arrow["f", from=1-1, to=1-2]
	\arrow["d"', from=2-1, to=1-2]
\end{tikzcd}\]
Moreover, we have $ udq_{(a,a')} = uf = g q_{(a,a')}$, but as a coequalizer, $q_{(a,a')}$ is an epimorphism: thence necessarily $ ud= g$ so that $ d$ is the desired lift of the square above. Uniqueness of such a lift process from the uniqueness of the solution in the universal property of the coequalizer. \end{proof}

\begin{remark}
Of course we have the dual statement saying that the equalizer of a parallel pair coequalized in $\mathcal{R}$ must be in $\mathcal{R}$.
\end{remark}

\begin{corollary}\label{simultaneous left equalization and right coequalization}
Any two arrows which is simultaneously equalized by a morphism in $\mathcal{L}$ and coequalized by a morphism in $ \mathcal{R}$ must be equal.
\end{corollary}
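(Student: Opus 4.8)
The plan is to obtain the statement directly from the orthogonality $\mathcal{L}\perp\mathcal{R}$, without even needing the coequalizer produced by \cref{Diers observation on coequalizers}. Suppose $a,a' : C \rightrightarrows D$ is the parallel pair in question, that $l : B \rightarrow C$ lies in $\mathcal{L}$ with $al = a'l$, and that $r : D \rightarrow E$ lies in $\mathcal{R}$ with $ra = ra'$. The first step is to record the commutative square
\[\begin{tikzcd}
B \arrow[r, "al"] \arrow[d, "l"'] & D \arrow[d, "r"] \\
C \arrow[r, "ra"'] & E
\end{tikzcd}\]
whose top edge is the common composite $al = a'l$ and whose bottom edge is the common composite $ra = ra'$; commutativity is immediate, since $r(al) = (ra)l = (ra')l = r(a'l)$.

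Next, since $l \in \mathcal{L}$ and $r \in \mathcal{R}$, the defining orthogonality of the factorization system gives a \emph{unique} diagonal filler $d : C \rightarrow D$ with $dl = al$ and $rd = ra$. The final step is to observe that both $a$ and $a'$ are such fillers: indeed $a\,l = al$ and $r\,a = ra$ hold trivially, while $a'\,l = al$ and $r\,a' = ra$ hold by the two hypotheses. Uniqueness of the diagonal then forces $a = d = a'$, which is exactly the assertion.

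I do not expect any real obstacle here: the only thing to spot is the correct square to diagonalize, and once it is written down the orthogonality property does all the work. For the record, one can also argue through \cref{Diers observation on coequalizers} when $\mathcal{C}$ has coequalizers — the coequalizer $q_{(a,a')}$ of the pair is then in $\mathcal{L}$, while $r$ factors through it, so by \cref{constrain on factorization} $q_{(a,a')}$ is a split monomorphism; being also an epimorphism it is an isomorphism, whence $a = a'$ — but this route needs the extra completeness hypothesis, so the direct diagonalization above is the cleaner statement.
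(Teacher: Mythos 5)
Your proof is correct, and it takes a genuinely different and more economical route than the paper's. The paper derives the corollary from the two preceding lemmas: it forms the coequalizer $q_{(a,a')}$, invokes \cref{Diers observation on coequalizers} to place it in $\mathcal{L}$, factors $r$ through it, applies \cref{constrain on factorization} to conclude that $q_{(a,a')}$ is a split monomorphism, and then uses that a coequalizer is an epimorphism to get an isomorphism, hence $a=a'$. That argument only makes sense when $\mathcal{C}$ has coequalizers (a hypothesis the corollary silently inherits from the lemma it follows). Your argument instead diagonalizes the single square with top edge $al=a'l$, left edge $l\in\mathcal{L}$, right edge $r\in\mathcal{R}$ and bottom edge $ra=ra'$, and observes that both $a$ and $a'$ are fillers, so uniqueness of the diagonal forces $a=a'$. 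This needs nothing beyond the orthogonality $\mathcal{L}\perp\mathcal{R}$ itself, so it is both more elementary and strictly more general (no completeness hypothesis on $\mathcal{C}$), and your closing remark correctly identifies the paper's coequalizer route as the alternative that does require the extra hypothesis.
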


\begin{proof}
Suppose $ a,a': C \rightrightarrows D$ are equalized by some $ l : B \rightarrow C$ in $\mathcal{L}$ and coequalized by some $r : D \rightarrow A$ in $ \mathcal{R}$. Then the coequalizer $ q_{(a,a')} : D \rightarrow \coeq(a,a')$ is both in $ \mathcal{L}$ and factorizes $r$ as below
\[\begin{tikzcd}
	B & C & D & A \\
	&& {\coeq(a,a')}
	\arrow["a", shift left=1, from=1-2, to=1-3]
	\arrow["{a'}"', shift right=1, from=1-2, to=1-3]
	\arrow["l", from=1-1, to=1-2]
	\arrow["r", from=1-3, to=1-4]
	\arrow["{q_{(a,a')}}"', two heads, from=1-3, to=2-3]
	\arrow["f"', dashed, from=2-3, to=1-4]
\end{tikzcd}\]
Then from \cref{constrain on factorization} we know $ q_{(a,a')}$ to be a split monomorphism; but a coequalizer being always an epimorphism, this forces that actually $ q_{(a,a')}$ is an isomorphism, so that $ a =a'$. 
\end{proof}

In the following we suppose that $ \mathcal{B}$ is a locally finitely presentable category and $ U : \mathcal{A} \rightarrow \mathcal{B}$ is a local right adjoint. Then denote $ \mathcal{D} $ the class of diagonally universal morphisms between finitely presented objects. This coincides with the intersection of the class of diagonally universal morphisms and the class of finitely presented morphisms, that is, $\mathcal{D} = \mathcal{D}iag \cap \mathcal{B}_\omega^2$. We are going to use $ \mathcal{D}$ to left-generate a factorization system, which will enjoy some degree of accessibility.

\begin{proposition}
The class $ \mathcal{D}$ has the following properties: \begin{itemize}
    \item $\mathcal{D}$ is closed under composition and contains isomorphisms between finitely presented objects
    \item $\mathcal{D}$ is right-cancellative
    \item $ \mathcal{D}$ is closed under pushouts along arbitrary finitely presented morphisms
    \item $\mathcal{D}$ is closed under finite colimit in the arrow category $ \mathcal{B}^2$
    \item Any filtered colimit in $\mathcal{B}^2$ of morphisms in $\mathcal{D}$ is diagonally universal.
\end{itemize}
\end{proposition}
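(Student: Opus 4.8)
The plan is to read off each of the five items as the intersection of a closure property of the left orthogonal class $\mathcal{D}iag = {}^\perp\!U(\mathcal{A}^2)$ with the analogous closure property of the class $\mathcal{B}_\omega^2$ of morphisms between finitely presented objects, using that $\mathcal{D} = \mathcal{D}iag \cap \mathcal{B}_\omega^2$. On the orthogonality side, the discussion preceding this proposition already records exactly what is needed: $\mathcal{D}iag$ contains all isomorphisms and is closed under composition, is right-cancellative, is stable under pushout along arbitrary morphisms, and is closed under arbitrary colimits in the arrow category $\mathcal{B}^2$ (so in particular under finite colimits and under filtered colimits). On the finite-presentability side, in the locally finitely presentable category $\mathcal{B}$ the class $\mathcal{B}_\omega^2$ satisfies the matching list by standard arguments: it contains isomorphisms between finitely presented objects and is closed under composition; it has the right-cancellation property, since if $n$ and $mn$ both have finitely presented domain and codomain then so does the remaining factor; it is stable under pushout along finitely presented morphisms, since the pushout object is then a finite colimit of finitely presented objects and hence finitely presented; and it is closed under finite colimits in $\mathcal{B}^2$, since colimits in $\mathcal{B}^2$ are computed pointwise and finite colimits of finitely presented objects stay finitely presented.

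With both lists in hand, items (1)--(4) are mechanical. First I would treat (1): a composite of maps of $\mathcal{D}$ is diagonally universal and still between finitely presented objects, and an isomorphism between finitely presented objects is diagonally universal, hence in $\mathcal{D}$. For (2): if $n, mn \in \mathcal{D}$ then the remaining factor $m$ is diagonally universal by right-cancellation in $\mathcal{D}iag$, and its domain and codomain are finitely presented (being respectively the codomain of $n$ and the codomain of $mn$), so $m \in \mathcal{D}$. For (3): the pushout of $n \in \mathcal{D}$ along a finitely presented morphism is diagonally universal because $\mathcal{D}iag$ is pushout-stable, and has finitely presented endpoints by the pushout-object remark above, hence lies in $\mathcal{D}$. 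For (4): a finite colimit in $\mathcal{B}^2$ of morphisms of $\mathcal{D}$ is diagonally universal by closure of $\mathcal{D}iag$ under colimits in $\mathcal{B}^2$, and its domain and codomain are finite colimits of finitely presented objects, hence finitely presented.

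Item (5) is then essentially immediate: a filtered colimit in $\mathcal{B}^2$ of morphisms of $\mathcal{D}$ is in particular a colimit of morphisms lying in $\mathcal{D}iag$, hence lies in $\mathcal{D}iag$ by closure of the left class under colimits in the arrow category, i.e.\ is diagonally universal. I would also point out why this cannot be strengthened to membership in $\mathcal{D}$: the colimit of the codomains need not stay finitely presented, so the colimit morphism generally falls out of $\mathcal{B}_\omega^2$ --- which is precisely why the statement claims only diagonal universality here, and this is the form that will be needed for the small object argument to come.

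The one step I expect to be the only real (and still very mild) obstacle is the bookkeeping around finitely presented morphisms in items (3) and (4): one must keep careful track of which objects are finitely presented and invoke the standard facts that finite colimits of finitely presented objects in a locally finitely presentable category remain finitely presented, that a pushout along a finitely presented morphism out of a finitely presented object has finitely presented apex, and that colimits in $\mathcal{B}^2$ are computed pointwise. Once those are in place, every item reduces to a one-line combination of two already-established closure properties.
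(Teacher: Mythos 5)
Your proof is correct and follows the same route as the paper: decompose $\mathcal{D} = \mathcal{D}iag \cap \mathcal{B}_\omega^2$ and intersect the closure properties of the left orthogonal class (recorded just before the proposition) with the standard closure properties of morphisms between finitely presented objects in a locally finitely presentable category. You simply spell out the finite-presentability bookkeeping that the paper leaves implicit, and your remark on why item (5) asserts only diagonal universality rather than membership in $\mathcal{D}$ is exactly the right observation.
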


\begin{proof}
The two first propositions are obvious. The third is an easy consequence of the universal property of the pushout. The fourth comes from the fact that $ \mathcal{B}^2_\omega$ is itself closed under finite colimits in $\mathcal{B}^2$ because $ \mathcal{B}_\omega$ is so in $\mathcal{B}$ and colimits in the arrow category are sent to colimit by domain and codomain functors, while $\mathcal{D}iag$ is also closed under colimit as a left class in an orthogonality structure. This last argument also proves the last item. 
\end{proof}

Now we invoke results of \cite{Anel} to construct a factorization system $ (\Ind(\mathcal{D}), \mathcal{D}^\perp)$. In our context, we can use the small class $\mathcal{D}$ of finitely presented diagonally universal morphism to left-generate a factorization system. We recall here the process:

\begin{definition}
A \emph{saturated class} is a set $ \mathcal{V} \subseteq {B}^2_{\omega}$ of finitely -presented maps such that:\begin{itemize}
    \item $\mathcal{V}$ contains isomorphisms and is stable by composition,
    \item $\mathcal{V}$ is right-cancellative
    \item $ \mathcal{V}$ is closed under finite colimits in $ { \mathcal{B}}^2$
    \item $\mathcal{V}$ is closed under pushouts along arbitrary maps between finitely presented objects
\end{itemize}
\end{definition}

A saturated class is always small, as lying in the essentially small generator ${\mathcal{B}}^2_{\omega}$. In our case, the class $ \mathcal{D}$ of finitely presented diagonally universal morphisms is a saturated class. 

\begin{proposition}
Any set of finitely presented maps $V \subseteq {\mathcal{B}^2_{\omega}}$ can be completed into a saturated class $ \mathcal{V}$ such that $ V^\perp = \mathcal{V}^\perp$. 
\end{proposition}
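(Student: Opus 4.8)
The goal is to close an arbitrary set $V \subseteq \mathcal{B}^2_\omega$ of finitely presented maps into a saturated class $\mathcal{V}$ with the same right-orthogonality complement.

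The plan is to define $\mathcal{V}$ as the smallest subclass of $\mathcal{B}^2_\omega$ containing $V$ and closed under the four saturation operations: (i) isomorphisms between finitely presented objects, closure under composition, (ii) right-cancellation, (iii) finite colimits in $\mathcal{B}^2$, and (iv) pushout along maps between finitely presented objects. Since all these operations stay inside the essentially small category $\mathcal{B}^2_\omega$, and $\mathcal{B}^2_\omega$ itself is closed under each of them (finite colimits of finitely presented objects are finitely presented, pushouts along finitely presented maps land among finitely presented objects, etc.), one can build $\mathcal{V}$ by transfinite iteration of a single-step operator $\Phi$ on subsets of (a skeleton of) $\mathcal{B}^2_\omega$; the iteration stabilizes after at most $\omega$ steps because each of the operations has finite arity, so $\mathcal{V} = \bigcup_{n < \omega} \Phi^n(V)$ is itself closed under all of them. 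Concretely I would set $\Phi(W)$ to be the union of $W$ with: all isomorphisms of $\mathcal{B}^2_\omega$; all composites $g \circ f$ with $f, g \in W$ composable; all $f$ such that $gf, g \in W$; all finite-colimit cocones built from diagrams valued in $W$; and all pushouts of members of $W$ along arbitrary maps of $\mathcal{B}_\omega$. Then $\mathcal{V} := \bigcup_n \Phi^n(V)$ is saturated by construction.

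The remaining content is the orthogonality identity $V^\perp = \mathcal{V}^\perp$. The inclusion $\mathcal{V}^\perp \subseteq V^\perp$ is immediate from $V \subseteq \mathcal{V}$, since $(-)^\perp$ is order-reversing. For the reverse inclusion $V^\perp \subseteq \mathcal{V}^\perp$, it suffices to show that $V^\perp$, viewed as a right class of an orthogonality structure, is automatically closed under all four saturation operations on the left: that is, for any class $\mathcal{R}$ of the form $W^\perp$, the left-orthogonal ${}^\perp\mathcal{R}$ already contains all isomorphisms, is closed under composition, is right-cancellative, and is closed under finite colimits in $\mathcal{B}^2$ and under pushouts — but these are precisely the standard closure properties of a left orthogonal class, recalled in the excerpt (closure under colimits in $\overrightarrow{\mathcal{C}}$, right-cancellation, and stability under pushout are listed for $\mathcal{D}iag$, and the argument there uses nothing but the orthogonality, so it applies verbatim to ${}^\perp(V^\perp)$). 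Hence ${}^\perp(V^\perp)$ is a saturated class containing $V$, so by minimality $\mathcal{V} \subseteq {}^\perp(V^\perp)$, which upon taking $(-)^\perp$ gives $V^\perp = (V^\perp)^{\perp\perp} \subseteq \mathcal{V}^\perp$. Together with the trivial inclusion this yields $V^\perp = \mathcal{V}^\perp$.

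The main obstacle, such as it is, is bookkeeping rather than mathematics: one must be careful that the iterated closure really does stay within $\mathcal{B}^2_\omega$ at every stage — this relies on $\mathcal{B}$ locally finitely presentable so that $\mathcal{B}_\omega$ is closed under finite colimits and the relevant pushouts — and that $\omega$ steps suffice, which uses that each generating operation consumes only finitely many inputs. I would also take care to phrase "closed under finite colimits in $\mathcal{B}^2$" so that it genuinely produces, for each finite diagram in $\mathcal{V}$, the colimit leg maps as members of $\mathcal{V}$, matching the definition of saturated class; this is the one place where a sloppy formulation could leave a gap. Everything else — the order-reversal of $(-)^\perp$, the closure properties of left orthogonal classes — is already available in the excerpt and needs no further argument.
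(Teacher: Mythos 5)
Your proof is correct and follows essentially the same route as the paper's: both come down to the fact that the left orthogonal ${}^\perp(V^\perp)$ already enjoys all of the saturation closure properties (isomorphisms and composition, right-cancellation, finite colimits in $\mathcal{B}^2$, pushouts), so that saturating $V$ cannot change the right class. The only difference is presentational: you package the verification once, via minimality of $\mathcal{V}$ inside ${}^\perp(V^\perp)\cap\mathcal{B}^2_\omega$ and the identity $({}^\perp(V^\perp))^\perp = V^\perp$, whereas the paper checks each closure operation against $V^\perp$ separately and writes out only the finite-colimit case in detail.
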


\begin{proof}
It is clear that $ \mathcal{V}^\perp \subseteq V^\perp$ for $ V \subseteq \mathcal{V}$. Moreover it is also easy to see that taking the closure by composition and iso does not modify the right class. Stability of the right class after closing the right class under finite colimit is a special case of stability of left class under colimits, but let us give the detailed proof: let be a map $l $ in $V^\perp$. If $ (l_i : K_i \rightarrow K_i')_{i \in I} $ is a diagram in ${\mathcal{B}}^2$, then in the following diagram we have for each $ i \in I$ a lifting
\[\begin{tikzcd}
	{K_i} & {\underset{i \in I}{\colim} \; K_i} & B \\
	{K_i'} & {\underset{i \in I}{\colim} \; K_i'} & {B'}
	\arrow["{k_i}"', from=1-1, to=2-1]
	\arrow["{q_i}", from=1-1, to=1-2]
	\arrow["{q_i'}"', from=2-1, to=2-2]
	\arrow["u", from=1-2, to=1-3]
	\arrow["v"', from=2-2, to=2-3]
	\arrow["l", from=1-3, to=2-3]
	\arrow[""{name=0, anchor=center, inner sep=0}, "{\underset{i \in I}{\colim} \; k_i}"{description}, from=1-2, to=2-2]
	\arrow["{d_i}"{description}, shorten >=10pt, dashed, no head, from=2-1, to=0]
	\arrow[shorten <=10pt, dashed, from=0, to=1-3]
\end{tikzcd}\]
and then by universal property of the colimit, this induces a unique map $ \langle d_i \rangle_{i \in I}$ with $\langle d_i \rangle_{i \in I} q'_i = d_i $. We prove this map is a filler. First, universal properties of colimits give us the following sequence of equalities: 
\[ u = \langle u q_i \rangle_{i \in I} = \langle d_i q_i' l_i \rangle_{i \in I} = \langle d_i \rangle_{i \in I} \langle q'_i l_i \rangle_{i \in I} = \langle d_i \rangle_{i \in I} \underset{i \in I}{\colim} \; l_i \]
so the upper triangle commutes; similarly we have
\[ v = \langle vq_i' \rangle_{i \in I} = \langle l d_i \rangle_{i \in I} = l \langle d_i \rangle_{i \in I} \]
so that the lower triangle commutes also. Finally, the stability of the right class after closing $V$ under pushouts along arbitrary maps is an easy consequence of the universal property of the pushout.
\end{proof}

\begin{definition}
Let be $ B$ in $\mathcal{B}$: define $ \mathcal{D}iag_B$ the category of diagonally universal morphisms with domain $B$, and $ \mathcal{D}_B \hookrightarrow \mathcal{D}iag_B$ the full subcategory whose objects are arrows $ n : B \rightarrow C$ such that there exists some finitely presented diagonally universal morphism $ k : K \rightarrow K'$ in $ \mathcal{D}$ and $ a : K \rightarrow B$ such that we can exhibit $ n$ as a pushout
\[ 
\begin{tikzcd}
K \arrow[d, "a"'] \arrow[r, "k"] \arrow[rd, "\ulcorner", phantom, very near end] & K' \arrow[d, "k_*a"] \\
B \arrow[r, "n"']                                                 & C                   
\end{tikzcd} \]
\end{definition}

\begin{remark}
By left cancellation, $\mathcal{D}iag_B$ is itself a full subcategory of the coslice $ B \downarrow\mathcal{B}$. 
\end{remark}

\begin{proposition}
The category $ \mathcal{D}_B$ is closed under finite colimits in $B\downarrow\mathcal{B}$.
\end{proposition}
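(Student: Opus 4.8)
The plan is to use that finite colimits in the coslice $B\downarrow\mathcal{B}$ are generated by the initial object $1_B$ together with binary coproducts and coequalisers, and to treat these three cases separately. Throughout, I write an object of $\mathcal{D}_B$ as $n\colon B\to C$ with $C=B+_{K}K'$ the pushout of some $k\colon K\to K'\in\mathcal{D}$ along $a\colon K\to B$, and I write $b\colon K'\to C$ for the other coprojection, so that $bk=na$.

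For the initial object, $1_B\colon B\to B$ is the pushout of the identity $1_0\colon 0\to 0$ of the initial object $0$ of $\mathcal{B}$ — an isomorphism between finitely presented objects, hence a member of $\mathcal{D}$ — along the unique map $0\to B$; so $1_B\in\mathcal{D}_B$. For the binary coproduct of $n_1\colon B\to C_1$ and $n_2\colon B\to C_2$ in $B\downarrow\mathcal{B}$, which is the pushout $B\to C_1+_B C_2$, one computes directly that $C_1+_B C_2=B+_{K_1\sqcup K_2}(K_1'\sqcup K_2')$ along $[a_1,a_2]$; since $\mathcal{D}$ is closed under finite colimits in $\mathcal{B}^2$, the map $k_1\sqcup k_2\colon K_1\sqcup K_2\to K_1'\sqcup K_2'$ lies in $\mathcal{D}$, so this coproduct lies in $\mathcal{D}_B$.

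The coequaliser is the heart of the matter. Given a parallel pair $f,g\colon C_1\rightrightarrows C_2$ over $B$ (so $fn_1=gn_1=n_2$), a map $h\colon C_2\to X$ satisfies $hf=hg$ if and only if $h(fb_1)=h(gb_1)$, so $\coeq(f,g)=\coeq(fb_1,gb_1\colon K_1'\rightrightarrows C_2)$. Now $fb_1$ and $gb_1$ agree after precomposition with $k_1$ (both equal $n_2 a_1$), so this coequaliser is the pushout of the codiagonal $\nabla\colon K_1'+_{K_1}K_1'\to K_1'$ along the map $\varphi\colon K_1'+_{K_1}K_1'\to C_2$ induced by $(fb_1,gb_1)$. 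The crucial observation is that $\nabla$ itself lies in $\mathcal{D}$: its source and target are finitely presented, and it is diagonally universal, because in any lifting square of $\nabla$ against a morphism $U(u)\colon U(A_1)\to U(A_2)$ in the range of $U$, the two legs $p_1,p_2\colon K_1'\rightrightarrows U(A_1)$ are simultaneously equalised by $k_1\in\mathcal{D}iag$ and coequalised by $U(u)\in\mathcal{D}iag^\perp$, hence coincide by \cref{simultaneous left equalization and right coequalization}; the common value is then the unique filler.

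It remains to recognise the coequaliser $q n_2\colon B\to Q$ as an object of $\mathcal{D}_B$, and this reassembly is the step I expect to cost the most work: $q$ has just been exhibited as a pushout of a $\mathcal{D}$-morphism along the map $\varphi$ landing in $C_2$ rather than in $B$, so the two pushouts (that of $k_2$ presenting $n_2$, and that of $\nabla$ presenting $q$) must be merged into a single pushout along $B$. The essential input is that $\varphi$ restricts on $K_1$, through $k_1$, to $n_2 a_1$, which does factor through $n_2\colon B\to C_2$; writing $Q$ as the iterated pushout $B+_{K_2}K_2'+_{K_1'+_{K_1}K_1'}K_1'$ and reorganising along this factorisation, the data $k_2$ and $\nabla$ glue into a single finitely presented diagonally universal morphism $\hat k\colon\hat K\to\hat K'$ equipped with a map $\hat K\to B$ whose pushout recovers $q n_2$. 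The conceptual content is entirely in the previous paragraph — the identity "coequaliser $=$ pushout of a codiagonal" and the closure of $\mathcal{D}$ under this codiagonal operation — and I expect the remaining difficulty to be purely the bookkeeping of this recombination, complicated only by the non-uniqueness of pushout presentations.
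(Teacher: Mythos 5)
Your decomposition into initial object, binary coproducts and coequalisers is sound, and it takes a genuinely different route from the paper: the paper lifts the whole finite diagram to a diagram of presentations $(k_i,a_i)_{i\in I}$, takes its colimit in $\mathcal{B}^2$ (which stays in $\mathcal{D}$ by saturation) and invokes commutation of pushouts with colimits, whereas you only ever manipulate one presentation at a time and absorb the morphisms of the diagram into the codiagonal $\nabla\colon K_1'+_{K_1}K_1'\to K_1'$. Your identification of the coequaliser as a pushout of $\nabla$, and the proof that $\nabla\in\mathcal{D}$ via \cref{simultaneous left equalization and right coequalization}, are correct, and this part is arguably more careful than the paper's argument, which never explains why the morphisms of a diagram in $\mathcal{D}_B$ should lift to morphisms of the chosen presentations compatibly with the $a_i$. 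The initial-object and binary-coproduct cases are fine as written.

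The last step, however, is not mere bookkeeping, and as stated the ``gluing'' of $k_2$ and $\nabla$ does not go through: $q\colon C_2\to Q$ is the pushout of $\nabla$ along $\varphi\colon K_1'+_{K_1}K_1'\to C_2$, and $\varphi$ lands in $C_2$, not in $B$ nor in $K_2'$; the observation that $\varphi\iota_1 k_1=n_2a_1$ factors through $n_2$ only controls the restriction to $K_1$ and is not the input the reassembly needs. What you need is that $P=K_1'+_{K_1}K_1'$ is finitely presented while $C_2=B+_{K_2}K_2'$ is the filtered colimit of the finitely presented objects $N=M+_{K_2}K_2'$, indexed by the finitely presented $M\to B$ through which $a_2$ factors; hence $\varphi$ factors through some such stage $N$. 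Pushout pasting then gives $Q=B+_M\bigl(N+_P K_1'\bigr)$, and the composite $M\to N\to N+_P K_1'$ lies in $\mathcal{D}$ (pushout of $k_2$, respectively of $\nabla$, along maps between finitely presented objects, followed by closure under composition), which exhibits $qn_2$ as an object of $\mathcal{D}_B$ with $\hat K=M$ and $\hat K'=N+_P K_1'$. With this supplement your proof is complete.
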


\begin{proof}
Let be $(n_i, a_i)_{i \in I}$ with 
\[\begin{tikzcd}
	{K_i} & {K_i'} \\
	{B}
	\arrow["{a_i}"', from=1-1, to=2-1]
	\arrow["{l_i}", from=1-1, to=1-2]
\end{tikzcd}\] a finite diagram in $\mathcal{D}_B$. We can use the fact that $ \mathcal{D}$ is closed under finite colimit to compute the finite colimit of $F$ as 
\[ 
\begin{tikzcd}
K_i \arrow[dd, "a_i"'] \arrow[rd, "q_i " description] \arrow[rr, "l_i"] &                                                                                                                                & K_i' \arrow[rd, "q_i'"] &                                    \\
                                                                            & {\underset{i \in I}{\colim}\, K_i} \arrow[ld, "\langle a_i \rangle_{i \in I}"] \arrow[rr, "{\underset{i \in I}{\colim} \, l_i}"'] &                             & {\underset{i \in I}{\colim}\, K'_i} \\
B                                                                           &                                                                                                                                &                             &                                   
\end{tikzcd} \]
where $(q_i, q_i') : n_i \rightarrow \colim_{i \in I} n_i$ is the inclusion in the colimit computed in the category of arrows and $ \colim_{i \in I}l_i$ is still in $\mathcal{D}$. Then by commutation of pushouts with colimits we have 
\[ \langle a_i {\rangle_{i \in I}}_* \underset{i \in I}{\colim} \, l_i =  \underset{i \in I}{\colim} \, {a_i}_*l_i \]
and this map is still in $\mathcal{D}_B$. 
\end{proof}

As a consequence, for any $ f : B \rightarrow C$, the category $ \mathcal{D}_B \downarrow f$ of diagonally universal morphisms under $B$ above $f$ is filtered. Moreover, recall that the codomain functor $ B \downarrow \mathcal{B}$ preserves filtered colimits. Now we can construct the factorization of any arrow $f$ in $\mathcal{B}$:

\begin{proposition}\label{SOA}
For any $ f : B \rightarrow C$ in $ \mathcal{B}$ we have a factorization 
\[ 
\begin{tikzcd}
B \arrow[rr, "f"] \arrow[rd, "{\colim \, \mathcal{D}_B\downarrow f }"'] &                                                 & C \\
                                                        & {\underset{\mathcal{D}_B\downarrow f}{\colim} \,C} \arrow[ru, "r_f"'] &       
\end{tikzcd} \]
with $ l_f$ in $\Ind(\mathcal{D})$ and $ r_f$ is in $\mathcal{D}^\perp$.
\end{proposition}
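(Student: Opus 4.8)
The plan is to run the small object argument of \cite{Anel} for the saturated class $\mathcal{D}$, but to extract from it the explicit shape of the factorization asserted here rather than to invoke it as a black box. Since $\mathcal{B}$ is cocomplete and, as recalled just above, $\mathcal{D}_B\downarrow f$ is small and filtered (because $\mathcal{D}_B$ has finite colimits in $B\downarrow\mathcal{B}$), I would first form the colimit $M_f:=\underset{\mathcal{D}_B\downarrow f}{\colim}$ of the codomains of the stages, i.e.\ the colimit in $B\downarrow\mathcal{B}$ of the forgetful functor $\mathcal{D}_B\downarrow f\to\mathcal{D}_B\hookrightarrow B\downarrow\mathcal{B}$ sending $(n:B\to C',\,t:C'\to C)$ to $n$. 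This produces a map $l_f:B\to M_f$; the cocone $(t)_{(n,t)}$ to $C$ induces $r_f:M_f\to C$ with $r_f l_f=f$, and this is the factorization to be analysed.

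The membership $l_f\in\Ind(\mathcal{D})$ is the easy half: $l_f$ is a filtered colimit, taken in $\mathcal{B}^2$, of the maps $n$ ranging over $\mathcal{D}_B\downarrow f$; each such $n$ is by definition a pushout of a finitely presented diagonally universal morphism $k\in\mathcal{D}$ along an arbitrary map; and $\Ind(\mathcal{D})$, being the left class generated by $\mathcal{D}$, is closed both under pushouts along arbitrary maps and under filtered colimits in the arrow category. Hence $l_f\in\Ind(\mathcal{D})$.

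The substance is the orthogonality $r_f\in\mathcal{D}^\perp$. Fix $k:K\to K'$ in $\mathcal{D}$ and a commuting square with $g:K\to M_f$, $h:K'\to C$, $r_fg=hk$. For \emph{existence} of the filler, finite presentability of $K$ and filteredness of the colimit let $g$ factor through some stage as $g=\iota_\alpha g_\alpha$ with $g_\alpha:K\to C_\alpha$, and then automatically $t_\alpha g_\alpha=hk$; form the pushout $P=C_\alpha+_KK'$ of $k$ along $g_\alpha$, with legs $k':K'\to P$ and $\bar g_\alpha:C_\alpha\to P$, so that $h$ and $t_\alpha$ (agreeing on $K$) induce $p:P\to C$ with $pk'=h$, $p\bar g_\alpha=t_\alpha$. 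Provided $\bar g_\alpha n_\alpha:B\to P$ is again an object of $\mathcal{D}_B\downarrow f$ receiving a morphism from $(n_\alpha,t_\alpha)$, the colimit injection $\iota_P$ gives the filler $\iota_Pk':K'\to M_f$, and the cocone identities yield $(\iota_Pk')k=g$ and $r_f(\iota_Pk')=h$. For \emph{uniqueness}, two fillers are equalized by $k\in\mathcal{D}\subseteq\mathcal{D}iag$, so by \cref{Diers observation on coequalizers} (applied to the orthogonality structure $(\mathcal{D}iag,\mathcal{D}iag^\perp)$, using that $\mathcal{B}$ has coequalizers) their coequalizer lies in $\mathcal{D}iag$; since $r_f$ factors through it, \cref{constrain on factorization} and \cref{simultaneous left equalization and right coequalization} force that coequalizer to be an isomorphism — equivalently one passes to a common later stage, possible since $K'$ is finitely presented, and kills the difference there.

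The step I expect to resist is the bookkeeping that these pushout and coequalizer constructions remain inside the system of stages $\mathcal{D}_B\downarrow f$: namely that the composite $\bar g_\alpha n_\alpha$ of a $\mathcal{D}_B$-cell with a further pushout of a finitely presented diagonally universal morphism is again a $\mathcal{D}_B$-cell — a ``two-step cell is a one-step cell'' fact that one unwinds in the locally finitely presentable setting by factoring the attaching maps through finitely presented subobjects — and likewise that the coequalizer used for uniqueness can be so realised. Granting these, the cited lemmas apply and the rest is routine manipulation of filtered colimits of finitely presented objects together with the closure properties of $(\mathcal{D}iag,\mathcal{D}iag^\perp)$. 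More economically, one may simply cite \cite{Anel} for the fact that $(\Ind(\mathcal{D}),\mathcal{D}^\perp)$ is an orthogonal factorization system and that the factorization of $f$ is computed precisely by this colimit over $\mathcal{D}_B\downarrow f$, the present discussion then serving only to identify $l_f$ and $r_f$.
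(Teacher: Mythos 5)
The paper does not actually prove this proposition: its ``proof'' is a one-line pointer to Anel (section 2.3, theorem 14). So your reconstruction is doing work the paper delegates, and your outline is the right one — it is exactly the small object argument that Anel's theorem packages. The identification of $l_f$ as a filtered colimit in $\mathcal{B}^2$ of one-step cell attachments, hence a member of $\Ind(\mathcal{D})$, is fine, and the existence half of $r_f\in\mathcal{D}^\perp$ goes through as you describe. The ``two-step cell is a one-step cell'' bookkeeping you flag is precisely where saturation of $\mathcal{D}$ is consumed, and it does close: since $K_\alpha$ and $K$ are finitely presented, the attaching map $g_\alpha\colon K\to C_\alpha=B+_{K_\alpha}K'_\alpha$ factors through a finitely presented approximation $B_\beta+_{K_\alpha}K'_\alpha$ (with $a_\alpha$ factored through a finitely presented $B_\beta\to B$); there the two cells compose inside $\mathcal{D}$ (closure under pushout along finitely presented maps, then under composition), and pushing the resulting single $\mathcal{D}$-map back out along $B_\beta\to B$ exhibits $\bar g_\alpha n_\alpha$ as an object of $\mathcal{D}_B$, with $\bar g_\alpha$ a morphism of $\mathcal{D}_B\downarrow f$.

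The one genuine flaw is that your first uniqueness argument is circular. To apply \cref{simultaneous left equalization and right coequalization} (or \cref{constrain on factorization}) to two fillers $d,d'$ equalized by $k$ and coequalized by $r_f$, you must already know that $r_f$ lies in the right class of an orthogonality structure having $k$ in its left class — that is, $r_f\in\mathcal{D}^\perp$ (or worse, $r_f\in\mathcal{D}iag^\perp$), which is the statement being proved. Your parenthetical fallback is the correct route and should be the argument: descend $d,d'$ to a common stage $C_\gamma$ as $e,e'$ with $ek=e'k$ (a further refinement, using finite presentability of $K$) and $t_\gamma e=t_\gamma e'$; note that $\coeq(e,e')$ is the pushout along $[e,e']\colon K'+_KK'\to C_\gamma$ of the fold map $\nabla\colon K'+_KK'\to K'$, which lies in $\mathcal{D}$ because the coprojections are pushouts of $k$ between finitely presented objects and $\mathcal{D}$ is right-cancellative; hence the coequalizer stage is again an object of $\mathcal{D}_B\downarrow f$ receiving a morphism from $(n_\gamma,t_\gamma)$, and passing to it gives $d=d'$. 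With that substitution (and \cref{Diers observation on coequalizers} dropped, since it cannot be invoked here), your proof is a correct, self-contained replacement for the citation.
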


For a complete proof of the statement, see \cite{Anel}[section 2.3] and in particular [theorem 14].

\begin{proposition}
The factorization above is orthogonal, that is, $ \Ind(\mathcal{D}) = ^\perp(\mathcal{D}^{\perp})$.
\end{proposition}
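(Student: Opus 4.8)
The plan is to prove the two inclusions $\Ind(\mathcal{D}) \subseteq {}^\perp(\mathcal{D}^\perp)$ and ${}^\perp(\mathcal{D}^\perp) \subseteq \Ind(\mathcal{D})$ separately, the second being the substantial one, and then to record the dual identity for the right class so as to conclude that the factorization is genuinely orthogonal.

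For the first inclusion I would argue purely formally. By definition of the orthogonal complement one has $\mathcal{D} \subseteq {}^\perp(\mathcal{D}^\perp)$, and ${}^\perp(\mathcal{D}^\perp)$ is the left class of an orthogonality structure, hence --- by the closure properties recalled above for left classes --- stable under composition, cobase change along arbitrary morphisms, filtered and transfinite colimits in $\mathcal{B}^2$, and retracts. Since $\Ind(\mathcal{D})$ is built from $\mathcal{D}$ exactly by iterating these operations (this is the content of the construction borrowed from \cite{Anel}), it is contained in ${}^\perp(\mathcal{D}^\perp)$.

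For the converse inclusion I would run the classical retract argument. Given $f : B \to C$ in ${}^\perp(\mathcal{D}^\perp)$, take the factorization $f = r_f\, l_f$ of \cref{SOA}, with $l_f : B \to E$ in $\Ind(\mathcal{D})$ and $r_f : E \to C$ in $\mathcal{D}^\perp$. Since $f$ is left orthogonal to $r_f$, the square with top edge $l_f$, left edge $f$, right edge $r_f$ and bottom edge $1_C$ admits a unique diagonal filler $d : C \to E$ with $d f = l_f$ and $r_f d = 1_C$. This exhibits $f$ as a retract of $l_f$ in the arrow category $\mathcal{B}^2$ (the retraction data being the squares $(1_B, d)\colon f \to l_f$ and $(1_B, r_f)\colon l_f \to f$, whose composite is $1_f$ because $r_f d = 1_C$); as $\Ind(\mathcal{D})$, being the left class of the factorization system of \cref{SOA}, is closed under retracts, we conclude $f \in \Ind(\mathcal{D})$.

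Combining the two inclusions gives $\Ind(\mathcal{D}) = {}^\perp(\mathcal{D}^\perp)$; I would then also note the dual identity $\mathcal{D}^\perp = \Ind(\mathcal{D})^\perp$, which is immediate since any map right orthogonal to $\mathcal{D}$ is right orthogonal to all composites, cobase changes, colimits and retracts of members of $\mathcal{D}$, hence to all of $\Ind(\mathcal{D})$ --- and the reverse containment is trivial from $\mathcal{D}\subseteq\Ind(\mathcal{D})$. So $(\Ind(\mathcal{D}), \mathcal{D}^\perp)$ is an orthogonal factorization system on $\mathcal{B}$. I do not expect any real obstacle here: the only non-formal ingredient is \cref{SOA} itself, which we grant, and the rest is the retract trick together with the standard closure properties of left and right classes. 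The one point to be slightly careful about is that the ``$\Ind$'' class produced by the small object argument really is retract-closed, but this is automatic as soon as \cref{SOA} is read as asserting that $(\Ind(\mathcal{D}),\mathcal{D}^\perp)$ forms a factorization system.
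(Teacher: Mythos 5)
Your argument is correct and follows essentially the same route as the paper: both proofs take $f\in{}^\perp(\mathcal{D}^\perp)$, factor it via \cref{SOA}, and use orthogonality of $f$ against its own right part $r_f$ to produce the filler $d$ with $df=l_f$ and $r_fd=1$. You diverge only in the final step. The paper concludes by cancellation: $r_fd=1$ places $d$ in $\mathcal{D}^\perp$ by left-cancellation of the right class, $df=l_f$ places $d$ in ${}^\perp(\mathcal{D}^\perp)$ by right-cancellation of the left class, so $d\perp d$ forces $d$ (hence $r_f$) to be invertible and $f\cong l_f$ lies in $\Ind(\mathcal{D})$. You instead run the retract argument, which is fine in principle, but the justification you give for its one non-formal input --- that $\Ind(\mathcal{D})$ is closed under retracts in $\mathcal{B}^2$ --- is circular as stated: you propose to read \cref{SOA} as already asserting that $(\Ind(\mathcal{D}),\mathcal{D}^\perp)$ is a factorization system, whereas before the present proposition it only supplies a factorization, and the orthogonality (hence the standard closure properties of its left class) is exactly what is being proved. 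The gap is easy to close without circularity: the splitting of an idempotent $e$ on an object $X$ of $\Ind(\mathcal{D})$ is the colimit of the sequential, hence filtered, diagram $X\xrightarrow{e}X\xrightarrow{e}\cdots$, and $\Ind(\mathcal{D})$ is closed under filtered colimits of its objects by construction; alternatively, just replace the retract step by the cancellation step above, which uses only the prefactorization $({}^\perp(\mathcal{D}^\perp),\mathcal{D}^\perp)$. Your explicit verification of $\Ind(\mathcal{D})\subseteq{}^\perp(\mathcal{D}^\perp)$ and of the dual identity $\mathcal{D}^\perp=\Ind(\mathcal{D})^\perp$ usefully spells out what the paper leaves implicit in its opening sentence.
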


\begin{proof}
First recall that diagonally universal are closed under (filtered) colimits. For $l \in  ^\perp(\mathcal{D}^\perp)$ with factorization $ f = r_fl_f$, $ f$ is left orthogonal to its own right part, therefore there is a unique filler in the diagram below
\[ 
\begin{tikzcd}
                       & B \arrow[dd, "f" description] \arrow[ld, "l_f"'] \arrow[r, "l_f"] & {C_f} \arrow[dd, "r_f"] \\
C_f \arrow[rd, "r_f"'] &                                                                   &                      \\
                       & C \arrow[r, equal] \arrow[ruu, description, dashed]      & C                   
\end{tikzcd} \]
But now left cancellation of left maps, together with cancellation of right maps, enforces that this filler is an isomorphism, which forces $r_f$ to be iso, so that $f$ is in $\Ind(\mathcal{D})$. 
\end{proof}


\begin{remark}

At this point we should also give some details about the right classes, in particular $ \mathcal{D}iag^\perp$ under the conditions of \cref{LMP from accessible Mradj}. We have $ \mathcal{D}iag^\perp = (^\perp U({\mathcal{A}}^2))^\perp$, exhibiting it as the closure of $U({\mathcal{A}}^2)$ under the axioms of the right classes. But from the specific assumptions the closure operations simplifies:\begin{itemize}
    \item by functoriality $ U({\mathcal{A}}^2)$ is allready stable under composition
    \item it is also left-cancellative for $U$ is relatively full and faithful
    \item and closed in ${\mathcal{B}}^2$ under connected limits for $ {\mathcal{A}}^2$ has connected limits and $ U$ preserves them.  
\end{itemize}  
Therefore one has to add: \begin{itemize}
    \item all the isomorphisms of $\mathcal{B}$, comprising those between objects out of the essential image of $U$; in fact it is sufficient to add the identities
    \item in particular one must add the terminal map $ 1_1$, the identity of $1$, in order to have all limits in ${\mathcal{B}}^2$
    \item and add all pullbacks along arbitrary morphisms of $ \mathcal{B}$ as below
\[\begin{tikzcd}
	{f^*U(A)} & {U(A)} \\
	B & {U(A')}
	\arrow["f"', from=2-1, to=2-2]
	\arrow["{U(u)}", from=1-2, to=2-2]
	\arrow["{f^*U(u)}"', from=1-1, to=2-1]
	\arrow[from=1-1, to=1-2]
	\arrow["\lrcorner"{anchor=center, pos=0.125}, draw=none, from=1-1, to=2-2]
\end{tikzcd}\]
    \item finally one must also add retract in ${\mathcal{B}}^2$ of maps in $ U({\mathcal{A}}^2)$ as below
\[\begin{tikzcd}[row sep=small]
	B && B \\
	& {U(A)} \\
	{B'} && {B'} \\
	& {U(A')}
	\arrow[Rightarrow, no head, from=1-1, to=1-3]
	\arrow[Rightarrow, no head, from=3-1, to=3-3]
	\arrow["f"{description}, from=1-1, to=2-2]
	\arrow["g"{description}, from=2-2, to=1-3]
	\arrow["{f'}"', from=3-1, to=4-2]
	\arrow["g'"', from=4-2, to=3-3]
	\arrow["{U(u)}"{description, pos=0.3}, from=2-2, to=4-2, crossing over]
	\arrow["r"{description}, from=1-1, to=3-1]
	\arrow["r"{description}, from=1-3, to=3-3]
\end{tikzcd}\]
\end{itemize}

\end{remark}

\begin{remark}
Hence we have for any local right adjoint $U$ a factorization system $(\Ind(\mathcal{D}), \mathcal{D}^\perp)$. Observe that in the general case, the local units under a given object may not be obtained as filtered colimit of finitely presented diagonaly universal morphisms under them. If one successively take the stable factorization and the $  (\Ind(\mathcal{D}), \mathcal{D}^\perp) $-factorization
\[ 
\begin{tikzcd}
B \arrow[d, "{\underset{\mathcal{D}_B\downarrow \eta^A_f}{\colim} \, n}"'] \arrow[r, "f"] \arrow[rd, "\eta^A_f" description] & U(A)                           \\
{\underset{\mathcal{D}_B\downarrow \eta^A_f}{\colim} \, C} \arrow[r, "u_{\eta^A_f}"']                                                 & U(A_f) \arrow[u, "U(L_A(f))"']
\end{tikzcd} \]
then 
\[u_f = U(L_A(f)) u_{\eta^A_f} \]
is in $ \Ind(\mathcal{D})^\perp$ by  uniqueness of the factorization because $ \mathcal{D}iag^\perp \subseteq \Ind(\mathcal{D})^\perp$, so that the right part of $ \eta^A_f$ is also the right part of $f$, so that we know that the functor \[\mathcal{D}_B \downarrow \eta^A_f \rightarrow \mathcal{D}_B \downarrow f\] is cofinal since it induces the same colimit. Remark also that $ u_{\eta^A_f}$ is in $ \mathcal{D}iag \cap \Ind(\mathcal{D})^\perp$, which however does not forces it however to be an isomorphism.\\

Moreover, this situation cannot even be improved in the case of a right multi-adjoint, where the local units under a given object form a small set. This is why, as we shall see in the second part, we have to impose explicitly the condition that local units are filtered colimits of finitely presented diagonally universal morphisms above them amongst conditions isolated by Diers to enable the construction of a spectra from a right multi-adjoint.
\end{remark}

\begin{definition}
A local right adjoint functor $ U $ is said to be \emph{diagonally axiomatisable} if we have $ \mathcal{D}iag = \Ind(\mathcal{D})$. It is said to satisfy \emph{Diers condition} if for any $B$ and any $f : B \rightarrow U(A)$, the local unit $ \eta^A_f $ is in $\Ind(\mathcal{D}iag_B)$.
\end{definition}

Recall that $ \mathcal{B}^2$ also is locally finitely presentable, with $ \mathcal{B}^2_\omega$ as generator of finitely presented objects. We have now a functor preserving finite colimits
\[ \mathcal{D} \stackrel{\iota_\mathcal{D}}{\hookrightarrow} \mathcal{B}^2_\omega \]
which extends into pair of adjoint functors
\[ 
\begin{tikzcd}
\mathcal{B}^2 \quad \arrow[rr, " {\iota_\mathcal{D}}_*"', bend right=20] & \perp & \Ind(\mathcal{D}) \arrow[ll, "\iota_\mathcal{D}^*"', hook, bend right=20]
\end{tikzcd}  \]
where $ \Ind(\mathcal{D})$ itself is locally finitely presentable. This gives rise to an idempotent comonad, one could see as returning the left part of a factorization system. In fact, the adjunction $ \iota_\mathcal{D}^*\dashv {\iota_\mathcal{D}}_*$ defines a morphism of locally finitely presentable categories, for the left adjoint $\iota^*_\mathcal{D}$ restricts to finitely presented objects, as morphisms in $\mathcal{D}$ are in particular finitely presented in ${\mathcal{B}}^2$.

\begin{remark}
It is well known that a left adjoint between locally finitely presented categories sends finitely presented object to finitely presented objects if and only if its right adjoint preserves filtered colimits. In our case, this says that the functor $ \iota_{D*}$, which returns the left part of the factorization, preserves filtered colimits. This means that for any filtered diagram $ f_{(-)} : I \rightarrow {\mathcal{B}}^2$ of arrows $ f_i : B_i \rightarrow B'_i$ in ${\mathcal{B}}^2$, the left part of the filtered colimit is the filtered colimit of the left parts
\[ l_{\underset{i \in I}{\colim} \; f_i} \simeq \underset{i \in I}{\colim} \; l_{f_i} \]
In particular, if we apply this to the canonical diagram of an arrow $ f \simeq \colim \; \mathcal{B}_\omega^2\downarrow f$, we have 
\[  l_{f} \simeq \underset{\mathcal{B}_\omega^2\downarrow f}{\colim} \; l_{k} \]
where the colimit is indexed by all the $ (k, a, a')$ in $ \mathcal{B}_\omega^2\downarrow f$. Hence this proves that arrows of the form $ l_k$ for $ k \in \mathcal{K}$ form a generator in $\Ind(\mathcal{D})$. 
\end{remark}

Moreover, objects in $\Ind(\mathcal{D})$ are then models of a finite limit theory, which motivates the following definition:

\begin{definition}
A diagonally universal morphism is said to be \emph{axiomatisable} if it lies in $ \Ind(\mathcal{D})$.  
\end{definition}

\begin{remark}
Observe however that in general one cannot force arbitrary diagonally universal morphisms to be decomposable as a filtered colimit of morphisms in $ \mathcal{D}$. That is, we only have $ \Ind(\mathcal{D}) \subseteq \mathcal{D}iag$ (and hence $ \mathcal{D}iag^\perp \subseteq \Ind(\mathcal{D})^\perp$) in the general case. For this reason, with the inclusion of right class a morphism between orthogonality structures, the factorization system $  (\Ind(\mathcal{D}), \Ind(\mathcal{D})^\perp) $ is the free left generated factorization system associated to the orthogonality structure $ (\mathcal{D}iag, \mathcal{D}iag^\perp)$. 
\end{remark}

\begin{remark}
Diers condition says that local units coincide with the diagonally axiomatisable morphism in the induced factorization. This is a strictly weaker condition than being diagonally axiomatisable as it does not requires any diagonally universal morphism to be axiomatisable.
\end{remark}

However in some situation we can produce local right adjoints with the desired property if we start from a left generated factorization system and a class of objects enjoying an adequate ``gliding" condition:

\begin{definition}
Let be a functor $ U : \mathcal{A} \rightarrow \mathcal{B}$ and $ \mathcal{R} $ a class of maps in $\mathcal{B}$. We say that $U$ \emph{lifts $\mathcal{R}$ maps} if for any $A$ in $\mathcal{A}$ and any $ r: B \rightarrow U(A)$, there exists $ u : A_0 \rightarrow A$ and an isomorphism $ \alpha: U(A_0) \simeq B$ such that $r \alpha=U(u)$.
\end{definition}

Let be $ \mathcal{V}$ a saturated class in a locally finitely presentable category $\mathcal{B}$ and $ (\mathcal{L}, \mathcal{R})$ the associated left generated system, with $ \mathcal{L} = \Ind(\mathcal{V})$ and $ \mathcal{R} = \mathcal{V}^\perp$. Now suppose that $U_0 : \mathcal{A}_0 \rightarrow \mathcal{B}$ is a functor lifting $ \mathcal{R}$-maps. Then define $\iota_0: \mathcal{A} \hookrightarrow \mathcal{A}_0$ as the wide subcategory whose arrows are those whose image under $U_0$ are in $\mathcal{R}$ and $ U $ as the restriction $ U=U_0 \iota_0$.

\begin{proposition}
Suppose that $ U_0$ is relatively full and faithful; then the induced functor $U: \mathcal{A} \rightarrow \mathcal{B}$ is stable and diagonally axiomatisable. 
\end{proposition}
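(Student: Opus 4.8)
The plan is to treat the two assertions in turn; stability comes first, since by \cref{stable is MRadj} it also delivers the local‑right‑adjoint property. The first thing I would record is that $U$ itself is relatively full and faithful: if $U(A_1)\xrightarrow{f}U(A_2)$ sits over some $U(A)$ through arrows $u_1,u_2$ of $\mathcal{A}$, relative full faithfulness of $U_0$ gives a unique $v\colon A_1\to A_2$ in $\mathcal{A}_0$ with $U_0(v)=f$, and left‑cancellativity of $\mathcal{R}$ applied to $U_0(u_2)U_0(v)=U_0(u_1)$ forces $f=U_0(v)\in\mathcal{R}$, so $v$ is an arrow of $\mathcal{A}$; in particular $U_0$, hence $U$, is faithful. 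Now, given $f\colon B\to U(A)$, I would take the $(\Ind(\mathcal{V}),\mathcal{V}^\perp)$-factorization $f=r_f\,l_f$ of \cref{SOA}, with $l_f\colon B\to C$ in $\mathcal{L}$ and $r_f\colon C\to U_0(A)$ in $\mathcal{R}$, and use that $U_0$ lifts $\mathcal{R}$-maps to write $r_f\,\alpha=U_0(u)$ for some $u\colon A_f\to A$ in $\mathcal{A}_0$ and an isomorphism $\alpha\colon U_0(A_f)\simeq C$. Since $r_f\in\mathcal{R}$ and $\alpha$ is invertible, $U_0(u)\in\mathcal{R}$, so $u$ is an arrow of $\mathcal{A}$; putting $n_f:=\alpha^{-1}l_f\colon B\to U(A_f)$ we get $f=U(u)\,n_f$ with $n_f\in\mathcal{L}$. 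The point is then that $n_f$ is a candidate: in any candidate square the right‑hand leg is $U(v)$ with $v$ in $\mathcal{A}$, hence in $\mathcal{R}$, so orthogonality $\mathcal{L}\perp\mathcal{R}$ produces a unique filler $d$ in $\mathcal{B}$; relative full faithfulness of $U_0$ rewrites $d=U_0(w)$ for a unique $w$ in $\mathcal{A}_0$; left‑cancellativity of $\mathcal{R}$ shows $d\in\mathcal{R}$, so $w$ is an arrow of $\mathcal{A}$; and faithfulness of $U_0$ upgrades the filling equations into the required triangle identity in $\mathcal{A}$, uniqueness of $w$ following from uniqueness of $d$. Thus $f=U(u)\,n_f$ is a stable factorization, essentially unique by the proposition on uniqueness of stable factorizations, so $U$ is stable.

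For diagonal axiomatisability, write $\mathcal{D}iag={}^\perp U(\mathcal{A}^2)$ and $\mathcal{D}=\mathcal{D}iag\cap\mathcal{B}_\omega^2$. By construction of $\mathcal{A}$ one has $U(\mathcal{A}^2)\subseteq\mathcal{R}$, hence $\mathcal{L}={}^\perp\mathcal{R}\subseteq{}^\perp U(\mathcal{A}^2)=\mathcal{D}iag$; in particular $\mathcal{V}\subseteq\mathcal{L}\cap\mathcal{B}_\omega^2\subseteq\mathcal{D}$, so $\mathcal{L}=\Ind(\mathcal{V})\subseteq\Ind(\mathcal{D})$, and with the general inclusion $\Ind(\mathcal{D})\subseteq\mathcal{D}iag$ we obtain $\mathcal{L}\subseteq\Ind(\mathcal{D})\subseteq\mathcal{D}iag$. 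So it suffices to prove $\mathcal{D}iag\subseteq\mathcal{L}$: then the three classes coincide and in particular $\mathcal{D}iag=\Ind(\mathcal{D})$. For this I would take $n\in\mathcal{D}iag$, form its $(\mathcal{L},\mathcal{R})$-factorization $n=r_n\,l_n$, note that $l_n\in\mathcal{L}\subseteq\mathcal{D}iag$ forces $r_n\in\mathcal{D}iag$ by right‑cancellativity of the left orthogonal class $\mathcal{D}iag$, so $r_n\in\mathcal{D}iag\cap\mathcal{R}$; it is then enough to know that such a morphism is invertible, since this gives $n\cong l_n\in\mathcal{L}$.

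The heart of the matter — and the step I expect to be the main obstacle — is therefore the claim $\mathcal{D}iag\cap\mathcal{R}\subseteq\{\text{isomorphisms}\}$, equivalently $\mathcal{R}\subseteq\mathcal{D}iag^\perp$, i.e. that every $\mathcal{R}$-map is right orthogonal to every diagonally universal morphism (the reverse inclusion $\mathcal{D}iag^\perp\subseteq\mathcal{R}$ uses only $\mathcal{V}\subseteq\mathcal{D}iag$). This is exactly where the hypothesis that $U_0$ \emph{lifts} $\mathcal{R}$-maps is indispensable. The strategy I would pursue is to reduce an arbitrary such morphism to one lying in $U(\mathcal{A}^2)$: over the range of $U$ the $\mathcal{R}$-lifting identifies an $\mathcal{R}$-map with codomain $U_0(A)$ with a morphism $U(u)$ up to isomorphism of its domain, so that \cref{relffdiaguniv} makes it a candidate whose image under the local left adjoint is invertible, and being simultaneously in $\mathcal{R}$ it must then be an isomorphism. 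The delicate point is precisely to carry this reduction out for an arbitrary element of $\mathcal{D}iag\cap\mathcal{R}$, whose codomain need not a priori lie in the range of $U_0$; once that is pinned down, the rest is bookkeeping with the two orthogonality structures and with the closure properties of $\mathcal{D}$, $\mathcal{L}$ and $\mathcal{R}$.
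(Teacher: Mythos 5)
Your treatment of stability is correct and follows essentially the paper's own route, only carried out in more detail: you take the $(\Ind(\mathcal{V}),\mathcal{V}^\perp)$-factorization of $f\colon B\to U(A)$, lift the right part along $U_0$ via the $\mathcal{R}$-lifting hypothesis, and verify directly that the left part is a candidate by combining orthogonality in $\mathcal{B}$ with relative full faithfulness of $U_0$ and left-cancellativity of $\mathcal{R}$ (the paper compresses this verification into an appeal to \cref{relffdiaguniv}). Your preliminary check that $U$ itself inherits relative full faithfulness, and hence faithfulness, from $U_0$ is a sensible addition and is needed to make that appeal legitimate.

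The second half, however, has a genuine and self-acknowledged gap. You reduce diagonal axiomatisability to the claim that $\mathcal{D}iag\cap\mathcal{R}$ consists of isomorphisms and then concede you cannot prove it; and indeed this step cannot be closed from the stated hypotheses, because the lifting property of $U_0$ only concerns $\mathcal{R}$-maps whose codomain lies in the range of $U_0$, whereas the right part $r_n$ of the $(\mathcal{L},\mathcal{R})$-factorization of a general diagonally universal morphism has no reason to land there (when it does, your argument that $r_n\perp r_n$ forces invertibility works, but that is the only case it covers). Note also that your reduction aims at something strictly stronger than required: establishing $\mathcal{D}iag\subseteq\mathcal{L}$ would give $\mathcal{D}iag=\mathcal{L}=\Ind(\mathcal{D})$, but the definition of diagonal axiomatisability only asks for $\mathcal{D}iag=\Ind(\mathcal{D})$, and since $\mathcal{V}\subseteq\mathcal{D}$ may be a proper inclusion one can have $\mathcal{L}=\Ind(\mathcal{V})\subsetneq\Ind(\mathcal{D})$ while the required equality still holds. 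For what it is worth, the paper's own proof stops exactly where yours does: it establishes that the local units are candidates lying in $\Ind(\mathcal{V})\subseteq\Ind(\mathcal{D})$, which is the Diers condition rather than the equality $\mathcal{D}iag=\Ind(\mathcal{D})$, so the obstruction you identified is a real one in the statement and not an artifact of your approach.
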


\begin{proof}
For any $ f : B \rightarrow U(A)$, consider the axiomatisable factorization
\[\begin{tikzcd}
	{B} && {U(A)} \\
	& {C_f}
	\arrow["{f}", from=1-1, to=1-3]
	\arrow["{l_f}"', from=1-1, to=2-2]
	\arrow["{r_f}"', from=2-2, to=1-3]
\end{tikzcd}\]
where $ l_f $ is obtained as the filtered colimit $ l_f = \colim \mathcal{D}_B \downarrow f$ in $B \downarrow \mathcal{B}$. For $U_0$ lifts along $\mathcal{R}$-maps, there exists $u_f : A_f \rightarrow A$ in $\mathcal{A}$ sent by $U$ to $r_f$, and moreover, this morphism is essentially unique in $\mathcal{A}$ as $U_0$ is relatively full and faithful. But then $l_f$ is the local unit of $U$, or equivalently, is a candidate for $U$ for it is diagonally universal with its image in the range of $U$ by \cref{relffdiaguniv}.
 \end{proof}

There is also a converse property:

\begin{proposition}
Let $U : \mathcal{A} \rightarrow \mathcal{B}$ a diagonally axiomatizable right multi-adjoint: then $U$ lifts its local maps. 
\end{proposition}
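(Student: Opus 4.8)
The plan is to recognise the stable factorization of a local map as its $(\mathcal{D}iag,\mathcal{D}iag^\perp)$-factorization and then play two factorizations against each other via uniqueness. Throughout I read ``$U$ lifts its local maps'' in the sense of the lifting condition used just above: for every $A$ in $\mathcal{A}$ and every $r\colon B\to U(A)$ lying in the right class $\mathcal{D}iag^\perp$ of the induced factorization system (which under diagonal axiomatisability is $\Ind(\mathcal{D})^\perp=\mathcal{D}^\perp$), there are $u\colon A_0\to A$ in $\mathcal{A}$ and an isomorphism $\alpha\colon U(A_0)\xrightarrow{\sim}B$ with $r\alpha=U(u)$.

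First I would fix such an $r$. Since a right multi-adjoint is in particular a local right adjoint, equivalently a stable functor by \cref{stable is MRadj}, the map $r$ has a stable factorization $r=U_AL_A(r)\circ\eta^A_r$ with $\eta^A_r\colon B\to U(A_r)$ its candidate. Now I record two facts. On the one hand $\eta^A_r$, being a candidate (a local unit, by the analysis around \cref{units are candidate}), is a diagonally universal morphism; because $U$ is assumed diagonally axiomatisable we have $\mathcal{D}iag=\Ind(\mathcal{D})$, so $\eta^A_r$ lies in the \emph{left} class $\Ind(\mathcal{D})$. On the other hand $U_AL_A(r)=U(L_A(r))$ belongs to $U(\mathcal{A}^2)$, and since $\mathcal{D}iag={}^\perp U(\mathcal{A}^2)$, the formal inclusion $S\subseteq({}^\perp S)^\perp$ gives $U(\mathcal{A}^2)\subseteq\mathcal{D}iag^\perp=\Ind(\mathcal{D})^\perp$, the right class. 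Hence $r=U(L_A(r))\circ\eta^A_r$ is an $(\Ind(\mathcal{D}),\Ind(\mathcal{D})^\perp)$-factorization of $r$.

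Next, since by hypothesis $r$ already lies in $\mathcal{D}iag^\perp=\Ind(\mathcal{D})^\perp$, the trivial decomposition $r=r\circ 1_B$ is a second such factorization. Uniqueness of factorizations in the orthogonality structure $(\Ind(\mathcal{D}),\Ind(\mathcal{D})^\perp)$ produces a unique comparison isomorphism between the two, obtained by filling the square
\[
\begin{tikzcd}
B \arrow[r, "\eta^A_r"] \arrow[d, "1_B"'] & U(A_r) \arrow[d, "U(L_A(r))"] \\
B \arrow[r, "r"'] \arrow[ru, dashed] & U(A)
\end{tikzcd}
\]
and this comparison isomorphism is forced to be $\eta^A_r$ itself; therefore $\eta^A_r$ is an isomorphism of $\mathcal{B}$. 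Setting $A_0:=A_r$, $u:=L_A(r)\colon A_r\to A$ and $\alpha:=(\eta^A_r)^{-1}\colon U(A_r)\to B$ we get $r\alpha=U(L_A(r))\,\eta^A_r\,(\eta^A_r)^{-1}=U(L_A(r))=U(u)$, which is exactly the required lift.

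The only genuine content is the observation in the second paragraph that the candidate $\eta^A_r$ sits in the left class $\Ind(\mathcal{D})$, and not merely in $\mathcal{D}iag$: this is precisely where ``diagonally axiomatisable'' is indispensable, since for a general local right adjoint one has only $\Ind(\mathcal{D})\subseteq\mathcal{D}iag$ and there is no reason for $\eta^A_r$ to be a filtered colimit of finitely presented diagonally universal morphisms. The inclusion $U(\mathcal{A}^2)\subseteq\mathcal{D}iag^\perp$ and the extraction of $\eta^A_r$ as the comparison isomorphism from uniqueness of factorizations are both routine, so I expect no further obstacle.
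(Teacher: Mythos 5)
Your argument follows the same route as the paper's own proof: both reduce the statement to the single orthogonality $\eta^A_r \perp r$ and then extract an inverse to $\eta^A_r$ --- the paper by producing the two diagonals of the square having $\eta^A_r$ on top, $U(L_A(r))$ on the right, the identity on the left and $r$ on the bottom, you by comparing the two factorizations $r = r\circ 1_B = U(L_A(r))\circ\eta^A_r$ in the system $(\Ind(\mathcal{D}),\Ind(\mathcal{D})^\perp)$ and invoking uniqueness. The repackaging through uniqueness of factorizations is fine, as are the identifications $\mathcal{D}iag^\perp=\Ind(\mathcal{D})^\perp=\mathcal{D}^\perp$ under diagonal axiomatisability and the formal inclusion $U(\mathcal{A}^2)\subseteq({}^\perp U(\mathcal{A}^2))^\perp=\mathcal{D}iag^\perp$.

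The genuine gap is the sentence ``$\eta^A_r$, being a candidate, is a diagonally universal morphism'', which you treat as free before spending your attention on the comparatively formal upgrade from $\mathcal{D}iag$ to $\Ind(\mathcal{D})$. The implication candidate $\Rightarrow$ diagonally universal does not follow from the definitions: a candidate supplies unique diagonals only for squares whose \emph{bottom} edge is itself of the form $U(v)$ (the universal property of the local unit lives entirely in the slice $\mathcal{B}/U(A)$ and sees no other squares), whereas diagonal universality demands a filler against every square whose right edge is in $U(\mathcal{A}^2)$ but whose bottom edge is an arbitrary map of $\mathcal{B}$. The implication can genuinely fail: take $\mathcal{A}$ to be the arrow category $A_1\to A_2$ and $U:\mathcal{A}\to\mathcal{S}et$ sending the generating arrow to the inclusion $\{0,1\}\hookrightarrow\{0,1,2\}$. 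This is a right multi-adjoint, one checks $\mathcal{D}iag=\Ind(\mathcal{D})=$ surjections so it is diagonally axiomatisable, and the map $\{0\}\to\{0,1\}$, $0\mapsto 0$, is its own local unit (hence a candidate); yet it is not left orthogonal to $\{0,1\}\hookrightarrow\{0,1,2\}$ (take the bottom edge of the lifting square to send $1\mapsto 2$), so the monomorphism $\{0\}\hookrightarrow\{0,1\}$ is a local map that does not lift. To be fair, the paper's proof asserts exactly the same thing (``$\eta^A_{rl}$ is diagonally universal'') with no more justification, so you have reproduced its gap rather than introduced a new one; but that assertion is where the entire content of the proposition sits, and it needs either a proof or a strengthening of the hypotheses before the argument closes.
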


\begin{proof}
For any local map of the form $r : C \rightarrow U(A)$, any precomposition with a diagonally universal morphism with codomain $C$ as below 
\[\begin{tikzcd}
	B & C & {U(A)}
	\arrow["l", from=1-1, to=1-2]
	\arrow["r", from=1-2, to=1-3]
\end{tikzcd}\]
must also coincides up to iso with its own factorization, for in the square below
\[\begin{tikzcd}
	B & {U(A_{rl})} \\
	C & {U(A)}
	\arrow["l"', from=1-1, to=2-1]
	\arrow["r"', from=2-1, to=2-2]
	\arrow["{\eta^A_{rl}}", from=1-1, to=1-2]
	\arrow["{UL_A(rl)}", from=1-2, to=2-2]
\end{tikzcd}\]
we have both a diagonal $ C \rightarrow U(A_{rl})$ induced from the fact that $ l$ is diagonally universal, and a diagonal $ U(A_{rl}) \rightarrow C$ from the fact that $ \eta^A_{rl}$ is diagonally universal and $ r$ is a local map, hence is in $ \mathcal{D}iag^\perp$ by diagonal axiomatizability. It is easy to see that those maps are mutual inverse. Applying the result before with $ l = 1_C$, we get an isomorphism $ \eta^A_{r} : C \simeq U(A_r)$, and we have $ r = UL_A(r)\eta^A_r$. 
\end{proof}

To complete this section, we should give a word on the way conversely, a factorization system together with a convenient class of objects induces a stable functor. A typical example of non full multireflection is the following as pointed out in \cite{Taylor} - where we recall that a class of maps closed under composition can be seen as a subcategory with is bijective on objects:

\begin{proposition}
For any factorization system $(\mathcal{L}, \mathcal{R})$ (resp. left generated factorization system), the inclusion $\mathcal{R} \hookrightarrow \mathcal{B}$ is a relatively full and faithful stable functor (resp. right multi-adjoint).\\

Conversely any stable functor (resp. right multi-adjoint) which is surjective on objects and faithful is the inclusion of a right class (resp. of a right class in a left generated factorisation system).
\end{proposition}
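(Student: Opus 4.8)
The plan is to prove the two halves separately, each time reducing to the dictionary already established: stable functor $=$ local right adjoint (\cref{stable is MRadj}), and right multi-adjoint $=$ local right adjoint $+$ Solution Set Condition (\cref{Mradj = Lradj + SSC}).

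\emph{From a factorization system.} Given $(\mathcal{L},\mathcal{R})$ on $\mathcal{B}$, I regard $\mathcal{R}$ as the wide subcategory of $\mathcal{B}$ on the $\mathcal{R}$-maps (legitimate since $\mathcal{R}$ contains identities and is closed under composition) and let $U\colon\mathcal{R}\hookrightarrow\mathcal{B}$ be the inclusion. First I would check that $U$ is relatively full and faithful: in a triangle $U(u_2)\circ f=U(u_1)$ with $u_1,u_2\in\mathcal{R}$, left-cancellativity of the right class forces $f\in\mathcal{R}$, and an inclusion is injective on arrows. Then, for stability, given $f\colon B\to U(A)$ I take its $(\mathcal{L},\mathcal{R})$-factorization $f=r_f\ell_f$; the map $r_f$ is already an arrow of $\mathcal{R}$, and $\ell_f$ is a candidate: in any square with $\ell_f$ on the left and some $U(v)=v\in\mathcal{R}$ on the right, the unique diagonal $w$ supplied by $\mathcal{L}\perp\mathcal{R}$ satisfies $v\circ w\in\mathcal{R}$, so by left-cancellativity $w\in\mathcal{R}$, i.e. $w$ is an arrow of the domain category and $U(w)=w$ is the required (and, a fortiori, unique) filler. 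Thus $f=U(r_f)\,\ell_f$ is a stable factorization, unique up to unique isomorphism by uniqueness of $(\mathcal{L},\mathcal{R})$-factorizations; hence $U$ is a relatively full and faithful stable functor. When $(\mathcal{L},\mathcal{R})$ is left generated — say $\mathcal{L}=\Ind(\mathcal{V})$ for a small saturated class $\mathcal{V}$ as in \cref{SOA} — I additionally need the Solution Set Condition: by the accessibility of the small object argument (\cite{Anel}), the $\mathcal{L}$-maps issued from a fixed $B$ form a set up to isomorphism under $B$, and since every $f\colon B\to U(A)$ factors through its left part $\ell_f\in\mathcal{L}$, this set is a small weakly initial family in $B\downarrow U$; so $U$ is a right multi-adjoint by \cref{Mradj = Lradj + SSC}.

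\emph{From a stable functor.} Conversely, let $U\colon\mathcal{A}\to\mathcal{B}$ be stable, faithful and surjective on objects, and set $\mathcal{R}:=\{\,U(u)\mid u\in\mathrm{Mor}\,\mathcal{A}\,\}$. By functoriality and surjectivity on objects, $\mathcal{R}$ contains all identities and is closed under composition, and — being faithful and surjective on objects — $U$ exhibits $\mathcal{A}$, up to equivalence over $\mathcal{B}$, as the category $\mathcal{R}$. Put $\mathcal{L}:={}^{\perp}\mathcal{R}$. Surjectivity on objects lets me write any $f$ in $\mathcal{B}$ as $f\colon B\to U(A)$, whose stable factorization $f=U(u_f)\,n_f$ has $U(u_f)\in\mathcal{R}$ and $n_f$ a candidate; since candidates are by definition left orthogonal to the arrows in the range of $U$, which are precisely the arrows of $\mathcal{R}$, we get $n_f\in{}^{\perp}\mathcal{R}=\mathcal{L}$. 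So every arrow of $\mathcal{B}$ factors as an $\mathcal{L}$-arrow followed by an $\mathcal{R}$-arrow. I would then invoke the standard fact that a class $\mathcal{R}$ closed under composition and containing isomorphisms, together with such factorizations, is the right class of the orthogonal factorization system $({}^{\perp}\mathcal{R},\mathcal{R})$ — the one thing to check being that an arrow of $({}^{\perp}\mathcal{R})^{\perp}$ lies in $\mathcal{R}$, which follows from a short retract argument (its left part is a split monomorphism that the uniqueness clause of ${}^{\perp}\mathcal{R}\perp\mathcal{R}$ forces to be invertible). Hence $U$ is the inclusion of a right class. If moreover $U$ is a right multi-adjoint, then for each $B$ the local units under $B$ — exactly the candidates out of $B$, i.e. a family of $\mathcal{L}$-maps through which every arrow out of $B$ factors — form a \emph{small} family, and this pointwise smallness of the generators of $\mathcal{L}$ is exactly what makes the factorization system left generated.

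\emph{Main obstacle.} The two genuinely non-formal steps are the occurrences of the ``left generated $\Leftrightarrow$ right multi-adjoint'' half: in the first direction, establishing that the left class of a left-generated system has only a set of representatives out of each object (the accessibility of the small object argument, where \cite{Anel} is invoked), and in the converse, reading off from the Solution Set Condition the generating data and matching it against the paper's notion of a left-generated factorization system. The residual point that $U$ need not be injective on objects is harmless once ``inclusion of a right class'' is read up to equivalence of categories over $\mathcal{B}$.
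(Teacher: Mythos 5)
Your treatment of the unparenthesised statement is correct and is essentially the paper's own (much terser) argument: for the direct implication the paper likewise takes the $(\mathcal{L},\mathcal{R})$-factorization of $f$ and observes that its left part is a candidate, and for the converse it likewise declares the candidates to be the left class and $U(\mathcal{A}^2)$ the right class. Your use of left-cancellativity of $\mathcal{R}$ to force the orthogonality diagonal into the subcategory, and to obtain relative full faithfulness, supplies exactly the details the paper omits. (One subtlety you share with the paper: the candidate condition only tests squares whose bottom edge is of the form $U(u)$, so ``candidates are left orthogonal to the arrows in the range of $U$'' is not literally ``by definition''; the paper makes the same elision.)

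The genuine gap is in the parenthetical clauses, which the paper's proof silently omits but which you attempt. You claim that ``by the accessibility of the small object argument, the $\mathcal{L}$-maps issued from a fixed $B$ form a set up to isomorphism under $B$''. The small object argument produces a factorization of each arrow; it gives no bound on the number of isomorphism classes of $\mathcal{L}$-maps out of $B$, i.e.\ on the connected components of $B\downarrow\mathcal{R}$, which is what the Solution Set Condition for the inclusion actually requires. Indeed the claim fails for left-generated systems in the paper's sense: the class $\mathcal{V}=\mathcal{B}^2_\omega$ is saturated and generates $(\Ind(\mathcal{V}),\mathcal{V}^\perp)=(\text{all maps},\text{isomorphisms})$, and already for $\mathcal{B}=\mathbf{Set}$ and $B=\emptyset$ the coslice $B\downarrow \mathcal{V}^\perp$ has a proper class of connected components, so the inclusion of the right class is a relatively full and faithful stable functor but not a right multi-adjoint. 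Elsewhere in the paper (e.g.\ \cref{Local objects are LMP}) the Solution Set Condition for such inclusions is always extracted from accessibility of the inclusion functor, not from left generation of the factorization system alone. Symmetrically, in your converse you identify ``a small multi-initial family of $\mathcal{L}$-maps out of each object'' with ``left generated'', which is not the paper's definition (a small saturated class of finitely presented generating maps in a locally presentable ambient category). So the ``resp.''\ clauses need either a stronger hypothesis or a different argument; your justification of them does not go through as written.
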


\begin{proof}
If we have a factorization system, any morphism $ f : B \rightarrow \iota_\mathcal{R}(A) $ factorizes uniquely since $ \iota_\mathcal{R}(r)l$ and $ l$ is orthogonal to any morphism in $ \mathcal{R}$ so we can see it as the desired candidate. \\

If $ U : \mathcal{A} \rightarrow \mathcal{B} $ is stable, faithful and surjective on objects, any object is some $ U(A)$ and any $ f : B \rightarrow U(A)$ factorizes through a candidate which is left orthogonal to the morphisms in the range of $U$, hence the class of candidates constitutes the left part and the morphisms in $ U(A^2)$ the right part: this is a factorization on the whole category as $U$ is surjective on objects.
\end{proof}

The following terminology was suggested by Anel, and was also identified in \cite{Diers}[part 4] amongst condition to produce a spectral construction: 

\begin{definition}\label{gliding}
Let be $\mathcal{R}$ a class of maps in a category and $ \mathcal{A}$ a class of objects. We say that $\mathcal{A}$ has the \emph{gliding property relatively to} $\mathcal{R}$ if for any arrow $ l : B \rightarrow A $ in $\mathcal{R}$ with $ A$ a n object of $\mathcal{A}$, then $ B$ must also be in $\mathcal{A}$.
\end{definition}

\begin{theorem}\label{Stable functor from gliding}
Let $\mathcal{B}$ be a category equiped with a factorization system $(\mathcal{L}, \mathcal{R})$ and $\mathcal{A}$ be a class of objects of $\mathcal{B}$ with the gliding condition relative to $\mathcal{R}$. Then the inclusion $\mathcal{A}^\mathcal{R} \hookrightarrow \mathcal{B}$ of $ \mathcal{A}$ objects equiped with arrows of $\mathcal{R} $ between them defines a relatively full and faithful stable functor.
\end{theorem}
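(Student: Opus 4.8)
The plan is to write $U:\mathcal{A}^{\mathcal{R}}\hookrightarrow\mathcal{B}$ for the inclusion and check the two properties of the statement separately, both reducing to left‑cancellativity of $\mathcal{R}$ together with the gliding hypothesis. First I would note that $\mathcal{A}^{\mathcal{R}}$ is a genuine (generally non‑full) subcategory of $\mathcal{B}$, since $\mathcal{R}$ contains identities and is closed under composition, so $U$ is a well‑defined faithful functor.

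For relative full and faithfulness I would take a commuting triangle $U(u_2)\,f=U(u_1)$ with $u_1:A_1\to A$ and $u_2:A_2\to A$ in $\mathcal{A}^{\mathcal{R}}$ and $f:U(A_1)\to U(A_2)$ in $\mathcal{B}$. By definition of $\mathcal{A}^{\mathcal{R}}$ we have $u_1,u_2\in\mathcal{R}$, so left‑cancellativity of $\mathcal{R}$ forces $f\in\mathcal{R}$; as $A_1,A_2$ are objects of $\mathcal{A}$, $f$ is already a morphism of $\mathcal{A}^{\mathcal{R}}$ with $U(f)=f$, and it is the unique preimage since $U$ is faithful. For stability I would take $f:B\to U(A)$ and its $(\mathcal{L},\mathcal{R})$‑factorization $f=r\circ l$ with $l:B\to C$ in $\mathcal{L}$ and $r:C\to A$ in $\mathcal{R}$. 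Since $A$ lies in $\mathcal{A}$ and $r\in\mathcal{R}$, the gliding property gives $C\in\mathcal{A}$, so $r$ is a morphism of $\mathcal{A}^{\mathcal{R}}$, say $r=U(u_f)$, and I set $n_f:=l:B\to U(C)$. To see $n_f$ is a candidate, consider a square with $U(v)=v$ on the right ($v:A_1\to A_2$ in $\mathcal{R}$), $U(u)=u$ on the bottom ($u:C\to A_2$ in $\mathcal{R}$), and $g:B\to U(A_1)$ on top, commuting in $\mathcal{B}$. The orthogonality $l\perp v$ supplies a unique $w:C\to A_1$ in $\mathcal{B}$ with $w\circ n_f=g$ and $v\circ w=u$; then $u=v\circ w$ with $u,v\in\mathcal{R}$ forces $w\in\mathcal{R}$ by left‑cancellativity, so $w$ is a morphism of $\mathcal{A}^{\mathcal{R}}$ whose left triangle $vw=u$ commutes there, and its uniqueness in $\mathcal{A}^{\mathcal{R}}$ is inherited from uniqueness in $\mathcal{B}$. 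Hence $n_f$ is a candidate and $f=U(u_f)\circ n_f$ is a stable factorization; by the uniqueness of stable factorizations already established in the section, $U$ is stable.

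The argument is essentially formal once the pieces are assembled; the only point requiring attention — and the place where the hypotheses actually do work — is that the data produced by the factorization system a priori live only in $\mathcal{B}$ and must be shown to descend to $\mathcal{A}^{\mathcal{R}}$: the gliding property is exactly what puts the middle object $C$ into $\mathcal{A}$, and left‑cancellativity of $\mathcal{R}$ is exactly what puts the diagonal filler $w$ into $\mathcal{R}$. I do not expect a genuine obstacle here; I would simply be careful to record that "unique filler in $\mathcal{A}^{\mathcal{R}}$" follows from "unique filler in $\mathcal{B}$" rather than needing an independent verification, and that the essential uniqueness of the stable factorization is the one coming from the general proposition about candidates, not from re‑running the factorization‑system argument.
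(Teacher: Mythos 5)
Your proof is correct and follows essentially the same route as the paper's: the $(\mathcal{L},\mathcal{R})$-factorization supplies the stable factorization, the gliding property puts the middle object into $\mathcal{A}$, orthogonality produces the diagonal filler for the candidate property, and left-cancellativity of $\mathcal{R}$ places both that filler and the comparison map of the relative full-faithfulness triangle back into $\mathcal{A}^{\mathcal{R}}$. Your write-up is somewhat more explicit than the paper's (in particular on relative full faithfulness and on why uniqueness of the filler descends to the subcategory), but there is no difference in substance.
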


\begin{proof}
This follows from the previous proposition: for any $B$ in $ \mathcal{B}$ and any arrow $ f : B \rightarrow A$ with $A $ an object in $\mathcal{A}$, as there exists a unique factorization $ B \stackrel{n_f}{\rightarrow} A_f \stackrel{u_f}{\rightarrow} A$ with $n_f$ in $ \mathcal{E}$ and $ u_f $ in $\mathcal{R}$, then $A_f$ is also an object in $\mathcal{A}$; this factorization is initial amongst those through a morphism in $\mathcal{R}$ on the right. Moreover $ n_f$ is a morphism in $ \mathcal{L}$ with an objects in $\mathcal{A}$ as codomain, and such arrows are exactly the candidates for the inclusion as for any square with $ A_0, A_1,A_2$ objects in $\mathcal{A}$, we have the diagonalization
\[ 
\begin{tikzcd}
B \arrow[]{d}[swap]{n} \arrow[]{r}{} & A_1 \arrow[]{d}{u } \\ A_0 \arrow[dashed]{ru}[description]{d} \arrow[]{r}[swap]{u' } &A_2 
\end{tikzcd} 
\]
But recall that $ \mathcal{R}$ is left-cancellative as any right class, so that $ u $ must itself be in $\mathcal{R}$. Hence in the factorization above $ n_f$ is a candidate and the inclusion is stable; left-cancellativity of $\mathcal{R}$ also enforces that this inclusion is relatively full and faithful.
\end{proof}

\section{Locally multi-presentable categories and accessible right multi-adjoint}

Locally multipresentables categories were introduced by Diers in \cite{DIA_1983__9__A1_0} (under the name ``cat\'{e}gories localisantes") as an a generalization of locally presentable categories encompassing a wide class of non-locally presentable categories, as local rings, fields, integral domains, local lattices... They are defined in the same way as locally presentable categories, but in the language of multi-colimits. For their tight relation with multiadjoitness and their reccuring role as the categories of local objects in spectral situations, we recall Diers theory of locally multipresentable categories, also present in \cite{Diers-multipres}. \\

First of all, recall the dual notion of multi-limit as introduced in section 1.

\begin{definition}
A category $\mathcal{A}$ is said to be \emph{locally finitely multipresentable} if \begin{itemize}
\item it has filtered colimits
    \item it has a small generator of finitely presented object such that any object $A$ decomposes as the filtered colimit $ A = \colim \, \mathcal{A}_\omega \downarrow A$ 
    \item it is multicocomplete
\end{itemize} 
\end{definition}

As well as locally finitely presentable can be characterized as the finitely accessible categories that are moreover either complete or cocomplete, locally finitely multipresentable categories are also characterized 

\begin{proposition}
Locally finitely multipresentable categories are exactly the finitely accessible categories with connected limits, where finite connected limits commutes with filtered colimits.
\end{proposition}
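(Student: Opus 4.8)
The first two defining clauses of local finite multipresentability say precisely that $\mathcal{A}$ is finitely accessible, so the real content is that, for finitely accessible $\mathcal{A}$, being multicocomplete is equivalent to having connected limits with finite connected limits commuting with filtered colimits. I would prove both implications through the functorial representation $\mathcal{A}\simeq\mathcal{M}lex[\mathcal{A}_\omega\op,\mathcal{S}et]$, the multiversal analogue of Gabriel--Ulmer duality due to Diers, where $\mathcal{M}lex[\mathcal{K},\mathcal{S}et]$ denotes the category of functors preserving finite multilimits; under this equivalence the finitely presented objects correspond to the representables $\mathcal{K}[k,-]$ and filtered colimits are computed pointwise. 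Its starting observation, which I would establish first, is that whenever $\mathcal{A}$ is multicocomplete the subcategory $\mathcal{A}_\omega$ is closed under finite multicolimits in $\mathcal{A}$, so that $\mathcal{K}:=\mathcal{A}_\omega\op$ is finitely multicomplete: for finite $F:I\to\mathcal{A}_\omega$ with multicolimit $(X_j)_{j\in J}$ in $\mathcal{A}$, the cocone presheaf $A\mapsto\lim_i\mathcal{A}[F(i),A]$ is a finite limit of corepresentables of finitely presented objects, hence finitary; by \cref{universal property of mcolim} it equals $\coprod_j\mathcal{A}[X_j,-]$, and since coproducts commute with filtered colimits in $\mathcal{S}et$ this forces each $X_j\in\mathcal{A}_\omega$, while the multi-initiality plainly restricts to $\mathcal{A}_\omega$.

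For the implication multicocomplete $\Rightarrow$ limit condition, I would then show that $\mathcal{M}lex[\mathcal{K},\mathcal{S}et]$ is closed in $[\mathcal{K},\mathcal{S}et]$ under filtered colimits and under connected limits. Closure under filtered colimits is the computation $\lim_i\colim_l G_lF(i)\cong\colim_l\lim_i G_lF(i)\cong\colim_l\coprod_j G_l(L_j)\cong\coprod_j\colim_l G_l(L_j)$, valid because $I$ is finite and coproducts commute with filtered colimits in $\mathcal{S}et$. For closure under connected limits, note that $G:\mathcal{K}\to\mathcal{S}et$ preserves the multilimit $(L_j)_j$ of a finite $F$ exactly when the canonical map $\lim_i GF(i)\to\coprod_j G(L_j)$ is invertible, and a pointwise connected limit $G=\lim_d G_d$ of such functors again has this property since in $\mathcal{S}et$ connected limits commute with small coproducts. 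As finite (hence finite connected) limits commute with filtered colimits pointwise in $[\mathcal{K},\mathcal{S}et]$ and both constructions are computed there, this commutation — and the mere existence of connected limits — descends to $\mathcal{A}\simeq\mathcal{M}lex[\mathcal{K},\mathcal{S}et]$.

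For the converse, assume $\mathcal{A}$ finitely accessible with connected limits such that finite connected limits commute with filtered colimits. I would first produce finite multicolimits in $\mathcal{A}_\omega$: for a finite $F:I\to\mathcal{A}$ with all $F(i)$ finitely presented, the cocone presheaf $\Phi_F=\lim_i\mathcal{A}[F(i),-]:\mathcal{A}\to\mathcal{S}et$ is finitary and, by Fubini for limits, preserves connected limits; invoking the converse half of the multiversal Gabriel--Ulmer correspondence — a finitary functor $\mathcal{A}\to\mathcal{S}et$ preserves connected limits iff it is a coproduct of corepresentables of finitely presented objects — one gets $\Phi_F\cong\coprod_j\mathcal{A}[c_j,-]$, which is exactly a finite multicolimit of $F$ with finitely presented vertices. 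Hence $\mathcal{K}:=\mathcal{A}_\omega\op$ is finitely multicomplete, the representation $\mathcal{A}\simeq\mathcal{M}lex[\mathcal{K},\mathcal{S}et]$ applies, and $\mathcal{M}lex[\mathcal{K},\mathcal{S}et]$ is multicocomplete (finite multicolimits as above, arbitrary ones assembled from finite multicolimits and filtered colimits exactly as small colimits are built in the ordinary cocomplete case); so $\mathcal{A}$ is multicocomplete.

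The genuinely non-formal ingredient, used in both directions, is the multiversal Gabriel--Ulmer representation $\mathcal{A}\simeq\mathcal{M}lex[\mathcal{A}_\omega\op,\mathcal{S}et]$ — equivalently, the identification of the finitary functors $\mathcal{A}\to\mathcal{S}et$ preserving connected limits with the coproducts of finitary corepresentables, together with the fact that every finite-multilimit-preserving presheaf on a finitely multicomplete small category is a filtered colimit of representables. Everything else reduces to the pointwise bookkeeping above and the two elementary facts that in $\mathcal{S}et$ finite limits commute with filtered colimits and connected limits commute with small coproducts. If one prefers to avoid citing the representation theorem, the same bookkeeping can be run directly inside $\mathcal{A}$ using the density of $\mathcal{A}_\omega$, at the cost of repeating these arguments in a less transparent form.
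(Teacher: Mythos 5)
The paper states this characterization without proof (it is Diers's theorem), so there is no in-text argument to compare yours against; judged on its own terms, your architecture is sound but routes everything through the representation $\mathcal{A}\simeq\mathcal{M}lex[\mathcal{A}_\omega\op,\mathcal{S}et]$, which is heavier than necessary and leaves two steps exposed. The forward direction is fine: the closure of $\mathcal{A}_\omega$ under finite multicolimits (via \cref{universal property of mcolim} and the fact that a coproduct of functors is finitary iff each summand is), the closure of $\mathcal{M}lex[\mathcal{K},\mathcal{S}et]$ under filtered colimits and connected limits inside $[\mathcal{K},\mathcal{S}et]$, and the pointwise commutation argument are all correct, resting only on finite limits commuting with filtered colimits and connected limits commuting with coproducts in $\mathcal{S}et$.

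Two caveats on the converse. First, the ``converse half of the multiversal Gabriel--Ulmer correspondence'' you invoke --- finitary connected-limit-preserving functors $\mathcal{A}\to\mathcal{S}et$ are coproducts of finitary corepresentables --- is normally stated for locally finitely multipresentable $\mathcal{A}$, which is exactly what you are trying to establish; to avoid circularity you must observe that it already holds for a finitely accessible $\mathcal{A}$ with connected limits, because the category of elements of such a functor is accessible (hence admits a small weakly initial family) and inherits connected limits, so the wide-pullback construction of \cref{Mradj preserves connected limits} produces the required multi-initial family. Second, ``arbitrary multicolimits assembled from finite ones and filtered colimits exactly as in the ordinary case'' is not routine: at each finite stage one has a \emph{family} of cocones rather than a single one, and the passage to the filtered colimit must be organized component by component (e.g.\ inside the free completion of the previous section); this is a theorem of Diers that should be cited or proved, not asserted by analogy. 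Both issues disappear if you run the converse directly with material already in the paper: the diagonal $\Delta:\mathcal{A}\to[I,\mathcal{A}]$ is a finitary functor between finitely accessible categories, hence satisfies the solution set condition, and it preserves the pointwise connected limits; by \cref{Mradj preserves connected limits} each comma $F\downarrow\Delta$, i.e.\ each category of cocones under $F$, then has a small multi-initial family, which is precisely a multicolimit of $F$. This short argument also makes visible that the commutation hypothesis is not needed for this implication --- it is automatic, as your forward direction shows.
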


\begin{remark}
Recall that a category is complete if and only if it has connected limits and a terminal object. Then a locally finitely multipresentable category is a locally finitely presentable category if and only if it has a terminal object.
\end{remark}

\begin{remark}
If $\mathcal{A}$ is locally finitely multipresentable, then:\begin{itemize}
    \item the arrow category $\mathcal{A}^2$ also is locally finitely multipresentable
    \item for any object $ A$ in $\mathcal{A}$ the coslice $ A\downarrow \mathcal{A}$ also is finitely multipresentable and moreover, the codomain functor $ \cod : A \downarrow \mathcal{A} \rightarrow \mathcal{A}$ is finitary and right multiadjoint.
\end{itemize}
\end{remark}

\begin{proof}
For the first item, observe that the category $ \mathcal{A}^2_\omega$ is a generator of finitely presented objects. Now multicolimits in $\mathcal{A}^2$ are computed as follows: for $I$ a finite category and $ (f_i : A_i \rightarrow A'_i)_{i \in I}$ a $I$-indexed diagram in $\mathcal{A}^2$, the multicolimit $ ( g'_{ij} : A'_i \rightarrow B'_j)_{i \in I, j \in J}$ of the codomains induces a multicocone $ (g'_{ij}f_i : A_i \rightarrow B_j)_{i \in I, j \in J}$, which defines in each $ j \in J$ a cocone over the $(A_i)_{i \in I}$, and if we chose a multicolimit $ (g_{ik} : A_i \rightarrow B_k)_{i \in I, k \in K}$, then for each $ i \in I$ and $j \in J$ there is a unique $ k \in K$ such that the cocone $ (g_{ij}'f_i)_{i \in I} $ factorizes uniquely through $g_{ik}$ 
\[\begin{tikzcd}
	{A_i} && {A_i'} \\
	{B_{k}} && {B_j'}
	\arrow["{g_{ij}'}", from=1-3, to=2-3]
	\arrow["{g_{ik}}"', from=1-1, to=2-1]
	\arrow["{f_i}", from=1-1, to=1-3]
	\arrow["{\overline{f_j}}"', dashed, from=2-1, to=2-3]
\end{tikzcd}\]
and the family $ (\overline{f}_{j} : B_k \rightarrow B_j')_{j \in J}$ is a multicolimit in $\mathcal{A}^2$. Then in particular for $ \mathcal{A}_\omega$ is closed under finite multicolimits, $ \mathcal{A}_\omega^2$ is so.\\

For the second item, we refer to \cite{diers1977categories}[Proposition 8.4] concerning existence of multicolimits. However we emphasize the following: for any object $A'$ in $ \mathcal{A}$, the canonical cone of local unit relatively to the codomain functor $ A \downarrow \mathcal{A } \rightarrow \mathcal{A}$ is made of the inclusions $ ( q_i : A' \rightarrow B_i)_{i \in I}$ into a multicoproduct of $ A' $ with $ A$. In particular, as we shall see in \cref{LMP from accessible Mradj}, this allows us to exhibits the generator of finitely presented objects $(A \downarrow \mathcal{A})_\omega$ as consisting of members of the multicoproducts of $A$ with finitely presented objects of $\mathcal{A}$. 
\end{proof}

For the sake of completeness, we recall here elements of (finite) \emph{Diers duality}, which is the multi-version of (finite) Gabriel-Ulmer duality: 

\begin{proposition}[Diers duality]
Let $ \mathcal{A}$ be a locally finitely multipresentable category. Then $ \mathcal{A}_\omega^{\op}$ is finitely multicomplete and we have an equivalence of category $ \mathcal{A} \simeq \mathcal{M}lex[\mathcal{A}_\omega^{\op}, \mathcal{S}et]$ where $\mathcal{M}lex$ denotes the 2-category category of small finitely multicomplete categories, small finitely multicomplete functors and natural transformation. Conversely for any small multicomplete category $ \mathcal{C}$, $\mathcal{M}lex[\mathcal{C}, \mathcal{S}et] $ is locally finitely multipresentable and $ \mathcal{C}^{\op}$ is equivalent to its small generator of finitely presented objects.
\end{proposition}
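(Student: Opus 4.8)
The plan is to run the proof of finite Gabriel--Ulmer duality with ``finite limit'' replaced everywhere by ``finite multilimit'', the extra combinatorics being absorbed by the coproduct-decomposition clause in the definition of a multilimit. First I would record that $\mathcal{A}_\omega^{\op}$ is finitely multicomplete: since $\mathcal{A}$ is locally finitely multipresentable, $\mathcal{A}_\omega$ is closed under finite multicolimits in $\mathcal{A}$ (the same closure argument used above for $\mathcal{A}^2$ and for the coslices), so finite multicolimits in $\mathcal{A}_\omega$ are those of $\mathcal{A}$, and dualising gives all finite multilimits in $\mathcal{A}_\omega^{\op}$. Next, consider the restricted Yoneda functor $Y : \mathcal{A} \to [\mathcal{A}_\omega^{\op}, \mathcal{S}et]$, $A \mapsto \mathcal{A}(\iota_{\mathcal{A}_\omega}-, A)$. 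Since every object of $\mathcal{A}$ is the filtered colimit of its canonical diagram over $\mathcal{A}_\omega$ and $\mathcal{A}$ is finitely accessible, $Y$ is fully faithful and preserves filtered colimits. Moreover each $\mathcal{A}(\iota_{\mathcal{A}_\omega}-, A)$ preserves finite multilimits: a finite multilimit in $\mathcal{A}_\omega^{\op}$ is a finite multicolimit $((q_i^j : F(i) \to X_j))_{i,j}$ in $\mathcal{A}_\omega$, hence in $\mathcal{A}$, and by \cref{universal property of mcolim} one has $\lim_i \mathcal{A}(F(i), A) \simeq \coprod_j \mathcal{A}(X_j, A)$, which is exactly the assertion that $(\mathcal{A}(X_j, A))_j$ is a multilimit of $\mathcal{A}(F(-), A)$ in the sense of \cref{multilimits}. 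Thus $Y$ factors through $\mathcal{M}lex[\mathcal{A}_\omega^{\op}, \mathcal{S}et]$, and it remains to prove essential surjectivity.

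For essential surjectivity, let $F : \mathcal{A}_\omega^{\op} \to \mathcal{S}et$ preserve finite multilimits and consider its category of elements $\int F$ (objects $(K, x)$ with $K \in \mathcal{A}_\omega$ and $x \in FK$). The co-Yoneda formula always gives $F \simeq \colim_{\int F} \mathcal{A}_\omega(-, K)$; the crux is that $\int F$ is filtered, so that this is a filtered colimit of representables and hence $F \simeq Y\!\left(\colim_{\int F} \iota_{\mathcal{A}_\omega} K\right)$ for a genuine object of $\mathcal{A}$, using that $Y$ preserves filtered colimits and $\mathcal{A}$ is finitely accessible. Filteredness is where multilimit-preservation does its work, and this is the main obstacle: non-emptiness follows from preservation of the empty multilimit (the multi-initial family $(I_\lambda)_\lambda$ of $\mathcal{A}_\omega$ maps to $\coprod_\lambda F(I_\lambda) \simeq 1$, so some $F(I_\lambda) \neq \emptyset$); amalgamation of two objects $(K_1, x_1), (K_2, x_2)$ follows from preservation of the multiproduct of $K_1, K_2$ in $\mathcal{A}_\omega^{\op}$, i.e. of a multicoproduct $(K_1 \to M_j \leftarrow K_2)_{j \in J}$ in $\mathcal{A}_\omega$, which yields $F(K_1) \times F(K_2) \simeq \coprod_j F(M_j)$ and hence a \emph{unique} $j$ together with an element $y \in F(M_j)$ lying above $(x_1, x_2)$, i.e. a cospan in $\int F$; and coequalisation of parallel pairs follows likewise from preservation of finite multiequalisers. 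One must here pay close attention to the uniqueness clauses of \cref{multilimits} in order to be sure that $\int F$ is genuinely filtered rather than merely a coproduct of filtered pieces --- this is precisely the point at which a multilimit does the job that an ordinary limit does in Gabriel--Ulmer.

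For the converse, fix a small finitely multicomplete category $\mathcal{C}$. The representables $\mathcal{C}(c, -)$ are finitely presented in $[\mathcal{C}, \mathcal{S}et]$ and, being corepresentable, preserve finite multilimits, so $\mathcal{C}^{\op} \hookrightarrow \mathcal{M}lex[\mathcal{C}, \mathcal{S}et]$. I would then check that $\mathcal{M}lex[\mathcal{C}, \mathcal{S}et]$ is closed in $[\mathcal{C}, \mathcal{S}et]$ under filtered colimits and under connected limits, both computed pointwise; this uses that in $\mathcal{S}et$ finite connected limits commute with filtered colimits and that, $\mathcal{S}et$ being extensive, connected limits commute with the coproducts occurring in the multilimit-preservation clause. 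Running the ``$\int F$ is filtered'' argument once more shows every object of $\mathcal{M}lex[\mathcal{C}, \mathcal{S}et]$ is a filtered colimit of representables, so $\mathcal{M}lex[\mathcal{C}, \mathcal{S}et] \simeq \Ind(\mathcal{C}^{\op})$ is finitely accessible with $\mathcal{C}^{\op}$ as its generator of finitely presented objects; combined with the existence of connected limits and their commutation with filtered colimits just established, the characterisation of locally finitely multipresentable categories as the finitely accessible categories with connected limits in which finite connected limits commute with filtered colimits gives the claim. Finally, the $2$-functoriality of $Y$ and its pseudo-inverse --- matching finitary functors on the locally finitely multipresentable side with finite-multilimit-preserving functors on the $\mathcal{M}lex$ side --- is a routine verification I would carry out at the end.
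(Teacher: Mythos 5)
The paper states this proposition without proof --- it is recalled from Diers's work, and even the commented-out lemmas that would have scaffolded an argument are left blank --- so there is no in-paper argument to measure yours against. Your proof is the standard adaptation of finite Gabriel--Ulmer duality and is essentially correct: the decisive point, namely that preservation of the finite multilimits (multi-terminal family, multiproducts, multiequalizers) makes the category of elements $\int F$ genuinely filtered because the canonical comparison $\coprod_j F(M_j) \to F(K_1)\times F(K_2)$ (and its analogues) is bijective, hence surjective, for \emph{every} pair of elements, is correctly identified and correctly executed, and the closure of $\mathcal{M}lex[\mathcal{C},\mathcal{S}et]$ under pointwise filtered colimits and connected limits is exactly what the characterisation of locally finitely multipresentable categories requires. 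Two small remarks. First, for filteredness only the \emph{existence} half of the multilimit universal property (surjectivity of those comparison maps) is actually used; the uniqueness clause is not what prevents $\int F$ from degenerating into a disjoint union of filtered pieces, so your cautionary sentence slightly misattributes where the work happens. Second, the claim that $\mathcal{C}\op$ is \emph{equivalent to} (rather than merely a dense subcategory of) the finitely presented objects of $\mathcal{M}lex[\mathcal{C},\mathcal{S}et]$ requires $\mathcal{C}$ to be Cauchy complete; this does follow from finite multicompleteness (an idempotent $e$ splits through the unique member of the multiequalizer of $(e,1)$ that $e$ itself factors through), but it deserves a line, since it is the only point where the argument goes beyond a bare $\Ind$-completion.
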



Another analog to locally presentable categories is the relation with adjointness. First, there is a well known variant of the adjoint functor theorem saying that a functor between locally presentable categories have a left adjoint if and only if it is accessible and preserves limits. Here we provide an analogous statement for right multiadjoint with a locally multipresented domain:

\begin{theorem}\label{Accessible multi-adjoint functor theorem}
Let be $U : \mathcal{A} \rightarrow \mathcal{B}$ a functor with $ \mathcal{A}$ and $ \mathcal{B}$ locally finitely multipresentable categories. Then $U$ is a right multiadjoint if and only if it is accessible and preserves connected limits.
\end{theorem}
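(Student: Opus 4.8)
The plan is to mimic the proof of the classical adjoint functor theorem for locally presentable categories, but feeding the accessibility through the multi-adjoint functor theorem \cref{Mradj preserves connected limits} rather than Freyd's. For the ``only if'' direction, suppose $U$ is a right multi-adjoint. By \cref{Mradj preserves connected limits} (applied with $\mathcal{A}$ having connected limits, hence in particular wide pullbacks) $U$ preserves connected limits, so it only remains to show $U$ is accessible, i.e.\ preserves filtered colimits. Here I would use the description of the comma category supplied by the previous sections: since $U$ is a right multi-adjoint, \cref{Mradj = Lradj + SSC} together with the nerve picture gives $\mathcal{N}^U_B \simeq \coprod_{i \in I_B} \mathcal{A}[A_i,-]$, so that $\mathcal{B}[B,U(-)] $ is a small coproduct of corepresentables. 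I would take $B$ finitely presented in $\mathcal{B}$ and show the codomains $A_i$ of the local units under $B$ may be chosen finitely presented in $\mathcal{A}$ — this is where the Diers-duality presentation $\mathcal{A}\simeq \mathcal{M}lex[\mathcal{A}_\omega^{\op},\Set]$ and the fact that $\mathcal{A}$ is finitely accessible are used: decompose $A_i=\colim\,\mathcal{A}_\omega\downarrow A_i$, push the unit $\eta_i:B\to U(A_i)$ through a finite stage using finite presentability of $B$ and the solution set condition, and invoke the Beck--Chevalley-type rigidity (\cref{units form a set}, \cref{units are candidate}) to see the resulting unit is again a local unit with finitely presented codomain. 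Once the $A_i$ are finitely presented, each $\mathcal{A}[A_i,-]$ preserves filtered colimits, a small coproduct of such still does, and hence $\mathcal{B}[B,U-]$ preserves filtered colimits for all finitely presented $B$; since $\mathcal{B}$ is finitely accessible (its finitely presented objects detect filtered colimits) this forces $U$ to preserve filtered colimits.

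For the ``if'' direction, assume $U$ is accessible and preserves connected limits. By \cref{Mradj preserves connected limits} it suffices to verify that $U$ satisfies the Solution Set Condition, since $\mathcal{A}$ has wide pullbacks and $U$ preserves them (wide pullbacks being connected limits). Fix $B$ in $\mathcal{B}$; I would first reduce to $B$ finitely presented by writing $B=\colim\,\mathcal{B}_\omega\downarrow B$ and checking that a small weakly initial family in each $B'\downarrow U$ for $B'$ finitely presented assembles — along the filtered diagram — into one for $B\downarrow U$ (a cofinality argument: any $f:B\to U(A)$ with $U$ accessible and $A=\colim\,\mathcal{A}_\omega\downarrow A$ factors, by finite presentability of $B'$ for cofinally many $B'$, through some $U(A_\omega)$ with $A_\omega$ finitely presented). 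This last point is the crux: for $B$ finitely presented, any $f:B\to U(A)$ factors through $U$ applied to a \emph{finitely presented} subobject-stage of $A$, because $U$ preserves the filtered colimit $A=\colim\,\mathcal{A}_\omega\downarrow A$ and $B$ is finitely presented; hence the (essentially small) set of maps $B\to U(K)$ with $K$ ranging over a skeleton of $\mathcal{A}_\omega$ is a small weakly initial family in $B\downarrow U$. Therefore $U$ satisfies the Solution Set Condition and \cref{Mradj preserves connected limits} delivers that $U$ is a right multi-adjoint.

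The main obstacle I anticipate is the ``only if'' direction, specifically showing that the codomains of the local units under a finitely presented object can be taken finitely presented, equivalently that $U$ restricted to $\mathcal{A}_\omega$ still ``sees'' all units under finitely presented objects. One must be careful that local units are defined by a universal property against the \emph{whole} of $\mathcal{A}$, so pushing a unit down to a finite stage requires showing the pushed-down map retains its universality; the tools for this are the isomorphism invariance results of Section 1 — \cref{units are candidate}, \cref{units form a set} and the Beck--Chevalley condition \cref{BC} — which guarantee that a unit composed with any map in the range of $U$ is again (isomorphic to) a unit, so the approximation process lands back among units. The ``if'' direction is comparatively soft, being essentially the finite-accessibility version of the standard observation that preservation of filtered colimits plus a finitely presented domain yields a solution set for free.
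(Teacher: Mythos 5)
Your ``if'' direction is in the same spirit as the paper's: the paper simply cites the known fact that an accessible functor between accessible categories satisfies the Solution Set Condition and then invokes \cref{Mradj preserves connected limits}; your sketch re-derives a special case of that fact by hand. Note, however, that your derivation silently assumes $U$ is \emph{finitely} accessible (you factor $f : B \to U(A)$ through a finite stage of $A = \colim\, \mathcal{A}_\omega \downarrow A$, which requires $U$ to preserve that finitely filtered colimit), whereas the hypothesis only gives $\lambda$-accessibility for some unknown $\lambda$. The argument survives if you run it at level $\lambda$: write $A$ as a $\lambda$-filtered colimit of $\lambda$-presented objects and test against $\lambda$-presented $B$; the fully general case is the result of Makkai--Par\'e and Rosick\'y--Tholen that the paper cites.

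The ``only if'' direction has a genuine gap. You propose to prove that $U$ preserves filtered colimits by showing that the codomains $A_i$ of the local units under a finitely presented $B$ can be taken finitely presented. There are two problems. First, the step where you ``push the unit $\eta_i$ through a finite stage using finite presentability of $B$'' presupposes that $U$ preserves the filtered colimit $A_i = \colim\,\mathcal{A}_\omega\downarrow A_i$ --- i.e.\ exactly the accessibility you are trying to establish; the argument is circular. Second, and more fundamentally, the conclusion you are aiming for is stronger than the theorem and false in general: the theorem only asserts that $U$ is accessible ($\lambda$-accessible for \emph{some} $\lambda$), and the remark immediately following it warns explicitly that one cannot control the rank of accessibility --- the $A_x$ need not be finitely presented even when $K$ is (that extra property is precisely what the additional hypotheses of \cref{LMP from accessible Mradj} buy). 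Even granting a factorization $\eta_i = U(v)g$ with $g : B \to U(K)$ and $K$ finitely presented, the rigidity results \cref{units form a set} and \cref{BC} only tell you that the local unit of $g$ is isomorphic to $\eta_i$; they do not make $A_i$ finitely presented. The paper's actual argument sidesteps all of this: since $\mathcal{B}_\omega$ is small and $\mathcal{A}$ is accessible, one \emph{chooses} a cardinal $\lambda$ large enough that every codomain $A_x$ of a local unit under a finitely presented $K$ is $\lambda$-presented, and then verifies that $U$ preserves $\lambda$-filtered colimits by testing against finitely presented objects of $\mathcal{B}$ and lifting through the local units. You should restructure your ``only if'' direction along those lines.
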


\begin{remark}
Beware that we cannot control the rank of accessibility of $U$, which will end up $\lambda$-accessible for some $\lambda \geq \aleph_0$ we do not know, even though $\mathcal{A}$ and $\mathcal{B}$ are finitely accessible. In the following proof, we make use of the following general fact: for $ \kappa \leq \lambda$ two cardinals, $ \lambda$-filtered categories are in particular $\kappa$-filtered; hence if a category has $ \kappa$-filtered colimits, it has in particular $ \lambda$-filtered colimits, and similarly, a functor preserving $ \kappa$-filtered colimits preserves in particular $ \lambda$-filtered colimits. In particular, $ \kappa$-presented objects are also $ \lambda$-presented. 
\end{remark}

\begin{proof}
For the indirect sense, it is known (see \cite{makkai1989accessible} and \cite{TholenRosickySolutionSet}) that accessible functors between accessible categories satisfy the solution set condition. As $\mathcal{A}$ and $\mathcal{B}$ are in particular (finitely) accessible and $U$ is accessible, $U$ satisfies the solution set condition, and if moreover it preserves connected limits, then by \cref{Mradj preserves connected limits} it is right multi-adjoint.\\

For the direct sense, we propose this adaptation of \cite{adamek1994locally}[Theorem 1.66] in the context of multi-adjointness and multipresentability. Suppose that $U$ is right multiadjoint (and hence preserves connected limits by \cref{Mradj preserves connected limits}). As $ \mathcal{B}_\omega$ is small and $ \mathcal{A}$ is accessible, we can chose a cardinal $ \lambda \geq \aleph_0$ such that for any $K$ in $\mathcal{B}_\omega$ and any local unit $ n_x : K \rightarrow U(A_x)$ with $x \in I_K$, the object $ A_x$ is $\lambda$-presented in $\mathcal{A}$. Then we prove that $ U$ is $\lambda$-accessible as follows. \\

Let be $F : I \rightarrow \mathcal{A}$ a $ \lambda$-filtered diagram: in particular, $ I$ is finitely filtered, and the colimit $( q_i : F(i) \rightarrow  \colim \; F)_{i \in I}$ exists in $\mathcal{A}$. We prove that $ (U(q_i) : F(i) \rightarrow U(\colim \; F))_{i \in I}$ is a colimit in $\mathcal{B}$ - and in particular, for it is $\lambda$-filtered, this colimit is also finitely filtered. By \cite{adamek1994locally}[Exercise 1.o(1)] we know that it is sufficient to check that\begin{itemize}
    \item for any finitely presented object $K $ in $\mathcal{B}$ and $ f : K \rightarrow U(\colim \; F)$, there is some lift $ a :K \rightarrow UF(i)$ of $ a$ along $q_i$ for some $i \in I$ 
    \item and that for any two such lifts $ b : K \rightarrow UF(i) $ and $ b' : K \rightarrow F(i') $ there is a common refinement $ d : i \rightarrow i'' $ and $ d' : i' \rightarrow i'' $ such that $ UF(d) b= UF(d') b'$.
\end{itemize}  
For $ a : K \rightarrow U(\colim \; F)$, consider the local factorization 
\[\begin{tikzcd}
	K & {} & {U(\colim \; F)} \\
	& {U(A_{x(a)})}
	\arrow["{n_{x(a)}}"', from=1-1, to=2-2]
	\arrow["{U(u_a)}"', from=2-2, to=1-3]
	\arrow["a", from=1-1, to=1-3]
\end{tikzcd}\]
As $K$ is finitely presented, $ A_{x(a)}$ is $ \lambda$-presented in $\mathcal{A}$, and as $ I$ is $\lambda$-filtered, there is a lift 
\[\begin{tikzcd}
	& {F(i)} \\
	{A_{x(a)}} & {\colim \; F}
	\arrow["{q_i}", from=1-2, to=2-2]
	\arrow["{u_a}"', from=2-1, to=2-2]
	\arrow["b", dashed, from=2-1, to=1-2]
\end{tikzcd}\]
whose image along $U$ provide the desired lift by precomposing with the local unit $ n_{x(a)}$
\[\begin{tikzcd}
	K && {U(\colim \; F)} \\
	& {U(A_{x(a)})} & {U(F(i))}
	\arrow["{U(q_i)}"', from=2-3, to=1-3]
	\arrow["{U(u_a)}"{description}, from=2-2, to=1-3]
	\arrow["{U(b)}"', dashed, from=2-2, to=2-3]
	\arrow["{n_{x(a)}}"', from=1-1, to=2-2]
	\arrow["a", from=1-1, to=1-3]
\end{tikzcd}\]
Now for two such lifts $ b : K \rightarrow UF(i)$, $b' : K \rightarrow F(i')$ of $a$, as the image of the inclusions $ U(q_i)$, $U(q_{i'})$ are in the range of $U$, we know by \cref{BC} that up to canonical isomorphism, $b $ and $b'$ factorize through the same local unit as $ a$, that is $ x(b) = x(a) = x(b')$ and we have the commutation below
\[\begin{tikzcd}[column sep=large]
	&& {UF(i)} \\
	K & {U(A_{x(a)})} && {U(\colim \; F)} \\
	&& {UF(i')}
	\arrow["{n_{x(a)}}"{description}, from=2-1, to=2-2]
	\arrow["{U(u_a)}"{description}, from=2-2, to=2-4]
	\arrow["{U(u_b)}"{description}, from=2-2, to=1-3]
	\arrow["{U(q_i)}", from=1-3, to=2-4]
	\arrow["{U(u_{b'})}"{description}, from=2-2, to=3-3]
	\arrow["{U(q_{i'})}"', from=3-3, to=2-4]
	\arrow["b", curve={height=-12pt}, from=2-1, to=1-3]
	\arrow["{b'}"', curve={height=12pt}, from=2-1, to=3-3]
\end{tikzcd}\]
This returns in $\mathcal{A}$ the following commutation
\[\begin{tikzcd}
	& {F(i)} \\
	{A_{x(a)}} && {\colim \; F} \\
	& {F(i')}
	\arrow["{u_a}"{description}, from=2-1, to=2-3]
	\arrow["{u_b}", from=2-1, to=1-2]
	\arrow["{q_i}", from=1-2, to=2-3]
	\arrow["{u_{b'}}"', from=2-1, to=3-2]
	\arrow["{q_{i'}}"', from=3-2, to=2-3]
\end{tikzcd}\]
But now, for $A_{x(a)}$ is $\lambda$-presented and $I$ is $\lambda$-directed, the lifts $ u_{b}$ and $u_{b'}$ factorize through a common refinement $ d : i \rightarrow i'' $ and $ d' : i' \rightarrow i'' $ such that $  F(d)u_{b} = F(d') u_{b'}$, which provides in $\mathcal{B}$ the desired common refinement for $b$ and $ b'$. \\

Then, we know that the cocone $ (U(q_i) : UF(i) \rightarrow U(\colim \; F))_{i \in I} $ is a colimit in $\mathcal{B}$; this proves that $U$ preserves $\lambda$-filtered colimits, that is, is $\lambda$-accessible. 
\end{proof}

\begin{remark}
In \cref{LMP from accessible Mradj}, we are going to prove that if we suppose that $U$ is \emph{finitely} accessible, then the localizations $ A_x$ for $x \in I_K$ and $K$ finitely presented in $\mathcal{B}$ are finitely presented. In fact, in the proof above, $\lambda$ was dependent of the rank of presentability of the localizations of finitely presented objects: if all the $A_x$ for $x \in I_K$ and $K$ in $\mathcal{B}_{\omega}$ happen to be finitely presented, then the functor $U$ can be certified as finitely accessible. 
\end{remark}

Recall that any locally finitely presentable functor is a right adjoint preserving filtered colimits; the last condition amounts to saying that its left adjoint sends finitely presented objects to finitely presented objects. Now recall from \cite{adamek1994locally}[Theorem 1.39] that a full, reflective subcategory $\mathcal{A}$ of a locally finitely presentable category $\mathcal{B}$ which is moreover closed under filtered colimits is locally finitely presentable - and the inclusion functor is locally finitely presentable; moreover, one could use as the generator of finitely presented objects in $\mathcal{A}$ the reflections of the finitely presented objects of $\mathcal{B}$. In \cite{diers1977categories}[Theorem 8.3.1] is given the following analog - we propose here a slightly different adaptation of.

\begin{proposition}\label{LMP from accessible Mradj}
Let $\mathcal{B}$ be a locally finitely presentable category and $ U : \mathcal{A} \hookrightarrow \mathcal{B}$ such that\begin{itemize}
    \item $U$ is a right multiadjoint
    \item $U$ is moreover faithful, and relatively full and faithful,
    \item $ \mathcal{A}$ closed under filtered colimits and $ U$ preserves them.
\end{itemize} 
Then $\mathcal{A}$ is a locally finitely multipresentable category. Moreover, the full, dense generator $ \mathcal{A}_\omega$ of finitely presented object of $\mathcal{A}$ has as objects local reflections of finitely presented objects, that is 
\[ \mid \!\mathcal{A}_\omega\! \mid \,= \coprod_{K \in \mathcal{B}_\omega} \{ A_x \mid \; n_x : K \rightarrow U(A_x) \in I_K \} \]
where $I_K$ is the set of local units of $K$ for $U$. 
\end{proposition}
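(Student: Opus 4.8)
The plan is to verify the three conditions in the characterisation recalled just above — that $\mathcal{A}$ is finitely accessible, has connected limits, and has finite connected limits commuting with filtered colimits — and then to identify the canonical generator. The cheap inputs come first. Since $\mathcal{B}$ is locally finitely presentable it is in particular complete, and $U$ is a relatively full and faithful right multi-adjoint; so by the relatively-full-and-faithful strengthening of \cref{closures properties of mreflective subcategories} recorded in the remark following it, $\mathcal{A}$ has connected limits, created by $U$. The combination \emph{faithful} $+$ \emph{relatively full and faithful} moreover makes $U$ conservative: if $U(u)$ is invertible, relative full faithfulness applied to the triangle $U(u)\comp U(u)^{-1}=U(1)$ produces a section $v$ of $u$, and then $U(vu)=U(1)$ together with faithfulness upgrades $v$ into a two‑sided inverse. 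Consequently, since $U$ preserves filtered colimits and preserves and reflects finite connected limits, and since finite (connected) limits commute with filtered colimits in the locally finitely presentable category $\mathcal{B}$, the canonical comparison morphism $\colim\,\lim\to\lim\,\colim$ in $\mathcal{A}$ is carried by $U$ to an isomorphism, hence is one: finite connected limits commute with filtered colimits in $\mathcal{A}$.

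It remains to show $\mathcal{A}$ is finitely accessible with the asserted generator, and this is the substantial part. Let $\mathcal{G}$ denote the class of codomains $A_x$ of local units $n_x:K\to U(A_x)$ with $K$ finitely presented in $\mathcal{B}$. First, every such $A_x$ is finitely presented in $\mathcal{A}$: by the multirepresentability of the co‑nerve one has $\mathcal{B}[K,U(-)]\simeq\coprod_{x\in I_K}\mathcal{A}[A_x,-]$ as functors $\mathcal{A}\to\Set$; the left-hand side preserves filtered colimits because $K$ is finitely presented in $\mathcal{B}$ and $U$ preserves filtered colimits; and a coproduct of $\Set$-valued functors preserves a (connected) filtered colimit iff each summand does, since coproducts of sets are disjoint; so each $\mathcal{A}[A_x,-]$ preserves filtered colimits. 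As $\mathcal{B}_\omega$ is essentially small and each $I_K$ is a set — here we genuinely use that $U$ is a right multi-adjoint, not merely a local right adjoint — the class $\mathcal{G}$ is essentially small.

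Next, every object $A$ of $\mathcal{A}$ is a filtered colimit of objects of $\mathcal{G}$. Present $U(A)$ as the canonical filtered colimit $U(A)=\colim_{(K,a)\in\mathcal{B}_\omega\downarrow U(A)}K$. Sending $(K,a)$ to the codomain $A_a\in\mathcal{G}$ of the local unit $n_a$ of $a$ defines a functor $\Phi:\mathcal{B}_\omega\downarrow U(A)\to\mathcal{A}$: for $k:(K,a)\to(K',a')$ one has $a=U(u_{a'})\comp n_{a'}\comp k$ with $u_{a'}=L_A(a')$, so $a$ factors through the local unit of $n_{a'}\comp k$, whence by uniqueness of the local unit through which a given arrow factors this local unit is (canonically isomorphic to) $n_a$, producing $\Phi(k):A_a\to A_{a'}$; functoriality likewise follows from that uniqueness together with the candidate property of $n_a$ (local units are candidates by \cref{units are candidate}). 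The maps $u_a:=L_A(a):A_a\to A$ form a cocone $\Phi\Rightarrow\Delta A$, and the local units form a natural transformation $n$ from the forgetful functor to $U\Phi$ with $U(u_a)\comp n_a=a$. Put $\tilde A:=\colim\Phi$, with comparison $\bar u:\tilde A\to A$; I claim $\bar u$ is invertible, whence $A\cong\tilde A$ is a filtered colimit of objects of $\mathcal{G}$. Applying $U$ (which preserves this filtered colimit) to $n$ gives $\colim n:U(A)\to U(\tilde A)$ with $U(\bar u)\comp\colim n=\mathrm{id}_{U(A)}$; this equality is precisely a triangle over $U(A)$, so relative full faithfulness yields $s:A\to\tilde A$ in $\mathcal{A}$ with $\bar u\comp s=\mathrm{id}_A$. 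Then $s\comp\bar u=\mathrm{id}_{\tilde A}$ is checked on each colimit leg $\tilde\iota_{(K,a)}:A_a\to\tilde A$: both $s\comp u_a$ and $\tilde\iota_{(K,a)}$ satisfy the two conditions $U(-)\comp n_a=U(\tilde\iota_{(K,a)})\comp n_a$ and $\bar u\comp(-)=u_a$ characterising the unique lift provided by the candidate $n_a$, so they coincide (here \cref{BC} guarantees that postcomposing $n_a$ with arrows in the range of $U$ does not move the local unit, which is what makes the two conditions consistent).

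Putting this together: $\mathcal{A}$ has filtered colimits, $\mathcal{G}$ is an essentially small family of finitely presented objects, and every object of $\mathcal{A}$ is a filtered colimit of objects of $\mathcal{G}$; hence $\mathcal{A}$ is finitely accessible, $\mathcal{G}$ is a dense generator, and $\mathcal{A}_\omega$ is the retract closure of $\mathcal{G}$ — which is the stated description of the canonical generator by local reflections of finitely presented objects of $\mathcal{B}$. Combined with the first paragraph and the cited characterisation, $\mathcal{A}$ is locally finitely multipresentable. The main obstacle is exactly the invertibility of $\bar u$: one must first manufacture the morphism $s$ \emph{in $\mathcal{A}$} (not just in $\mathcal{B}$) out of $\colim n$ using relative full faithfulness, and then upgrade the resulting retraction to an isomorphism using the sharper universal property of candidates rather than that of mere units — squeezing both halves out of the single hypothesis of relative full faithfulness is the delicate point.
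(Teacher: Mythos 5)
Your proof is correct, and it takes a genuinely different route from the paper's for about half of the statement. For finite accessibility the two arguments essentially coincide: both present $A$ via the canonical filtered diagram of finitely presented objects over $U(A)$, push it through the local reflections, obtain a retraction $U(\bar u)\circ\colim n=1_{U(A)}$, use relative full faithfulness to realise the section in $\mathcal{A}$, and then use the candidate property of the local units to upgrade the retraction to an isomorphism — you have correctly identified that this last upgrade needs the full diagonalization property of candidates and not merely the unit property, which is exactly the delicate point in the paper's proof as well. Your argument that each $A_x$ is finitely presented (via the coproduct decomposition $\mathcal{B}[K,U(-)]\simeq\coprod_{x\in I_K}\mathcal{A}[A_x,-]$ and disjointness of coproducts in $\mathcal{S}et$) is cleaner and more complete than the paper's direct lifting argument, which only exhibits a factorization and leaves the eventual-equality half of finite presentability implicit. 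Where you genuinely diverge is the multicocompleteness part: the paper constructs multicolimits of an arbitrary $F:I\to\mathcal{A}$ explicitly, by taking $\colim UF$ in $\mathcal{B}$, indexing the multicolimit by the local units under it, and using relative full faithfulness to descend each cocone to $\mathcal{A}$; you instead invoke the characterisation of locally finitely multipresentable categories as finitely accessible categories with connected limits in which finite connected limits commute with filtered colimits, checking the limit conditions via \cref{closures properties of mreflective subcategories} (in its relatively-full-and-faithful form, as licensed by the remark following it) and via the conservativity of $U$, which you correctly derive from faithfulness plus relative full faithfulness. Your route is shorter and buys a slick commutation argument, but it leans on the characterisation proposition, which the paper states without proof, and it yields no explicit description of multicolimits in $\mathcal{A}$; the paper's construction is longer but self-contained relative to the definition and produces the multicolimit cocones concretely, which is what later geometric applications use. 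Both treatments leave the same small loose end in the description of $\mathcal{A}_\omega$, namely closure under retracts, and yours is at least as careful on this point as the paper's.
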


\begin{proof}
First, let us prove that $\mathcal{A}$ is finitely accessible. For it is supposed to have filtered colimits, we have to check that $\mathcal{A}_\omega$ is a small dense generator of finitely presented objects. Let be $ K$ in $\mathcal{B}_\omega$ and $ n_x : K \rightarrow U(A_x)$ a local unit under $K$. We first check that $ A_x$ is finitely presented in $\mathcal{A}$: let be a filtered diagram $ F : I \rightarrow \mathcal{A}$, and an arrow $ a : A_x \rightarrow \colim F$. Then for $U$ is supposed to preserves filtered colimits and $ K$ is finitely presented, there is some $i $ in $ I $ such that we have a lift
\[\begin{tikzcd}
	K & {UF(i)} \\
	{U(A_x)} & {\colim \; UF}
	\arrow["{U(a)}"', from=2-1, to=2-2]
	\arrow["{U(q_i)}", from=1-2, to=2-2]
	\arrow["{n_x}"', from=1-1, to=2-1]
	\arrow["b", dashed, from=1-1, to=1-2]
\end{tikzcd}\]
Then the local reflection of $b$ provides us with a filler and morover we have a commutation
\[\begin{tikzcd}
	& {F(i)} \\
	{A_x} & {\colim \; F}
	\arrow["a"', from=2-1, to=2-2]
	\arrow["{q_i}", from=1-2, to=2-2]
	\arrow["{L_{F(i)}(b)}", from=2-1, to=1-2]
\end{tikzcd}\]
Hence $ A_x$ is finitely presented. Now we prove that for each $ A$ in $\mathcal{A}$, we have a filtered colimit $ A \simeq \mathcal{A}_\omega\downarrow A$. First, observe that there exist a canonical arrow in $\mathcal{A}$ as below 
\[\begin{tikzcd}
	& {A_x} \\
	A && {\colim \; \mathcal{A}_\omega\downarrow A}
	\arrow["{q_a}", from=1-2, to=2-3]
	\arrow["a"', from=1-2, to=2-1]
	\arrow["{\langle a \rangle_{a\in \mathcal{A}_\omega\downarrow A }}", from=2-3, to=2-1]
\end{tikzcd}\]
induced from the fact that the diagram $ \dom : \mathcal{A}_\omega \downarrow A \rightarrow \mathcal{A}$ is equipped with a canonical cocone of tip $A$. Moreover, for $U$ preserves filtered colimit, this triangle is sent by $U$ on a triangle 
\[\begin{tikzcd}
	& {U(A_x)} \\
	{U(A)} && {\colim \; U(\mathcal{A}_\omega\downarrow A)}
	\arrow["{U(q_a)}", from=1-2, to=2-3]
	\arrow["{U(a)}"', from=1-2, to=2-1]
	\arrow["{\langle U(a) \rangle_{a\in \mathcal{A}_\omega\downarrow A }}", from=2-3, to=2-1]
\end{tikzcd}\]
But in $\mathcal{B}$, we have a filtered colimit  $ U(A) \simeq \colim \mathcal{B}_\omega \downarrow U(A)$, where each $ b : K \rightarrow U(A)$ is the inclusion at the index it defines in this canonical colimit. Moreover the local adjoint of $U$ over $A$ restricts to finitely presented objects as a functor 
\[ \mathcal{B}_\omega \downarrow U(A) \rightarrow \mathcal{A}_\omega \downarrow A \]
sending any $ b : K \rightarrow U(A)$ to its local part $ L_A(b) : A_{x(b)} \rightarrow A$ for the unique $x(b)$ in $I_K$ corresponding to the local factorization of $b$. For each $b$ the corresponding $ L_A(b)$ is equipped with a canonical inclusion $ q_{L_A(b)} : A_{x(b)} \rightarrow \colim \mathcal{A}_\omega \downarrow A$ and one has $ L_A(b) = \langle a \rangle_{ \mathcal{A}_\omega \downarrow A} q_{L_A(b)}$. Then one has a factorization of the inclusion $b$
\[\begin{tikzcd}
	& K & {U(A_{x(b)})} \\
	{U(A)} &&& {\colim \; U(\mathcal{A}_\omega\downarrow A)}
	\arrow["{U(q_{L_A(b)})}", from=1-3, to=2-4]
	\arrow["{n_{x(b)}}", from=1-2, to=1-3]
	\arrow["b"', from=1-2, to=2-1]
	\arrow["{U(\langle a \rangle_{\mathcal{A}_\omega \downarrow A})}", from=2-4, to=2-1]
	\arrow["{UL_A(b)}"{description}, from=1-3, to=2-1]
\end{tikzcd}\]
which entails by the universal property of the colimit that we have a retraction 
\[\begin{tikzcd}[row sep=small]
	{U(A)} \\
	& {U(\colim \; \mathcal{A}_\omega\downarrow A)} \\
	{U(A)}
	\arrow[Rightarrow, no head, from=1-1, to=3-1]
	\arrow["{\langle U(q_{L_A(b)})n_{x(b)}\rangle_{b \in \mathcal{B}_\omega \downarrow U(A)}}", dashed, from=1-1, to=2-2]
	\arrow["{U(\langle a \rangle_{a \in \mathcal{A}_\omega\downarrow A})}", from=2-2, to=3-1]
\end{tikzcd}\]
Moreover, for $U$ is relatively full and faithful, the map $\langle U(q_{L_A(b)})n_{x(b)}\rangle_{b \in \mathcal{B}_\omega \downarrow U(A)} $ must actually comes from a unique section $ u_A $ in $\mathcal{A}$ 
\[\begin{tikzcd}[row sep=small]
	A \\
	& {\colim \; \mathcal{A}_\omega\downarrow A} \\
	A
	\arrow[Rightarrow, no head, from=1-1, to=3-1]
	\arrow["{u_a}", from=1-1, to=2-2]
	\arrow["{\langle a \rangle_{a \in \mathcal{A}_\omega\downarrow A}}", from=2-2, to=3-1]
\end{tikzcd}\]
On the other hand, the composites $ U( q_{L_A(b)}) n_x : K \rightarrow \colim \; U(\mathcal{A}_\omega \downarrow A)$ define altogether a universal map
\[\begin{tikzcd}
	& K & {U(A_{x(b)})} \\
	{U(A)} &&& {\colim \; U(\mathcal{A}_\omega\downarrow A)}
	\arrow["{U(q_{L_A(b)})}", from=1-3, to=2-4]
	\arrow["{n_{x(b)}}", from=1-2, to=1-3]
	\arrow["b"', from=1-2, to=2-1]
	\arrow["{\langle U(q_{L_A(b)}) n_{x(b)} \rangle_{b \in \mathcal{B}_\omega \downarrow U(A)}}"', from=2-1, to=2-4]
\end{tikzcd}\]
But as we have $\langle U(q_{L_A(b)}) n_{x(b)} \rangle_{b \in \mathcal{B}_\omega \downarrow U(A)} = U(u_A) $ and for each $b : K \rightarrow U(A)$, $ n_{x(b)}$ is a local unit, and the local factorization provides a diagonalization of this square 
\[\begin{tikzcd}
	& {A_{x(b)}} \\
	A && {\colim  \; \mathcal{A}_\omega \downarrow A}
	\arrow["{q_{L_A(b)}}", from=1-2, to=2-3]
	\arrow["{L_A(b)}"', dashed, from=1-2, to=2-1]
	\arrow["{u_A}"', from=2-1, to=2-3]
\end{tikzcd}\]
Moreover, for $U$ is relatively full and faithful, we knwo that each $U_A$ is full and faithful, so that the counits of the local adjunctions are isomorphisms, and for any $ a : A_x \rightarrow A$ in $ \mathcal{A}_\omega \downarrow A$, we have $ a \simeq L_A(U_A(a))$, and hence $a = L_A(U_A(a)n_x)$, and $ q_a = q_{L_A(U_A(a))}$. Hence for any $ a \in \mathcal{A}_\omega \downarrow A$ we have a factorization of the inclusion $ q_a$ given as 
\[\begin{tikzcd}
	& {A_x} \\
	A && {\colim  \; \mathcal{A}_\omega \downarrow A}
	\arrow["{q_{a}}", from=1-2, to=2-3]
	\arrow["{L_A(U(a)n_x)}"', dashed, from=1-2, to=2-1]
	\arrow["{u_A}"', from=2-1, to=2-3]
\end{tikzcd}\]
which entails by the universal property of the colimit that we have a retraction 
\[\begin{tikzcd}[row sep=small]
	&& {\colim  \; \mathcal{A}_\omega \downarrow A} \\
	A \\
	&& {\colim  \; \mathcal{A}_\omega \downarrow A}
	\arrow["{u_A}"', from=2-1, to=3-3]
	\arrow[Rightarrow, no head, from=1-3, to=3-3]
	\arrow["{\langle a \rangle_{a \in \mathcal{A}_\omega \downarrow A}}"', from=1-3, to=2-1]
\end{tikzcd}\]
Hence we have an isomorphism $ A \simeq \colim  \; \mathcal{A}_\omega \downarrow A$ as desired. \\

Now we prove that $\mathcal{A}$ has multicolimits. Let be $F : I \rightarrow \mathcal{A}$. Then, for $\mathcal{B}$ is locally finitely presentable, we can compute in $\mathcal{B}$ the colimit $ \colim \; UF$; now consider the local units under $ \colim \; UF$, and observe that for each of those local unit $ x \in I_{ \colim \; UF}$ we have a cocone over $UF$
\[\begin{tikzcd}
	{UF(i)} && {UF(j)} \\
	& { \colim \; UF} \\
	& {U(A_x)}
	\arrow["{UF(d)}", from=1-1, to=1-3]
	\arrow["{q_i}"', from=1-1, to=2-2]
	\arrow["{q_j}", from=1-3, to=2-2]
	\arrow["{n_x}", from=2-2, to=3-2]
\end{tikzcd}\]
and we claim that each of those cocones comes uniquely from a cocone in $\mathcal{A}$, which altogether form the multicolimit. For a given cocone $ (f_i :F(i) \rightarrow A)_{i \in I}$ in $\mathcal{A}$, take the universal map $ \langle U(f_i) \rangle_{i \in I} : \colim \; UF \rightarrow U(A)$; for $U$ is right multi-adjoint, this arrows factorizes uniquely through one of the local unit of index $x(\langle U(f_i) \rangle_{i \in I}) \in I_{ \colim \; UF}$. Then for each $i \in I$ we have a factorization of $U(f_i)$
\[\begin{tikzcd}
	{UF(i)} \\
	{\colim \; UF} && {U(A)} \\
	{U(A_{x(\langle U(f_i) \rangle_{i \in I})})}
	\arrow["{U(f_i)}", from=1-1, to=2-3]
	\arrow["{\langle U(f_i) \rangle_{i \in I}}"{description}, from=2-1, to=2-3]
	\arrow["{U_A(L_A(\langle U(f_i) \rangle_{i \in I}))}"', from=3-1, to=2-3]
	\arrow["{q_i}"', from=1-1, to=2-1]
	\arrow["{n_{x(\langle U(f_i) \rangle_{i \in I})}}"', from=2-1, to=3-1]
\end{tikzcd}\]
which forces the composite $n_{x(\langle U(f_i) \rangle_{i \in I})} q_i$ to come from a unique arrow $ \gamma^x_i : F(i) \rightarrow A_{x(\langle U(f_i) \rangle_{i \in I})}$ in $\mathcal{A}$ for $U$ is relatively full and faithful. Hence in particular each composite $ n_x q_i$ for $ i \in I$ and $ x \in I_{\colim \; UF}$ comes from a unique $ \gamma^x_i : F(i) \rightarrow A_x$ in $\mathcal{A}$, and the family of cocones 
\[ ((\gamma^x_i : F(i) \rightarrow A_x)_{i \in I})_{x \in I_{\colim \; UF}} \]
is exhibited as a multicolimit for $F$ in $\mathcal{A}$. Hence $\mathcal{A}$ is multicocomplete, and being finitely accessible, it is locally finitely multipresentable.\\

In particular observe that $\mathcal{A}$ has connected limits - and observe that $U$ preserves them as a right multi-adjoint, so that they are computed as limits in $\mathcal{B}$.\end{proof} 

In this context, we can in particular compute explicitly the factorization through local units associated to $U$ by using the canonical cone of its domain:

\begin{corollary}
With the hypothesis of \cref{LMP from accessible Mradj}, for any $f : B \rightarrow U(A)$, we have 
\[ A_f \simeq \underset{b \in \mathcal{B}_\omega\downarrow B}{\colim} \; A_{fb} \quad \quad \eta^A_{f} = \underset{b \in \mathcal{B}_\omega\downarrow B}{\colim} \; \eta^A_{fb} \quad \quad L_A(f) = \langle L_A(fb) \rangle_{b \in \mathcal{B}_\omega\downarrow B} \]
\end{corollary}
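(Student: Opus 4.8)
The plan is to build the local factorization of $f$ out of the local factorizations of the restrictions of $f$ along the canonical filtered presentation of $B$. Since $\mathcal{B}$ is locally finitely presentable, $B$ is the filtered colimit of the canonical diagram $\dom : \mathcal{B}_\omega\downarrow B \to \mathcal{B}$, $b \mapsto K_b := \dom(b)$, with jointly epimorphic colimit cocone $(q_b : K_b \to B)_b$. Precomposition with $f$ makes $b \mapsto (fb : K_b \to U(A))$ a functor $\mathcal{B}_\omega\downarrow B \to \mathcal{B}/U(A)$, and postcomposing with the local left adjoint $L_A$ gives a functor $b \mapsto \big(L_A(fb) : A_{fb}\to A\big)$ valued in $\mathcal{A}/A$ --- in fact in $\mathcal{A}_\omega/A$ by \cref{LMP from accessible Mradj} --- together with the natural family of local units $\eta^A_{fb} : K_b \to U(A_{fb})$ supplied by the unit of the local adjunction $L_A\dashv U_A$. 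Since $\mathcal{A}$ is finitely accessible one forms $L := \underset{b \in \mathcal{B}_\omega\downarrow B}{\colim}\, A_{fb}$, and since $U$ preserves filtered colimits $U(L)\simeq\underset{b}{\colim}\, U(A_{fb})$; the $\eta^A_{fb}$ then induce $n := \underset{b}{\colim}\,\eta^A_{fb} : B \to U(L)$ and the cocone $(L_A(fb))_b$ over $A$ induces $\ell := \langle L_A(fb)\rangle_{b\in\mathcal{B}_\omega\downarrow B} : L \to A$.

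First I would check that $(n,\ell)$ is a factorization of $f$ through the range of $U$, i.e.\ $f = U(\ell)\,n$: precomposing with each $q_b$ both sides reduce to $U(L_A(fb))\,\eta^A_{fb} = fb = f q_b$, and the $q_b$ are jointly epimorphic. It then remains to identify $n$ with the local unit $\eta^A_f$, for which I would verify directly the universal property of $\eta^A_f$ as the initial object of the category of factorizations of $f$ through $U$. Given $g : B \to U(A')$ and $v : A' \to A$ with $U(v)\,g = f$, each pair $(g q_b,\,v)$ is a factorization of $fb$ through $U$, so the universal property of $\eta^A_{fb}$ yields a unique $w_b : A_{fb}\to A'$ with $U(w_b)\,\eta^A_{fb} = g q_b$ and $v w_b = L_A(fb)$; uniqueness makes $(w_b)_b$ natural, hence it induces a unique $w : L \to A'$ compatible with the colimit coprojections, and testing once more against the jointly epimorphic families one gets $U(w)\,n = g$, $v w = \ell$, and uniqueness of such a $w$. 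Thus $n$ has exactly the universal property of $\eta^A_f$, so $L\simeq A_f$ in $\mathcal{A}$, $n\simeq\eta^A_f$ in $B\downarrow U$ and $\ell\simeq L_A(f)$ under these canonical isomorphisms, which is the content of the three displayed identities.

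Alternatively, and closer to the spirit of the factorization section, one can avoid re-deriving the universal property: each $\eta^A_{fb}$ is a candidate, hence diagonally universal, and diagonally universal morphisms are closed under colimits in the arrow category $\mathcal{B}^2$, so $n$ is diagonally universal; its codomain $U(L)$ lies in the range of $U$ and $U$ is relatively full and faithful, so \cref{relffdiaguniv} forces $n$ to be a candidate, and then $(n,\ell)$ is, by uniqueness of the stable factorization and \cref{stable is MRadj}, canonically isomorphic to the local factorization of $f$.

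The only genuinely non-formal step is this last one --- that the filtered colimit of the local units $\eta^A_{fb}$ is again a local unit. In the second argument this is exactly where relative full faithfulness of $U$ (through \cref{relffdiaguniv}) and the closure of the left orthogonal class under colimits in $\mathcal{B}^2$ are used; in the direct argument the corresponding input is the joint epimorphy of the canonical cocone over $B$, which is where local finite presentability of $\mathcal{B}$ enters and where preservation of filtered colimits by $U$ is needed to make $n$ land in $U(L)$. Everything else --- functoriality of $b\mapsto A_{fb}$ and naturality of $b\mapsto\eta^A_{fb}$ --- comes for free from the functoriality of the local adjunction, and the final identification with $A_f$, $\eta^A_f$, $L_A(f)$ is forced by uniqueness of the local (equivalently stable) factorization.
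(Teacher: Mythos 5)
Your proof is correct, but it takes a genuinely different route from the paper's. The paper's own argument is very short: it factors $fb$ through $\eta^A_f$ and invokes the Beck--Chevalley isomorphism of \cref{BC} to identify $A_{fb}$ with $A_{\eta^A_f b}$, the local reflection of $\eta^A_f b$ at $A_f$, and then implicitly leans on the canonical presentation $A_f \simeq \colim\, \mathcal{A}_\omega\downarrow A_f$ established inside the proof of \cref{LMP from accessible Mradj} (together with an unstated cofinality of the $\eta^A_f b$ among $\mathcal{B}_\omega\downarrow U(A_f)$). You instead construct $L=\colim_b A_{fb}$ from scratch and verify directly that the induced $(n,\ell)$ is initial among factorizations of $f$ through $U_A$, with the second variant delegating the key step to the closure of diagonally universal morphisms under colimits in $\mathcal{B}^2$ together with \cref{relffdiaguniv}. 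What your approach buys is self-containedness and an explicit audit of where each hypothesis enters (filteredness and joint epimorphy of the canonical cocone, preservation of filtered colimits by $U$, finite presentability of the $A_{fb}$, relative full faithfulness in the second variant); what the paper's approach buys is brevity, since Beck--Chevalley does in one line the work of transporting local units along the postcomposition $U(L_A(f))$. Both arguments are sound, and your direct verification of the universal property of $n$ (naturality of the $w_b$ from uniqueness, then testing against the jointly epimorphic coprojections) is complete as written.
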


\begin{proof}
We have $ B \simeq  {\colim} \; \mathcal{B}_\omega\downarrow B$, so that $ f = \langle fb \rangle_{b \in \mathcal{B}_\omega\downarrow B}$. Now for a given $b : K \rightarrow B$, consider the factorization
\[\begin{tikzcd}[column sep=large]
	K & B && {U(A)} \\
	& {U(A_{\eta^A_fb})} & {U(A_f)}
	\arrow["f", from=1-2, to=1-4]
	\arrow["b", from=1-1, to=1-2]
	\arrow["{\eta^A_f}"{description}, from=1-2, to=2-3]
	\arrow["{UL_A(f)}"', from=2-3, to=1-4]
	\arrow["{\eta^{A_f}_{\eta^A_fb}}"', from=1-1, to=2-2]
	\arrow["{UL_{A_f}(\eta^A_f b)}"', from=2-2, to=2-3]
\end{tikzcd}\]
but by \cref{BC} we know that actually 
\[  A_{\eta^A_fb} \simeq A_{fb} \quad \quad \eta^{A_f}_{\eta^A_fb} = \eta^A_{fb} \quad \quad L_A(f) L_{A_f}(\eta^A_fb) = L_A(fb)  \]

\end{proof}

In \cite{diers1980quelques}, a specific process to construct locally finitely multipresentable categories is provided, which encompasses most of the interesting examples and enjoys Diers condition. Though this process is originally done inside an ambient locally multipresentable category, we consider here the special case of an ambient locally presentable category, where the construction simplifies slightly and meets the version of the small object argument presented above. \\

In the following, let $\mathcal{B}$ be a locally finitely presentable category. We consider a small class $\Gamma$ of small cones in $\mathcal{B}_{\omega}$, with \[ V_\Gamma = \{ g_i \mid \; (g_i : K \rightarrow K_i)_{i \in I} \in \Gamma, i \in I \} \] the set of all arrows involved in cones of $\Gamma$. \\

\begin{definition}
An object $ A$ in $\mathcal{B}$ is said to be \emph{$\Gamma$-local} if for any cone $ (g_i : K \rightarrow K_i)_{i \in I}$ in $\Gamma$, we have a surjection
\[ \begin{tikzcd}
\underset{i \in I}{\coprod} \mathcal{B}[K_i, A] \arrow[ two heads]{rr}{ \langle \mathcal{B}[g_i, A] \rangle_{i \in I} } & & \mathcal{B}[K, A]
\end{tikzcd}  \]
\end{definition}

\begin{remark}
Observe that this notion is slightly weaker than the one considered in \cite{diers1977categories}[Section 8.6], which we treat later under the name of \emph{strongly $\Gamma$-local}. Our notion, beside being more general, is more suited to encompass the situations corresponding to geometric extensions of the finite limit theory behind $ \mathcal{B}$ as we shall see below.
\end{remark}

We have also a notion of local morphism, which is actually the notion of right map relatively to the class of all maps involved in the cones of $\Gamma$; together with local objects they form a certain subcategory of $\mathcal{B}$ we are going to focus on: 

\begin{definition}
A morphism is said to be \emph{$\Gamma$-local} if it is in $ V_\Gamma^\perp$. We denote as $ \mathcal{B}_\Gamma$ the category of $\Gamma$-local objects and $\Gamma$-local morphisms between them, equiped with a faitfull, injective on object inclusion
\[ \begin{tikzcd}
 \mathcal{B}_\Gamma \arrow[hook]{r}{U_\Gamma}& \mathcal{B} \end{tikzcd} \]
\end{definition}

\begin{remark}
Observe that the category hence obtained is not a full subcategory: this allows in particular to select a distinguished class of morphisms between models of an extension of the theory behind $\mathcal{B}$, while considering only a finite limit extension of the theory of $\mathcal{B}$ in the same signature does not restrict morphisms between the models of the extension, producing always a full subcategory: this is because morphisms are determined by the signature and not by the axioms of the theory. 
\end{remark}

We know then from \cref{SOA} that $ \Gamma$-local morphisms are the right class of a left generated factorization system $ (\Ind(\overline{V_\Gamma}), V^\perp_\Gamma) $ where $\overline{V_\Gamma} $ is the saturated class generated from $ V_\Gamma$. \\

Then we have the following property, which is slightly more general than  \cite{diers1980quelques}[Theorem 3.2] and whose proof simplifies also a bit thanks to the prior treatment of the small object argument:

\begin{theorem}\label{Local objects are LMP}
Let $\Gamma$ be any small class of small cones in $\mathcal{B}_\omega$. Then the inclusion $ U_\Gamma$ is
\begin{itemize}
    \item accessible,
    \item relatively full and faithful,
    \item right multi-adjoint,
    \item and satisfies Diers condition
\end{itemize} 
In particular it exhibits $ \mathcal{B}_\Gamma$ as a locally finitely multipresentable category. 
\end{theorem}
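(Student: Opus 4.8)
\textbf{Overall approach.} The plan is to verify the four listed properties one at a time, and then invoke \cref{LMP from accessible Mradj} — since $\mathcal{B}$ is locally finitely presentable, $U_\Gamma$ is faithful and injective on objects by construction, and the other hypotheses of that theorem are exactly ``right multi-adjoint'', ``relatively full and faithful'', and ``$\mathcal{B}_\Gamma$ closed under filtered colimits with $U_\Gamma$ preserving them'' (accessibility of $U_\Gamma$ will follow as well). So the real content is establishing these properties directly from the left generated factorization system $(\Ind(\overline{V_\Gamma}), V_\Gamma^\perp)$ and the definition of $\Gamma$-local objects and morphisms.

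\textbf{Relative full faithfulness and the stable-functor structure.} First I would observe that $U_\Gamma$ is, essentially, the inclusion of the right class $\mathcal{R} = V_\Gamma^\perp$ restricted to those objects that are $\Gamma$-local, and that $\Gamma$-local objects form a class with the gliding property relative to $\mathcal{R}$: indeed if $r : B \to A$ is in $V_\Gamma^\perp$ (i.e.\ right orthogonal to every $g_i$) and $A$ is $\Gamma$-local, then for any cone $(g_i : K \to K_i)_i$ in $\Gamma$ and any $K \to B$, compose with $r$ to get $K \to A$, factor through some $g_i$ by locality of $A$, and lift along $r$ using orthogonality; this shows $B$ is $\Gamma$-local. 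Hence \cref{Stable functor from gliding} applies and gives immediately that $U_\Gamma : \mathcal{B}_\Gamma = \mathcal{A}^{\mathcal{R}} \hookrightarrow \mathcal{B}$ is a relatively full and faithful stable functor, hence a local right adjoint by \cref{stable is MRadj}. The local unit of $f : B \to U_\Gamma(A)$ is the left part $l_f$ of its $(\Ind(\overline{V_\Gamma}), V_\Gamma^\perp)$-factorization from \cref{SOA}, whose codomain is $\Gamma$-local by gliding.

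\textbf{From local right adjoint to right multi-adjoint, plus Diers condition.} The key extra step is the Solution Set Condition (\cref{Mradj = Lradj + SSC}): given $B$, I need a \emph{small} family of candidates under $B$ through which every $f : B \to U_\Gamma(A)$ factors. Here I would use that $\Ind(\overline{V_\Gamma})$ is accessible (the factorization system is left generated by the \emph{small} saturated class $\overline{V_\Gamma}$, and $\mathcal{D} = \overline{V_\Gamma}$ is a saturated class of finitely presented maps, so by the discussion around \cref{SOA} and the idempotent-comonad paragraph, left parts of factorizations of maps out of $B$ form an essentially small collection): the local units $l_f$ all arise, up to iso, from filtered colimits indexed by the essentially small category $\mathcal{D}_B$, so there is only a set of them. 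This gives SSC, hence $U_\Gamma$ is a right multi-adjoint; accessibility of $U_\Gamma$ then follows since $\mathcal{B}$ and the whole construction are finitary, and one checks $\mathcal{B}_\Gamma$ is closed under filtered colimits in $\mathcal{B}$ because both ``$\Gamma$-local object'' (a surjectivity condition tested against finitely presented $K, K_i$) and ``$\Gamma$-local morphism'' ($V_\Gamma^\perp$ with $V_\Gamma$ finitely presented maps, whose right class is closed under filtered colimits in $\mathcal{B}^2$) are preserved by filtered colimits. Finally, Diers condition holds essentially by construction: the local unit $\eta^A_f = l_f$ \emph{is} the left part of the left generated factorization system, hence lies in $\Ind(\overline{V_\Gamma}) \subseteq \Ind(\mathcal{D}iag_B)$.

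\textbf{Main obstacle.} The delicate point is the smallness/accessibility bookkeeping in the Solution Set step: one must be careful that ``candidates under $B$'' for $U_\Gamma$ really are exhausted — up to isomorphism in $B \downarrow U_\Gamma$ — by left parts of the small-object-argument factorization, and that this collection is genuinely a set, not merely each one being a filtered colimit. This is where I would lean on the material already developed: \cref{units form a set} (local units in the same connected component are isomorphic) together with the accessibility of $\Ind(\mathcal{D})$ established in the factorization section. Once SSC is secured, everything else is a routine assembly of the cited results, and the concluding sentence is just the invocation of \cref{LMP from accessible Mradj} with faithfulness and injectivity-on-objects of $U_\Gamma$ being immediate from the definition of $\mathcal{B}_\Gamma$.
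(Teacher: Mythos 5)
Your overall architecture matches the paper's: you establish the gliding property of $\Gamma$-local objects along $V_\Gamma^\perp$-maps, invoke \cref{Stable functor from gliding} to obtain a relatively full and faithful stable functor whose local units are the left parts of the $(\Ind(\overline{V_\Gamma}), V_\Gamma^\perp)$-factorization (whence Diers condition), and verify closure of $\mathcal{B}_\Gamma$ under filtered colimits to get accessibility. The genuine gap is in the Solution Set step. You claim the local units under $B$ form a set because each arises as a colimit $\colim\,\mathcal{D}_B\downarrow f$ with $\mathcal{D}_B$ essentially small. That does not follow: filtered colimits of diagrams valued in an essentially small category can realize a proper class of isomorphism types (the closure of $\mathcal{B}_\omega$ under filtered colimits is all of $\mathcal{B}$), and the left class of a factorization system generated by a single map can contain a proper class of non-isomorphic maps out of a fixed object --- consider (all maps, isomorphisms) on $\mathcal{S}et$, generated by $\emptyset \to 1$. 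What must be bounded is the number of connected components of $B\downarrow U_\Gamma$, and \cref{units form a set} only says that units in the same component coincide, not that there is a set of components. The paper closes this differently: having proved $U_\Gamma$ accessible by the direct verification you sketch, it invokes the standard theorem that an accessible functor between accessible categories satisfies the solution set condition (the same device as in \cref{Accessible multi-adjoint functor theorem}), and then applies \cref{Mradj = Lradj + SSC}. Your proof is repaired by substituting that citation for your smallness argument.

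A secondary weak point: you justify $\Gamma$-localness of the colimit inclusions $q_i : F(i)\to\colim\, F$ by saying the right class $V_\Gamma^\perp$ is ``closed under filtered colimits in $\mathcal{B}^2$''. As a general principle this is backwards --- right classes are closed under \emph{limits} in the arrow category, left classes under colimits. The statement you need is true here, but only because the generating maps $g_j$ have finitely presentable domain and codomain; the paper devotes the longest portion of its proof to precisely this verification (existence of the diagonal via finite presentability of $K_j$ and filteredness of $I$, uniqueness via \cref{simultaneous left equalization and right coequalization}), and it should not be presented as an instance of a generic closure property of right classes.
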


\begin{proof}
First observe that $U_\Gamma$ is faithful; moreover, it is easy to see it is relatively full and faithful as a consequence of left cancellability local maps enjoy as a right class. \\

We now prove that $\mathcal{B}_\Gamma$ has filtered colimits and that they are preserved by $U_\Gamma$. \\

Let be $ F : I \rightarrow \mathcal{B}_\Gamma$ a small diagram with $I$ filtered. Consider its filtered colimit $ \colim U_\Gamma F$ in $ \mathcal{B}$. Now take a cone $ (g_j : K \rightarrow K_j)_{j\in J}$ in $ \Gamma$.  Now for the joint surjectivity, consider an arrow $ a : K \rightarrow \colim \; U_\Gamma F$; as $K$ is finitely presented, we have a lift $ b : K \rightarrow F(i)$ for some $i \in I$, and by localness of $ F(i)$ there is some $j$ in $J$ and a factorization 
\[\begin{tikzcd}
	K && {F(i)} \\
	{K_j} && {\colim \; U_\Gamma F}
	\arrow["{g_j}"', from=1-1, to=2-1]
	\arrow["{q_i}", from=1-3, to=2-3]
	\arrow["b", from=1-1, to=1-3]
	\arrow["a"{description}, from=1-1, to=2-3, near end]
	\arrow["{\bar{b}}"{description}, from=2-1, to=1-3, near start, crossing over]
\end{tikzcd}\]
and we have $ a = q_i b = q_i \bar{b} g_j$. Hence the localness of $\colim \; U_\Gamma F$.  \\

 We must now prove the inclusions $ q_i$ are $\Gamma$-local. Consider a square as below, for $ (g_j : K \rightarrow K_j)_{j \in I}$ and $ j \in J$:
\[\begin{tikzcd}
	K & {F(i)} \\
	{K_j} & {\underset{i \in I}{\colim} \, F(i)}
	\arrow["a", from=1-1, to=1-2]
	\arrow["b"', from=2-1, to=2-2]
	\arrow["{g_j}"', from=1-1, to=2-1]
	\arrow["{q_i}", from=1-2, to=2-2]
\end{tikzcd}\]
Then for $ K_j$ is finitely presented and $I$ filtered, there is some $ i'$ such that $ b $ factorizes through $ q_{i'}$ as $ b = q_{i'}b'$. But then $ q_i a = b g_j = q_{i'}b'g_j$, so that there exists some span $ s : i \rightarrow i''$, $s' : i' \rightarrow i''$ in $I$ such that we have a factorization as follows
\[\begin{tikzcd}
	K & {F(i)} \\
	& {F(i'')} && {\underset{i \in I}{\colim} \, F(i)} \\
	{K_j} & {F(i')}
	\arrow["{g_j}"', from=1-1, to=3-1]
	\arrow["{q_i}", curve={height=-12pt}, from=1-2, to=2-4]
	\arrow["b"', from=3-1, to=3-2]
	\arrow["{q_{i'}}"', curve={height=12pt}, from=3-2, to=2-4]
	\arrow["a", from=1-1, to=1-2]
	\arrow["{F(s)}"{description}, from=1-2, to=2-2]
	\arrow["{F(s')}"{description}, from=3-2, to=2-2]
	\arrow["{q_{i''}}"{description}, from=2-2, to=2-4]
\end{tikzcd}\]
But then for $ g_j$ is in $V_\Gamma$ and $ F(s)$ is $\Gamma$-local we have a unique diagonalization as below
\[\begin{tikzcd}
	K && {F(i)} \\
	{K_j} & {F(i')} & {F(i'')} && {\underset{i \in I}{\colim} \, F(i)}
	\arrow["{g_j}"', from=1-1, to=2-1]
	\arrow["{q_i}", curve={height=-12pt}, from=1-3, to=2-5]
	\arrow["b"', from=2-1, to=2-2]
	\arrow["a",from=1-1, to=1-3]
	\arrow["{F(s)}"{description}, from=1-3, to=2-3]
	\arrow["{F(s')}"', from=2-2, to=2-3]
	\arrow["{q_{i''}}"{description}, from=2-3, to=2-5]
	\arrow["d"{description}, dashed, from=2-1, to=1-3]
\end{tikzcd}\]
where $d$ provides also a diagonalization of the original square above.\\

Moreover such a diagonalization must always be unique: suppose indeed one has two parallel diagonalization
\[\begin{tikzcd}
	K & {F(i)} \\
	{K_j} & {\underset{i \in I}{\colim} \, F(i)}
	\arrow["a", from=1-1, to=1-2]
	\arrow["{g_j}"', from=1-1, to=2-1]
	\arrow["{q_i}", from=1-2, to=2-2]
	\arrow["d", shift left=1, from=2-1, to=1-2]
	\arrow["b"', from=2-1, to=2-2]
	\arrow["{d'}"', shift right=1, from=2-1, to=1-2]
\end{tikzcd}\]
Then $ g_j$ equalize $ d,d'$; but we also have that $ q_i $ coequalizes $ d,d'$, and from the fact that $ K_j$ is finitely presented and $I$ is filtered, this implies there exists some arrow $ s : i \rightarrow i'$ in $I$ such that $ F(s) $ coequalizes $ d,d'$: but $F(s)$ is $ \Gamma$-local, that is, in $V_\Gamma^{\perp}$, so by \cref{simultaneous left equalization and right coequalization}, this forces $d,d'$ to be equal. This finish to prove that $ \mathcal{B}_\Gamma$ has filtered colimit and that $ U_\Gamma$ preserves them, hence is accessible.  \\

Now we have to prove $U_\Gamma$ is a right multi-adjoint. We consider as already constructed the factorization system $(\Ind(\overline{V_{\Gamma}}), V_\Gamma^\perp) $. We prove that $ \Gamma$-local objects have the gliding property relatively to local maps (see \cref{gliding}). Suppose that $ A$ is $\Gamma$-local and $ C$ is an object equipped with a local arrow $ u : C \rightarrow A$. Now for any cone $ (g_i : K \rightarrow K_i)_{i \in I}$ in $\Gamma$ and any $f : K \rightarrow C$ we have a factorization for some $ g_i$:
\[\begin{tikzcd}
	K & C \\
	{K_i} & A
	\arrow["{g_i}"', from=1-1, to=2-1]
	\arrow["f", from=1-1, to=1-2]
	\arrow["u", from=1-2, to=2-2]
	\arrow["a"', dashed, from=2-1, to=2-2]
\end{tikzcd}\]
Then for $ g_i \perp u$ this induces a unique factorization $ K_i \rightarrow C$ of $f$ as desired. Then $ C$ is $\Gamma$-local. \\

Hence from \cref{Stable functor from gliding} we know that $ U_\Gamma $ is stable, as $ \mathcal{B}_\Gamma$ consists precisely of a class of objects that have the gliding property to a right class. Hence for any arrow $ f : B \rightarrow A $ with $A$ a $\Gamma$-local object, the $(\Ind(\overline{V_\Gamma}), V_\Gamma^\perp)$-factorization 
\[\begin{tikzcd}
	B && A \\
	& {A_f}
	\arrow["f", from=1-1, to=1-3]
	\arrow["{u_f}"', from=2-2, to=1-3]
	\arrow["{n_f}"', from=1-1, to=2-2]
\end{tikzcd}\]
returns a local object $A_f$. Moreover, as $U_\Gamma$ is relatively full and faithful, arrow $ n : B \rightarrow A$ in $\Ind(\overline{V_\Gamma}) $ with $ A$ $\Gamma$-local are exactly the candidate for $U$ : hence $ U_\Gamma$ automatically satisfies Diers condition.\\

By \cref{stable is MRadj}, $ U_\Gamma$ is local right adjoint. But now, for $ U_\Gamma$ is accessible, it satisfies the solution set condition: then by \cref{Mradj = Lradj + SSC}, $ U_\Gamma$ is a right multi-adjoint. As a consequence, $ \mathcal{B}_\Gamma$ is locally finitely multipresentable, from \cref{LMP from accessible Mradj}.
 \end{proof}

\begin{remark}
Beware however that without additional assumption, $U_\Gamma$ may not be diagonally axiomatizable; it may happens actually that the class of $\Gamma$-local object is empty, or at least does not contain enough objects to ensure that $ ^\perp U_\Gamma(\mathcal{B}_\Gamma^2) = \Ind(\overline{V_\Gamma})$. \\

Moreover, we also must restrict to the local maps between local objects in order to get a multireflective subcategory, for there is no reason for local objects to have the gliding property along arbitrary maps. 
\end{remark}

\begin{definition}
An object $A$ is said to be \emph{strongly $\Gamma$-local} if it is $ \Gamma$-local and satisfies moreover the condition that for any cone $ (g_i : K \rightarrow K_i)_{i \in I} $ in $\Gamma$ and any $i \in I$, one has an injection
 we have in each $i \in I$ an injection
\[ \begin{tikzcd}[column sep=large]
\mathcal{B}[K_i, A] \arrow[hook]{r}{\mathcal{B}[g_i, A] } & \mathcal{B}[K, A]
\end{tikzcd}  \]
Again one can define $ \mathcal{B}_\Gamma^{\textrm{strong}} \hookrightarrow \mathcal{B}$ the subcategory of strongly $ U^{\textrm{strong}}_\Gamma : \Gamma$-local objects and local maps between them.
\end{definition}

Then we have the analog of \cref{Local objects are LMP}, which is the original content of \cite{diers1980quelques}[Theorem 3.2]:

\begin{proposition}
The inclusion $ U^{\textrm{strong}}_\Gamma$ is accessible, relatively full and faithful, and axiomatizable right multi-adjoint, and it exhibits $ \mathcal{B}^{\textrm{strong}}_\Gamma$ as a locally finitely multipresentable category. 
\end{proposition}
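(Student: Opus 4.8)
The plan is to run the proof of \cref{Local objects are LMP} essentially verbatim, the only genuinely new input being that the \emph{strong} clause — injectivity of each $\mathcal{B}[g_i,A]$ — is stable under the two operations that matter here: filtered colimits in $\mathcal{B}$, and restriction along $V_\Gamma^\perp$-maps (the gliding property of \cref{gliding}). Faithfulness and relative full faithfulness of $U^{\textrm{strong}}_\Gamma$ follow, as for $U_\Gamma$, from left-cancellability of the right class $V_\Gamma^\perp$. I would then show that the class of strongly $\Gamma$-local objects has the gliding property relatively to $V_\Gamma^\perp$. Given $u\colon C\to A$ in $V_\Gamma^\perp$ with $A$ strongly $\Gamma$-local and a cone $(g_i\colon K\to K_i)_{i\in I}$ of $\Gamma$, joint surjectivity of $\langle\mathcal{B}[g_i,C]\rangle_i$ is obtained exactly as in \cref{Local objects are LMP} (factor $uf$ through some $g_i$, then diagonalize $g_i\perp u$); for injectivity of a fixed $\mathcal{B}[g_i,C]$, from $f_1 g_i=f_2 g_i$ one gets $(uf_1)g_i=(uf_2)g_i$, hence $uf_1=uf_2$ by strong $\Gamma$-locality of $A$, and then $f_1$ and $f_2$ are both diagonal fillers of the square with sides $g_i$, $u$, $f_1 g_i$, $uf_1$, so uniqueness of the filler ($g_i\in V_\Gamma$, $u\in V_\Gamma^\perp$) forces $f_1=f_2$. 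Thus $C$ is strongly $\Gamma$-local.

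Equivalently, the full inclusion $U_0$ of \emph{all} strongly $\Gamma$-local objects into $\mathcal{B}$ lifts $V_\Gamma^\perp$-maps, and being full it is relatively full and faithful; the wide subcategory of its $V_\Gamma^\perp$-arrows is precisely $\mathcal{B}^{\textrm{strong}}_\Gamma$, so $U^{\textrm{strong}}_\Gamma=U_0\iota_0$ in the notation of the preceding section. The Proposition there — a relatively full and faithful $U_0$ lifting $\mathcal{R}$-maps induces a stable, diagonally axiomatisable functor $U_0\iota_0$ — then gives at once that $U^{\textrm{strong}}_\Gamma$ is \emph{stable}, hence a local right adjoint by \cref{stable is MRadj}, and \emph{diagonally axiomatisable}; combined with $V_\Gamma\subseteq\mathcal{D}iag$ this pins down $\Ind(\overline{V_\Gamma})=\Ind(\mathcal{D})=\mathcal{D}iag$. (One may alternatively invoke \cref{Stable functor from gliding} for stability.) Next I would establish accessibility by showing $\mathcal{B}^{\textrm{strong}}_\Gamma$ has filtered colimits, computed in $\mathcal{B}$ and preserved by $U^{\textrm{strong}}_\Gamma$. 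For filtered $F\colon I\to\mathcal{B}^{\textrm{strong}}_\Gamma$, the colimit $\colim U^{\textrm{strong}}_\Gamma F$ in $\mathcal{B}$ is $\Gamma$-local (joint surjectivity, as in \cref{Local objects are LMP}) and strongly $\Gamma$-local: from $\phi_1 g_i=\phi_2 g_i\colon K\to\colim F$ with $\phi_1,\phi_2\colon K_i\to\colim F$, finite presentability of $K_i$ lifts $\phi_1,\phi_2$ to a common $F(j)$, finite presentability of $K$ lets one pass to some $F(j')$ on which the two liftings agree after precomposition with $g_i$, and strong $\Gamma$-locality of $F(j')$ then forces the liftings equal, whence $\phi_1=\phi_2$. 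The cocone legs $q_i$ are checked to be $\Gamma$-local exactly as in \cref{Local objects are LMP}, using \cref{simultaneous left equalization and right coequalization} for uniqueness of the fillers, so this is indeed the colimit in the non-full subcategory $\mathcal{B}^{\textrm{strong}}_\Gamma$; hence $U^{\textrm{strong}}_\Gamma$ is accessible.

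Finally, an accessible functor between accessible categories satisfies the solution set condition (\cite{makkai1989accessible}, \cite{TholenRosickySolutionSet}), so since $U^{\textrm{strong}}_\Gamma$ is also a local right adjoint it is a right multi-adjoint by \cref{Mradj = Lradj + SSC}; being moreover faithful, relatively full and faithful, with $\mathcal{B}^{\textrm{strong}}_\Gamma$ closed under filtered colimits in $\mathcal{B}$ and $U^{\textrm{strong}}_\Gamma$ preserving them, \cref{LMP from accessible Mradj} exhibits $\mathcal{B}^{\textrm{strong}}_\Gamma$ as locally finitely multipresentable. I expect the main obstacle to be bookkeeping rather than conceptual: transporting the two ``stability of the strong clause'' verifications (under filtered colimits, and under gliding) with the injectivity condition tracked correctly, and re-checking that the cocone legs of the colimit are $\Gamma$-local so that it really computes the colimit inside $\mathcal{B}^{\textrm{strong}}_\Gamma$ — which is precisely where the argument of \cref{Local objects are LMP} via \cref{simultaneous left equalization and right coequalization} is reused.
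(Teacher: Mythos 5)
Your proposal follows the paper's proof essentially verbatim: the paper likewise reduces everything to the argument for $\Gamma$-local objects and only adds the two checks that the injectivity clause survives filtered colimits (lift the two parallel maps, equalize after precomposition with $g_j$ in some $F(i'')$, conclude by strong locality there) and survives gliding along $V_\Gamma^\perp$-maps (push forward along $u$, use strong locality of the codomain, then uniqueness of the orthogonal filler — which is exactly the content of the cited corollary on simultaneous left equalization and right coequalization). The only divergence is that you supply an explicit route to the ``axiomatizable'' clause via the proposition on functors lifting $\mathcal{R}$-maps, a point the paper's own proof leaves implicit; that addition is sound and arguably an improvement.
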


\begin{proof}
The proof would be essentially the same as for \cref{Local objects are LMP}: we just have to control the additional injectivity condition.\\

Take a filtered diagram $ F : I \rightarrow \mathcal{B}_\Gamma$. We know that $ \colim_I F$ is local, and that the inclusion are local. We must prove that the colimit is strongly local. Suppose that, for some $ j \in J$ one has two arrows $ a, a'$ such that $ a g_j = a'g_j$; as $K_j$ is finitely presented, there are respectively $ i, i' $ in $I$ and arrows $ b, \, b'$ factorizing respectively $ a$ and $a'$ as below
\[\begin{tikzcd}
	& {F(i)} & {F(i')} \\
	{K_i} && {\colim \;U_\Gamma F}
	\arrow["a", shift left=1, from=2-1, to=2-3]
	\arrow["{a'}"', shift right=1, from=2-1, to=2-3]
	\arrow["{q_i}"', from=1-2, to=2-3]
	\arrow["{q_{i'}}", from=1-3, to=2-3]
	\arrow["{b}", from=2-1, to=1-2]
	\arrow["{b'}"{description}, crossing over, from=2-1, to=1-3]
\end{tikzcd}\]
But then observe that $ q_i b g_j = ag_j = a'g_j = q_{i'}b'g_j$, and as $K$ is finitely presented and $F$ is filtered, there is some $i''$ in $I$ and a cospan $ (d : i \rightarrow i'', d' : i' \rightarrow i'') $ in $I$ such that we have a common factorization as below
\[\begin{tikzcd}
	& K \\
	{F(i)} & {F(i'')} & {F(i')} \\
	& {\colim U_\Gamma F}
	\arrow["{q_i}"{description}, from=2-1, to=3-2]
	\arrow["{q_{i'}}"{description}, from=2-3, to=3-2]
	\arrow["{F(d)}"{description}, from=2-1, to=2-2]
	\arrow["{F(d')}"{description}, from=2-3, to=2-2]
	\arrow["{q_{i''}}"{description}, from=2-2, to=3-2]
	\arrow["{ag_j}"', from=1-2, to=2-1]
	\arrow["{a'g_j}", from=1-2, to=2-3]
\end{tikzcd}\]
Now by localness of $F(i'')$, we know that $ F(d)b = F(d')b'$ for those arrows are equalized by precomposition with $g_j$. But then we have 
\[ a = q_i b = q_{i''} F(d) b = q_{i''} F(d') b' = q_{i'} b' = a'  \]
which proves the injectivity of the applications $ \mathcal{B}[g_j, \colim \; U_\Gamma F]$.\\

Now it remains to prove that $ U^{\textrm{strong}}_\Gamma$ is a right multi-adjoint. Again we just have to prove that strongly $\Gamma$-local objects have the gliding property relatively to $\Gamma$-local maps. If one has $ u : C \rightarrow A$ with $u$ local and $ A$ a strongly $\Gamma$-local object, then we know from the last item of \cref{Local objects are LMP} that $ C$ is $\Gamma$-local; now we must strengthen this condition. For any cone $ (g_i : K \rightarrow K_i)_{i \in I}$ in $\Gamma$, suppose we have two parallel extension of a same $f$ as below
\[\begin{tikzcd}
	K & C \\
	{K_i}
	\arrow["{g_i}"', from=1-1, to=2-1]
	\arrow["f", from=1-1, to=1-2]
	\arrow["a", shift left=1, from=2-1, to=1-2]
	\arrow["{a'}"', shift right=1, from=2-1, to=1-2]
\end{tikzcd}\]
Then one has $ lf=lag_i = la'g_i$, which enforces that $ la=la'$ as $ A$ is strongly $\Gamma$-local: but then $ a,a'$ are simultaneously equalized by $g_j$ which is in $V_\Gamma$, and coequalized by $u$ which is in $V_\Gamma^\perp$, so that they are equal by \cref{simultaneous left equalization and right coequalization}.
\end{proof}

\section{Co-stable functors from factorization systems}


We saw in the previous section how stable functors towards a locally presentable category induced a factorization system through a small object argument. 
In this part we also make use of a factorization system, but for a different purpose: in the context of a category equipped with a factorization system and terminal object, we show how to construct a \emph{costable} inclusion from a class of ``left objects' (aka, objects with a left map as their terminal map). The motivation for this construction will be made clearer in a future work, where the bicategorical version of this process will be developed and applied in the bicategory of Grothendieck topoi relatively to several factorization systems, and provide a notion of ``2-geometry" for Grothendieck topoi. However, as we chose to remain purely 1-categorical in this paper, we do not go into the 2-dimensional version of this construction and give it as an autonomous construction to prepare for its future involvement.\\

Throughout this section, we want to emphasize the ``geometric" intuition leading our work. Hence, while the remaining of the paper followed ``algebraic convention", where we studied stables functor in to categories of objects to be seen as algebraic (for instance, living in a locally presentable category), this section will produce co-stable functors rather than stable ones, and the object of the ambient category should be seen as spaces. \\

Before anything, let us precise that we call a functor $ F :  \mathcal{A} \rightarrow \mathcal{C}$ \emph{co-stable } if the corresponding functor between the opposite categories $ F^{op} : \mathcal{A}^{op} \rightarrow \mathcal{B}^{op}$ is stable. \\

We fix a category $ \mathcal{C}$ with a terminal object. This object should be seen as the \emph{generic point}, as is the point $ *$ in the category of topological spaces, or the topos of Sets in the bicategory of Grothendieck toposes, or the 2-elements lattice 2 seen as the trivial locale. Now we equip $\mathcal{C}$ with a factorization system $ (\mathcal{L}, \mathcal{R})$. \\

Now the core idea of this section is that one can classify objects of $\mathcal{C}$ as left or right depending whether their terminal map is left or right; though arbitrary objects may not lie in either of those two classes, it is well known that right classes always define reflective subcategories by this process, which means that any object admits a ``right replacement". 

\begin{definition}
Let $\mathcal{C}$ be a category equiped with a factorization system $(\mathcal{L}, \mathcal{R} )$ and a terminal object. Let define \begin{itemize}
    \item the class of $\mathcal{L}$-objects as those $ L$ such that $ L \stackrel{!L}{\rightarrow} 1$ is in $\mathcal{L}$; together with the $L$ maps they form a subcategory $ \mathcal{L}Obj$.
    \item the class of $\mathcal{R}$-objects as those $ R$ such that $ R \stackrel{!R}{\rightarrow} 1$ is in $\mathcal{R}$; together with the $R$ maps they form a subcategory $ \mathcal{R}Obj$.
\end{itemize}
\end{definition}
\begin{remark}

Now observe the following: \begin{itemize}
    \item 1 is, up to iso, the only object up to iso to be both left and right.
    \item By right cancellation, any point $ 1 \rightarrow R$ of a right object is a right map.
    \item Any arrow toward a right object factorizes through a right object by right cancellability of the right class.
    \item The category $\mathcal{R}Obj $ of right objects and right maps is reflexive in $ \mathcal{C}$ because of the factorization of terminal maps
    \[ \begin{tikzcd}
    A \arrow[]{rr}{!_A} \arrow[]{rd}[swap]{l_A} & & 1 \\ & \mathcal{R}(A) \arrow[]{ru}[swap]{r_A} & 
    \end{tikzcd} \] 
    Indeed the reflector $ A \stackrel{l_A}{\rightarrow} \mathcal{R}(A)$ (which is a left map) is initial amongst those arrows toward a right object: for any $ A \stackrel{f}{\rightarrow} R$, the statute of $(l_A, r_A)$ as the terminal factorization of $!_A$ with a left map on the left induces the dashed arrow in the following:
    \[ \begin{tikzcd}
   & \arrow[]{ld}[swap]{l_A} A \arrow[]{rr}{f} \arrow[]{rd}[swap]{l_f}  & & R \arrow[bend left=20]{ldd}{!_R} \\ 
  \mathcal{R}(A) \arrow[]{drr}[swap]{r_A} \arrow[dashed]{rr}[swap]{\exists \in \mathcal{L}} & & R_f \arrow[]{d}{!_{R_f}} \arrow[]{ru}[swap]{r_f} & \\ & & 1 & 
    \end{tikzcd} \]
    \item In particular the right reflection of a left object is necessarily 1 by applying cancellation property of both classes to its terminal map. 
    \item Any object in $ \mathcal{C}$ admits exactly one $\mathcal{L}$-map into an $\mathcal{R}$-object: its own reflection map, because post composing it with the terminal map of this $\mathcal{R}$-object returns the factorization of the terminal map of $\mathcal{C}$ which is unique. 
    \item Both right and left objects possess a gliding condition along their associated map by stability by composition: in the following 
    
    \[ \begin{tikzcd}
    A \arrow[]{rr}{f } \arrow[]{rd}[swap]{!_A} & & B \arrow[]{ld}{!_B}\\
    & 1 & \end{tikzcd} \]
    Then $!_A$ is either in $ \mathcal{R}$ or $\mathcal{L}$ as soon as both $ f $ and $!_B$ are. However, as we are in the geometric side, this is not the condition from which we want to deduce stability.
    \item Left objects have moreover the co-gliding condition relatively to the $ \mathcal{L}$-maps: in the following 
    \[ \begin{tikzcd}
    L \arrow[]{rr}{l \in \mathcal{L} } \arrow[]{rd}[swap]{!_L}  && A \arrow[]{ld}{!_A}\\
    & 1 & \end{tikzcd} \] if $ L$ is a left object sending a left map toward an object $A$, then $A$ has to be a left object by left cancellation. As a consequence, in the opposite side we have a stable inclusion:
    \[ \mathcal{LO}bj^{op} \hookrightarrow \mathcal{C}^{op} \] 
    However notify we cannot infer any smallness condition, so this has to be token as a local reflexivity. But this just forces us to admits the use of large spaces with a proper (at least a moderate) class of points.
\end{itemize}
\end{remark}
Most of those properties have a relativised version, that is, a corresponding statement where 1 has been replaced by an arbitrary base object $B$ of $\mathcal{C}$. 

\begin{definition}
For $B$ in $\mathcal{C}$, we define $\mathcal{LO}bj[B] $ (resp. $ \mathcal{RO}bj[B]$) as the categories whose objects are respectively left maps $ l : C \rightarrow B  \in \mathcal{L}$ (resp. $ r : C \rightarrow B  \in \mathcal{R}$) and $ \mathcal{L}$-maps (resp $\mathcal{R}$-maps) between them.
\end{definition}

\begin{proposition}
We have the same properties as over 1:\begin{itemize}
    \item $\mathcal{RO}bj[B] $ is a reflective subcategory of $\mathcal{C}/B$ where the free right object of a map $f : C \rightarrow B$ is $ r_f : C_f \rightarrow B$ and its unit is given by the left map $ l_f : C \rightarrow C_f $;
    \item iso morphism $  B' \simeq B$ are the unique object to be both right and left;
    \item the category $\mathcal{LO}bj[B]$ is co-stable in $ C/B$ because of the glidding condition induced by the left cancellation property of the left maps.
\end{itemize}
\end{proposition}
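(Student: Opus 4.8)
The plan is to deduce the Proposition from its non-relativised counterpart recorded in the Remark above (the one discussing $\mathcal{LO}bj$ and $\mathcal{RO}bj$ over $1$), by passing to the slice category. The key observation is that the factorization system $(\mathcal{L},\mathcal{R})$ on $\mathcal{C}$ induces one, call it $(\mathcal{L}_{/B},\mathcal{R}_{/B})$, on $\mathcal{C}/B$: a slice morphism $h\colon(f\colon C\to B)\to(g\colon D\to B)$ is put in $\mathcal{L}_{/B}$ (resp.\ $\mathcal{R}_{/B}$) precisely when the underlying arrow $h\colon C\to D$ lies in $\mathcal{L}$ (resp.\ $\mathcal{R}$). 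This is routine: the $(\mathcal{L},\mathcal{R})$-factorization $C\xrightarrow{l}E\xrightarrow{r}D$ of an underlying arrow endows $E$ with the structure map $gr\colon E\to B$ and yields a factorization $C\xrightarrow{l}(E,gr)\xrightarrow{r}(D,g)$ in $\mathcal{C}/B$; and a lifting problem in $\mathcal{C}/B$ of an $\mathcal{L}_{/B}$-map against an $\mathcal{R}_{/B}$-map is a lifting problem in $\mathcal{C}$ whose unique diagonal automatically commutes with the two structure maps to $B$, so $\mathcal{L}_{/B}\perp\mathcal{R}_{/B}$. Moreover $\mathcal{C}/B$ has terminal object $1_B=\mathrm{id}_B$, and for $f\colon C\to B$ the unique slice morphism $f\to 1_B$ is $f$ itself. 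Hence an object $f$ of $\mathcal{C}/B$ is an $\mathcal{R}_{/B}$-object exactly when $f\in\mathcal{R}$, so the category of $\mathcal{R}_{/B}$-objects and $\mathcal{R}_{/B}$-maps is literally $\mathcal{RO}bj[B]$; likewise for $\mathcal{LO}bj[B]$.

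Granting this identification, the three bullets of the Proposition are the corresponding bullets of the Remark, applied to $\mathcal{C}/B$ with the system $(\mathcal{L}_{/B},\mathcal{R}_{/B})$ and terminal object $1_B$. First, the $(\mathcal{L}_{/B},\mathcal{R}_{/B})$-factorization of the terminal map $f\colon C\to B$ of $\mathcal{C}/B$ is $C\xrightarrow{l_f}C_f\xrightarrow{r_f}B$, and the usual diagonalization argument (copied verbatim from the Remark, with $1$ replaced by $B$) shows $r_f$ is initial among morphisms from $f$ to an $\mathcal{R}_{/B}$-object; this exhibits $\mathcal{RO}bj[B]$ as reflective in $\mathcal{C}/B$ with unit the left map $l_f$. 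Second, since in any factorization system the intersection of the two classes is exactly the isomorphisms, the objects of $\mathcal{C}/B$ that are simultaneously $\mathcal{L}_{/B}$- and $\mathcal{R}_{/B}$-objects are exactly the isomorphisms $B'\xrightarrow{\sim}B$. Third, $\mathcal{L}_{/B}$-objects have the co-gliding property relative to $\mathcal{L}_{/B}$-maps: if $l\colon L\to A$ is an $\mathcal{L}_{/B}$-map and $L\to 1_B$ lies in $\mathcal{L}_{/B}$, then $A\to 1_B$ lies in $\mathcal{L}_{/B}$ by left cancellation; applying \cref{Stable functor from gliding} inside $(\mathcal{C}/B)^{\mathrm{op}}$ — equipped with the opposite factorization system, in which this co-gliding property becomes exactly the gliding property relative to the right class — shows that $(\mathcal{LO}bj[B])^{\mathrm{op}}\hookrightarrow(\mathcal{C}/B)^{\mathrm{op}}$ is stable, i.e.\ $\mathcal{LO}bj[B]\hookrightarrow\mathcal{C}/B$ is co-stable (and, as a bonus, relatively full and faithful).

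The only point that is not a verbatim transcription of the absolute case is the verification, in the first paragraph, that $(\mathcal{L},\mathcal{R})$ genuinely lifts to $\mathcal{C}/B$; I therefore regard this as the ``main obstacle'', though it is entirely standard (slices always inherit factorization systems), so in practice there is nothing hard here — this is exactly what the phrase ``we have the same properties as over $1$'' anticipates. An alternative, equally short, route is to re-run each of the three arguments of the Remark line by line with the base object $B$ in place of the terminal object.
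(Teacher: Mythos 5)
Your proposal is correct and matches the paper's (implicit) argument: the paper gives no separate proof, treating the proposition as the Remark's absolute statements transported to $\mathcal{C}/B$, whose terminal object is $\mathrm{id}_B$ and which inherits the factorization system $(\mathcal{L},\mathcal{R})$ levelwise — exactly the reduction you carry out. Your explicit check that the induced classes on the slice are orthogonal (and that the diagonal filler automatically respects the structure maps to $B$) is the only content the paper leaves unsaid, and you handle it correctly.
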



Let $ (\mathcal{L},\mathcal{R})$ be a factorization system on $\mathcal{C}$ as above. Now we are going to see that we also can chose as left objects a subclass $ \mathcal{L}' \subseteq \mathcal{L}$ behaving at least as a left class in an orthogonality structure, while not constituting necessarily the whole left class of a factorization system, plus some additional condition of closure by pullback and cancellability along $\mathcal{L}$-maps. 

\begin{definition}
Let be $\mathcal{C}$ and $( \mathcal{L}, \mathcal{R})$ as above with a distinguished class $ \mathcal{L}' \subseteq \mathcal{L}$. We say that $\mathcal{L}'$ has \emph{right $\mathcal{L}$-cancellation property} when for each triangle as below
\[ \begin{tikzcd}[row sep=small]
A \arrow[]{rr}{l \in \mathcal{L}} \arrow[]{rd}[swap]{l' \in \mathcal{L}'} & & B \arrow[]{ld}{f} \\ & C & 
\end{tikzcd} \]
then $l$ being in $ \mathcal{L}$ and $ l'$ in $\mathcal{L}'$ forces $f$ to be in $\mathcal{L}'$. 
\end{definition}

In the following we fix a class of map $ \mathcal{L}' \subseteq \mathcal{L}$ satisfying the right $\mathcal{L}$-cancellation property which moreover we suppose stable by pullback and composition and containing all iso. In particular $ \mathcal{L}'$-maps are left orthogonal to $ \mathcal{R}$-maps. 

\begin{definition}
We define the category $ \mathcal{L}'Obj^\mathcal{L}$ as having $ \mathcal{L}'$-objects as objects and $ \mathcal{L}$-maps between them. Similarily, for any $ B$ in $\mathcal{C}$, we define  $ \mathcal{L}'Obj^\mathcal{L}[B]$ as having $ \mathcal{L}'$-maps $ l : C \rightarrow B$ for objects and triangles with $ \mathcal{L}$-maps between them as morphisms.  
\end{definition}

\begin{proposition}
In a context above with $\mathcal{L}'$ having right $\mathcal{L}$-cancellation, we have a stable inclusion
\[ (\mathcal{L}'Obj^\mathcal{L})^{op} \hookrightarrow \mathcal{C}^{op}  \]
\end{proposition}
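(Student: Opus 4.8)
The plan is to read this statement off as an instance of \cref{Stable functor from gliding}, applied in the opposite category $\mathcal{C}^{op}$ rather than in $\mathcal{C}$. First I would set up the translation: the factorization system $(\mathcal{L},\mathcal{R})$ on $\mathcal{C}$ induces on $\mathcal{C}^{op}$ the factorization system whose left class is $\mathcal{R}^{op}$ and whose right class is $\mathcal{L}^{op}$; and under the canonical identifications, the category $(\mathcal{L}'Obj^\mathcal{L})^{op}$ is exactly the category of $\mathcal{L}'$-objects equipped with the right maps of $\mathcal{C}^{op}$ between them, since an $\mathcal{L}^{op}$-map in $\mathcal{C}^{op}$ between two $\mathcal{L}'$-objects is the same thing as an $\mathcal{L}$-map between them in $\mathcal{C}$. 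So, modulo these identifications, what has to be proved is that the class of $\mathcal{L}'$-objects, viewed as a class of objects of $\mathcal{C}^{op}$, has the gliding property (\cref{gliding}) relative to the right class $\mathcal{L}^{op}$.

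The one substantive step is then this gliding verification, and it is precisely the hypothesis being invoked. Unwinding the definition, it asks: if $A$ is an $\mathcal{L}'$-object and $l : A \to B$ is a morphism of $\mathcal{L}$ in $\mathcal{C}$ (dually, an $\mathcal{L}^{op}$-map of $\mathcal{C}^{op}$ with codomain $A$), then $B$ is again an $\mathcal{L}'$-object. Here the terminal maps fit into a commuting triangle $!_A = {!_B}\circ l$ with $!_A \in \mathcal{L}'$ and $l \in \mathcal{L}$, so the right $\mathcal{L}$-cancellation property of $\mathcal{L}'$ forces $!_B \in \mathcal{L}'$, i.e.\ $B$ is an $\mathcal{L}'$-object. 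Feeding this into \cref{Stable functor from gliding} (with $\mathcal{C}^{op}$ in the role of the ambient category, $\mathcal{L}^{op}$ in the role of the right class, and the class of $\mathcal{L}'$-objects in the role of the distinguished class of objects) yields that the inclusion $(\mathcal{L}'Obj^\mathcal{L})^{op} \hookrightarrow \mathcal{C}^{op}$ is a relatively full and faithful stable functor; being faithful and injective on objects, it is in particular a stable inclusion, which is the claim.

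For concreteness I would also spell out the resulting stable factorization directly. Given a map $\phi : A \to B$ in $\mathcal{C}$ with $A$ an $\mathcal{L}'$-object --- that is, an arrow $B \to A$ of $\mathcal{C}^{op}$ landing in the subcategory --- take its $(\mathcal{L},\mathcal{R})$-factorization $\phi = r_\phi \circ l_\phi$ with $l_\phi : A \to B_\phi$ in $\mathcal{L}$ and $r_\phi : B_\phi \to B$ in $\mathcal{R}$. By the cancellation step above $B_\phi$ is an $\mathcal{L}'$-object and $l_\phi$ is a morphism of $\mathcal{L}'Obj^\mathcal{L}$, so in $\mathcal{C}^{op}$ we get $\phi^{op} = l_\phi^{op}\circ r_\phi^{op}$ with $l_\phi^{op}$ in the range of the inclusion and $r_\phi^{op} : B \to B_\phi$ a candidate. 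That $r_\phi^{op}$ is indeed a candidate is checked by pulling a candidate square back to $\mathcal{C}$: it becomes a commuting square with $r_\phi$ on one edge and an $\mathcal{L}$-map on the opposite edge, so $(\mathcal{L},\mathcal{R})$-orthogonality supplies a unique diagonal filler; this filler lies in $\mathcal{L}$ because the left class of a factorization system is right-cancellative, hence it is a morphism of $\mathcal{L}'Obj^\mathcal{L}$ and provides exactly the lift required by the definition of candidate.

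I do not anticipate a real obstacle: the mathematical content is just the $\mathcal{L}'$-cancellation lemma, and everything else is bookkeeping with variances. The only point deserving care is the orientation when passing to $\mathcal{C}^{op}$ --- the gliding must be taken relative to $\mathcal{L}^{op}$, the \emph{right} class of $\mathcal{C}^{op}$, and it is the terminal map of $\mathcal{C}$ (an initial map in $\mathcal{C}^{op}$) whose membership in $\mathcal{L}'$ is being propagated.
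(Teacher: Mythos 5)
Your proposal is correct and matches the paper's own argument: the paper justifies this proposition exactly by the co-gliding observation (the terminal map of the intermediate object $L_f$ of the $(\mathcal{L},\mathcal{R})$-factorization lands in $\mathcal{L}'$ by right $\mathcal{L}$-cancellation), which is the dual of \cref{Stable functor from gliding} applied in $\mathcal{C}^{op}$ with right class $\mathcal{L}^{op}$. Your extra care with the variance bookkeeping and the explicit verification that $r_\phi^{op}$ is a candidate only spells out what the paper leaves implicit.
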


Indeed it is easy to see that we then have the co-gliding condition above any object by absorption of the terminal map of the intermediate object of any $(\mathcal{L}, \mathcal{R})$-factorization: indeed in the situation below
\[ \begin{tikzcd}[row sep=small]
L' \arrow[]{rr}{f} \arrow[]{rd}[swap]{l_f} \arrow[bend right =20]{rdd}[swap]{l' \in \mathcal{L'}} & & A \arrow[bend left =20 ]{ldd} \\ & L_f \arrow[]{ru}[swap]{r_f} \arrow[]{d}{l_f} & \\ & C & 
\end{tikzcd} \]
$l_f$ is in $\mathcal{L}'$ as soon as $ l'$ is.

\begin{definition}
In the spirit of \cite{Anel}, we define as $ \mathcal{L}'$-form of an object $C$ all $\mathcal{R}$-maps $ r : L \rightarrow C$ with a $\mathcal{L}'$-object as domain. In particular any arrow between $ \mathcal{L}'$-forms is in $\mathcal{R}$ and should be seen as an inclusion of fondamental neighborhood, or equivalently, as generalized specialization order.
\end{definition}

This means that any left form of $A$ induces a point of $\mathcal{R}(A)$: to any $ {r_x} : L_x {\rightarrow } A$ we can associate $ p_x = R(r_x) : 1 \rightarrow \mathcal{R}(A)$. Moreover, we could think this assignment as ``continuous" in the sense that any $ \mathcal{R}$-neighborhood $ R' \stackrel{r}{\rightarrow} R$ of $p_x$ induces a $\mathcal{R}$-neighborhood of $r_x$ by pullback

\[\begin{tikzcd}
	{L_x } \\
	& {r^*A} & {A} \\
	{1} \\
	& {R} & {\mathcal{R}(A)}
	\arrow[from=1-1, to=2-2]
	\arrow["{l_A^*r}", from=2-2, to=2-3, shorten <=3pt, shorten >=3pt]
	\arrow["{l_A}", from=2-3, to=4-3]
	\arrow[from=2-2, to=4-2]
	\arrow["{r}"', from=4-2, to=4-3]
	\arrow[from=3-1, to=4-2]
	\arrow["{!_{L_x}}"', from=1-1, to=3-1]
	\arrow["{p_x}" description, from=3-1, to=4-3, curve={height=-12pt}]
	\arrow["{x}", from=1-1, to=2-3, curve={height=-12pt}]
	\arrow["\lrcorner"{very near start, rotate=0}, from=2-2, to=4-3, phantom]
\end{tikzcd}\]

However in general, it is clear by functoriality of $\mathcal{R}(-)$ that the reflection of an object $A$ does not correctly distinguish local forms of $A$ as it collapses the specialization order when this one is non trivial: for a triangle of $\mathcal{R}$-maps
\[ \begin{tikzcd}
L_1 \arrow[bend left=10]{rrd}{r_1} \arrow[]{rd}[swap]{r} & & \\
& L_2 \arrow[]{r}{r_2} & A 
\end{tikzcd} \]
then $\mathcal{R}(r_1) = \mathcal{R}(r_2  r) = \mathcal{R}(r_2)  !_1$ so that both define the same point of $ \mathcal{R}(A)$. Intuitively, $\mathcal{R}$ collapses the local components and produces a disconnected set of points with trivial specialization order. \\


In this context, $ \mathcal{R}$-objects still play the role of ``discrete" objects and $ \mathcal{L}$-objects the role of connected object, while $\mathcal{L}'$-objects as local objects are to be seen as the ``fundamental neighborhood" of the geometry, while the $ \mathcal{R}$-maps codes its etale domain.\\

The $\mathcal{L}'$-maps from an arbitrary object toward an object will then be examples of ``bundles of $\mathcal{L}'$-objects" over a space, that is, bundles that are locally $\mathcal{L}'$-objects in the sense that their fibers are $\mathcal{L}'$-objects and $ \mathcal{L}'$-form of the domain of the bundle, from our key assumption that $ \mathcal{L}$-maps are closed by pullback:
\[\begin{tikzcd}
L_p \arrow[]{d}[swap]{p^*l_C \in \mathcal{L}} \arrow[phantom, very near start]{rd}{\lrcorner} \arrow[]{r}{l_C^*p \in \mathcal{R}} & C \arrow[]{d}{l_c} \\
1 \arrow[]{r}{p} & R
\end{tikzcd}\]

\begin{definition}
Define $ \mathcal{C}_{Spaces}$ as the category of ``$\mathcal{C}$-modelled spaces" whose \begin{itemize}
    \item objects are arbitrary maps $ f : C \rightarrow B $,
    \item and morphisms $f_1 \rightarrow f_2$ are triples $(g,a^\sharp, a_\sharp)$ of the form 

\[ \begin{tikzcd}
C_1 \arrow[bend left=12, end anchor=north west]{rrd}{a_\sharp} \arrow[bend right=20, end anchor=west]{rdd}[swap]{f_1} \arrow[]{rd}{a^\sharp} & &  \\
& g^*C_2 \arrow[]{r}{f_2^*g} \arrow[phantom, very near start]{rd}{\lrcorner} \arrow[]{d}[swap]{g^*f_2}  & C_2  \arrow[]{d}{f_2}  \\
& B_1 \arrow[]{r}{g} & B_2 
\end{tikzcd} \]
\end{itemize} 
\end{definition}
\begin{remark}
Actually, in the diagram above, $ a^\sharp$ and $ a_\sharp $ are obviously mutually determined, but this emphasizes the analogy with modelled toposes as they correspond to the dual maps of the inverse and direct comorphisms. Indeed, one would expect $f_1,f_2$ to code for sheaves of $ \mathcal{C}^{op}$ objects over $ \mathcal{R}$-spaces with $g$ a continuous map between them, along which one can either pull or push those sheaves. In this point of view, $g^*f_2 :  g^*C_2  \rightarrow B_1$ is the inverse image of $ f_2$ over $ \mathcal{B}_1$ and $ g \circ f_1$ really is the direct image $ r_*f_1  $ of $ f_1$ over $B_2$. 
\end{remark}

Now we turn to the objects we want to see as locally modelled objects:

\begin{definition}
For each object $B$, we define $\mathcal{L}oc^\mathcal{L}_{\mathcal{L}'}[B]$ as the category whose:\begin{itemize}
    \item objects are $ C \rightarrow B$ that are stalkwise in $\mathcal{L}'$
    \item maps are triangles that are stalkwise in $\mathcal{L}$\end{itemize}
Then we define the category $\mathcal{L}oc^\mathcal{L}_{\mathcal{L}'} $ as the wide subcategory of $\int \mathcal{L}oc^\mathcal{L}_{\mathcal{L}'}[-]$ whose arrows are restricted to the $(g,a^\sharp)$ with $a^\sharp$ stalkwise in $\mathcal{L}$. Observe that in particular this is a non full subcategory of $ \mathcal{C}-spaces$. 

\end{definition}
\begin{remark}
Observe we have to require the comorphisms to be locally in $\mathcal{L}$ as requiring it globally would not be sufficient from the non stability along pullback; actually, they do not even need to be so globally. This mimics the idea of local morphism, which returns local maps at stalks, in the usual spectral construction.
\end{remark}


In the following diagram

\[ \begin{tikzcd}
 C_1 \arrow[]{rr}{l \in \mathcal{L}} \arrow[]{rd}[swap]{f_1} & & C_2 \arrow[]{ld}{f_2} \\ &C & 
\end{tikzcd} \]
Then, if $ f_1$ is in $\mathcal{L}$-cancellation. If $ f_1$ locally is in $\mathcal{L}$, then by right cancellation of pullback square we know that for any point $p$, $ p^*a^\sharp$ is in $\mathcal{L}$ so that one has over 1

\[ \begin{tikzcd}
 p^*C_1 \arrow[]{rr}{p^*l \in \mathcal{L}} \arrow[]{rd}[swap]{p^*f_1 \in \mathcal{L}} & & p^*C_2 \arrow[]{ld}{p^*f_2} \\ & 1 & 
\end{tikzcd} \]
which forces each $p^*f_2$ to be in $\mathcal{L}$.\\

\begin{proposition}
One has a relatively full and faithful costable inclusion in each $ C$
\[ \mathcal{L}oc^\mathcal{L}_{\mathcal{L}'}[C] \hookrightarrow \mathcal{C}/C  \]
\end{proposition}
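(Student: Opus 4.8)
The plan is to reproduce, fibre by fibre over the points of $C$, the co-gliding argument that proves the earlier costable inclusions, exploiting throughout that $\mathcal{L}$, $\mathcal{R}$ and $\mathcal{L}'$ are all stable under pullback ($\mathcal{R}$ automatically as a right class, $\mathcal{L}$ by the standing ``key assumption'', $\mathcal{L}'$ by the hypotheses imposed on it), so that the $(\mathcal{L},\mathcal{R})$-factorization of an arrow of $\mathcal{C}$ remains an $(\mathcal{L},\mathcal{R})$-factorization after pulling back along any point $p : 1 \rightarrow C$. Write $U$ for the inclusion. Since $U$ is costable by definition iff $U^{op}$ is stable, I must produce, for each morphism $\bar f : (X,f) \rightarrow (Y,h)$ of $\mathcal{C}/C$ with $(X,f)$ in $\mathcal{L}oc^{\mathcal{L}}_{\mathcal{L}'}[C]$ and $(Y,h)$ arbitrary, a factorization $\bar f = r_f \circ \lambda_f$ in $\mathcal{C}/C$ with $\lambda_f$ a morphism of $\mathcal{L}oc^{\mathcal{L}}_{\mathcal{L}'}[C]$ and $r_f$ a co-candidate (i.e. $r_f^{op}$ a candidate), essentially unique.

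For the factorization, take the $(\mathcal{L},\mathcal{R})$-factorization $X \xrightarrow{\lambda_f} L_f \xrightarrow{r_f} Y$ of $\bar f$ in $\mathcal{C}$ and make $L_f$ an object of $\mathcal{C}/C$ via $\ell_f := h\,r_f$. For a point $p : 1 \rightarrow C$, pulling back gives $p^{*}f = p^{*}\ell_f \circ p^{*}\lambda_f$ over $1$, with $p^{*}\lambda_f \in \mathcal{L}$ (pullback-stability of $\mathcal{L}$) and $p^{*}f \in \mathcal{L}'$ (hypothesis on $(X,f)$); the right $\mathcal{L}$-cancellation property of $\mathcal{L}'$, applied to this triangle over $1$, then forces $p^{*}\ell_f \in \mathcal{L}'$. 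Hence $\ell_f$ is stalkwise in $\mathcal{L}'$, so $(L_f,\ell_f) \in \mathcal{L}oc^{\mathcal{L}}_{\mathcal{L}'}[C]$, and $\lambda_f$ is a morphism of $\mathcal{L}oc^{\mathcal{L}}_{\mathcal{L}'}[C]$ since each $p^{*}\lambda_f$ lies in $\mathcal{L}$. This is exactly the ``absorption of the terminal map of the intermediate object of a factorization'' trick, performed stalkwise.

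It then remains to check that $r_f$ is a co-candidate, which yields costability, uniqueness of the factorization up to unique isomorphism, and — via right-cancellativity of $\mathcal{L}$ — relative full-and-faithfulness. Dually to the definition of candidate, being a co-candidate means: for every square in $\mathcal{C}/C$ having $r_f$ on one side and a morphism $g$ of $\mathcal{L}oc^{\mathcal{L}}_{\mathcal{L}'}[C]$ on the opposite side, there is a unique filler, itself a morphism of $\mathcal{L}oc^{\mathcal{L}}_{\mathcal{L}'}[C]$. As $r_f \in \mathcal{R}$, it suffices that $g$, being stalkwise in $\mathcal{L}$, be left-orthogonal to $\mathcal{R}$-maps; the filler is then the unique diagonal, and by right-cancellativity of the left class $\mathcal{L}$ (using $g \in \mathcal{L}$ and $\lambda_f \in \mathcal{L}$) it is again in $\mathcal{L}$, hence stalkwise in $\mathcal{L}$, hence a morphism of $\mathcal{L}oc^{\mathcal{L}}_{\mathcal{L}'}[C]$. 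The same right-cancellativity, applied stalkwise, shows that any morphism of $\mathcal{C}/C$ between two objects of $\mathcal{L}oc^{\mathcal{L}}_{\mathcal{L}'}[C]$ through which a morphism of $\mathcal{L}oc^{\mathcal{L}}_{\mathcal{L}'}[C]$ factors is itself one — this is the (easy half giving) relative full-and-faithfulness.

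The one genuine obstacle is the orthogonality ``stalkwise-$\mathcal{L}$ against $\mathcal{R}$''. At the level of a single stalk it is immediate: factoring a stalkwise-$\mathcal{L}$ map $g$ as $\mu \in \mathcal{L}$ followed by $\nu \in \mathcal{R}$, each $p^{*}g$ carries the two $(\mathcal{L},\mathcal{R})$-factorizations $(p^{*}g,\mathrm{id})$ and $(p^{*}\mu,p^{*}\nu)$, so every $p^{*}\nu$ is invertible by uniqueness of factorization. Concluding that $\nu$ itself is invertible — so that $g \in \mathcal{L}$, hence genuinely orthogonal to $\mathcal{R}$ — requires that isomorphisms of $\mathcal{C}$ be detected on stalks, an ``enough points'' condition which is innocuous in the geometric settings this section prepares for (and is in any case what makes ``stalkwise'' a bona fide localness condition); I would make it an explicit standing hypothesis at this point. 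Everything else is bookkeeping with pullback-stability and the cancellation properties.
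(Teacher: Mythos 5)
Your overall strategy is the one the paper intends: take the global $(\mathcal{L},\mathcal{R})$-factorization of a map out of a stalkwise-$\mathcal{L}'$ object, check stalkwise via the right $\mathcal{L}$-cancellation (``absorption'') property that the intermediate object is again stalkwise in $\mathcal{L}'$, and deduce relative full-and-faithfulness from right-cancellativity of $\mathcal{L}$ applied stalkwise. The full-and-faithfulness half is correct as you state it: there the two outer legs of the triangle are stalkwise in $\mathcal{L}$ \emph{by hypothesis} (that is what it means for them to be morphisms of $\mathcal{L}oc^{\mathcal{L}}_{\mathcal{L}'}[C]$), so cancellation applies at each point with no further input.

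The gap is in the costability half, exactly where you invoke ``pullback-stability of $\mathcal{L}$ by the standing key assumption''. The section's hypotheses only make $\mathcal{L}'$ (and, automatically, $\mathcal{R}$) stable under pullback; the remark immediately following the definition of $\mathcal{L}oc^{\mathcal{L}}_{\mathcal{L}'}$ explicitly appeals to the \emph{non}-stability of $\mathcal{L}$ under pullback as the very reason the morphisms must be required to be stalkwise, rather than globally, in $\mathcal{L}$ (the earlier phrase ``our key assumption that $\mathcal{L}$-maps are closed by pullback'' is a slip for $\mathcal{L}'$-maps, as the displayed pullback square there concerns the fibers of an $\mathcal{L}'$-bundle). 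Without pullback-stability of $\mathcal{L}$ you cannot conclude $p^{*}\lambda_f\in\mathcal{L}$, and then neither the claim that $\lambda_f$ is a morphism of $\mathcal{L}oc^{\mathcal{L}}_{\mathcal{L}'}[C]$ nor the application of right $\mathcal{L}$-cancellation yielding $p^{*}\ell_f\in\mathcal{L}'$ goes through; the same unjustified step reappears inside your orthogonality argument, where you need $(p^{*}\mu,p^{*}\nu)$ to be an $(\mathcal{L},\mathcal{R})$-factorization. Note also that the two hypotheses you would add --- pullback-stability of $\mathcal{L}$ and detection of isomorphisms on stalks --- together force ``stalkwise in $\mathcal{L}$'' to coincide with ``in $\mathcal{L}$'', collapsing the distinction the definition is built on (the paper insists that stalkwise-$\mathcal{L}$ maps ``do not even need to be so globally''). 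To be fair, the paper's own two-line proof suffers from the same lacuna for the costability claim, since it silently assumes the global factorization restricts to the stalkwise ones; your write-up is a faithful expansion that has the merit of exposing precisely which extra hypothesis is being consumed, but as a proof of the proposition under the stated assumptions it does not close.
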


\begin{proof}
This just comes from the $\mathcal{L}$-absorption property of $\mathcal{L}'$ applied over 1 in each point of $\mathcal{R}$. The inclusion functor $ Loc^\mathcal{L}_ {\mathcal{L}'}[C]$ always is relatively full and faithful as indeed in the following situation
\[ \begin{tikzcd}
& A_2 \arrow[no head]{d}[near end]{ l_2 \in \mathcal{L}'} \arrow[]{rd}{f } & \\
A_1 \arrow[]{ru}{l} \arrow[]{rd}[swap]{l_1 \in \mathcal{L}'} \arrow[]{rr}[near start]{l' \in \mathcal{L}} & \arrow[]{d} & A_3 \arrow[]{ld}{l_3} \\ & C & 
\end{tikzcd}  \]
$ f$ is forced to be in $\mathcal{L}'$ by the absorption property of $\mathcal{L}'$ in $\mathcal{L}$ (this actually does not depends of $ l_1,l_2,l_3$ being in $\mathcal{L}'$). 

\end{proof}

\printbibliography

\end{document}